\documentclass{amsart} 

\newif\ifshort
\shortfalse

\ifshort
\usepackage[top=1.1in, bottom=1.1in, left=1.3in, right=1.3in]{geometry}
\pagestyle{plain}
\fi

\usepackage[table]{xcolor}

\usepackage{sty2el}
\usepackage{graphics}

\usepackage[pdftex]{hyperref}
\hypersetup{colorlinks,citecolor=blue,linktocpage,hyperindex=true,backref=true}

\usepackage{array,longtable,float}

\usepackage{calc,tikz}
\usepackage{xcolor}
\usepackage{array,booktabs,colortbl}

\usepackage{todonotes}

\date{May 11, 2022} \title[Compactifications of moduli spaces of K3 surfaces]%
{Compactifications of moduli spaces of K3 surfaces
  with a nonsymplectic involution}

\author{Valery Alexeev}
\email{valery@uga.edu}
\address{Department of Mathematics, University of Georgia, Athens GA
  30602, USA}
\author{Philip Engel}
\email{philip.engel@uga.edu}
\address{Department of Mathematics, University of Georgia, Athens GA
  30602, USA}

\begin{document} 
\begin{abstract}
  There are $75$ moduli spaces $F_S$ of K3 surfaces with a
  nonsymplectic involution. We give detailed descriptions of Kulikov
  models for one-parameter degenerations in $F_S$.
  In the $50$ cases where the fixed locus of
  the involution has a component $C_g$ of genus $g\ge2$, we identify
  normalizations of the KSBA compactifications of $F_S$ via
  stable pairs $(X,\epsilon C_g)$, with explicit semitoroidal
  compactifications of $F_S$.
\end{abstract}

\maketitle 

\ifshort 
\setcounter{tocdepth}{1}
\else
\setcounter{tocdepth}{2}  
\fi

\tableofcontents

\section{Introduction}
\label{sec:intro}

Let $X$ be a smooth projective K3 surface. An involution $\iota$ of
$X$ is called \emph{nonsymplectic} if it acts as
$\iota^*\omega_X = -\omega_X$ on a generator of
$H^{2,0}(X)$.  The $(+1)$-eigenspace of the induced involution
on $H^2(X,\bZ)$ is a hyperbolic lattice~$S$. All the possibilities for
$S$ were found in a classical work
\cite{nikulin1979integer-symmetric} of Nikulin, who proved that there
are $75$ cases, given in \fign. The lattices $S$ are uniquely
determined by a triple of invariants $(r,a,\delta)$, or an equivalent
set of invariants, $(g,k,\delta)$.

For a given lattice $S$, there is a moduli space $F_S$ of K3 surfaces with
an involution and generic Picard lattice $S$. It is an open subset of
$\bD_S/\Gamma$, the quotient of a type IV domain of dimension $20-\rank S$
by an arithmetic group.

The K3 surfaces that appear include many interesting ones, for example
the double covers of: Enriques surfaces, smooth del Pezzo surfaces, log
del Pezzo surfaces of index~$2$, index~$2$ Halphen pencils, and rational
elliptic surfaces. They have been the subject of a
great deal of research. Here, we are interested in geometric
compactifications of the moduli spaces $F_S$. 

In $50$ of the $75$ cases, the fixed locus $R$ of the involution
contains a smooth curve $C_g$ of genus $g\ge2$. The divisor $C_g$ is
semiample and defines a contraction $X\to \oX$ to a K3 surface with
$ADE$ singularities and an ample Cartier divisor $\oC_g$.

It follows that for $0<\epsilon\ll1$ the pair $(\oX, \epsilon\oC_g)$
is a KSBA stable pair, see \cite{kollar2023families-of-varieties} for
their general theory. Stable pairs have complete, projective moduli
spaces. One thus obtains a geometrically meaningful KSBA
compactification $\oF_S$.

\begin{problem}\label{prob:describe-compactification}
  Describe the compactification $\oF_S$ explicitly.
\end{problem}

In previous collaborations, we solved this problem in two cases:
for the degree~2 K3 surfaces \cite{alexeev2019stable-pair} and for
the elliptic degree~2 K3 surfaces
\cite{alexeev2020compactifications-moduli}, which is a Heegner divisor
in the previous case. In this paper, we solve it for all the remaining
cases:

\begin{theorem}\label{thm:main-compactifications}
  The normalization of $\oF_S$ is a semitoroidal compactification of
  $\bD_S/\Gamma$ for an explicit semifan $\fF\dram$. 
  In $48$ of the $50$ cases it is dominated by a toroidal
  compactification for a Coxeter reflection fan.
\end{theorem}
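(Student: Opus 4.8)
The plan is to prove the identification degeneration-by-degeneration, matching stable pair limits with the boundary strata of the semitoroidal compactification attached to $\fF\dram$. Both $\oF_S\uslc$ (hence its normalization) and the candidate semitoroidal compactification are proper and contain $F_S$ as a dense open subset, so by the valuative criterion it suffices to show that two one-parameter families of K3 pairs acquire the same stable limit if and only if their periods in $\bD_S/\Gamma$ meet the same cone of $\fF\dram$. First I would fix a generic one-parameter degeneration whose limiting period lies on a prescribed boundary stratum and, after a finite base change, pass to a Kulikov model $\mathcal{X}\to\Delta$: a semistable model with trivial relative canonical class to which the nonsymplectic involution extends. The central fiber of a Type III model carries an integral-affine structure on the two-sphere, with the involution acting and with the fixed curve $C_g$ degenerating to an explicit cycle of curves; Type II degenerations give the codimension-one strata together with a chain of elliptic ruled surfaces.

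The geometric heart is the computation of the stable pair limit $(\oX_0,\epsilon\oC_{g,0})$ as the relative log canonical model of $(\mathcal{X},\epsilon\mathcal{C}_g)$ over $\Delta$. For $0<\epsilon\ll1$ this is governed by the relative MMP for $K_{\mathcal{X}}+\epsilon\mathcal{C}_g$; since $K_{\mathcal{X}}$ is trivial on each Kulikov fiber, the output is controlled entirely by how $\mathcal{C}_g$ meets the components of the central fiber, and I expect the limiting surface to be read off directly from the integral-affine data together with the position of the degenerate curve. I would then show that this combinatorial type is constant as the degeneration direction ranges over the relative interior of a maximal cone, and that it jumps exactly when the direction crosses a wall. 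Declaring cones by these jumps, and verifying the face, intersection, local-finiteness, and $\Gamma$-invariance conditions, produces the semifan $\fF\dram$ and identifies the normalization of $\oF_S\uslc$ with its semitoroidal compactification.

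Finally, to obtain the statement about the Coxeter reflection fan, I would identify the walls found above with mirror hyperplanes of reflections in $(-2)$-vectors of the hyperbolic lattice governing the cusp (the monodromy cone in its positive cone). Each wall-crossing should correspond to the appearance of a new $(-1)$- or $(-2)$-curve in the central fiber, that is, to a reflection, so that the Weyl chamber decomposition --- the Coxeter reflection fan --- refines $\fF\dram$ and the associated toroidal compactification dominates the normalization of $\oF_S\uslc$. The main obstacle is twofold. First, carrying out the MMP computation of the stable limit uniformly across all $50$ lattices $S$, including the delicate book-keeping of how $C_g$ sits in the central fiber and which components it stabilizes, is the principal technical burden. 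Second, I anticipate that in the two exceptional cases the reflection group fails to tile the cone by finitely many chambers --- a parabolic or non-finite-covolume Coxeter subdiagram --- so that a genuine, non-polyhedral wall of $\fF\dram$ appears and no Coxeter reflection fan can dominate the compactification; these would require a direct analysis of the integral-affine degenerations along the offending cusp.
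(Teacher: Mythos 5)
Your proposal has the right skeleton: equivariant Kulikov models, stable limits computed by the relative log MMP, cones of the semifan cut out by constancy of the combinatorial type of the stable model, Coxeter chambers refining $\fF\dram$, and infinite covolume of the reflection group as the mechanism behind the exceptional cases. Indeed the two exceptions are $S=(2,2,1)$ and $(3,3,1)$, whose mirror lattices $(18,2,1)$ and $(17,3,1)$ are precisely the ones where $\wref$ fails to have finite covolume (Theorem~\ref{thm:finite-covolume}), so your anticipation of the dichotomy is correct. But there is a genuine gap at the core of the argument. Knowing that the stable limit of every one-parameter degeneration is constant on the relative interior of each cone does \emph{not} identify the normalization of $\oF_S\uslc$ with the semitoroidal compactification: the valuative criterion needs a morphism to act on, and a cone-by-cone matching of limit points is only a set-theoretic statement. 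To upgrade it one must produce an actual family of stable pairs over (a toroidal refinement of) the candidate compactification, use the universal property of the KSBA functor to get a morphism to $\oF_S\uslc$, descend it through the coarsening, and finish with Zariski's main theorem. Moreover, your wall-and-chamber argument tacitly assumes that the stable limit depends only on the cusp and the monodromy invariant $\lambda$, and not on the choice of degenerating family or Kulikov model; for the divisor part this is exactly the statement that $C_g$ is a \emph{recognizable} divisor, i.e.\ that its limit is canonically determined by the limit surface alone. The paper does not re-derive any of this machinery: it invokes the main theorems of \cite{alexeev2021compact} (KSBA compactifications by recognizable divisors are semitoroidal, with cones characterized by constancy of stable models of one-parameter degenerations) and \cite{alexeev2021nonsymplectic} (the genus $g\ge2$ component of the fixed locus is recognizable). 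Without citing these results, your plan would have to prove them, which is the content of two separate papers.

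A secondary but substantive omission: what the paper actually spends its length on is the step you label ``book-keeping,'' namely producing, for every $0$-cusp $\oT$ and every $\lambda$, a Kulikov model on which the involution is biregular (not merely birational) and whose stable model is computable. This is not done by abstract semistable reduction but by mirror symmetry: $\lambda$ is reinterpreted as a nef line bundle on a mirror K3 surface with involution, Theorem~\ref{thm:special-fibration} produces an integral-affine sphere $B=P\cup P\uopp$ glued from two Symington polytopes of the quotient $\hY$, and reading $B$ back as a Type III surface yields the equivariant Kulikov model whose stable model is of pumpkin or smashed-pumpkin type (Theorem~\ref{thm:stable-models}). It is this explicit construction that identifies the semifan as the generalized Coxeter (Wythoff) semifan $\fF\dram$, which is a strict coarsening of the $(-2)$-reflection fan $\fF_2$ --- so while your claim that the reflection fan refines $\fF\dram$ is correct, not every reflection hyperplane is a wall of $\fF\dram$; only the ``relevant'' equatorial roots produce walls, and crossing the others leaves the stable model unchanged.
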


The precise statement is given in Theorem~\ref{thm:compact-moduli}. 
We reduce Problem~\ref{prob:describe-compactification} to the
following problem which we solve \emph{for all $75$ cases:}

\begin{problem}\label{prob:kulikov-models}
  For each cusp of the Baily-Borel compactification $\oF_S\ubb$ 
  and each one-parameter degeneration $C\setminus 0\to F_S$
  approaching that cusp, describe explicitly a Kulikov
  model $X\to (C,0)$ adapted to the ramification divisor~$R$.
\end{problem}

A {\it Kulikov model} is a $K$-trivial SNC model $X\to (C,0)$ with smooth total
space, and it is {\it adapted to $R$} if $R_t\subset X_t$ for $t\in C\setminus 0$ extends to
$X_0$ as a divisor not containing singular strata, and the limit of any component
of positive genus is nef.

The answer to the last problem can be read off
directly from the Coxeter diagrams of the reflection groups of the
hyperbolic lattices $\oT$ appearing at the $0$-cusps of $F_S$. The
reason for this is quite simple. The main tool  we use is the
mirror symmetry between degenerations in the $S$-family and nef line
bundles on mirror K3 surfaces $\hX$ with Picard lattice
$\hS=\oT$. The nef cone of a K3 surface depends on, and is described
by the reflection group of its Picard lattice.

\medskip

The structure of the paper is as follows.  In Section~\ref{sec:2-el}
we recall the theory of K3 surfaces with a nonsymplectic involution,
of $2$-elementary lattices, and the general facts about the moduli
spaces $F_S$ of K3 surfaces with group action. We
also recall basic facts about the combinatorial (Baily-Borel,
toroidal, semitoroidal) and functorial (KSBA) compactifications of these
moduli spaces.

In Section~\ref{sec:reflection-groups} we recall Vinberg's theory of
reflection groups in hyperbolic spaces and the Coxeter-Vinberg
diagrams. We don't need the Coxeter groups for all the $75$ of the
$2$-elementary lattices but only for those that appear at the
$0$-cusps of $F_S$ for some $S$. These are the lattices with $g\ge1$,
excluding $(r,a,\delta)=(14,6,0)$. For the lattices with $g\ge2$ the Coxeter
diagrams were computed by Nikulin in \cite{alexeev2006del-pezzo}. A
few cases on the $g=1$ line were previously known. We complete the job
for the remaining lattices, the answer is given in
Figs.~\ref{fig:coxeter1} and \ref{fig:coxeter2}.

In Section~\ref{sec:Y-nef-cone} we describe K3 surfaces appearing in
the $75$ families, their quotients by the involution, and their nef
cones. The $75$ families are woven together in a web by certain
``Heegner divisor moves,'' corresponding to when one moduli
space $F_{S'}$ is a Heegner divisor in or at the boundary of $F_S$.
 We describe these moves and their
properties.

In Section~\ref{sec:cusps} we completely describe the $0$- and
$1$-cusps of $F_S$ together with the incidence relations between them.
In particular, the $0$-cusps of $F_S$ are described by three kinds of
``mirror moves'' on the nodes of \fign, making it into a directed graph
in which every vertex has in- and out-degrees equal to $0$, $1$, $2$, or
$3$.

In Section~\ref{sec:kulikov+ias} we discuss the theory of integral-affine
spheres ($\ias$) in relation to Kulikov models.
It is well known that the dual graph $\Gamma(X_0)$ of a Type III
Kulikov central fiber $X_0=\cup V_i$ is a triangulation of $S^2$.
In simple terms, the integral affine sphere $B=\Gamma(X_0)$ is an
economical description for $X_0$. The singularities of the $\ias$
describe the nontoric components $V_i$. The same integral-affine
structures describe a Lagrangian torus fibration $\mu\colon \hX\to B$ on a
mirror K3 surface $\hX$ with a symplectic form, e.g. given by an ample
line bundle.

In Section~\ref{sec:mirror-with-involution} we study this mirror
correspondence specifically for K3 surfaces with a nonsymplectic
involution. To understand the K\"ahler geometry of $X$, encoded by
the divisor $R$, we must understand the complex geometry of $\hX$.
The key is a special degeneration of $\hX$ into two copies $\hX_0 = \hY\cup \hY$
of the surface $\hY=\hX/\hiota$, the quotient of $\hX$ by the mirror
involution. This applies to all the cases except for the Enriques
case, where the answer is even more interesting: $\hY$ is an Halphen
pencil, and the gluing is by a $2$-torsion
twist on the multiple fiber.

The resulting $\ias$ is of a particularly simple kind:
$B=P\cup P\uopp$, the union of two isomorphic ``hemispheres'', 
Symington polytopes for $\hY$ glued along a circular equator representing
an anticanonical boundary of $\hY$.
 We prove that the mirror correspondence exchanges the
$(\pm1)$-eigenspaces on the lattices $H^2(X,\bZ)$, $H^2(\hX,\bZ)$
modulo the vanishing cycle, resp. the fiber class of the Lagrangian
torus fibration.

In Section~\ref{sec:kulikov-models-all}, for each lattice $\oT$
appearing at a $0$-cusp of $F_S$, and each monodromy invariant
$\lambda\in\oT$ encoding the Picard-Lefschetz transform of a one-parameter
degeneration, we construct explicitly the families of Kulikov surfaces with involution
that appear, up to taking some multiple of $\lambda$.

In Section~\ref{sec:compact-moduli} we first define the semifans appearing in
Theorem~\ref{thm:main-compactifications}. Next, we compute the
stable models for the Kulikov surfaces of
Section~\ref{sec:kulikov-models-all}. Finally, we prove 
Theorem \ref{thm:main-compactifications}
by applying the general theory of \cite{alexeev2021compact}
and \cite{alexeev2021nonsymplectic}.

\begin{acknowledgements}
  The authors were partially supported by the NSF grants DMS-2201222
  and DMS-2201221 respectively.
\end{acknowledgements}

\section{K3 surfaces with involution and $2$-elementary lattices} 
\label{sec:2-el}

\subsection{K3 surfaces with a nonsymplectic involution}
\label{sec:k3-with-invo}

Let $X$ be a smooth projective complex K3 surface. An involution $\iota$ of
$X$ is called \emph{nonsymplectic} if it acts as
$\iota^*\omega_X = -\omega_X$ on a non-vanishing holomorphic two-form
$\omega_X\in H^{2,0}(X)$. It is well known that the quotient $Y=X/\iota$ is
either a rational or Enriques surface and that $X$ is algebraic.

The main invariant of the involution is the $(+1)$ eigenspace
$S=H^2(X,\bZ)^+$, a hyperbolic lattice of some rank $r$. Its
orthogonal complement $T=S^\perp=H^2(X,\bZ)^-$ in $H^2(X,\bZ)$ is a
lattice of signature $(2,20-r)$.
There is a canonical isomorphism $A_S=A_T$ between the discriminant
lattices $A_S=S^*/S$ and $A_T=T^*/T$.  The involution acts by
multiplication by $\pm1$ on $A_S$ and $A_T$ respectively. This implies
that $A_S=\bZ_2^a$ for some $a\ge0$.
Such lattices are called {\it $2$-elementary}.

Conversely, if $S\subset\lk={\rm II}_{3,19}=U^{\oplus 3}\oplus E_8^{\oplus 2}$
is a primitive $2$-elementary lattice and
$T=S^\perp$ 
then the involution $\rho$ of $\lk\otimes\bQ$ acting as $\pm1$ on
$S$ and $T=S^\perp$ respectively is an involution of $\lk$. If
$X$ is a K3 surface whose Picard lattice $S_X$ equals $S$ via some
marking $H^2(X,\bZ)\to\lk$ then by the Torelli theorem,
there exists a unique involution $\iota$ of $X$ such that $\iota^*=\rho$. 

An indefinite even
$2$-elementary lattice is uniquely determined by its signature and the
triple $(r,a,\delta)=({\rm rk}_{\bZ} S,\,{\rm rk}_{\bZ_2}A_S, \,\delta)$,
where $\delta\in\{0,1\}$ is an additional
invariant explained in the next section.
The $2$-elementary hyperbolic lattices admitting a primitive embedding
into $\lk$ were classified by Nikulin in
\cite[3.6.2]{nikulin1979integer-symmetric}. There are 75 lattices and
for each of them, an embedding into $\lk$ is unique up to $O(\lk)$.
The result is given in Fig.~\ref{fig:discforms}.

\begin{figure}[ht]
  \centering
  \includegraphics[width=5 in]{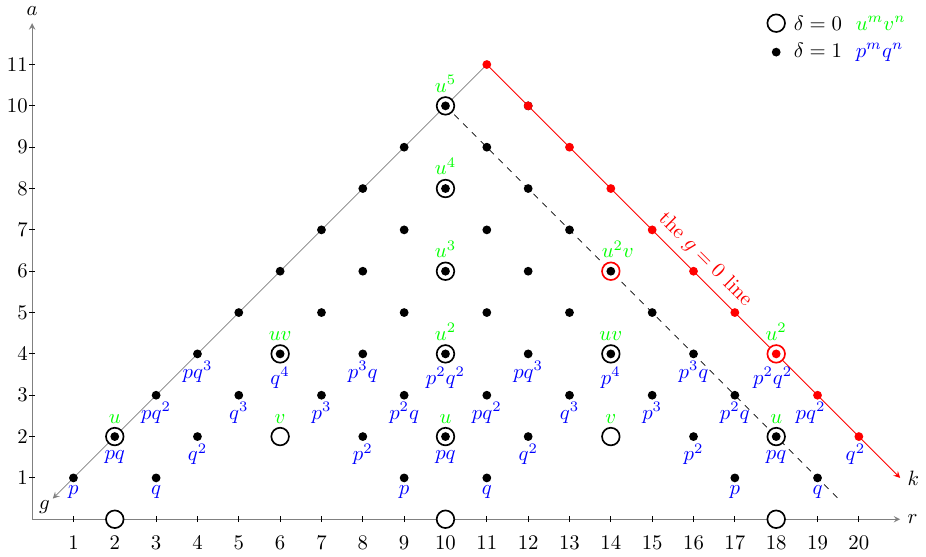}
  \caption{Hyperbolic $2$-elementary K3 lattices $(r,a,\delta)$}
  \label{fig:discforms}
\end{figure}

The fixed locus $R=X^\iota$ of the involution and the quotient
surface $Y=X/\iota$ are smooth. Denote by $k+1$ the number of the
irreducible components of $R$ and by $g$ the sum of their genera
(excluding the special Enriques case). Then
\begin{equation}\label{eq:g-k}
  g = 11 - \tfrac12 (r+a) \text{ and } k = \tfrac12 (r-a)
\end{equation}
and the triple $(g,k,\delta)$ is an alternative set of invariants of
$S$. There are three cases:
\begin{enumerate}
\item $R$ is a union of a curve $C_0$ of genus $g\ge0$ and $k$
  additional curves $C_1,\dotsc,C_k$ each isomorphic to $\bP^1$.
  The surface $Y$ is rational. \vspace{1pt}
\item $R=C_0\sqcup C_1$ is the union of two elliptic curves. Then
  $(r,a,\delta)=(10,8,0)$ and $(g,k,\delta) = (2,1,0)$.
  The surface $Y$ is rational elliptic. \vspace{1pt}
\item $R=\emptyset$. Then $(r,a,\delta)=(10,10,0)$ and
  $(g,k,\delta)=(1,0,0)$. The surface $Y$ is Enriques.
\end{enumerate}

\begin{remark}\label{rem:10-8-0}
  The case $(10,8,0)$ in many ways is more similar to those on the
  $g=1$ line than to those with $g\ge2$. For example, the automorphism
  group of a K3 surface with this Picard lattice is infinite.  Its
  Coxeter diagram, given in Fig.~\ref{fig:coxeter1}, is the same as
  for $(10,10,1)$.  See also Theorem~\ref{thm:nikulin-elliptic}.
\end{remark}

In the next sections, we briefly recall the theory of $2$-elementary
lattices and two ways of constructing the moduli spaces of K3 surfaces
with a nonsymplectic involution.

\subsection{$2$-elementary lattices}
\label{sec:2-elem-lattices}

 A \emph{lattice} $H$ is a free finite
rank $\bZ$-module together with a nondegenerate $\bZ$-valued bilinear
form. It is called \emph{even} if $x^2\equiv 0\pmod {2\bZ}$ for all $x\in H$ and
\emph{odd} otherwise. The \emph{discriminant lattice} is $A_H = H^*/H$, where
$H^*\subset H_\bQ$ is the dual lattice. It comes with the \emph{discriminant
  form} $q_H\colon A_H\to \bQ/\bZ$, $q(x) = x^2$. Moreover, if $H$ is
even then $q_H$ takes well-defined values in $\bQ/2\bZ$. One also has the
associated bilinear form $b_H\colon A_H\times A_H\to \bQ/\bZ$.

If $L$ is a unimodular lattice, $S\subset L$ a primitive
(i.e. saturated) sublattice and $T=S^\perp$ then $(A_S, q_S) = (A_T,
-q_T)$ in a canonical way.

\begin{num}\label{{num:invo-2el}}

If $S$ and $T$ are, respectively, the $(\pm 1)$-eigenspaces
of an involution $\iota$ on $L$ then $\iota$ acts as identity on $A_S$
and as $(-1)$ on $A_T$. Therefore, $A_S\simeq\bZ_2^a$ for some
$a$. Lattices $H$ with $A_H\simeq\bZ_2^a$ are called
\emph{$2$-elementary}. Thus, $S$ and $T$ are $2$-elementary in this
case.
\end{num}

\begin{definition}
  We define an additional invariant, \emph{coparity} $\delta_H$
  as follows: $\delta=0$ if for all $x\in A_H$ one has
  $q_H(x) \equiv 0\pmod\bZ$ and $\delta=1$ otherwise. We will call
  lattices with $\delta_H=0$ \emph{co-even} and lattices with
  $\delta_H=1$ \emph{co-odd}.
\end{definition}

This notation is explained by the following. Recall that for any
lattice $K$, $K(n)$ denotes the lattice with the bilinear product
scaled by $n$, i.e. $(x,y)_{K(n)} = n\cdot (x,y)_K$.

\begin{definition}\label{def:doubled-dual}
  For a $2$-elementary (not necessarily even) lattice $H$, the {\it doubled
  dual} is $H^\dag = H^*(2)$. The assignment $H\to H^\dag$ is an involution since
  $$(H^\dag)^\dag = \left(H^*(2) \right)^* (2) = H(\tfrac12)(2) = H.$$
\end{definition}

The doubled dual operation interchanges the parity and co-parity:

\begin{lemma}\label{lem:doubled-dual} 
  Let $H$ be a $2$-elementary lattice with invariants
  $(r,a,\delta)$. Then $H^\dag$ is a $2$-elementary lattice of the
  same signature with 
  invariants $(r,r-a,\delta^\dag)$ and the discriminant group is
  \begin{displaymath}\label{eq:discr-doubled-dual}
    A_{H^\dag} = (H^\dag)^* / H^\dag = H(\tfrac12) / H^*(2).
  \end{displaymath}
  Moreover, $\delta=0$ (resp.~$\delta=1$) iff $H^\dag$ is even (resp. odd),
  and $H$ is even (resp. odd) iff $\delta^\dag=0$ (resp.~$\delta^\dag=1$).
\end{lemma}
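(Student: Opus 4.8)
The plan is to carry out everything inside the common rational span $H_\bQ$, where $H$, $H^*$, and $\tfrac12 H$ all sit with the chain of inclusions
\[
2H^*\subseteq H\subseteq H^*\subseteq\tfrac12 H.
\]
The first inclusion holds because $A_H=H^*/H\simeq\bZ_2^a$ is killed by $2$, and the last is equivalent to it. The signature and rank statements are immediate: passing to the dual and rescaling the form by a positive integer change neither, so $H^\dag=H^*(2)$ has signature equal to that of $H$ and rank $r$.

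For the discriminant group I would use the elementary identity $(K(n))^*=K^*(\tfrac1n)$ to compute $(H^\dag)^*=\bigl(H^*(2)\bigr)^*=H(\tfrac12)$. Realized as concrete subsets of $H_\bQ$, the lattice $(H^\dag)^*$ is the group $\tfrac12 H$ while $H^\dag$ is the group $H^*$, and $H^*\subseteq\tfrac12 H$ by the chain above; hence
\[
A_{H^\dag}=(H^\dag)^*/H^\dag=H(\tfrac12)/H^*(2)=\tfrac12 H/H^*.
\]
To read off the structure I would factor using $H\subseteq H^*\subseteq\tfrac12 H$, writing $\tfrac12 H/H^*=(\tfrac12 H/H)\big/(H^*/H)$. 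Here $\tfrac12 H/H\simeq\bZ_2^{\,r}$ and $H^*/H=A_H\simeq\bZ_2^{\,a}$, so the quotient is an elementary abelian $2$-group of order $2^{r-a}$, i.e.\ $\simeq\bZ_2^{\,r-a}$. Thus $H^\dag$ is $2$-elementary with $a$-invariant $r-a$.

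The parity statements rest on a single computation: $H^\dag=H^*(2)$ is even iff $(v,v)_{H^\dag}=2(v,v)_H\in2\bZ$ for every $v\in H^*$, that is, iff $(v,v)_H\in\bZ$ for every $v\in H^*$. On the other side, $\delta_H=0$ means exactly $q_H(x)\equiv0\pmod\bZ$ for all $x\in A_H$, i.e.\ $(v,v)_H\in\bZ$ for every representative $v\in H^*$; this value is well-defined mod $\bZ$ on $A_H$ because $(v,h)_H\in\bZ$ and $(h,h)_H\in\bZ$ for $h\in H$. These two conditions are identical, so $\delta_H=0\iff H^\dag$ even and $\delta_H=1\iff H^\dag$ odd. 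Applying the same equivalence to $H^\dag$ in place of $H$ and invoking the involutivity $(H^\dag)^\dag=H$ from Definition~\ref{def:doubled-dual} then gives $H$ even $\iff\delta^\dag=0$ and $H$ odd $\iff\delta^\dag=1$, completing the invariants $(r,r-a,\delta^\dag)$.

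I expect the only genuine care to be bookkeeping: tracking the scalings so that $(H^\dag)^*$ and $H^\dag$ are correctly identified with the concrete lattices $\tfrac12 H$ and $H^*$ inside $H_\bQ$ (making the displayed quotient literally $\tfrac12 H/H^*$), and verifying that the $\pmod\bZ$ condition defining $\delta$ matches the evenness condition on the rescaled dual. Once these identifications are pinned down, the remaining content is a short index count and the formal application of the $(H^\dag)^\dag=H$ symmetry.
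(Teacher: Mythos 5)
Your proof is correct and takes essentially the same route as the paper's: the inclusion chain $2H^*\subseteq H\subseteq H^*\subseteq\tfrac12 H$ inside $H_\bQ$, the identification $A_{H^\dag}=\tfrac12 H/H^*$ with the index count giving $\bZ_2^{r-a}$, the scaling computation $(x,x)_{H^*}=\tfrac12(x,x)_{H^\dag}$ for the coparity claims, and the involution $(H^\dag)^\dag=H$ for the final statement. The only step the paper makes explicit that you leave implicit is the one-line verification that $H^\dag$ is actually $\bZ$-valued (for $x,y\in H^*$ one has $(x,y)_{H^\dag}=2(x,y)_H=(2x,y)_H\in\bZ$ since $2x\in H$), which follows at once from your first inclusion.
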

\begin{proof}
  For $x,y\in H^*$ one has $2x\in H$ so
  $(x,y)_{H^\dag} = 2(x,y) = (2x,y)\in\bZ$. So $H^\dag$ is indeed a
  $\bZ$-lattice. The equation for the discriminant group is immediate.
  Since $H$ is $2$-elementary, one has
  \begin{displaymath}
    H\subset 2H^*\subset \tfrac12 H    \implies
    2H^\dag = 2H^*(2) \subset \tfrac12 H(2) = H(\tfrac12),
  \end{displaymath}
  so $H^\dag$ is $2$-elementary of the same rank $r$, and the
  $\bZ_2$-rank of the discriminant group is $2r-(r+a) = r-a$. 
  For $x\in H^*$ one has $(x,x)_{H^*} = \frac12 (x,x)_{H^\dag}$, so
  $\delta_H=0$ iff $H^\dag$ is even. The last part holds by symmetry.
\end{proof}

We recall the following facts about $2$-elementary lattices proved in
\cite{nikulin1979integer-symmetric}. Any \emph{indefinite} even
$2$-elementary lattice $H$ is uniquely defined by its signature
$(n_+,n_-)$ and the invariants $(r,a,\delta)$, where $r=n_++n_-$ is
its rank, $a = \rk_{\bZ_2} A_H$ is the $\bZ_2$-rank of the discriminant
lattice, and $\delta$ is the coparity.  Moreover, the homomorphism
$O(H) \to O(q_H)$ from the isometry group to the isometry group of
$(A_H,q_H)$ is surjective. For definite $2$-elementary lattices,
the genus of the lattice is uniquely defined but there may be several
isomorphism
classes, cf. Section~\ref{sec:1-cusps}.

\begin{notation}
  Instead of writing ``a lattice $H$ of signature $(n_+,n_-)$ with
  invariants $(r,a,\delta)$'' we will simply write
  $H=(r,a,\delta)_{n_+}$. Moreover, for hyperbolic lattices, which are
  the majority of lattices in this paper, we will frequently omit the
  subscript $n_+=1$ and write simply $(r,a,\delta)$.
\end{notation}

The discriminant forms of even lattices were classified in
\cite{nikulin1979integer-symmetric}. For the even $2$-elementary
lattices they are direct sums of $p:=q_1(2)$, $q:=q_{-1}(2)$,
$u:=u(2)$ and $v:=v(2)$, which are the discriminant forms of the
lattices $\la 2\ra$, $\la -2\ra$, $U(2)$, $V(2)$:
\begin{displaymath}
  (2), \quad (-2), \quad
  \begin{pmatrix}
    0 & 2 \\ 2 & 0
  \end{pmatrix},
  \quad
  \begin{pmatrix}
    4 & 2 \\ 2 & 4
  \end{pmatrix}
\end{displaymath}
considered as lattices over the $2$-adic numbers.
Among them $u$
and $v$ are co-even, and $p$ and $q$ are co-odd. The values of $q$ in
$\bQ/2\bZ$, on $\bZ_2e^*$ and $\bZ_2e^*\oplus \bZ_2 f^*$, are \begin{align*}
&p(e^*)=\tfrac12,  && q(e^*)=-\tfrac12,   \\ 
&u(e^*)=u(f^*)=0, \,u(e^*+f^*)=1 &&v(e^*)=v(f^*)=v(e^*+f^*)=1. \end{align*}

We write the discriminant form for a direct sum of lattices multiplicatively.
The relations between the generators $p,q,u,v$ are generated by the identities
\begin{displaymath}
  u^2=v^2,\quad p^4=q^4,\quad up = (pq)p,\quad uq = (pq)q,\quad vp=q^3,\quad vq=p^3.
\end{displaymath}

The signature of a discriminant form is well defined mod $8$. For $p$,
$q$, $u$, $v$ it is $1$, $-1$, $0$, $4$ respectively.  This
makes it easy to compute the discriminant forms for all the cases. We
show some of them in \fign, enough to see the pattern. 

All of the lattices in \fign\ can be written as direct sums of 
the negative definite lattices $A_1$, $D_4$, $D_6$, $D_8$, $E_7$, $E_8$, $E_8(2)$, 
and hyperbolic lattices $\la 2\ra$, 
$U={\rm II}_{1,1}$, $U(2)$.
Their discriminant forms are as follows. For the co-even ones
$q(U)= q(E_8)=1$, $q(U(2))=q(D_8)=u$, $q(D_4)=v$,
$q(E_8(2))=u^4$; for the co-odd ones $q(A_1)=q$, $q(E_7)=q(\la 2\ra)=p$,
$q(D_6)=p^2$.

\subsection{Moduli of K3 surfaces with an involution}
\label{sec:moduli-with-involution}
The K3 surfaces with a nonsymplectic involution corresponding to a
given $2$-elementary lattice $S$ come with a natural moduli space. One
way to approach it is using the moduli of $S$-polarized K3 surfaces
following \cite{dolgachev1996mirror-symmetry}, as in
\cite{dolgachev2007moduli-of-k3}. The construction is a little delicate. 
Another, more direct approach applies to K3 surfaces with any finite
automorphism group that is not totally symplectic, see
\cite[Sec.~2A]{alexeev2021nonsymplectic}.

Fix an involution $\rho$ of $\lk$ with the $(\pm 1)$ eigenspaces $S$ and
$T$. A {\it $\rho$-marking} of a K3 surface with an involution $\iota$ is an
isometry $\phi\colon H^2(X,\bZ)\to\lk$ such that $\iota^* = \phi\inv
\circ \rho \circ \phi$.  Let  $\bD_S := \bP\{x\in T\otimes\bC \mid
 x\cdot x=0, \ x\cdot\bar x > 0 \}$ be the {\it period domain}.
 Then $\rho$-marked K3 surfaces with involution have a period
 $\phi(\bC\omega_X)\in \bD_S$.

One defines the discriminant
locus $\Delta = (\cup_\delta \delta^\perp)\cap\bD_S$, with $\delta$
ranging over the $(-2)$-vectors in $S$. The $\rho$-markings of
a K3 surface with involution are a torsor over
\begin{displaymath}
  \Gamma_\rho := \{ \gamma\in O(\lk) \mid \gamma\circ\rho = \rho\circ\gamma \}.
\end{displaymath}
Then $F_S=(\bD_S\setminus\Delta) / \Gamma_\rho$ is the coarse space
of K3 surfaces that admit a $\rho$-marking.

Recall that for the $2$-elementary indefinite lattices $S$ and $T$ the
homomorphisms $O(S)\to O(A_S,q_S)$ and $O(T)\to O(A_T,q_T)$ are
surjective and one of course has $O(A_S,q_S) = O(A_T,q_T)$. Thus,
$\bD_S/\Gamma_\rho=\bD_S/O(T)$ is the quotient
by the full group of isometries of $T$, and $F_S$ is the
complement of finitely many divisors in it.

Note that for the surfaces parameterized by $F_S$ the Picard
lattice $S_X$ could be bigger than $S$ but the $(+1)$-eigenspace
$S_X^+$ can be identified with $S$.

\subsection{Baily-Borel, toroidal, and semitoroidal compactifications}
\label{sec:bb-etc-compactifications}
This material is well known, so we refer to \cite{ash1975smooth-compactifications,
  looijenga2003compactifications-defined2} for details.
Let $\bD=\bD_S$ as above, and let
$\Gamma\subset O(T)$ be a finite index subgroup.
The {\it Baily-Borel compactification}
$$\bD/\Gamma\hookrightarrow \overline{\bD/\Gamma}\ubb$$
is a projective variety whose boundary
consists of finitely many points ($0$-cusps) and modular curves
($1$-cusps). The ``Type III" $0$-cusps (resp. ``Type II" $1$-cusps) are in a bijection with
$\Gamma$-orbits of primitive isotropic lines $I\subset T$ (resp. 
primitive isotropic planes $J\subset T$). The ``Type" terminology arises
from the Kulikov-Persson-Pinkham classification of K3 degenerations
\cite{kulikov1977degenerations-of-k3-surfaces,
  persson1981degeneration-of-surfaces}.

A {\it toroidal compactification} $\overline{\bD/\Gamma}^\fF$ is a
combinatorially defined normal variety specified by the data
$\fF=\{\fF_I\}$ of a compatible system of admissible fans for each
cusp. For a Type IV domain, the data for the $1$-cusps is trivial, so
the only important fans are for the $0$-cusps and they are always
compatible. The fan $\mathfrak{F}_I$ is a rational polyhedral
decomposition of the rational closure $\ocC_{I,\bQ}$ of the positive
cone $\cC_I\subset I^\perp/I\otimes \bR$. It is required to satisfy
the usual fan axioms, and additionally be $\Gamma$-invariant
with only finitely many orbits of cones.

A semitoroidal (or semitoric) compactification
$\overline{\bD/\Gamma}^\fF$ of Looijenga 
is a generalization in which the cones of $\fF_I$ are locally polyhedral,
but not necessarily finitely generated. The data for the $1$-cusps
and the compatibility condition may be nontrivial.
By \cite[Thm.~5.14]{alexeev2021compact},
a semitoroidal compactification is the same as a normal
compactification which may be sandwiched between the Baily-Borel and a
toroidal compactification.

\subsection{Stable pair compactifications}
\label{sec:ksba-compactifications}

We refer the reader to \cite{kollar2023families-of-varieties} for the
definition of slc singularities and the existence of the KSBA
compactifications of moduli spaces via KSBA stable pairs.  

In the case at hand, a {\it stable pair} $(X,\epsilon R)$ consists of a
seminormal surface $X$ with only slc singularities (in particular,
double normal crossing in codimension~$1$) with a trivial dualizing
sheaf and an ample Cartier divisor $R$ which does not
contain any log canonical centers of $X$. For $0<\epsilon\ll1$ this
pair is a KSBA stable pair, for all small enough $\epsilon$ bounded 
in terms $R^2$. For fixed $R^2$ there exists a projective moduli space
for such pairs.  For full details, see
\cite{alexeev2019stable-pair} and
\cite{alexeev2020compactifications-moduli}.

When $g\geq 2$, we denote by $\oF_S$ the closure of the pairs
$F_S=\{(\oX,\epsilon \oC_g)\}$ in the space of KSBA stable pairs.
One of the main goals of this paper is to
prove that $$(\oF_S)^\nu = \oF_S^\fF$$ 
for a particular semitoroidal compactification
for an explicit semifan $\fF=\{\fF_I\}$. Here $\nu$
denotes the normalization.

\section{Reflection groups}
\label{sec:reflection-groups}

One of the main tools in the study of K3 surfaces is reflection
groups. In this paper we apply it in two ways: to determine the nef
cones in Section~\ref{sec:nef-XY} and to describe certain toroidal
compactifications of Section~\ref{sec:bb-etc-compactifications}.

\subsection{Vinberg's theory}
\label{sec:vinberg-theory}

We refer to \cite{vinberg1973some-arithmetic,
  vinberg1972-groups-of-units} for Vinberg's theory of reflection
groups of hyperbolic lattices. We briefly describe it below.

\smallskip

Let $H$ be a hyperbolic lattice. Let $\cC$ the component of the set 
$\{ v\in H_\bR \mid v^2>0\}$, containing a fixed class $h$ with
$h^2>0$. Let $\cH=\bP\cC$ be the corresponding hyperbolic space.
A vector $v\ne0$ with $v^2=0$ in the closure of $\cC$ is a point on
the sphere at infinity of $\cH$. 

There are two kinds of closures of $\cC$, and it is always clear from the
context which one we have in mind. When $H=\Pic X$ for some surface
$X$, the nef cone $\Nef(X)$ is naturally a subset of the round cone
$\ocC = \{ v\in H_\bR \mid v^2\ge0\}$, so here we add all infinite
points of $\cH$.  When $\cC$ is used to define a (semi)fan $\fF$ for
some (semi)toroidal compactification, one considers the \emph{rational
  closure} $\ocC_{\bQ}$ instead, with only the rays $\bR_{\ge0}v$ spanned by
rational vectors $v$ added.

A \emph{reflection} in a root $\alpha\in H$ is the isometry
$w_\alpha(v) = v - \frac{2(\alpha,v)}{(\alpha,\alpha)} \alpha$.  One
must have $2\di(\alpha)\in(\alpha,\alpha) \bZ$ for it to be
well defined.
Let $W\subset O(H)$ be a group generated by some subset of
reflections. The most interesting cases are the groups
\begin{enumerate}
\item $W\dref$ generated by all reflections, and \vspace{1pt}
\item $W_2$ generated by the $(-2)$-reflections, in roots with
  $\alpha^2=-2$. 
\end{enumerate}

\begin{definition}\label{def:chambers}
  We denote by $\ch$ the fundamental chamber for $W$. Equivalently,
  one can treat it as the (possibly infinite) polyhedron $\bP\ch \subset
  \cH$. One has 
\begin{displaymath}
  \ch = \left\{ v\in\ocC \text{ or } \ocC_\bQ \mid
    (\alpha_i, v) \ge 0 
    \text{ for simple roots } \alpha_i \right\},
  \quad O(H) = W \ltimes \Sym(\ch).
\end{displaymath}
\end{definition}

The fundamental chamber is encoded in a Coxeter-Vinberg diagram
$\Gamma$. The vertices correspond to the simple roots $\alpha_i$ and the
edges show the angles between them as follows. Let
$g_{ij} = (\alpha_i,\alpha_j) / \sqrt{(\alpha_i,\alpha_i)
  (\alpha_j,\alpha_j)}$. One connects $i$ and $j$ by
\begin{itemize}
\item an $m$-tuple line if $g_{ij} = \cos\frac{\pi}{m+2}$. The
  hyperplanes $\alpha_i^\perp$, $\alpha_j^\perp$ intersect in $\cH$.
\item a thick line if $g_{ij}=1$. $\alpha_i^\perp$,
  $\alpha_j^\perp$ are parallel, meet at an infinite point of $\cH$.
\item a dotted line if $g_{ij} > 1$. $\alpha_i^\perp$,
  $\alpha_j^\perp$ do not meet in $\cH$ or its closure.
\end{itemize}

We identify a subset $V'\subset V(\Gamma)$ of vertices of $\Gamma$
with the induced subgraph $\Gamma'$. 
The faces of $\ch$ are of the form 
\begin{equation}\label{eq:ch-faces}
  F = \cap_{i\in \Gamma'} \alpha_i^\perp \cap\ch
\end{equation}
for the $\Gamma'$ which are elliptic or parabolic, i.e. corresponding
to a negative definite or negative semi-definite matrix.
This correspondence is bijective for elliptic
subdiagrams. But disjoint parabolic subdiagrams define the same ray
of~$\ch$.

The subgroup $W\subset O(H)$ has finite index iff the polyhedron
$\bP\ch$ has finite volume. One says $W$ has finite covolume.
In that case, rational vectors at infinity
correspond to maximal parabolic subdiagrams, of rank $\dim
H-1$. Otherwise, there may exist some $v$ for which the maximal parabolic
subdiagram has lower rank; for example it could be empty. 

\medskip

In a $2$-elementary even hyperbolic lattice, the roots are the
$(-2)$-vectors and the $(-4)$-vectors of divisibility $2$. In the
Coxeter diagram we denote the $(-2)$-vectors by transparent, white
vertices and the $(-4)$-vectors by filled, black vertices. 
In addition, when the hyperbolic lattice is interpreted as the Picard
lattice $S$ of a K3 surface with an involution, and the white vertices
as $(-2)$-curves on it, the double-circled vertices denote the
$(-2)$-curves which are fixed pointwise by the involution.
See Fig. \ref{fig:coxeter1} for some examples.

\medskip

For a K3 surface $X$ with $H=\Pic X$, its nef cone $\Nef(X)$ is
identified with $\fC_2\subset\ocC$,  described by the Coxeter
diagram $\Gamma_2$. This is the main object of our interest because it
appears in the Mirror Theorem~\ref{thm:EF-mirror}.

But in the most important case, when $H=S$ is a $2$-elementary lattice
lying on the $g=1$ line, the group $W_2\subset O(S)$ has infinite
index, unless $S=(19,1,1)$. Indeed, this is equivalent to
$|\Aut X|=\infty$, which holds by
\cite{nikulin1979quotient-groups}. The Coxeter diagram $\Gamma_2$ in
these cases is infinite. Working with the smaller, usually finite,
diagram $\Gamma_r$ instead is much more convenient.

For the $50$ lattices $S\ne(10,8,0)$ with $g\ge2$, $\Aut X$ is finite, and
$W_2$ has finite covolume, see Section~\ref{sec:vinberg-g>1} below.
But usually $\Gamma_2$ is enormous and $\gref$ is relatively small.
In Section~\ref{sec:vinberg-g=1} we compute Coxeter diagrams for the
lattices $S$ on the $g=1$ line and prove that for most of them $\wref$
has finite covolume and $\gref$ is finite.

\smallskip

In fact, there are many intermediate reflection groups between
$W_2$ and $\wref$:

\begin{definition}\label{def:refl-chambers}
  Let $V(\gref) = V_2 \sqcup V_4$ be the decomposition
  of the vertices of $\gref$ into the $(-2)$-roots (white) and the
  $(-4)$-roots (black).
  Consider a subset $\bB\subset V_4$ and let $\bB^c$ be the
  complement in $V(\gref)$, which therefore includes all of
  $V_2$.  We define two reflection subgroups of $\wref$:
  \begin{displaymath}
  W(\bB) = \la w_\alpha \mid \alpha\in\bB \ra
  \qquad
  W\dnor(\bB^c) = \la w_{g(\alpha)} \mid g\in \wref,\ \alpha\in\bB^c  \ra
  \end{displaymath}
  The latter is the minimal \emph{normal} subgroup of $\wref$
  generated by $W(\bB^c)$.
  Let $\fC_{W\dnor(\bB^c)}$ be the fundamental chamber for the action of
  $W\dnor(\bB^c)$ on $\ocC$.
\end{definition}

Two special cases are: $W(\emptyset) = \wref$ and $W(V_4) =
W_2$. 

\begin{lemma}\label{lem:refl-chambers}
  One has $\wref = W(\bB) \ltimes W\dnor(\bB^c)$.
\end{lemma}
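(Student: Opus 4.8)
The plan is to invoke Vinberg's theory (Section~\ref{sec:vinberg-theory}) to present $\wref$ as a Coxeter group on the generators $\{w_\alpha : \alpha\in V(\gref)\}$, with relations $w_\alpha^2=1$ and $(w_\alpha w_\beta)^{p_{\alpha\beta}}=1$ read off the Coxeter--Vinberg diagram, where $g_{\alpha\beta}=\cos(\pi/p_{\alpha\beta})$ for an adjacent pair and $p_{\alpha\beta}=\infty$ (no relation) when the walls $\alpha^\perp$, $\beta^\perp$ do not meet in $\cH$. Writing $N:=W\dnor(\bB^c)$, which is normal in $\wref$ by construction, the assertion splits into the product relation $\wref = W(\bB)\cdot N$ and the triviality $W(\bB)\cap N=\{e\}$; together these give the semidirect product decomposition.

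The product relation is formal. By definition $N$ is the normal closure in $\wref$ of $W(\bB^c)=\langle w_\beta : \beta\in\bB^c\rangle$, hence of the generating set $\{w_\beta : \beta\in\bB^c\}$. In $\wref/N$ every such $w_\beta$ dies, so $\wref/N$ is generated by the images of $\{w_\alpha : \alpha\in\bB\}$, which is exactly the statement $\wref = W(\bB)\cdot N$. For the intersection I would construct a retraction, i.e.\ a homomorphism $\pi\colon\wref\to W(\bB)$ with $\pi(w_\alpha)=w_\alpha$ for $\alpha\in\bB$ and $\pi(w_\beta)=1$ for $\beta\in\bB^c$. Granting that $\pi$ is well defined, $\ker\pi$ is a normal subgroup containing all $w_\beta$ with $\beta\in\bB^c$, so $N\subseteq\ker\pi$; since $\pi$ is the identity on $W(\bB)$, any $w\in W(\bB)\cap N$ satisfies $w=\pi(w)=1$. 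Combined with the product relation this yields $\wref=W(\bB)\ltimes N$. (Geometrically this is the statement that $W(\bB)$ permutes simply transitively the $\wref$-chambers inside $\fC_{W\dnor(\bB^c)}$, which is the reassuring shadow of the algebraic claim.)

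It remains to check that $\pi$ respects the relations, which reduces to showing $p_{\alpha\beta}$ is \emph{even} for every pair $\alpha\in\bB$, $\beta\in\bB^c$ joined by a finite-label edge: the relation $(w_\alpha w_\beta)^{p_{\alpha\beta}}=1$ then maps to $w_\alpha^{p_{\alpha\beta}}=1$, valid iff $p_{\alpha\beta}$ is even, while relations internal to $\bB$ or to $\bB^c$ map to valid relations automatically and $p_{\alpha\beta}=\infty$ imposes nothing. Here the hypothesis $\bB\subseteq V_4$ is essential: each $\alpha\in\bB$ is a $(-4)$-root of divisibility $2$, so $(\alpha,\gamma)\in2\bZ$ for every root $\gamma$. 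If $\beta\in\bB^c$ is a $(-2)$-root, then $g_{\alpha\beta}=(\alpha,\beta)/(2\sqrt2)\in\{0,1/\sqrt2\}$ for an adjacent pair, giving $p_{\alpha\beta}\in\{2,4\}$, which is even. If $\beta\in\bB^c$ is itself a $(-4)$-root, then $g_{\alpha\beta}=(\alpha,\beta)/4\in\{0,\tfrac12\}$, giving $p_{\alpha\beta}\in\{2,3\}$.

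The single value breaking evenness is $p_{\alpha\beta}=3$ — a single line joining two black vertices with $(\alpha,\beta)=2$ — and this is the main obstacle, since there is no lattice-theoretic reason forbidding such a pair of $(-4)$-roots in the abstract. I expect to rule it out directly from the explicit Coxeter--Vinberg diagrams of Figs.~\ref{fig:coxeter1}--\ref{fig:coxeter2}: among the simple roots of these lattices no two $(-4)$-roots are joined by a single line (they are orthogonal or separated by thick or dotted edges), so every cross-edge between $\bB$ and $\bB^c$ carries an even or infinite label and $\pi$ is well defined. Because $\bB\subseteq V_4$ is arbitrary, this property is exactly what is needed for the decomposition to hold for all $\bB$ simultaneously, and it is the only point where the specific geometry of the diagrams, rather than formal Coxeter-group theory, enters.
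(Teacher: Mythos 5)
Your route is genuinely different from the paper's. The paper proves the lemma in one line, by citing the Proposition on p.~2 of Vinberg's \emph{The two most algebraic K3 surfaces} (decomposition of a reflection group with respect to the normal subgroup generated by a conjugation-invariant set of roots), adding only that it applies because the roots in $V_4$ have divisibility~$2$. You instead argue from the Coxeter presentation of $\wref$ and construct a retraction $\wref\to W(\bB)$. That framework is sound, and you isolate exactly the right condition: every edge joining $\bB$ to $\bB^c$ must carry an even or infinite Coxeter label. Your handling of the mixed pairs --- a $(-4)$-root of divisibility $2$ against a $(-2)$-root forces $g_{\alpha\beta}\in\{0,1/\sqrt2\}$ or $g_{\alpha\beta}>1$, hence labels $2$, $4$ or $\infty$ --- is correct, and it is precisely the content of the paper's parenthetical remark about divisibility.

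The black--black case, however, is a genuine gap, and your proposed fix is not available: single lines between two $(-4)$-roots \emph{do} occur among the simple roots of lattices relevant to this paper. The full reflection group of the $D_4$ lattice is $W(F_4)$: its $(-2)$-vectors and its $(-4)$-vectors of divisibility~$2$ together form an $F_4$ root system, whose simple system is the $F_4$ diagram --- two white and two black vertices, with the two blacks joined by a single line. Consequently, for example, $\gref$ of $(6,2,0)=U\oplus D_4$ (a $0$-cusp lattice, e.g.\ of $F_{(14,2,0)}$) contains the affine diagram $\widetilde F_4$ as the maximal parabolic subdiagram corresponding to its $1$-cusp $(4,2,0)=D_4$ of Table~\ref{tab:neg-def}, so black--black single edges really are present. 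This is also visible inside the paper: the conversion rules \eqref{eq:conversion} and Fig.~\ref{fig:conversion} list $F_4\to D_4$ and $B_3(2)\to A_1^3$, and $F_4$ and $B_3(2)$ are exactly the elliptic subdiagrams containing a single line between two $(-4)$-roots. For such an edge with $b_1\in\bB$, $b_2\in\bB^c$, the order-$3$ relation makes $w_{b_1}$ and $w_{b_2}$ conjugate in $\langle w_{b_1},w_{b_2}\rangle$, so $w_{b_1}\in W(\bB)\cap W\dnor(\bB^c)$ and no homomorphism fixing $w_{b_1}$ while killing $w_{b_2}$ can exist; your final step is therefore false rather than merely unverified. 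What is actually needed is the hypothesis that $\bB$ contains every $(-4)$-root joined to $\bB$ by an odd-labelled edge (automatic when $\bB=V_4$, the Alexeev--Nikulin case the paper cites, and satisfied in the paper's geometric applications); note that the paper's one-line citation also elides this point, since divisibility~$2$ only rules out conjugacy between reflections in $\bB$ and $(-2)$-reflections, not conjugacy to other $(-4)$-reflections.
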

\begin{proof}
  This follows by Proposition on p.2 of
  \cite{vinberg1983two-most}, which applies because roots in $V_4$
  have divisibility~$2$. 
  See also
  \cite[Prop.~2.4.1]{alexeev2006del-pezzo} for the case
  $\bB=V_4$. 
\end{proof}

\begin{corollary}\label{cor:refl-chambers}
 $W(\bB)$ acts on $\fC_{W\dnor(\bB^c)}$ with
  the fundamental chamber~$\fcref$. In particular,
  $W(V_4)$ acts on $\fC_2$ with the fundamental chamber $\fcref$.
\end{corollary}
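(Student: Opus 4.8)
The plan is to deduce the statement from the semidirect decomposition $\wref = W(\bB)\ltimes W\dnor(\bB^c)$ of Lemma~\ref{lem:refl-chambers} by a gallery/tiling argument, exactly as one passes from a Coxeter group to a parabolic quotient. Write $N=W\dnor(\bB^c)$; thus $N\trianglelefteq\wref$ and $W(\bB)\cap N=\{1\}$. By \eqref{eq:ch-faces}, $\fcref$ is cut out inside $\ocC$ (or $\ocC_\bQ$) by the walls $\alpha_i^\perp$, $i\in V(\gref)=\bB\sqcup\bB^c$, and $\fcref\subseteq\fC_N$ since $N\subseteq\wref$. The crux is to separate these walls: a hyperplane $\alpha^\perp$ is a wall of $\fC_N$ only when $w_\alpha\in N$, since the reflections contained in a reflection group are precisely the reflections in its mirrors. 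For $\alpha\in\bB^c$ this holds, while for $\alpha\in\bB$ one has $w_\alpha\in W(\bB)\setminus\{1\}$, hence $w_\alpha\notin N$ by the trivial intersection. Therefore every $\bB$-wall of $\fcref$ lies in the interior of $\fC_N$, and a generic point of such a facet meets no $N$-mirror.

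With this in hand I would show that $W(\bB)$ acts on $\fC_N$. Reflecting $\fcref$ across a $\bB$-wall crosses no mirror of $N$, so $w_\alpha\,\fcref\subseteq\fC_N$ for $\alpha\in\bB$; iterating along a gallery gives $a\,\fcref\subseteq\fC_N$ for every $a\in W(\bB)$. Since $a$ normalizes $N$, the set $a\,\fC_N$ is again an $N$-chamber, and it shares the interior points $a\,\fcref$ with $\fC_N$, so $a\,\fC_N=\fC_N$. Thus $W(\bB)$ preserves the chamber $\fC_N$.

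It then remains to see that the translates $\{a\,\fcref : a\in W(\bB)\}$ tile $\fC_N$, which is exactly the assertion that $\fcref$ is a fundamental chamber for this action. Distinct translates have disjoint interiors because $\fcref$ is a fundamental domain for the larger group $\wref\supseteq W(\bB)$, and distinct $a$ are distinct elements of $\wref$. For covering, given an interior point $x\in\fC_N$, I pick $w=an$ with $a\in W(\bB)$, $n\in N$ and $wx\in\fcref$; then $nx=a^{-1}(wx)\in\fC_N$ is $N$-equivalent to $x$, so $nx=x$ (interior points of an $N$-fundamental chamber are alone in their orbit), whence $x=a^{-1}(wx)\in a^{-1}\fcref$. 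Hence every point of $\fC_N$ is $W(\bB)$-equivalent to a point of $\fcref$, and $\fcref$ is the fundamental chamber of $W(\bB)$ acting on $\fC_N$.

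For the ``in particular'' I take $\bB=V_4$, so $\bB^c=V_2$ and $N=W\dnor(V_2)$. As $\wref$ preserves the set of $(-2)$-roots, $W_2\trianglelefteq\wref$, giving $W\dnor(V_2)\subseteq W_2$ and $\fC_2\subseteq\fC_N$. Conversely, a $(-4)$-wall $\alpha^\perp$ is never a $(-2)$-mirror, since a common mirror would force $\beta\in\bQ\alpha$ with $\beta^2=-2$ and $\alpha^2=-4$; so crossing black walls never crosses a $(-2)$-mirror, and each $a\,\fcref$ with $a\in W(V_4)$ stays inside the single $W_2$-chamber $\fC_2$. By the tiling, $\fC_N=\bigcup_a a\,\fcref\subseteq\fC_2$, so $\fC_N=\fC_2$ and $W\dnor(V_2)=W_2$; substituting $\bB=V_4$ into the general statement yields that $W(V_4)$ acts on $\fC_2$ with fundamental chamber $\fcref$. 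The one delicate point throughout is the wall-separation of the first paragraph: it rests on $W(\bB)\cap N=\{1\}$ from Lemma~\ref{lem:refl-chambers} (which via Vinberg uses that the $(-4)$-roots of $\bB\subseteq V_4$ have divisibility $2$), together with the identification of mirrors with the reflections lying in $N$. Once that separation is secured, the tiling and the fundamental-domain property are formal.
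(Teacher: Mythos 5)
Your proof is correct and follows essentially the same route as the paper: the corollary is deduced from the semidirect decomposition $\wref = W(\bB)\ltimes W\dnor(\bB^c)$ of Lemma~\ref{lem:refl-chambers}. The paper offers no separate argument because this tiling statement is part of the cited Proposition of Vinberg (which applies since the roots in $\bB$ have divisibility~$2$), so your gallery/tiling argument --- including the wall-separation step and the identification $W\dnor(V_2)=W_2$ behind the ``in particular'' --- simply supplies the details the paper leaves implicit.
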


Any $\lambda\in\fcref$ is also a vector in $\fC_2$, so
any elliptic subdiagram of $\gref$ can be translated into an elliptic
subdiagram of $\Gamma_2$.
We note the following
useful conversion rules, see Fig.~\ref{fig:conversion}.
\begin{equation}\label{eq:conversion}
  B_n(2)\to A_1^n \quad C_3 \to A_3 \quad C_n \to D_n \quad F_4 \to D_4
\end{equation}
\begin{figure}[htbp]
  \centering
  \includegraphics{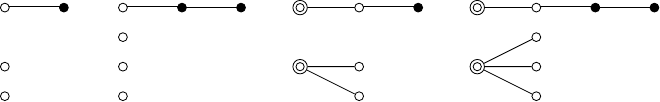}
   \caption{Conversion: $B_n(2)\to A_1^n$ ($n=2,3$), $C_3\to A_3$, $F_4\to D_4$}
  \label{fig:conversion} 
\end{figure}
The reverse direction is possible up to ``irrelevant'' walls formed by
connected diagrams consisting entirely of the $(-4)$-roots.

\subsection{Coxeter, or reflection semifan}
\label{sec:coxeter-semifan}

With the above notations:

\begin{definition}\label{def:coxeter-semifan}
  A Coxeter, or reflection semifan $\fF$ is a semifan with support on
  $\ocC_\bQ$ with the following cones: the fundamental chamber $\fC$,
  its faces, and their $W$-translates. It is a fan iff $W$ has finite
  covolume.
\end{definition}

In particular, we have the semifan $\fF_2$ for the Weyl group $W_2$
and its refinement, the semifan $\fref$ for the full reflection group
$\wref$.

For a semitoroidal compactification defined by $\fF$, the Type III
cones correspond to elliptic subdiagrams of the Coxeter diagram. If
$\fF$ is a fan then the Type II cones are the rays on the boundary,
corresponding to maximal parabolic subdiagrams of rank $\dim H-1$.

\subsection{Coxeter diagrams for lattices with $g\ge2$, excluding
  $(10,8,0)$}
\label{sec:vinberg-g>1}

For $50$ of the $75$ Picard lattices of \fign, namely those with
$g\ge2$ excluding $(10,8,0)$, the Coxeter diagrams $\gref$ were computed by
Nikulin in \cite[Table 1]{alexeev2006del-pezzo}. We recomputed and
confirmed them for this paper.

These are exactly the cases when the fixed locus $R$ of the involution
contains a curve of genus $g\ge2$. Another interpretation is that
these are the $2$-elementary Picard lattices $S_X$ for which a K3
surface has finite automorphism group, excluding the lattice
$(19,1,1)$ with $g=1$ for which the automorphism group is also finite.

\subsection{Coxeter diagrams for lattices with $g=1$ and for $(10,8,0)$}
\label{sec:vinberg-g=1}

These are the $2$-elementary lattices corresponding to K3 surfaces
with an elliptic pencil that is preserved by an infinite automorphism
group $\Aut X$, see Section~\ref{sec:k3-elliptic-pencil}.  For several
of them the Coxeter diagrams are known, e.g.  $(10,10,0)$ in
\cite{vinberg1973some-arithmetic}, $(18,2,0)$ in
\cite{vinberg1978the-groups}, $(19,1,1)$ in
\cite{kondo1989algebraic-k3}. We complete the computation in the
remaining cases.

\begin{theorem}\label{thm:finite-covolume}
  $\wref$ has finite covolume for all the lattices on the $g=1$ line
  except for $(18,2,1)$ and $(17,3,1)$. The finite Coxeter diagrams are as given in
  Figures~\ref{fig:coxeter1}, \ref{fig:coxeter2}.
  For $(18,2,1)$ and $(17,3,1)$, the Coxeter diagrams are infinite
  and are described in Section~\ref{sec:vinberg-exceptional}.
\end{theorem}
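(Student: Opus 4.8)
The plan is to compute the Coxeter diagram $\gref$ for each lattice on the $g=1$ line directly by Vinberg's algorithm. Fix a hyperbolic $2$-elementary lattice $H=S$ with invariants $(r,a,\delta)$ on the $g=1$ line, and choose a controlling vector $h_0\in H$ with $h_0^2>0$ (ideally a distinguished isotropic or low-norm vector adapted to the elliptic pencil, since these lattices split off a copy of $U$ or $U(2)$). Recall that in a $2$-elementary even lattice the roots are the $(-2)$-vectors together with the $(-4)$-vectors of divisibility~$2$. I would run Vinberg's algorithm to produce the simple roots $\alpha_1,\alpha_2,\dotsc$ in batches ordered by increasing value of the ratio $(\alpha_i,h_0)^2/|\alpha_i^2|$: at each stage one selects all new roots orthogonal to $h_0$ at the current distance whose hyperplanes bound the chamber $\ch$ already constructed on the side of $h_0$, i.e. satisfy $(\alpha_i,\alpha_j)\ge 0$ against all previously chosen simple roots. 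The output is the Coxeter--Vinberg diagram $\Gamma_r$, with edges labeled by the angles $g_{ij}$ as in the definition, white vertices for $(-2)$-roots and black vertices for $(-4)$-roots.

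The finiteness claim is the substantive part, and here I would invoke Vinberg's termination and finite-covolume criteria. Recall from the discussion after Definition~\ref{def:chambers} that $\wref$ has finite covolume iff the polyhedron $\bP\ch$ has finite volume, and that in the finite-covolume case rational points at infinity correspond to maximal parabolic subdiagrams of rank $\dim H-1=r-1$. Vinberg gives a combinatorial test: once finitely many simple roots have been found, one checks that every rank-$(r-2)$ elliptic or parabolic subdiagram extends in exactly the required way (each such ``edge'' of the chamber lies on exactly two walls, or terminates in a genuine parabolic vertex at infinity), so that the chamber is a finite-volume Coxeter polytope. If the algorithm closes up after finitely many steps and this local-finiteness condition holds at every face, then $\Gamma_r$ is finite and $\wref$ has finite covolume. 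For the $23$ lattices where this succeeds, I would exhibit the resulting finite diagram and point to Figures~\ref{fig:coxeter1} and~\ref{fig:coxeter2}; the known cases $(10,10,0)$, $(18,2,0)$, $(19,1,1)$ serve as consistency checks that the algorithm and normalization conventions are correct.

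For the two exceptional lattices $(18,2,1)$ and $(17,3,1)$, the claim is the negative one that $\wref$ has \emph{infinite} covolume, equivalently $\Gamma_r$ is infinite. Here the strategy is to show Vinberg's algorithm does not terminate, which is cleanest to do by producing an explicit obstruction rather than arguing that no finite diagram appears. The standard device is to locate an isotropic vector $v$ in $\ocC$ whose maximal parabolic subdiagram has rank strictly less than $r-1$ (possibly empty), as flagged in the remark after Definition~\ref{def:chambers}: such a ``thin'' boundary point forces $\bP\ch$ to have infinite volume. Concretely I would find a primitive isotropic $v$, analyze the roots orthogonal to it (these form the root system of $v^\perp/v$, a lattice of signature $(0,r-2)$), and show the corresponding parabolic subdiagram has deficient rank, so the chamber is noncompact in a direction of positive measure. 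The diagrams themselves are then described in Section~\ref{sec:vinberg-exceptional}.

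The main obstacle I expect is twofold. First, Vinberg's algorithm is genuinely a computation that must be carried out lattice-by-lattice, and the bookkeeping of which $(-2)$- and $(-4)$-roots at each distance bound the chamber is where errors creep in; matching the output against the three previously-known diagrams is the safeguard. Second, and more delicate, is rigorously certifying finiteness at the boundary: verifying that every maximal parabolic subdiagram has the full rank $r-1$ (for the finite cases) and conversely exhibiting the rank deficiency for $(18,2,1)$ and $(17,3,1)$ requires understanding the isotropic vectors of $H$ and their orthogonal root systems, which ties back to the $0$-cusp analysis. This is precisely where the special arithmetic of these two lattices diverges from the rest of the $g=1$ line.
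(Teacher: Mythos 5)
Your proposal is correct and follows essentially the same path as the paper: the finite-covolume cases are settled by running Vinberg's algorithm and verifying its stopping condition, and the infinite cases $(18,2,1)$ and $(17,3,1)$ are handled by exhibiting an isotropic vector whose orthogonal root system is rank-deficient --- in the paper's formulation, a splitting $S\simeq U\oplus K$ with the root sublattice of $K$ of infinite index (the lattices $A_{15}\!*\!*$ and $A_{13}A_1(2)\!*\!*$ of Theorem~\ref{thm:2-elem-negdef}). The paper's only addition is an alternative argument for $(18,2,1)$ via Nikulin's infinite-automorphism-group result and the Torelli theorem, which your obstruction renders unnecessary.
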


Note that in the lattice $(10,8,0)$ the roots generate the index~$2$
sublattice equal $(10,10,1)$, so the two lattices have the same
Coxeter diagrams. In all other cases, the roots generate the full lattice.

\begin{proof}
  The proof is a direct computation using Vinberg's algorithm
  \cite{vinberg1972-groups-of-units, vinberg1973some-arithmetic} which
  is computationally involved but straightforward. In all the cases
  except for $(18,2,1)$ and $(17,3,1)$ the algorithm satisfies
  Vinberg's stopping condition after finitely many steps.

  In the $(18,2,1)$
  case, there are no $(-4)$-vectors of divisibility $2$, so
  $\wref=W_2$.
  By \cite{nikulin1979quotient-groups,
    nikulin1981quotient-groups}   
  a K3 surface
  with this Picard lattice $S_X$ has an infinite automorphism
  group. By the Torelli theorem, this is equivalent to
  $W_2(S_X) \subset O(S_X)$ being of infinite index. Another proof, which
  also works for $(17,3,1)$ is that in both cases there exists a
  negative definite lattice $K$ such that $S\simeq U\oplus K$ and the
  root sublattice $R\subset K$ is of infinite index. These are the
  lattices $A_{15}\!*\!*$ and $A_{13}A_1(2)\!*\!*$ respectively
  of Theorem~\ref{thm:2-elem-negdef}.
  As explained in Section~\ref{sec:coxeter-semifan},
  if $\wref\subset O(S)$ is of finite index then the rays
  on the boundary of $\cH$, giving the $1$-cusps, correspond to
  maximal parabolic subdiagrams and their root sublattices have finite
  index.  
\end{proof}

\begin{figure}[hbp]
  \centering
  \includegraphics[width=5 in]{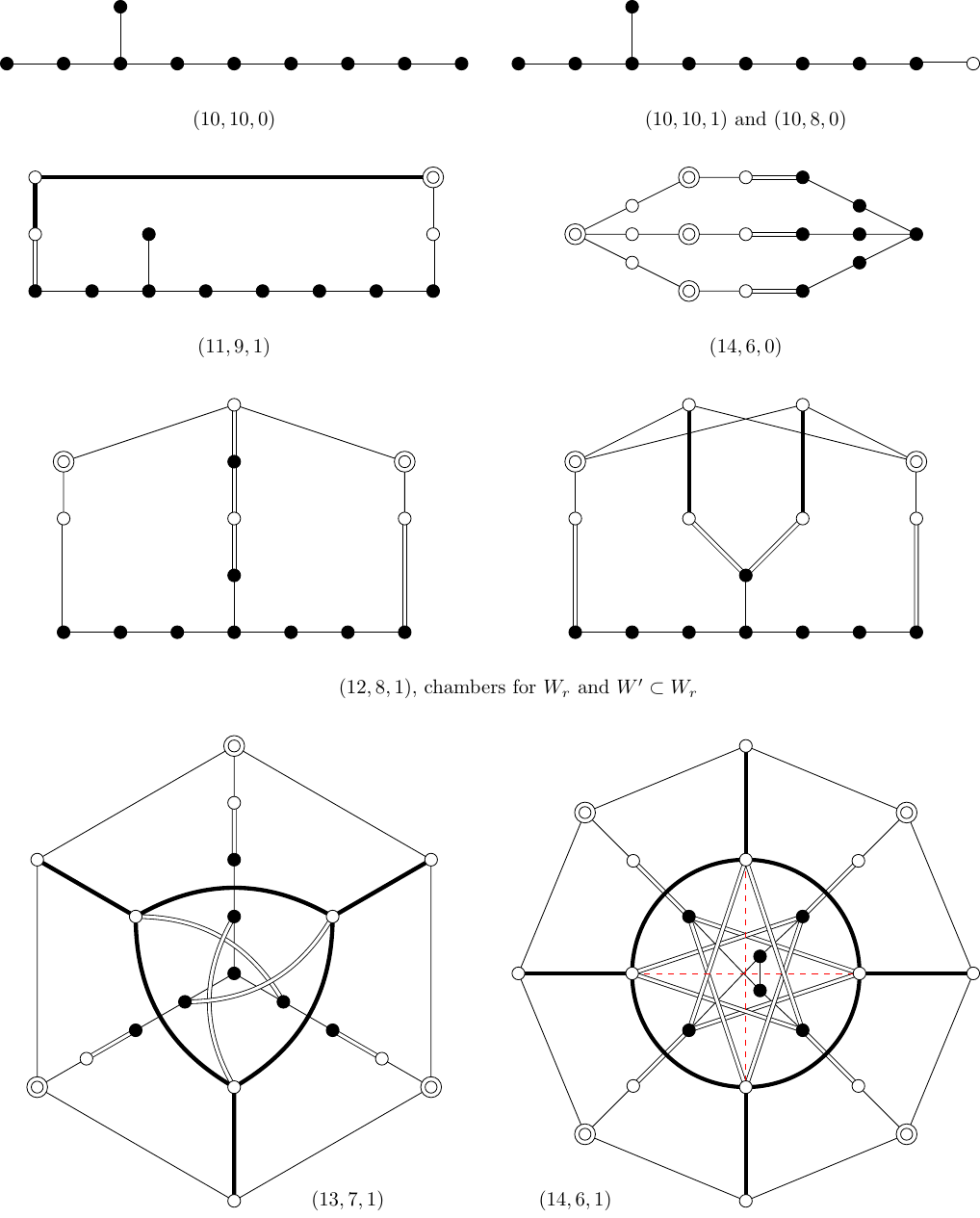}
   \caption{Coxeter diagrams for lattices on the $g=1$ line, part 1}
   \label{fig:coxeter1}
\end{figure}

\begin{figure}[htp]
  \centering
  \includegraphics[width=5 in]{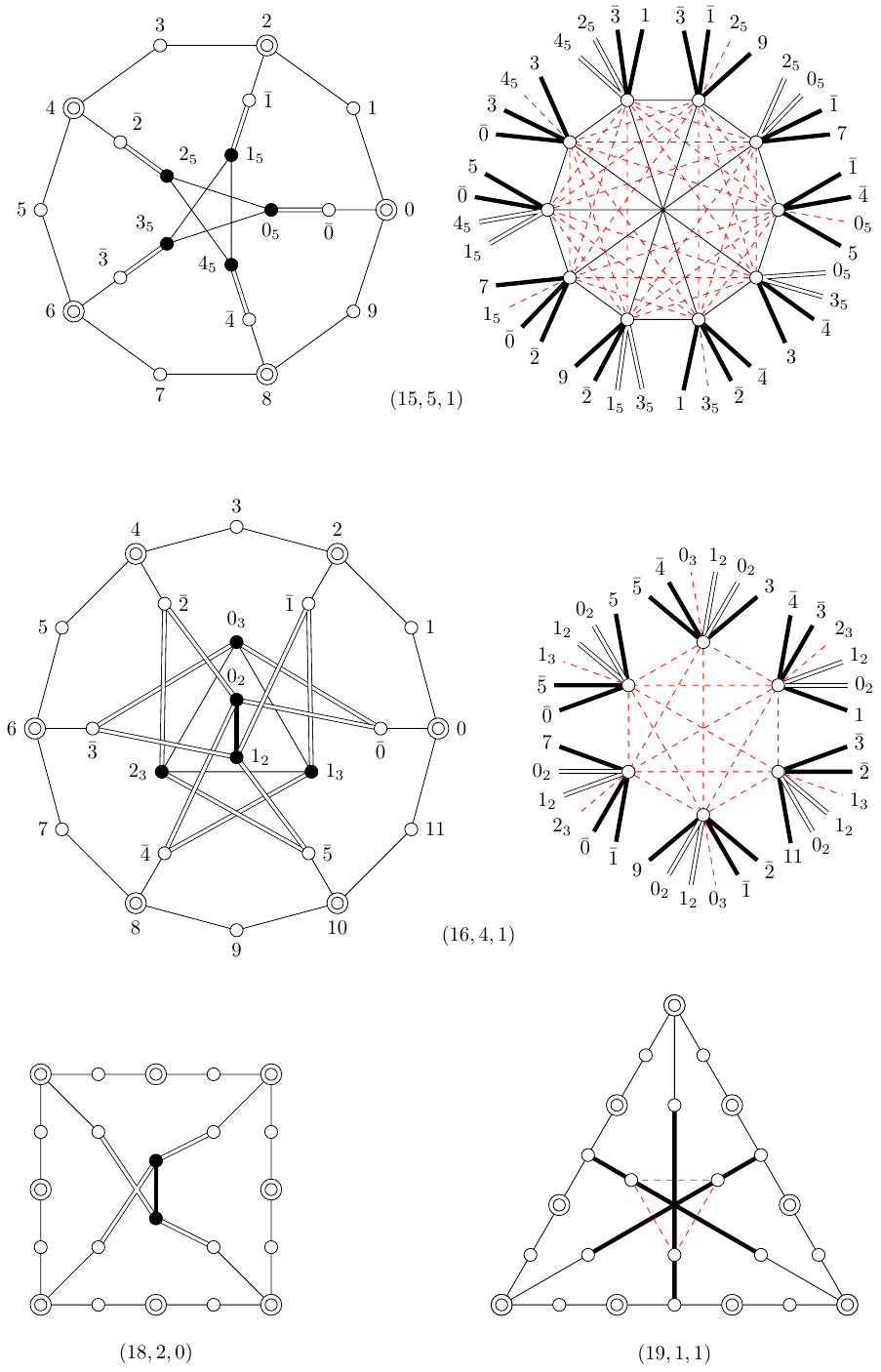}
   \caption{Coxeter diagrams for lattices on the $g=1$ line, part 2}
   \label{fig:coxeter2}
\end{figure}

\begin{remark}
  The $(12,8,1)$ diagram contains the subdiagram $\wB_3$ which
  generates the same lattice as $\wA_3$.  As in
  Lemma~\ref{lem:refl-chambers}, let $\bB$ consist of the isolated
  $(-4)$-vector, so that $W(\bB)=S_2$.  Reflecting the attached
  $(-2)$-vectors gives two other $(-2)$-vectors. One of them forms the
  $\wA_3$ diagram with the others. This gives the Coxeter diagram for
  an index~2 reflection subgroup $W'\subset \wref$, also shown in
  Fig.~\ref{fig:coxeter1}. The righthand diagram is of greater
  relevance for later constructions.
\end{remark}

The diagrams for $(15,5,1)$ and $(16,4,1)$ are quite large so drawn in
two parts. 

\begin{remark}
  The overarching reason for the two exceptional cases is that the
  lattices on the $g=1$ line are mirrors
  of the double covers
  of del Pezzo surfaces. The del Pezzo surfaces of Picard rank $n+1$
  correspond to the $E_n$ lattices. For $n\ge3$ they are root
  lattices: $A_2A_1$, $A_4$, $D_5$, $E_6$, $E_7$, $E_8$. But the $E_2$
  lattice of degree $7$ is not a root lattice, its root sublattice has
  corank~$1$. And for $n=1$ there are two types $E_1$, $E_1'$
  corresponding to $\bF_1$ and $\bF_0$. The lattice $E_1'$
  corresponding to $\bF_0$ has a finite index root sublattice
  $A_1$. But the lattice $E_1$ corresponding to $\bF_1$ has an empty
  root lattice. See more about these in
  \cite{alexeev2020compactifications-moduli}.
\end{remark}

\subsection{Coxeter diagrams for $(18,2,1)$ and $(17,3,1)$}
\label{sec:vinberg-exceptional}

Here we treat the two exceptional cases which do not have finite
covolume. The following is the result of applying Vinberg's algorithm.

\smallskip\underline{$(18,2,1)$}.
There are no 
$(-4)$-vectors of divisibility~$2$ in this lattice $S$, and since the automorphism
group of a K3 surface with this Picard lattice is infinite, there are
infinitely many $(-2)$-vectors, so the Coxeter diagram is
infinite. Rather than attempting to draw it, we describe it in words.

There is a wheel $\wA_{15}$ of $16$ $(-2)$-curves forming a singular
fiber of an elliptic fibration. In addition to them there are $8$
sections $a_0, a_2, \dotsc, a_{14}$ attached to $e_0,e_2, \dotsc a_{14}$
singly, i.e. with $(a_i,e_j) =\delta_{ij}$; and bisections $b_1, b_3,
\dotsc, b_{15}$ with $(b_i,e_j) = 2\delta_{ij}$. The bisections
have divisibility~$2$. 

Let $\wA_{15}^\perp = \la c, v \ra$ with $c$ the class of the
fiber. The Eichler transformation $E = E_{c,v}$
\cite[3.7]{scattone1987on-the-compactification-of-moduli}
is an isometry
$S$ which fixes the vertices of $\wA_{15}$. The orbits
$a_i^{n} := E^n(a_i)$ and $b_i^{n} := E^n(b_i)$ are the remaining
$(-2)$-vectors. Each orbit is isomorphic to $\bZ$.

For the typical representatives $a_0=a_0^{0}$ and $b_1=b_1^{0}$ we list
the vectors with intersections $0,2$. All the other $a_i^{n}$ and
$b_i^{n}$ have intersections $>2$ and give dashed edges in the
Coxeter diagram. 

For $a_0$ the $(-2)$-vectors outside of the $\wA_{15}$ cycle with
intersections $0,2$ are: 

\begin{enumerate}
  \setcounter{enumi}{-1}
\item $a_2^{0}$, $a_4^{1}$, $a_6^{1}$, $a_{10}^{2}$,  $a_{12}^{2}$, $a_{14}^{3}$
  and $b_1^{0}$, $b_3^{0}$, $b_{13}^{2}$, $b_{15}^{2}$. 
  \setcounter{enumi}{1}
\item $a_2^{1}$, $a_8^{1}$, $a_8^{2}$, $a_{14}^{2}$ and 
  $b_7^{1}$, $b_9^{1}$.  
\end{enumerate}

For $b_1$ the $(-2)$-vectors outside of the $\wA_{15}$ cycle with
intersections $0,2$ are: 

\begin{enumerate}
  \setcounter{enumi}{-1}
\item $a_0^0$, $a_2^1$, $a_4^1$, $a_{14}^3$.
  \setcounter{enumi}{1}
\item $a_8^2$, $a_{10}^2$.
\end{enumerate}

The other intersection numbers are recovered by the fact that $E$ is
an isometry, the cyclic symmetry $(u_i, v_j) = (u_{i-k}, v_{j-k})$
($u,v\in \{a,b\}$) for as long as all the indices stay in
$\{0,\dotsc 15\}$. Going around the circle the shift is
$u_{i+16}^n = u_i^{n-3}$. For example,
\begin{eqnarray*}
  && (b_{15}, a_{6}^{-1}) = (b_9, a_0^{-1}) = (b_9^1, a_0^0) = 2,\\
  && (a_{14}^3, a_2^1) = (a_{12}^3, a_0^1) = (a_{12}^2, a_0^0)= 0, \\
  && (b_{15}, a_{16}^1) = (b_{15}, a_0^{-2}) = (b_{15}^2, a_0^0) = 0.     
\end{eqnarray*}

\begin{example}
  $b_1$ is a $(-2)$-vector of divisibility $2$ which does not lie on
  the $\wA_{15}$-cycle. 
  The vectors which have intersection $0$ with $b_1$ are $e_i$ for
  $i\ne1$ and $a_{14}^3$, $a_0^0$, $a_2^1$, $a_4^1$. The last four
  vectors are mutually orthogonal with the exception of
  $(a_0^0, a_2^1)=2$. We conclude that these $15+4$ vectors form the
  Coxeter diagram for the lattice $(17,1,1)$ that was given in
  \cite[Table~1]{alexeev2006del-pezzo}.
  This agrees with Lemma~\ref{lem:heegner-diagrams}.
\end{example}

\smallskip\underline{$(17,3,1)$}. 
Both the diagrams for the full reflection
group $\wref$ and the reflection group $W_2$ in $(-2)$-vectors are
infinite. We compute the latter one.

There is a wheel $\wA_{13}$ of $14$ $(-2)$-curves forming a singular
fiber of an elliptic fibration. In addition to them there are $7$
sections $a_0, a_2, \dotsc, a_{12}$ attached to $e_0,e_2, \dotsc a_{12}$
singly, i.e. with $(a_i,e_j) =\delta_{ij}$; and bisections $b_1, b_3,
\dotsc, b_{13}$ with $(b_i,e_j) = 2\delta_{ij}$. The bisections
have divisibility~$2$. 

Let $\wA_{13}^\perp = \la c, v_1, v_2 \ra$ with $c$ the class of the
fiber. For $v\in\wA_{13}^\perp$ the Eichler transformation
$E = E_{c,v}$ is an isometry $S$ which fixes the vertices of
$\wA_{15}$. We pick $v_1,v_2$ so that their intersection form is
$\begin{pmatrix}
  -4 & 2\\ 2 & -8
\end{pmatrix}.$  The orbits
\begin{displaymath}
a_i^{n_1,n_2} := E_{c,n_1v_1+n_2v_2}(a_i), \quad  
b_i^{n_1,n_2} := E_{c,n_1v_1+n_2v_2}(b_i)
\end{displaymath}
are the remaining $(-2)$-vectors. Each orbit is isomorphic
to~$\bZ^2$. Note that $E_{c,n_1v_1}$ is contained in the Weyl group
$W(\wA_1)$ generated by the reflections in the $(-4)$-vectors $v_1$
and $c-v_1$.

For the typical representatives $a_0=a_0^{0,0}$ and $b_1=b_1^{0,0}$ we list
the vectors with intersections $0,2$. All the other $a_i^{n_1,n_2}$ and
$b_i^{n_1,n_2}$ have intersections $>2$ and give dashed edges in the
Coxeter diagram. 

For $a_0$ the $(-2)$-vectors outside of the $\wA_{13}$ cycle with
intersections $0,2$ are: 

\begin{enumerate}
  \setcounter{enumi}{-1}
\item   $a_0^{-1,0}$, $a_0^{1,0}$, \ 
  $a_2^{0,0}$, $a_2^{0,1}$, $a_2^{1,1}$, \ 
  $a_4^{0,1}$, $a_4^{1,1}$, \ 
  $a_6^{1,2}$, \ 
  $a_8^{1,2}$, \ 
  $a_{10}^{1,3}$, $a_{10}^{2,3}$, \newline
  $a_{12}^{1,3}$, $a_{12}^{2,3}$, $a_{12}^{2,4}$;  \qquad
  $b_1^{0,0}$, $b_1^{1,0}$, \ 
  $b_3^{1,1}$, \ 
  $b_{11}^{2,3}$, \ 
  $b_{13}^{2,4}$, $b_{13}^{3,4}$.
  \setcounter{enumi}{1}
\item $a_0^{-1,-1}$, $a_0^{0,-1}$, $a_0^{0,1}$, $a_0^{1,1}$, \ 
  $a_2^{-1,0}$, $a_2^{1,0}$, \ 
  $a_4^{1,2}$, \ 
  $a_6^{0,1}$, $a_6^{0,2}$, $a_6^{1,1}$, $a_6^{2,2}$, \newline
  $a_8^{0,2}$, $a_8^{1,3}$, $a_8^{2,2}$, $a_8^{2,3}$, \ 
  $a_{10}^{1,2}$, \ 
  $a_{12}^{1,4}$, $a_{12}^{3,4}$; \qquad
  $b_1^{1,1}$,  \ 
  $b_5^{1,1}$, \ 
  $b_7^{1,2}$, $b_7^{2,2}$, \ 
  $b_9^{2,3}$, \ 
  $b_{13}^{2,3}$.
\end{enumerate}

For $b_1$ the $(-2)$-vectors outside of the $\wA_{13}$ cycle with
intersections $0,2$ are: 

\begin{enumerate}
  \setcounter{enumi}{-1}
\item $a_0^{-1,0}$, $a_0^{0,0}$, \ 
  $a_2^{-1,0}$, $a_2^{0,0}$, \ 
  $a_4^{0,1}$, \ 
  $a_{12}^{1,3}$.  
  \setcounter{enumi}{1}
\item $a_0^{-1,-1}$, \ 
  $a_2^{0,1}$, \ 
  $a_6^{0,1}$, \ 
  $a_8^{0,2}$, $a_8^{1,2}$, \ 
  $a_{10}^{1,3}$.  
\end{enumerate}

The other intersection numbers are recovered by the fact that Eichler
transformations are isometries, the symmetries
$(u_i, v_j) = (u_{i-k}, v_{j-k})$ ($u,v\in \{a,b\}$) for as long as
all the indices stay in $\{0,\dotsc 13\}$. Going around the circle the
shift is $u_{i+14}^{n_1,n_2} = u_i^{n_1-2,n_2-4}$.

\begin{example}
  $b_1$ is a $(-2)$-vector of divisibility~$2$ which does not lie on
  the $\wA_{13}$-cycle.
  The vectors which have intersection $0$ with $b_1$ are $e_i$ for
  $i\ne1$ and $a_0^{-1,0}$, $a_0^{0,0}$, $a_2^{-1,0}$, $a_2^{0,0}$,
  $a_4^{0,1}$, $a_{12}^{1,3}$. Using the rules above, we obtain that
  $(a_0^{0,0}, a_2^{-1,0}) = (a_0^{-1,0}, a_2^{0,0}) =2$ and all the
  other intersection numbers among these six vectors are zero. This is
  the same diagram of $(-2)$-curves as the one for the $(16,2,1)$
  lattice obtained from the Coxeter diagram given in
  \cite[Table~1]{alexeev2006del-pezzo} by applying the Weyl group
  $W(A_1)=S_2$ for the unique black vertex.
  This agrees with Lemma~\ref{lem:heegner-diagrams}.
\end{example}

\subsection{Lattices on the $g=0$ line}
\label{sec:vinberg-g=0}

Several of the lattices on the $g=0$ line have finite Coxeter
diagrams. They are quite large and complicated. We don't need them for
the present paper since they don't appear as targets of the mirror
moves~\ref{def:mirror-moves}, see Remark~\ref{rem:mirror-targets}. So
we don't include them here; they will appear elsewhere.

\section{K3 surfaces, their quotients,  and the nef cones}
\label{sec:Y-nef-cone}

We give a brief description for the K3 surfaces $X$ and their
quotients $Y$ that appear in the $75$ families of \fign.

\subsection{Surfaces for $S$ with $g\ge2$, excluding
  $(10,8,0)$}
\label{sec:surfaces-g>1}

For the $50$ lattices with $g\ge2$, excluding $(10,8,0)$, a
satisfactory description for the quotients $Y$ is given in
\cite{alexeev2006del-pezzo}. Indeed, all the possibilities for the
exceptional curves on the surfaces $Y$ appearing in these
families were found. From the graph of exceptional curves one can
realize $Y$ as an explicit blowup of $\bP^2$ or $\bF_n$. 
The K3 surfaces $X$ are double covers of $Y$ branched in some divisor
$B$ lying in the linear system $|-2K_Y|$.

For the most basic lattices $(r,r,\delta)$ with $r\le 9$, the surfaces
$Y$ are weak del Pezzo surfaces with big and nef $-K_Y$ and with $ADE$
singularities. Thus, $Y=\Bl_{r-1}\bP^2$ for $(r,r,1)$ and
$Y=\bP^1\times\bP^1$ or $\bF_2$ for $(2,2,0)$.

\subsection{Surfaces for $S$ on the $g=1$ line and $(10,8,0)$}
\label{sec:k3-elliptic-pencil}

This case is especially important for us since it serves as the base
case for the mirror symmetry constructions and all the other cases are
derived from it.

\begin{theorem}[Nikulin]\label{thm:nikulin-elliptic}
  Let $X$ be a K3 surface with a nonsymplectic involution~$\iota$ and
  $2$-elementary Picard lattice $S=\Pic X$. Denote
  $\pi\colon X\to Y= X/\iota$.  Then the automorphism group $\Aut X$
  is infinite and preserves a (necessarily unique) elliptic pencil
  $f\colon X\to\bP^1$ if and only if $S$ is one of the following lattices:
  \begin{enumerate}
  \item $\I_{2k}\I_0$:
    $(10+k, 10-k, \delta)\ne (14,6,0)$ and
    $k>0$. $Y$ is a rational elliptic surface with a section and 
    the ramification divisor $$R=X^\iota = F_0\cup\cup_{i=0}^{k-1} E_{2i},$$ where $F_0$ is a 
    smooth elliptic fiber, and $E_i$ are disjoint $\bP^1$s which are the
    alternating curves in the $I_{2k}$ fiber $F$ of Kodaira
    type $I_{2k}$ (a wheel of $2k$ rational curves).
  \item $\I_0^2$: $(10,8,0)$. $R=F_0\cup F$ is a union of two smooth
    fibers of $f$.
  \item Halphen: $(10,10,1)$. $Y$ is an index~$2$ Halphen pencil and
    $R = F_0$ is a smooth elliptic fiber which does not ramify over
    the unique multiple fiber $G=2D$ on $Y$. 
      \item Enriques:
    $(10,10,0)$. $Y$ is an Enriques surface and $R=\emptyset$.

  \item $\wE_6$: $(14,6,0)$: $R$ is the sum of a smooth elliptic fiber $F_0$ 
  and $4$ disjoint $\bP^1$s, alternating curves in a $\IV^*$ fiber.
  \end{enumerate}
  In the $\I_0^2$ case one has $g=2$, in the other cases one has $g=1$.
\end{theorem}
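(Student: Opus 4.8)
The plan is to convert the condition on $\Aut X$ into lattice theory and then read the geometry off the nef cone. By the Torelli theorem the natural map $\Aut X\to O(S)$ has finite kernel, and its image is of finite index in the group $\Sym(\Nef X)$ of isometries preserving the nef cone. Since $\Nef X=\fC_2$ and $O(S)=W_2\ltimes\Sym(\fC_2)$ by Vinberg's theory (Definition~\ref{def:chambers}), the cone conjecture for K3 surfaces (Sterk) shows that $\Aut X$ is finite exactly when $\fC_2$ is rational polyhedral, i.e. when $W_2(S)$ has finite index in $O(S)$. Thus the first step is purely lattice-theoretic: $\Aut X$ is infinite iff $W_2(S)\subset O(S)$ has infinite index, the equivalence already noted in Section~\ref{sec:vinberg-theory}. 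For $2$-elementary hyperbolic lattices this was settled by Nikulin \cite{nikulin1979quotient-groups, nikulin1981quotient-groups}; combined with the finite-covolume results of Theorem~\ref{thm:finite-covolume} and Section~\ref{sec:vinberg-g>1}, the lattices with $|\Aut X|=\infty$ are precisely the complement of the $50$ lattices carrying a genus $\ge2$ component together with $(19,1,1)$.

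Among these infinite-automorphism lattices I would isolate those carrying a unique invariant pencil as exactly the ones for which the image of $\Aut X$ in $O(S)$ is virtually unipotent, i.e. fixes a single point at infinity of the hyperbolic space $\bP\cC$. Such a fixed point is a nef primitive isotropic class $f\in S$, and $f\colon X\to\bP^1$ is the invariant genus-one fibration; uniqueness is then automatic, since a parabolic group fixes exactly one boundary point. The class $f$ is the fiber class spanning the radical of the maximal parabolic subdiagram, fixed by the Eichler transformations that generate the unipotent part of $\Aut X$; equivalently it corresponds to the maximal parabolic subdiagram stable under the symmetries $\Sym(\fcref)$ of the reflection chamber. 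I expect the $g=0$ lattices with infinite $\Aut X$ to fail this test, because their automorphism groups contain hyperbolic elements and fix no nef isotropic ray, leaving precisely the $g=1$ lattices other than $(19,1,1)$ together with $(10,8,0)$. The same accounting explains $(19,1,1)$: its associated rational elliptic surface has Mordell--Weil rank $0$, the unipotent part degenerates, and $\Aut X$ is finite.

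It then remains to identify, case by case, the Kodaira type of the invariant fiber, the action of $\iota$, the fixed locus $R=X^\iota$, and the quotient $Y=X/\iota$. For this I would use the explicit Coxeter diagrams of Figures~\ref{fig:coxeter1} and~\ref{fig:coxeter2}: the maximal parabolic subdiagram carrying $f$ in its radical is a cycle $\wA_{2k-1}$, giving an $\I_{2k}$ fiber in case~(1), or a $\wE_6$ subdiagram, giving a $\IV^*$ fiber in case~(5); the double-circled white vertices mark the alternating components $E_{2i}$ of that fiber fixed pointwise by $\iota$, and $F_0$ is the fixed smooth elliptic fiber. The invariants then determine the rest through $g=11-\tfrac12(r+a)$ and $k=\tfrac12(r-a)$: the parity $\delta$ separates the two rank-$10$ cases --- Enriques $(10,10,0)$, with $R=\emptyset$ and $Y$ an Enriques surface, from Halphen $(10,10,1)$, with $R=F_0$ and $Y$ an index-$2$ Halphen pencil --- while $(10,8,0)$ is the $\I_0^2$ case, with $R$ a pair of smooth elliptic fibers and $g=2$.

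The main obstacle is this last geometric step, and in particular the three cases that break the uniform $\I_{2k}\I_0$ pattern. In the Enriques case $R=\emptyset$, so the invariant pencil is not visible as a fixed fiber and its uniqueness must be argued abstractly from the parabolic structure of $\Aut X$; the $\wE_6$ lattice $(14,6,0)$ must be split off because its invariant fiber has type $\IV^*$ rather than $\I_{2k}$; and $(10,8,0)$ must be admitted even though $g=2$, since its fixed locus consists of two elliptic curves rather than one curve of genus $\ge2$ and its automorphism group is infinite. Verifying that $\iota$ acts on each fiber configuration with exactly the claimed pointwise-fixed components --- so that $Y$ comes out rational, rational elliptic, or Enriques as asserted, and the counts match $(g,k,\delta)$ --- is where the real content lies; the remainder is bookkeeping with the triple $(r,a,\delta)$.
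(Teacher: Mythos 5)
The paper does not actually prove this theorem: it is attributed to Nikulin, and the proof given is only the citation \cite[Sec.~4]{nikulin1981quotient-groups} together with \cite{nikulin2020some-examples}. Your proposal must therefore stand as a free-standing argument, and as such it has a genuine gap at its center. Your first reduction is fine: by Torelli and the cone conjecture, $\Aut X$ is infinite iff $W_2(S)\subset O(S)$ has infinite index, which is how the paper itself phrases it in Section~\ref{sec:vinberg-theory}. But the real content of the theorem is the next step: among the two dozen lattices with infinite $\Aut X$ (the $g=1$ line, $(10,8,0)$, and the entire $g=0$ line), one must decide which carry an isotropic class fixed by \emph{all} of $\Aut X$. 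You reformulate this as ``the image of $\Aut X$ fixes a rational boundary point'' and then write ``I expect the $g=0$ lattices \dots to fail this test, because their automorphism groups contain hyperbolic elements and fix no nef isotropic ray.'' That is not an argument: ``fix no nef isotropic ray'' is verbatim the statement to be proved, and containing hyperbolic elements does not preclude a common fixed point at infinity (each hyperbolic isometry fixes two). Note the $g=0$ surfaces are genuinely elliptic --- they are the Halphen K3 surfaces of Section~\ref{sec:k3-g=0} --- so invariant pencils exist as classes; what needs proof is that no single one is preserved by the whole group. Symmetrically, in the positive direction you must show that all of $\Aut X$, not merely the Mordell--Weil translations (which visibly fix the fiber class), preserves $f$ for the listed lattices; this too is asserted rather than proved.

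Two further concrete problems. Your accounting for $(19,1,1)$ --- ``its associated rational elliptic surface has Mordell--Weil rank $0$, the unipotent part degenerates, and $\Aut X$ is finite'' --- proves too much: by Table~\ref{tab:extremal-surfaces} the extremal surfaces for $(18,2,0)$ and $(10,8,0)$ also have torsion Mordell--Weil groups ($\bZ_2^2$ and $0$ respectively), yet both lattices have infinite $\Aut X$ and belong to the theorem's list. To separate $(19,1,1)$ honestly you would need the Mordell--Weil rank of the fibration on a \emph{generic} member of each family (via Shioda--Tate this is $9-k$ in the $\I_{2k}\I_0$ cases, so it vanishes only for $k=9$), not of the special surfaces in the table. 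Finally, the geometric assertions of (1)--(5) --- the Kodaira types of the invariant fibers, which components are fixed pointwise, and the identification of $Y$ as rational elliptic, Halphen, or Enriques, with the counts matching $(g,k,\delta)$ --- are precisely what you defer as ``where the real content lies''; deferring them means the theorem is not proved. In short, the proposal is a plausible plan whose decisive steps presuppose Nikulin's classification rather than establish it.
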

\begin{proof}
  \cite[Sec.~4]{nikulin1981quotient-groups} and
  \cite{nikulin2020some-examples}. 
\end{proof}

We will call (1,2,3) \emph{the ordinary cases}. The case (5) does not
appear on the mirror side in our constructions.

\subsection{Surfaces for $S$ on the $g=0$ line}
\label{sec:k3-g=0}

The lattices 
$(10+k,12-k,\delta)$ for $1\le k\le 9$
are in a
bijection with the lattices 
$(10+k,10-k,\delta)$
on the line below in \fign,
excluding $(14,6,0)$, which is the ($\wE_6$) case of
Theorem~\ref{thm:nikulin-elliptic}.

Consider one of the lattices 
$S'=(10+k,10-k,\delta)$, $1\le k\le 9$ 
on the $g=1$ line. This is the case (1) of
Theorem~\ref{thm:nikulin-elliptic}. The quotient $Y'$ is a rational
elliptic surface with a section and $\pi\colon X'\to Y'$ is ramified in
a smooth elliptic fiber $F_0$ and ramified in the $n$ alternating
curves $(-4)$-curves $E_{2i}$. It follows that $S'=U\oplus K$ for some
negative definite lattice $K$. (Indeed, these are the lattices
$A_{2k-1}E_{9-k}(2)$ of Table~\ref{tab:neg-def}.)

Now let $S=U(2)\oplus K$. Then 
$S=(10+k,12-k,\delta)$
is the corresponding
lattice on the $g=0$ line. The line bundle corresponding to an
isotropic vector in $U(2)$ defines an elliptic fibration
$f\colon X\to\bP^1$ without a section whose jacobian fibration is
$f'\colon X'\to\bP^1$. The fiber corresponding to $F$ is a double
fiber of $f$ and the ramification divisor of $\pi\colon X\to Y$ is
the union of the $n$ curves $E_{2i}$. The surface $X$ can be obtained
from $X'$ directly by a logarithmic transformation at $F$
using \cite[Cor.~5.4.7]{cossec1989enriques-surfaces}.

So these K3 surfaces $X$ are the index~2 Halphen K3 surfaces with an
$I_{2k}$ fiber. The surfaces $\oY$ obtained by contracting the
$(-1)$-curves in the special fiber
are the rational index~$2$ Halphen pencils with
an $I_k$ fiber. Some explicit constructions for such pencils were
given in \cite{kimura2018k3-surfaces, kimura2020sun-times,
  zanardini2020explicit-constructions}.

\smallskip

Finally, the moduli space for the lattice $(20,2,1)$ is a point. It is the
unique ``most algebraic'' $2$-elementary K3 surface of
\cite{vinberg1983two-most}. It admits no Halphen pencils but we
computed that up to automorphisms, it has $13$ elliptic pencils with a
section.

\subsection{Nef cones and exceptional curves on  $X$ and $Y$}
\label{sec:nef-XY}

It is well known that the nef cone of a K3 surface $X$ is
defined by linear inequalities in the positive cone
$\ocC=\{v\in S_X\otimes\bR \mid x^2\ge0, \ x\cdot h\ge0\}$, where $h$ is a
K\"ahler class, with facets $x\cdot E_i\ge0$ for the smooth
rational curves $E_i$ with $E_i^2=-2$, which we call the
$(-2)$-curves. Their classes in $\NS(X)$ are the positive roots for the
Weyl group $W_2(S_X)$.  For any $(-2)$-vector $v\in\NS(X)$, exactly one
of $\pm v$ is effective and is a sum of $(-2)$-curves.

Since a $(-2)$-curve is uniquely determined by its class in $\NS(X)$,
any involution preserving its class preserves the $(-2)$-curve, but may
not fix it pointwise.

We call a curve \emph{exceptional} if it is irreducible and has
negative self-intersection. $R=X^\iota$ will denote the ramification
divisor of $\pi\colon X\to Y$ and $B\subset Y$ the branch divisor. 

\begin{lemma}\label{lem:exc-curves-Y}
  Exceptional curves $F$ on $Y$ are of three types:
  \begin{enumerate}
  \item[(1)] $F^2=-4$, $\pi^*(F) = 2E$, and the $(-2)$-curve
    $E\subset X^\iota$ is fixed pointwise by the involution.
  \item[(2a)] $F^2=-1$, $\pi^*(F)=E$, $|F\cap B| = |E\cap R|=2$.
  \item[(2b)] $F^2=-1$, $\pi^*(F)=E_1+E_2$, $F$ is tangent to $B$, the
    curves $E_1$, $E_2$ are exchanged by the involution and
    $E_1\cdot E_2=1$.
  \item[(3)] $F^2=-2$, $\pi^*(F)=E_1+E_2$,
    $F\cap B = E_i\cap R=\emptyset$, the curves $E_1$, $E_2$ are
    disjoint and are exchanged by the involution.
  \end{enumerate}
\end{lemma}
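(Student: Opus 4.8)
The plan is to classify exceptional curves $F$ on $Y$ by analyzing their preimage $\pi^{-1}(F)$ under the double cover $\pi\colon X \to Y$ branched along $B \in |-2K_Y|$, using the fact that $X$ is a K3 surface so the only irreducible negative curves on $X$ are the $(-2)$-curves. First I would recall the standard ramification formula: since $\pi$ is a double cover branched along the smooth divisor $B$, one has $\pi^*K_Y = K_X - \tfrac12\pi^* B = -\tfrac12\pi^*B$ (as $K_X = 0$), and for any curve $F \subset Y$ the intersection numbers pull back via $\pi^*(F)\cdot\pi^*(F') = 2(F\cdot F')$. The key dichotomy is whether $F$ lies in the branch locus behavior: either $F \subset B$, or $F$ meets $B$ transversally, or $F$ is tangent to $B$, or $F \cap B = \emptyset$. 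Each case is controlled by how $\pi^{-1}(F)$ decomposes.

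The core of the argument is a case analysis on $\pi^*(F)$. I would compute $\pi^*(F)^2 = 2F^2$ and use that $\pi^{-1}(F)$ is either irreducible (when $F$ is not in the branch/ramification pattern that forces splitting) or splits as $E_1 + E_2$ with $E_1, E_2$ exchanged by $\iota$. When $\pi^{-1}(F) = E$ is irreducible and $\iota$-invariant, $E^2 = \tfrac12\pi^*(F)^2 = F^2$ only if $E$ is a double cover — here I distinguish the case where $E$ is fixed pointwise (so $\pi^*F = 2E$, giving $F^2 = 2E^2 = -4$, case (1)) from the case where $E \to F$ is genuinely degree two branched at $B \cap F$ (so $\pi^*F = E$, $E^2 = 2F^2$; forcing $E^2 = -2$ gives $F^2 = -1$ and $|E \cap R| = 2$ by Riemann–Hurwitz on $E \to F \cong \mathbb{P}^1$, case (2a)). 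When $\pi^{-1}(F) = E_1 + E_2$ splits, each $E_i \cong F$ maps isomorphically, $E_i^2 = F^2$, and I compute $E_1 \cdot E_2$ from $(E_1 + E_2)^2 = 2F^2$ together with $E_i^2 = -2$: this yields $E_1 E_2 = F^2 + 2$. Requiring the $E_i$ to be genuine $(-2)$-curves forces $F^2 \in \{-1, -2\}$, giving the tangency case (2b) with $E_1 E_2 = 1$ and the disjoint case (3) with $E_1 E_2 = 0$; the geometric descriptions of $F \cap B$ (tangent versus disjoint) follow from how the splitting locus meets the branch divisor.

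The step I expect to be the main obstacle is rigorously excluding all other values of $F^2$ and confirming that these four configurations are exhaustive. Specifically, I must argue that an irreducible $F$ with $F^2 \le -1$ and its preimage pinned down by the $(-2)$-curve constraint on $X$ cannot produce, say, $F^2 = -3$ or an irreducible preimage that is not a $(-2)$-curve. The control comes from the fact that any irreducible negative curve on the K3 surface $X$ must be a $(-2)$-curve (or, in the branched case, a double cover thereof), so $E^2 = -2$ or $E_i^2 = -2$ is forced, and this rigidly constrains $F^2$ through the pullback formula. I would also need to verify the claimed incidence data ($|F \cap B| = 2$, tangency, $E_1 E_2 = 1$) using the local structure of $\pi$ over $B$: transverse intersection of $F$ with $B$ corresponds to simple branch points of $E \to F$, while tangency of $F$ with $B$ is precisely what makes $\pi^{-1}(F)$ split into two components meeting at the tangency point, producing $E_1 E_2 = 1$. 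Assembling these local pictures into the global statement is the delicate bookkeeping, but each individual computation is routine given the ramification formula.
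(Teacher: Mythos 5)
The paper gives no proof of this lemma at all --- it simply cites \cite[Sec.~2.4]{alexeev2006del-pezzo} --- and your double-cover analysis is exactly the standard argument carried out in that reference: split into $F\subset B$ versus $F\not\subset B$, and in the latter case into irreducible versus split preimage, then pin down $F^2$ using the fact that irreducible negative curves on a K3 are $(-2)$-curves. So in substance your route is the intended one, and it is essentially complete.

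One claim in your second paragraph is false and is inconsistent with your own conclusion: when $\pi^{-1}(F)=E_1+E_2$ splits you assert $E_i^2=F^2$ because $E_i\cong F$, but self-intersection is not preserved by abstract isomorphism, and indeed in your case (2b) one has $E_i^2=-2$ while $F^2=-1$. The correct relation comes from the projection formula: $\pi^*F\cdot E_i = F\cdot \pi_*E_i = F^2$, i.e. $E_i^2 + E_1E_2 = F^2$. This identity also supplies the step you need before invoking the K3 constraint: since $E_1E_2\ge 0$ for distinct irreducible curves, it gives $E_i^2\le F^2<0$, so $E_i$ is a negative curve on a K3 and hence $E_i^2=-2$; then $E_1E_2=F^2+2\ge 0$ forces $F^2\in\{-1,-2\}$, exactly as you conclude. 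With that repair the rest of your argument stands: case (1) from $\pi^*F=2E$, case (2a) from Riemann--Hurwitz for the degree-$2$ map $E\to F$ of rational curves, and for (2b)/(3) the observation that any point of $F\cap B$ has its single preimage fixed by $\iota$, hence lying on both $E_1$ and $E_2$, so $E_1E_2=0$ forces $F\cap B=\emptyset$. Exhaustiveness (e.g.\ ruling out $F^2=-3$) is then automatic from the trichotomy, not a separate obstacle.
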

\begin{proof}
  \cite[Sec.~2.4]{alexeev2006del-pezzo}.
\end{proof}

\begin{lemma}\label{lem:nef-Y-trivial}
  $\Nef Y = (\Nef X)\cap S_\bR$, where $S= S_X^+$
\end{lemma}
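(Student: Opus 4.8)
The plan is to use the pullback map $\pi^*$ to identify $\NS(Y)_\bR$ with the invariant subspace $S_\bR=(\NS X)^\iota_\bR$, and then to deduce the statement about nef cones from the two standard facts that nef classes pull back to nef classes and that $\pi$ is finite of degree~$2$.

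First I would establish that $\pi^*\colon\NS(Y)_\bR\to S_\bR$ is an isomorphism. Since $\pi\circ\iota=\pi$, one has $\iota^*\circ\pi^*=\pi^*$, so the image of $\pi^*$ lies in $S_\bR$, where $S=S_X^+=(\NS X)^\iota$. Injectivity is automatic because $\pi$ is finite and surjective. For surjectivity I would use the transfer relation $\pi^*\pi_*=1+\iota^*$ on $\NS(X)$: on an invariant class $L\in S$ this gives $\pi^*(\pi_* L)=2L$, so that $\tfrac12\pi^*\pi_*=\id$ on $S$ and $L=\tfrac12\pi^*(\pi_* L)$ lies in $\pi^*\NS(Y)_\bQ$. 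Hence $\pi^*\NS(Y)_\bR=S_\bR$, and we may regard $\Nef Y$ as a cone inside $S_\bR\subset\NS(X)_\bR$.

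With this identification, the inclusion $\pi^*(\Nef Y)\subseteq(\Nef X)\cap S_\bR$ is the elementary fact that pullback preserves nefness: for a nef class $D$ on $Y$ and any curve $C'\subset X$ the projection formula gives $\pi^*D\cdot C'=D\cdot\pi_*C'\ge0$, since $\pi_*C'$ is effective, so $\pi^*D$ is nef; and $\pi^*D$ is automatically invariant, hence lies in $S_\bR$. For the reverse inclusion, take $L\in(\Nef X)\cap S_\bR$ and write $L=\pi^*D$ with $D\in\NS(Y)_\bR$ using the isomorphism above. To see that $D$ is nef, let $C$ be any curve on $Y$. Because $\pi$ is finite of degree~$2$ between smooth projective surfaces, the key identity $\pi^*D\cdot\pi^*C=2\,(D\cdot C)$ holds, and the divisorial pullback $\pi^*C=\pi^{-1}(C)$ is an effective divisor on $X$. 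Since $L=\pi^*D$ is nef we obtain $2(D\cdot C)=\pi^*D\cdot\pi^*C\ge0$, so $D\cdot C\ge0$ for every curve $C$, i.e. $D\in\Nef Y$. Combining the two inclusions yields $\Nef Y=(\Nef X)\cap S_\bR$.

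The argument is essentially formal; the only point requiring genuine care---and the one I would state explicitly---is the surjectivity of $\pi^*$ onto the \emph{full} invariant part $S_\bR$, rather than merely a finite-index subspace, which is what justifies treating $(\Nef X)\cap S_\bR$ and $\Nef Y$ as living in the same space. This relies on $\iota$ being an honest involution of the smooth surface $X$ with smooth quotient $Y$, so that both the degree-$2$ projection formula and the averaging operator $\tfrac12\pi^*\pi_*=\id$ on $S$ are available.
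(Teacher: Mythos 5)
Your proof is correct and follows essentially the same route as the paper: the paper's entire proof is a one-line appeal to the identities $\pi^*\circ\pi_* = 1+\iota^*$ and $\pi_*\circ\pi^* = 2$, which are exactly the transfer and degree relations your argument spells out in detail (identifying $\NS(Y)_\bR$ with $S_\bR$ via $\pi^*$, then checking both inclusions of cones with the projection formula). Your write-up just makes explicit the bookkeeping that the paper leaves to the reader.
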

\begin{proof}
  This follows from the identities $\iota^*\circ\iota_* = 1+\iota$
  and $\iota_*\circ\iota^* = 2$. 
\end{proof}

\begin{definition}
  Let $\Cur_2=\{F_i\}$ be the set of the $(-2)$-curves on $Y$, i.e. of
  type (3) in Lemma~\ref{lem:exc-curves-Y}. Let $\pi^*(F)=E_1+E_2$ be
  the corresponding $(-4)$-vectors in~$S$. It is obvious that for any
  $v\in S$ one has $\pi^*(F)\cdot v = 2E_1\cdot v\in 2\bZ$. They are
  also primitive since $(\pi^*(F)/2)^2=-1$ and $S$ is an even
  lattice. So they are simple roots for the full reflection group
  $\wref(S)$ and they correspond to a subset $\bB$ of black vertices in
  the Coxeter diagram of $S$ as in Definition~\ref{def:refl-chambers}.
\end{definition}

\begin{lemma}\label{lem:nef-Y}
  $\Nef Y=W(\bB).\chref$, where $\chref$ is a fundamental chamber
  for $\wref(S)$.
  The set of exceptional curves on $Y$ is identified with
  the union of $W(\bB)$-orbits
  of white vertices in the Coxeter diagram of $S$.
\end{lemma}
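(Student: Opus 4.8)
The plan is to realize $\Nef Y$ as the $\iota$-fixed slice of the nef chamber of $X$ and then read it off the Coxeter diagram of $S$ through the splitting of Lemma~\ref{lem:refl-chambers}. By Lemma~\ref{lem:nef-Y-trivial}, $\Nef Y=\Nef X\cap S_\bR$. Since $\iota$ is an automorphism of $X$, the isometry $\iota^*$ carries ample classes to ample classes and hence preserves the single chamber $\Nef X=\fC_2(S_X)$; therefore $\Nef Y=(\Nef X)^{\iota^*}$ is the intersection of that chamber with the fixed space $S_\bR=(S_X^+)_\bR$. The task is thus to describe this slice as a reflection chamber for a suitable subgroup of $\wref(S)$.

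I would next determine which walls of $\Nef X$ descend, using the trichotomy of Lemma~\ref{lem:exc-curves-Y}. The walls of $\Nef X$ are the classes of the $(-2)$-curves $E$ on $X$, which are either $\iota$-invariant or occur in exchanged pairs $E,E'=\iota(E)$. For an invariant $E$, and for a type~(2b) pair, the reflection already preserves $S_\bR$ and restricts to the $(-2)$-reflection in the white root $[E]$, resp.\ $[E{+}E']\in S$; these are the white vertices of the diagram of $S$. For a type~(3) pair one has $E\cdot E'=0$, and although $w_E$ alone does not fix $S_\bR$, a direct computation gives $w_E w_{E'}|_{S_\bR}=w_\beta$, the reflection in the $(-4)$-root $\beta=E+E'$; by the identity $(\pi^*F/2)^2=-1$ this $\beta$ is primitive of divisibility~$2$, hence a black vertex lying in $\bB$. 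So the reflections of $\wref(S)$ that arise on the slice are exactly the white ones together with $W(\bB)$, and $\Nef Y$ is a fundamental chamber for the group they generate.

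The crux is then to match this chamber with $W(\bB).\chref$. The white (and type-(2b)) reflections are $(-2)$-reflections closed under $\wref$-conjugation and generate the normal subgroup $W\dnor(\bB^c)$, while the type-(3) reflections generate the complement $W(\bB)$ in the splitting $\wref=W(\bB)\ltimes W\dnor(\bB^c)$ of Lemma~\ref{lem:refl-chambers}; by Corollary~\ref{cor:refl-chambers} the fundamental chamber of $W\dnor(\bB^c)$ is precisely $W(\bB).\chref=\fC_{W\dnor(\bB^c)}$, which I would identify with $\Nef Y$. The second assertion is then immediate: $W(\bB)$ permutes the $(-2)$-curves of $X$, and hence the white walls, so the exceptional curves of $Y$ are exactly the $W(\bB)$-orbit of the white vertices. \emph{The main obstacle} is the bookkeeping of the $(-4)$-roots: one must check that the geometric exceptional curves account for all the relevant walls and that no black vertices outside $\bB$ intervene, for which the conversion rules~\eqref{eq:conversion} translating the elliptic subdiagrams $B_n(2),C_n,F_4$ of $\gref$ into $A_1^n,D_n,D_4$ are exactly what is needed.
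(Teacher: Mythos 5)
Your overall route is the same as the paper's: Lemma~\ref{lem:nef-Y-trivial} to realize $\Nef Y$ as $\Nef X\cap S_\bR$, the trichotomy of Lemma~\ref{lem:exc-curves-Y} to see which mirrors arise on the slice, and Lemma~\ref{lem:refl-chambers}/Corollary~\ref{cor:refl-chambers} to name the resulting chamber; in particular your computation $w_Ew_{E'}|_{S_\bR}=w_{E+E'}$ for a type-(3) pair is exactly the right mechanism, and the paper compresses precisely these ingredients into one sentence. However, your final identification is wrong, and it contradicts your own second step. A type-(3) class $\beta=E+E'$ is the class of an irreducible curve on $Y$, so every nef class pairs nonnegatively with it: $\Nef Y$ lies entirely on one side of $\beta^\perp$, i.e.\ these mirrors are \emph{walls} of $\Nef Y$, as you yourself conclude. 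But $W(\bB).\chref$ contains both $\chref$ and $w_\beta.\chref$ for $\beta\in\bB$, so with your reading of $\bB$ (the curve classes) the mirrors of $\bB$ cross the \emph{interior} of $W(\bB).\chref$; hence $\Nef Y\ne W(\bB).\chref$ whenever $\bB\ne\emptyset$. The step that hides this clash is the assertion that the white reflections generate $W\dnor(\bB^c)$: by definition $W\dnor(\bB^c)$ is the normal closure of the reflections in \emph{all} of $\bB^c$, which includes every black vertex not in $\bB$; your assertion holds only when all black vertices lie in $\bB$, and even then the formula would give $W(V_4).\chref=\fC_2$, while the true answer in that case (Proposition~\ref{prop:small-nef-cone}) is the single chamber $\chref$. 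A rank-two check makes the failure concrete: for $S=U(2)=(2,2,0)$ the Coxeter diagram is one black vertex and no white ones; when $Y=\bF_2$ the $(-2)$-section is a type-(3) curve and $\pi^*\Nef Y$ is a single $\wref$-chamber, whereas your formula predicts the union of both; when $Y=\bP^1\times\bP^1$ there are no exceptional curves at all and $\Nef Y$ is the whole positive cone, whereas your formula predicts a single chamber.

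The missing idea is that the set $\bB$ entering the chamber formula must be the \emph{complementary} set of black vertices: the $(-4)$-roots of divisibility $2$ that are \emph{not} classes of curves on the given $Y$. (The wording of the definition preceding the lemma is misleading on this point, but this is the reading forced by Corollary~\ref{cor:refl-chambers} and by the way the lemma is applied in Proposition~\ref{prop:small-nef-cone}.) With that convention your wall analysis does finish the proof: the walls of $\Nef Y$ are the mirrors of effective roots, namely the white roots and the black roots in $\bB^c$ together with their $W(\bB)$-translates, while the mirrors of the non-effective black roots in $\bB$ and their conjugates pass through the interior. Thus $\Nef Y$ is the union of the $W(\bB)$-translates of $\chref$, which by Corollary~\ref{cor:refl-chambers} is exactly $\fC_{W\dnor(\bB^c)}=W(\bB).\chref$; its facets are the $W(\bB)$-orbits of the simple mirrors in $\bB^c$, which is the asserted identification of the exceptional curves of $Y$ with the $W(\bB)$-orbits of the white vertices (together with the black vertices in $\bB^c$, which are the type-(3) curves themselves).
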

\begin{proof}
  This follows by Corollary~\ref{cor:refl-chambers}
  and the description of $\Nef X$ above.   
\end{proof}

\subsection{Surfaces $Y$ with the smallest nef cone}
\label{sec:smallest-nef}

\begin{proposition}\label{prop:small-nef-cone}
  For each lattice $S\ne(10,10,0)$,  with $g\ge1$ of \fign\ there exists
  a K3 surface with $S_X^+=S$ such that $\Nef Y$ can be identified
  with the Coxeter chamber for the full reflection group $\wref$ if
  $S\ne(12,8,1)$, and the Coxeter chamber for an index~$2$ subgroup
  $W'\subset \wref$ if $S=(12,8,1)$.
\end{proposition}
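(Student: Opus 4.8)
The plan is to prove this by exhibiting, for each lattice $S$ on the $g\ge1$ line with $S\ne(10,10,0)$, a specific K3 surface $X$ with $S_X^+=S$ whose quotient $Y$ has the required nef cone. By Lemma~\ref{lem:nef-Y}, $\Nef Y = W(\bB).\chref$ where $\bB\subset V_4$ is the set of black vertices corresponding to the type-(3) curves $F_i$ on $Y$ (those with $\pi^*(F)=E_1+E_2$, $E_1,E_2$ disjoint and exchanged by $\iota$). The statement that $\Nef Y$ equals the full Coxeter chamber $\chref$ is therefore equivalent (via Corollary~\ref{cor:refl-chambers} and the semidirect product decomposition $\wref = W(\bB)\ltimes W\dnor(\bB^c)$ of Lemma~\ref{lem:refl-chambers}) to the assertion that $\bB=\emptyset$, i.e.\ that the surface $Y$ has \emph{no} exceptional curves of type (3). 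So the heart of the proof is: first I would reformulate the goal as producing a K3 surface $X$ for which the involution $\iota$ exchanges no pair of disjoint $(-2)$-curves.

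The key steps, in order, are as follows. First I would translate ``$\bB=\emptyset$'' into lattice-theoretic terms: a type-(3) curve corresponds to a primitive $(-4)$-vector $\pi^*(F)\in S$ of divisibility $2$, i.e.\ to a black vertex of the Coxeter diagram $\gref(S)$ that is \emph{effective} as $E_1+E_2$ on the chosen $X$. The freedom I have is in the choice of $X$ within the moduli space $F_S$: different K3 surfaces with $S_X^+=S$ realize different subsets of the black vertices as genuine disjoint exchanged pairs versus fixed $(-2)$-curves (type (1), double-circled, with $\pi^*(F)=2E$). Second, for the realization I would invoke the explicit surface descriptions from Theorem~\ref{thm:nikulin-elliptic}: on the $g=1$ line every such $X$ carries an $\iota$-invariant elliptic pencil with $Y$ a rational (or Halphen) elliptic surface, and I would choose $X$ generic in its family so that the only reducible fibers and the only $(-2)$-configurations are the forced ones coming from the $I_{2k}$ (or $\IV^*$) fiber. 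For such a generic member the $(-4)$-vectors of divisibility $2$ all arise as $2E$ with $E$ a fixed $(-2)$-curve in the branch fiber, i.e.\ they are of type (1), not type (3); hence $\bB=\emptyset$ and $\Nef Y=\chref$.

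The exceptional case $S=(12,8,1)$ I would handle by the computation already recorded in the Remark following Theorem~\ref{thm:finite-covolume}: its diagram contains the subdiagram $\wB_3$, and the single isolated $(-4)$-vector cannot be made to disappear—reflecting in it (the group $W(\bB)=S_2$ for that lone black vertex) moves the attached $(-2)$-vectors and produces the $\wA_3$ configuration. Thus for $(12,8,1)$ one is forced to have $\bB$ a single vertex, so $W(\bB)$ is the order-$2$ group generated by that reflection, and $\Nef Y = W(\bB).\chref$ is the Coxeter chamber for the index-$2$ subgroup $W'\subset\wref$ identified in that Remark. I would verify directly that no member of $F_{(12,8,1)}$ realizes this black vertex as type (1) rather than type (3), which is what pins down the index-$2$ phenomenon and distinguishes this lattice from all the others.

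The main obstacle I expect is precisely this last point: showing that for each $S\ne(12,8,1)$ a \emph{generic} K3 surface genuinely has $\bB=\emptyset$, i.e.\ that every black vertex of $\gref(S)$ can simultaneously be realized as a fixed $(-2)$-curve (type (1)) rather than as an exchanged disjoint pair (type (3)). This requires a case-by-case check against the Coxeter diagrams of Figures~\ref{fig:coxeter1} and~\ref{fig:coxeter2} together with the incidence data of the branch divisor, and the delicate part is ruling out that some black vertex is \emph{forced} to be of type (3) by the geometry of $R$—the $(12,8,1)$ case shows this can happen, so the argument must explain why it does not happen for the others. I would organize this as a uniform argument using the elliptic-fibration structure of Theorem~\ref{thm:nikulin-elliptic}, reducing the existence of the desired $X$ to the statement that the relevant $(-4)$-classes lie over the branch fiber, and then isolate $(12,8,1)$ as the one diagram where an isolated $(-4)$-vector obstructs this.
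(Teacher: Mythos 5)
There is a genuine gap, and it is fatal: your proposal inverts the relationship between realized black vertices and the size of $\Nef Y$. Realizing a black vertex as an actual $(-2)$-curve on $Y$ (type (3)) \emph{adds} a wall and makes $\Nef Y$ \emph{smaller}; the proposition asks for the smallest possible cone $\chref$ (the section is titled ``Surfaces $Y$ with the smallest nef cone''), and this requires \emph{all} black vertices to be realized as type-(3) curves, i.e.\ the \emph{most degenerate} surface in the family, not the generic one. For generic $X$ (with $\Pic X = S$) there are indeed no type-(3) curves, but then no black-vertex hyperplane is a wall of $\Nef Y = \Nef X \cap S_\bR$, so $\Nef Y$ is the fundamental chamber $\fC_2$ of $W_2$, which strictly contains $\chref$ whenever $S$ has $(-4)$-roots of divisibility $2$. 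A concrete counterexample to your argument: for $S=(2,2,0)=U(2)$ the generic $X$ has $Y=\bP^1\times\bP^1$ and $\Nef Y$ is the full positive quadrant, twice the chamber $\chref$; the correct witness is the \emph{special} $X$ with $Y=\bF_2$, whose $(-2)$-section pulls back to a disjoint exchanged pair and realizes the unique black vertex, cutting $\Nef Y$ down to $\chref$. (The literal wording of Lemma~\ref{lem:nef-Y} may have misled you—the unfolding group is generated by the reflections in the \emph{non}-realized black roots—but the geometry, the $U(2)$ example, and the paper's own proof all point the same way. Note also that your $(12,8,1)$ discussion tacitly uses the correct logic, a non-realizable black vertex forcing a bigger cone, which contradicts your main case.)

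Two further problems. First, your claim that for generic $X$ the $(-4)$-vectors of divisibility $2$ ``arise as $2E$ with $E$ a fixed $(-2)$-curve, i.e.\ are of type (1)'' is numerically impossible: $(2E)^2=-8\ne-4$. Type-(1) curves correspond to the double-circled \emph{white} vertices (the classes $\tfrac12\pi^*F$, of square $-2$); a black vertex, when effective, is precisely a type-(3) class $E_1+E_2$. Second, once the direction is corrected, the actual content of the proof is an existence statement with no generic-member shortcut: one must exhibit, for each $S$, a surface on which every black vertex is simultaneously an effective $(-2)$-curve. This is what the paper does by citing classification results—\cite[Table~3]{alexeev2006del-pezzo} for the $g\ge2$ lattices, the extremal rational elliptic surfaces of Table~\ref{tab:extremal-surfaces} (existing by \cite{persson1990configurations-of-kodaira, miranda1986on-extremal-rational, oguiso1991mordell-weil}) for the lattices with an elliptic pencil with section, and a logarithmic transformation for the Halphen case $(10,10,1)$—with $(12,8,1)$ singled out because there the isolated $(-4)$-root cannot be so realized and one only reaches the chamber of the index-$2$ subgroup $W'$. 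Your proposal never engages with this existence problem, which is the heart of the proof.
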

\begin{proof}
  In view of Lemma~\ref{lem:nef-Y} we need to find a quotient surface
  $Y$ on which the $(-2)$-curves form the entire black subdiagram of
  the Coxeter diagram, for $S\ne (12,8,1)$. 

  In the $(12,8,1)$ case we consider the fundamental chamber
  $\ch'\dref$ for the index~$2$ subgroup of $W'\subset \wref$, a
  union of two fundamental chambers $\chref$ and $w\chref$ where $w$
  is the reflection in the isolated $(-4)$-root. This modified chamber
  is also pictured in Fig.~\ref{fig:coxeter1}. The corresponding
  surface has an $I_2$ fiber. 

  For the $50$ lattices with $g\ge2$ and
  different from $(10,8,0)$ existence of such $Y$ is a small part of
  \cite{alexeev2006del-pezzo} where \emph{all} possibilities for the
  sets of exceptional curves were classified. The surfaces we need
  here are ``the most degenerate'', they all appear in
  \cite[Table~3]{alexeev2006del-pezzo}.

  For the surfaces  with an elliptic pencil with a
  section we take for $Y$ one of the surfaces of
  Table~\ref{tab:extremal-surfaces}.  They exist by
  \cite{persson1990configurations-of-kodaira}; see also
  \cite{miranda1986on-extremal-rational, oguiso1991mordell-weil}.

  \begin{table}[htpb]
    \centering
    \begin{tabular}{l@{\hspace{2em}}lll}
      Case & Singular fibers &Fiber root lattice &$MW(Y)$\\
      \hline
      $(10,10,1)$ &$_2\I_0\II^* + \I_1^2$ &$E_8$&$\emptyset$\\
      $(10,8,0)$  &$\II^* + \I_1^2$ &$E_8$&$0$\\
      $(11,9,1)\ A_0E_8$ &$\I_1\II^*+ \I_1$ &$E_8$&$\bZ_1$\\
      $(12,8,1)\ A_1E_7$ &$\I_2\,\III^*+ \I_1$ &$A_1E_7$ &$\bZ_2$\\
      $(13,7,1)\ A_2E_6$ &$\I_3\IV^*+ \I_1$ &$A_2E_6$ &$\bZ_3$\\
      $(14,6,1)\ A_3E_5$ &$\I_4\I_1^*+ \I_1$ &$A_3D_5$ &$\bZ_4$\\
      $(15,5,1)\ A_4E_4$ &$\I_5\I_5+I_1^2$ &$A_4A_4$ &$\bZ_5$\\
      $(16,4,1)\ A_5E_3$ &$\I_6\I_3\I_2+ \I_1$ &$A_5A_2A_1$ &$\bZ_6$\\
      $(17,3,1)\ A_6E_2$ &$\I_7\I_2+\I_1\II$ &$A_6A_1$ &$\bZ$ \\
      $(18,2,1)\ A_7E_1$ &$\I_8+\I_1^2\II$ &$A_7$ &$\bZ$ \\
      $(18,2,0)\ A_7E'_1$ &$\I_8\I_2+ \I_1^2$ &$A_7A_1$ &$\bZ_2^2$\\
      $(19,1,1)\ A_8E_0$ &$\I_9+\I_1^3$ &$A_8$ &$\bZ_3$ \\
    \end{tabular}
    \medskip
    \caption{Special rational elliptic surfaces $Y$}
    \label{tab:extremal-surfaces}
  \end{table}

  In the $(10,10,1)$ Halphen case, the surface with a double
  $_2I_0$ fiber can be obtained from a $(10,8,0)$ surface by
  a logarithmic transformation along a smooth $I_0$ fiber using
  \cite[Cor.~5.4.7]{cossec1989enriques-surfaces}.
\end{proof}

\begin{remark}
  Most of the surfaces of Table~\ref{tab:extremal-surfaces} are the
  maximally degenerate ones in their families but some are not: For
  $(12,8,1)$ the most degenerate surface has $III^*III$ fibers, and
  for $(10,10,1)$ the $_2I_1 II^* I_1$ fibers.

  We use a logical notation $A_{k-1}E_{9-k}$ to denote the cases
  $(10+k,10-k,\delta)$ for $1\le k\le 9$. Here, $E_k$ is the lattice
  $K^\perp$ in the Picard lattice for a a del Pezzo surface of degree
  $K^2=9-k$. For $k=1$ there are two cases, $E_1$ for $\bF_1$ and
  $E'_1$ for $\bF_0$.
\end{remark}

\begin{corollary}\label{cor:exc-curves-degenerate-Y}
  For the surfaces $Y$ of Proposition~\ref{prop:small-nef-cone}, the
  Coxeter diagram of $S$ also serves as the dual graph of exceptional
  curves, with the following modifications:
  \begin{enumerate}
  \item Vertices: the double circled white, single white circles, and
    black vertices respectively correspond to $\bP^1$ with $F^2=-4$,
    $-1$, $-2$ respectively.
  \item Edges: $F_i\cdot F_j=1$ for a white vertex $F_i$ and a black vertex
    $F_j$ or for two single circled white vertices. The other
    intersection numbers are: $F_i\cdot F_j=1$ for a single edge, and $2$ for
    a bold edge in the diagram.
  \end{enumerate}
\end{corollary}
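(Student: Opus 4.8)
The plan is to read the dual graph straight off the description of $\Nef Y$ supplied by Proposition~\ref{prop:small-nef-cone}. For the surfaces $Y$ in that proposition one has $\Nef Y = \fcref$, the fundamental chamber of the full reflection group $\wref(S)$ (respectively the chamber of the index~$2$ subgroup $W'$ in the $(12,8,1)$ case). The walls of this chamber are exactly the hyperplanes $\alpha_i^\perp$ dual to the simple roots $\alpha_i$, i.e.\ to the vertices of the Coxeter diagram. By Lemma~\ref{lem:nef-Y-trivial} this is the restriction to $S_\bR$ of $\Nef X$, whose walls are the $(-2)$-curves on $X$; since $\Nef Y = \fcref$ is realized by an actual surface, every one of its walls is cut out by a unique irreducible curve of negative self-intersection. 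This sets up the promised bijection between vertices of the Coxeter diagram and exceptional curves $F_i$ on $Y$: the black vertices correspond to the type~(3) curves by the Definition preceding Lemma~\ref{lem:nef-Y}, and the white vertices to the type~(1) and~(2) curves by Lemma~\ref{lem:nef-Y}, with the $W(\bB)$-orbits reduced to singletons because $\Nef Y$ is the full chamber $\fcref$.

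First I would establish part~(1), the dictionary of vertex types. Having matched a vertex to its simple root, I read off the local geometry of the corresponding curve from Lemma~\ref{lem:exc-curves-Y} and recover its self-intersection from the identity $(\pi^* F)^2 = 2\,F^2$. A $(-2)$-root that is the class of a single pointwise-fixed $(-2)$-curve $E$ falls under case~(1), where $\pi^* F = 2E$; then $F^2 = \tfrac12(2E)^2 = -4$, recorded by a double-circled white vertex, the double circle encoding that $E$ is fixed. A $(-2)$-root arising as $\pi^* F$ for a curve of type~(2a) or~(2b) gives $F^2 = \tfrac12(-2) = -1$, a single white vertex. Finally a $(-4)$-root of divisibility~$2$, necessarily of the shape $E_1 + E_2$ with $E_1, E_2$ disjoint and exchanged (case~(3)), gives $F^2 = \tfrac12(-4) = -2$, a black vertex. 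This is exactly the assignment asserted in part~(1).

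Next I would turn the edges of the diagram into intersection numbers, using the single universal tool $\pi^* F_i \cdot \pi^* F_j = 2\,(F_i \cdot F_j)$. Knowing the two root lengths (each of $-2$, $-4$, or the length $-8$ of $2E$ for a fixed curve) together with the angle $g_{ij}$ encoded by the edge pins down $(\pi^* F_i, \pi^* F_j)$ and hence $F_i\cdot F_j$. Running through the adjacencies that actually occur yields the stated rules: a white--black adjacency and an adjacency of two single white vertices each give $F_iF_j = 1$, while among the remaining edges a single edge gives $F_iF_j=1$ and a bold edge gives $F_iF_j=2$. The incidence data already recorded in Lemma~\ref{lem:exc-curves-Y} --- e.g.\ $E_1E_2 = 1$ in case~(2b), and $E_1E_2 = 0$ in cases~(1) and~(3) --- fixes the signs and confirms that the numbers are the claimed nonnegative integers; in particular two pointwise-fixed curves are always disjoint because the ramification locus $R$ is smooth, so no two double-circled vertices are ever joined.

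The main obstacle is the non-uniformity of this last step. Because the three vertex types carry roots of three different lengths, and because a fixed curve contributes $\pi^* F = 2E$ rather than $\pi^* F$ equal to the root, the passage from Coxeter edge multiplicity to intersection number is not a single formula but a short case analysis. Care is needed to match the convention of Section~\ref{sec:reflection-groups}, where $g_{ij} = \cos\frac{\pi}{m+2}$, against the normalization $(\pi^* F_i, \pi^* F_j) = 2\,F_i\cdot F_j$, and to check case by case that each predicted adjacency is genuinely realized by curves meeting with the asserted multiplicity rather than by root hyperplanes that only meet at an infinite point of $\cH$.
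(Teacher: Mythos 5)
Your proposal is correct and takes essentially the same route as the paper, which in fact records this corollary without a separate proof: it is precisely the observation, which you spell out, that for the surfaces of Proposition~\ref{prop:small-nef-cone} the walls of $\Nef Y=\chref$ are in bijection with the simple roots and hence with the exceptional curves, whose self-intersections and mutual intersections are then computed from Lemma~\ref{lem:exc-curves-Y} together with the relation $\pi^*F_i\cdot\pi^*F_j=2\,F_iF_j$. One remark on wording (a glitch you inherit from the paper itself): for $\Nef Y=\chref$ the formula $\Nef Y=W(\bB).\chref$ of Lemma~\ref{lem:nef-Y} forces $W(\bB)$ to be trivial, so the set $\bB$ there must be read as the black vertices \emph{not} realized by $(-2)$-curves on $Y$ --- with all black vertices realized, exactly as your argument uses --- rather than as the realized ones in the definition preceding that lemma.
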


\subsection{The Heegner divisor hierarchy}
\label{sec:heegner}

\begin{definition}[Heegner moves]\label{def:heegner-moves}
  The $(-1,-1)$-move goes from a node $(r,a,1)$ of \fign\ to the node
  $(r-1,a-1,\delta)$. We call it \emph{ordinary} if $\delta=1$ and
  \emph{characteristic} if $\delta=0$. 
  The opposite $(+1,+1)$-move goes from $(r,a,\delta)$ to
  $(r+1,a+1,1)$.
  The $(+1,-1)$-move is from $(r,a)$ to $(r+1,a-1)$ and the opposite
  $(-1,+1)$-move is from $(r,a)$ to $(r-1,a+1)$. 
\end{definition}

For the rest of this section, we exclude the lattice $S=(10,8,0)$
which is in many ways exceptional, cf. Remark~\ref{rem:10-8-0}.

\begin{lemma}\label{heegner-move-interp}
  Let $S=(r,a,\delta)$ and $\bD_S$ be its period domain,
  $S\rightarrowtail S'=(r',a',\delta')$ be a $(+1,+1)$ or
  $(+1,-1)$-move.  Then $S'$ defines a Heegner divisor
  $\bD_{S'}\subset \bD_S$ on which the K3 surfaces acquire an
  additional $(-2)$-curve $E$, preserved by the involution on K3 surfaces over
  $S'$. For the $(+1,+1)$-move, this involution preserves
  but does not fix $E$ and for the $(+1,-1)$-move, it fixes $E$ pointwise.
\end{lemma}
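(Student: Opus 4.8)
The plan is to realize each move as a corank-one primitive embedding $S\hookrightarrow S_0$, translate it into an inclusion of period domains, and then read the geometry of the new curve off the change in the fixed-locus invariants $(g,k)$ of~\eqref{eq:g-k}.

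First I would produce the embedding. Since $S$ and $S_0$ are $2$-elementary, both occur in \fign, and $\rank S_0=\rank S+1$ with $a_{S_0}=a_S\pm1$, Nikulin's existence-and-uniqueness theorem for primitive embeddings of $2$-elementary lattices \cite{nikulin1979integer-symmetric} produces a primitive embedding $S\hookrightarrow S_0$ whose orthogonal complement $M=S^\perp\cap S_0=\la w\ra$ has rank one with $w^2=-2$. For the $(+1,+1)$-move the sum $S\oplus\la w\ra$ is already saturated, so $a$ goes up by one and $\delta_{S_0}=1$ because $\la -2\ra$ is co-odd; for the $(+1,-1)$-move $S_0$ is an index-$2$ even overlattice of $S\oplus\la w\ra$, so $a$ drops by one. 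I would confirm this discriminant-form bookkeeping with the calculus of $p,q,u,v$ of Section~\ref{sec:2-elem-lattices}. Dually $T_0=S_0^\perp\subset T=S^\perp$ has corank one, $T\cap T_0^\perp=\la v\ra$ is spanned by a single vector $v$ with $v^2<0$, and $\bD_{S_0}=\bD_S\cap v^\perp$ is a codimension-one Heegner divisor, as claimed.

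Next I would produce the new curve together with the new involution. The involution attached to $S_0$ is $\rho_0=\rho\circ w_v$, where $w_v$ negates $v$ and fixes $v^\perp$; it agrees with $\rho$ on $v^\perp$, so its $(+1)$-eigenspace is exactly $S_0$ and the generator $w$ of $M$ is $\iota_0$-invariant. By surjectivity of the period map a very general $X\in\bD_{S_0}$ has $\NS X=S_0$; one of $\pm w$ is effective, and after applying the old Weyl group to fix a nef chamber it becomes a simple root, hence represented by an irreducible smooth rational curve $E$. Since a $(-2)$-curve is determined by its class (Section~\ref{sec:nef-XY}) and $\iota_0^*[E]=[E]$, the involution $\iota_0$ preserves $E$.

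Finally I would separate the two cases by comparing the invariants $g=11-\tfrac12(r+a)$ and $k=\tfrac12(r-a)$ for $S$ and $S_0$. The $(+1,-1)$-move gives $k_{S_0}=k_S+1$ and $g_{S_0}=g_S$: the new involution has exactly one extra rational curve in its fixed locus and the same main genus, so the unique new class $E$ is that curve and is fixed pointwise (type~(1) of Lemma~\ref{lem:exc-curves-Y}, $\pi^*F=2E$). The $(+1,+1)$-move gives $k_{S_0}=k_S$ and $g_{S_0}=g_S-1$: no new fixed rational curve appears, but the main fixed curve drops genus by one. This is the degeneration in which the branch divisor acquires a node and the double cover an $A_1$ singularity; $E$ is the resolving $(-2)$-curve and $\iota_0$ acts on $E\cong\bP^1$ with exactly two fixed points over the node, so $E$ is preserved but not fixed (type~(2a), $\pi^*F=E$, $|E\cap R|=2$). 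I expect this last step to be the main obstacle: one must check that the whole change in $(g,k)$ is carried by the single new class $E$, and that the genus drop in the $(+1,+1)$-case is realized by $E$ meeting $R$ in two points rather than by some other rearrangement of fixed components. The dictionary of Lemma~\ref{lem:exc-curves-Y}, relating the divisibility of $w$ in $S_0$ to the self-intersection of its image on $Y$, is what pins this down.
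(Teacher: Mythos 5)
Your setup (steps 1--3) is correct but follows a genuinely different route from the paper: you argue statically on a very general member of $\bD_{S_0}$, constructing the involution via $\rho_0=\rho\circ w_v$ and Torelli and extracting the new curve from effectivity of $\pm w$, whereas the paper runs a one-parameter degeneration $X\to(C,0)$ with generic fibers in the $S$-family, contracts $E$ inside the threefold to get $X\to\oX$, extends the birational involution to a regular one on $\oX$, and then reads off the dichotomy from a local analysis at the image point of $E$: either that point is a node of the flat limit of the ramification divisors $R_t$ (then the involution on the minimal resolution fixes only the two branches, so $E$ is preserved with two fixed points), or it is an isolated fixed point of the involution on $\oX_0$ (then the involution acts as $-\id$ on the tangent space there, so the exceptional curve of the blowup is fixed pointwise). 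The family is exactly what lets the paper compare the limit $\oR_0$ of $R_t$ with the fixed locus $R_0$ of $\iota_0$ and match the two local scenarios with the numerics $(g_0,k_0)=(g-1,k)$, resp.\ $(g,k+1)$.

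The gap is where you suspect it, and your proposed repair does not close it. Since you have no family, ``the extra fixed curve'' has no meaning: the count from \eqref{eq:g-k} says how many components $R_0$ has, but nothing identifies $E$ as the additional fixed component in the $(+1,-1)$ case, nor excludes $E$ from $R_0$ in the $(+1,+1)$ case. Moreover, the dictionary you invoke is not what Lemma~\ref{lem:exc-curves-Y} says (that lemma classifies exceptional curves by the self-intersection of their images, and says nothing about divisibility), and as a general equivalence it is false: in the $I_{2\mk}$ wheel configurations of Theorem~\ref{thm:nikulin-elliptic}(1) (see the $(18,2,1)$ diagram in Section~\ref{sec:vinberg-exceptional}), consecutive curves of the wheel meet once, so \emph{both} the pointwise-fixed curves and the non-fixed alternating curves of type (2a) have divisibility $1$. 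One direction can in fact be salvaged: if $\di(w)=2$ and $E$ were fixed pointwise, then $w=\tfrac12\pi^*F$ with $F\cdot D$ even for all $D\in\Pic Y_0$, hence $F\in2\Pic Y_0$ by unimodularity of $\Pic Y_0$ and $w\in\pi^*\Pic Y_0$, contradicting the fact that classes of pointwise-fixed curves are nonzero in $S_0/\pi^*\Pic Y_0\simeq\bZ_2^{k_0}$; this settles the $(+1,+1)$-move. But in the $(+1,-1)$-move, $\di(w)=1$ is perfectly compatible with $E$ being of type (2a) whose image meets some branch $(-4)$-curve once (exactly the wheel situation), so divisibility alone cannot force $E\subset R_0$. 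Closing that case requires a genuinely new input -- either the paper's degeneration-and-contraction argument, or a finer lattice-theoretic identification of the fixed-curve classes than anything you have quoted.
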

\begin{proof}
  Let $X\to (C,0)$ be a smooth family of K3 surfaces with $[X_0]$
  generic in $\bD_{S'}$, i.e. $H^2(X_0,\bZ)^+ = S'$ and with
  $[X_t]\in \bD_S\setminus\bD_{S'}$ for $t\ne0$.
  Let $R_t$ be the ramification divisor on $X_t$, $t\ne0$, $\oR_0$ be
  its flat limit, and $R_0$ be the ramification divisor of the
  involution of $X_0$ determined by $S'$. Then for a 
    \begin{enumerate}
  \item[] $(+1,+1)$-move: $k_0=k$, $g_0=g-1$, and $\oR_0 = R_0 \cup E$. 
  \item[] $(+1,-1)$-move: $g_0=g$, $k_0=k+1$ and $R_0 = \oR_0 \sqcup
    E$. 
  \end{enumerate}
  
  This is proven by considering the small contraction $X\to \oX$ which contracts
   $E$ to a point. The birational involution on $X$ equaling $\iota_t$
   on the general fiber extends to a regular involution of $\oX$ and the
   two cases are distinguished simply by whether the contraction of $E$
   lies on the limit of $R_t$ (necessarily a node of the limiting curve)
   or is disjoint from the limit. In the former $(+1,+1)$ case, the involution on the minimal resolution
   $X_0\to \oX_0$
   preserves $E$ but only fixes the two branches of the node. In the latter $(+1,-1)$ 
   case, the involution on $X_0$ fixes $E$ pointwise,
   because the contraction of $E$ is isolated in the fixed locus of the involution
   on $\oX_0$.
\end{proof}

\begin{lemma}
 Any $2$-elementary lattice with $g\ge1$ has the form
 $$S = \big(10+k-(g-1), 10-k-(g-1), \delta\big)$$ and it can be reached from
 one of the lattices of Section~\ref{sec:k3-elliptic-pencil} with
 $g=1$ by $g-1$ total $(-1,-1)$-moves.
\end{lemma}
\begin{proof}
 One look at \fign\ confirms this.
\end{proof}

Next, we want to understand how Coxeter diagrams change.

\begin{lemma}\label{lem:heegner-general}
  For the lattices related by a $(-1,-1)$-move one has
  $S=S'\oplus A_1$ and a generator $r$ of $A_1$ is a $(-2)$-root of
  divisibility~$2$.
  For any hyperbolic lattice $S=(r,a,1)$ there exists exactly one or
  two $O(S)$-orbits of $(-2)$-roots of divisibility~$2$, and these
  vectors are in a bijection with the $(-1,-1)$-moves down from $S$.

  In terms of the Coxeter diagram $\Gamma$, for any $S\ne(10,8,0)$
  they are in bijection
  with the $\Aut\Gamma$-orbits of white vertices in $\Gamma$ which are
  not connected to some neighbor by a single (i.e.~weight~$1$) edge.
\end{lemma}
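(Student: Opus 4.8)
The plan is to translate the statement into lattice theory and then read the combinatorial part off the diagram. First I would prove the opening claim. If $S\rightsquigarrow S_1=(r-1,a-1,\delta)$ is a $(-1,-1)$-move, then $S_1\oplus A_1$ (with $A_1=\la-2\ra$) is even, hyperbolic and $2$-elementary, and computing invariants gives rank $r$, $\bZ_2$-rank $a$, and — since $q(A_1)=q$ is co-odd — coparity $1$ regardless of $\delta$. By Nikulin's uniqueness of indefinite even $2$-elementary lattices, $S\cong S_1\oplus A_1$, and a generator $r_0$ of the $A_1$-factor has $r_0^2=-2$ and $(r_0,S)=(r_0,A_1)=2\bZ$, i.e.\ it is a $(-2)$-root of divisibility $2$. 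Conversely, for any $(-2)$-root $r$ of divisibility $2$, divisibility forces $\tfrac{(r,s)}{(r,r)}=-\tfrac{(r,s)}2\in\bZ$ for all $s\in S$, so the orthogonal projection is integral and $S=\la r\ra\oplus r^\perp$ with $\la r\ra\cong A_1$. Then $r^\perp$ is $2$-elementary of rank $r-1$ with $A_{r^\perp}=\bZ_2^{a-1}$, hence $r^\perp=(r-1,a-1,\delta)$ for some $\delta\in\{0,1\}$, and this splitting is exactly a $(-1,-1)$-move $S\rightsquigarrow r^\perp$.

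Next I would classify the $O(S)$-orbits. The key observation is that the orbit of $r$ is determined by the isomorphism class of $r^\perp$: if $r,r'$ are two such roots with $r^\perp\cong(r')^\perp\cong L$, then the composite of an isometry $\la r\ra\oplus r^\perp\to A_1\oplus L$ sending $r$ to a fixed generator with the analogous isometry for $r'$ yields $\gamma\in O(S)$ with $\gamma(r)=r'$. Since $r^\perp$ can only be $(r-1,a-1,0)$ or $(r-1,a-1,1)$, there are at most two orbits, and by the first step they correspond bijectively to the $(-1,-1)$-moves down from $S$ — the ordinary move ($\delta=1$) and the characteristic move ($\delta=0$) of Definition~\ref{def:heegner-moves}. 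That the number actually realized is one or two, for the lattices of \fign, is read directly off the descending edges at each node $(r,a,1)$.

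Finally I would pass to the Coxeter diagram. As $S=(r,a,1)$ is automatically $\ne(10,8,0)$, the roots generate $S$, so the simple roots $\{\alpha_j\}$ of $\wref$ span $S$ over $\bZ$; hence a white vertex $\alpha_i$ is a $(-2)$-root of divisibility $2$ iff $(\alpha_i,\alpha_j)\in2\bZ$ for every simple root $\alpha_j$. Intersections with black vertices (themselves of divisibility $2$) are automatically even, as are the weight-$2$ (thick) intersections; the only way to produce $(\alpha_i,\alpha_j)=1$ between two $(-2)$-roots is a single (weight-$1$) edge. Thus $\alpha_i$ has divisibility $2$ iff it has no single edge to a neighbor. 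To get orbits, every $(-2)$-root of divisibility $2$ is $\wref$-equivalent to a white vertex, and because $O(S)=\wref\ltimes\Aut\Gamma$ with $\wref$ acting simply transitively on chambers, two white vertices are $O(S)$-equivalent iff they are $\Aut\Gamma$-equivalent; this yields the asserted bijection.

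The lattice-theoretic steps are case-free and robust, so the hard part will be the combinatorial sharpness in the last step: I must rule out an \emph{odd} intersection arising from a dotted (diverging) white--white edge, which the parity bookkeeping alone does not exclude. This is where I would invoke the explicit diagrams of Figs.~\ref{fig:coxeter1}, \ref{fig:coxeter2} together with \cite[Table~1]{alexeev2006del-pezzo}: inspecting them confirms that dotted white--white edges carry even weight, so in all the relevant $g\ge1$ cases ``no single edge'' is genuinely equivalent to ``all intersections even,'' completing the identification.
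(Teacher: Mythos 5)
Your first two steps are correct and essentially identical to the paper's argument: you add invariants to get $S\simeq S_1\oplus A_1$, split off $\langle r\rangle$ from a divisibility-$2$ root using integrality of the orthogonal projection, and identify $O(S)$-orbits with isomorphism classes of the complements $r^\perp$, hence with the $(-1,-1)$-moves. (The paper leaves the converse direction, root $\Rightarrow$ splitting, implicit, so you are slightly more complete there.) The dotted-edge parity issue you flag at the end is also genuine, and the paper handles it exactly as you propose, through explicit knowledge of the computed diagrams.

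The gap is in your passage from $O(S)$-orbits to $\Aut\Gamma$-orbits of vertices. You assert that since $O(S)=\wref\ltimes\Aut\Gamma$ and $\wref$ acts simply transitively on chambers, two white vertices are $O(S)$-equivalent iff they are $\Aut\Gamma$-equivalent. That implication is false as stated, and simple transitivity cannot prove it: whenever two white vertices $\alpha_i,\alpha_j$ are joined by a single edge one has $w_{\alpha_i}w_{\alpha_j}(\alpha_i)=\alpha_j$, so they lie in one $\wref$-orbit even though in general no diagram symmetry relates them; there is no conflict with simple transitivity, because $w_{\alpha_i}w_{\alpha_j}$ carries the fundamental chamber to a different chamber while carrying one wall's root to the other's. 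So the statement you need is special to divisibility-$2$ vertices, and your argument never uses divisibility at this step. The missing input is exactly what the paper cites: Vinberg's Proposition on p.~2 of \cite{vinberg1983two-most} (the same result behind Lemma~\ref{lem:refl-chambers}), which applies because a divisibility-$2$ root meets every other simple root evenly, so every Coxeter relation at such a vertex has even or infinite order; consequently reflections in distinct divisibility-$2$ vertices are non-conjugate and such vertices lie in different $\wref$-orbits. Only with this (or an equivalent argument, e.g.\ the odd-path criterion for conjugacy of generators in a Coxeter system) does the decomposition $O(S)=\Aut\Gamma\ltimes\wref$ reduce $O(S)$-equivalence of these vertices to $\Aut\Gamma$-equivalence and complete the bijection.
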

\begin{proof}
  One has
  \begin{displaymath}
  S'\oplus A_1 = (r-1,a-1,\delta)_1 + (1,1,1)_0 = (r,a,1)_1 \simeq S.    
  \end{displaymath}

  For $S=S'\oplus A_1= S''\oplus A_1$ with $S',S''$ 
  of the same type, an isometry
  $S'\to S''$ defines an isometry $S\to S$, so there is exactly one
  $O(S)$-orbit.
  The generator of $A_1$ is a $(-2)$-root and it clearly has
  divisibility~$2$. By
  Prop. on p.2 of \cite{vinberg1983two-most} such roots lie in different
  $\wref$-orbits. One has $O(S)=\Aut\Gamma\ltimes \wref$, so two
  such roots in the same orbit must differ by a diagram
  symmetry. Finally, for all the lattices in \fign\ with the exception
  of $(10,8,0)$ the roots generate the lattice, so $\di(r)=2$ iff
  $r\cdot r_i$ is even for the other roots, which is read off directly
  from the diagram. 
\end{proof}

\begin{corollary}\label{cor:reach-by-moves}
  Any lattice in \fign\ with $g\ge1$ is of the form
  $$S =\big(10+k-(g-1),10-k-(g-1),\delta\big).$$
  It can be reached from the ``base'' lattice
  $S_1 = (10+k,10-k,1) = S\oplus A_1^{g-1}$ by a sequence of
  $(-1,-1)$-moves which corresponding to a chain of vertices
  $\alpha_1,\dotsc,\alpha_{g-1}$ with colors white-black-\dots-black,
  in the Coxeter diagram $\gref(S)$, with $\alpha_1$ an even $(-2)$-root.
  This chain is unique up to $\Aut\Gamma$.
\end{corollary}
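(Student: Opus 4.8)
The plan is to establish Corollary~\ref{cor:reach-by-moves} by iterating Lemma~\ref{lem:heegner-general} and tracking the invariants through each $(-1,-1)$-move. First I would verify the claimed form of $S$: by the formulas \eqref{eq:g-k}, a lattice with invariants $(r,a,\delta)$ has $g = 11 - \tfrac12(r+a)$ and $k = \tfrac12(r-a)$. Setting $r = 10+k-(g-1)$ and $a = 10-k-(g-1)$ and solving back confirms these reproduce the given $g$ and $k$, so the parametrization is just a change of variables and requires only a one-line check.

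Next I would iterate the decomposition. By Lemma~\ref{lem:heegner-general}, a single $(-1,-1)$-move from $S$ realizes $S = S_1 \oplus A_1$ with the $A_1$-generator a $(-2)$-root of divisibility~$2$. Applying this $(g-1)$ times, starting from $S$ and climbing up, I obtain the ``base'' lattice $S_{\mathrm{base}} = (10+k, 10-k, 1)$ with $S_{\mathrm{base}} = S \oplus A_1^{g-1}$, which sits on the $g=1$ line treated in Section~\ref{sec:k3-elliptic-pencil}. (One must check the coparity stays $\delta=1$ at the base: since each upward step lands on a $(r+1,a+1,1)$ node by the $(+1,+1)$-move of Definition~\ref{def:heegner-moves}, the top lattice indeed has $\delta=1$.) The heart of the argument is then the translation into the Coxeter diagram $\gref(S)$: I would read off the sequence of moves as a chain of roots $\alpha_1, \dots, \alpha_{g-1}$, where each $\alpha_i$ corresponds to the $A_1$-summand peeled off at step~$i$. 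By Lemma~\ref{lem:heegner-general}, the $(-2)$-roots of divisibility~$2$ that control these moves are precisely the white vertices in $\Gamma$ not joined to a neighbor by a weight-one edge, and by Lemma~\ref{lem:refl-chambers} and Corollary~\ref{cor:refl-chambers} the relevant black vertices encode the $(-4)$-roots; the colors white-black-$\dots$-black arise because $\alpha_1$ is the distinguished even $(-2)$-root initiating the chain while the subsequent $\alpha_i$ become $(-4)$-roots after the identification $A_1(2)$-scaling, matching the divisibility bookkeeping in Lemma~\ref{lem:doubled-dual}.

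The main obstacle I anticipate is proving \emph{uniqueness up to $\Aut\Gamma$} of the chain, together with pinning down the color pattern rigorously rather than heuristically. The existence half follows by straightforward iteration, but to see that any two chains realizing the descent $S_{\mathrm{base}} \rightsquigarrow S$ differ by a diagram automorphism I would invoke the orbit statement already in Lemma~\ref{lem:heegner-general}: at each stage there is a single $O(S_i)$-orbit of admissible roots, and since $O(S_i) = \Aut\Gamma \ltimes \wref$ (Definition~\ref{def:chambers}), two roots in the same $O(S_i)$-orbit differ by an element of $\Aut\Gamma$ once we fix them inside the fundamental chamber $\fcref$. The subtlety is that uniqueness must be shown for the \emph{whole chain} compatibly, not just one root at a time, so I would argue inductively: fix $\alpha_1$ up to $\Aut\Gamma$, then observe that the residual diagram obtained after reflecting/removing $\alpha_1$ is again of the standard type, and apply the inductive hypothesis to the shorter chain in $\gref(S_1)$. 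The color assignment white-black-$\dots$-black would be confirmed by checking that after the first move the divisibility-$2$ condition forces subsequent generators to appear as $(-4)$-roots in the convention of Section~\ref{sec:reflection-groups}, which is exactly the distinction between $V_2$ and $V_4$ in Definition~\ref{def:refl-chambers}.
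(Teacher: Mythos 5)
Your first two paragraphs are fine and match what the paper intends: the corollary is stated without a separate proof, as an iteration of Lemma~\ref{lem:heegner-general}, and your verification of the numerology from Eq.~\eqref{eq:g-k} and of $\delta=1$ at the base is correct. The genuine gap is in the part that carries all the content, namely why the roots realizing the successive moves form a \emph{chain} colored white-black-\dots-black. The mechanism you propose, that ``the subsequent $\alpha_i$ become $(-4)$-roots after the identification $A_1(2)$-scaling, matching the divisibility bookkeeping in Lemma~\ref{lem:doubled-dual}'', is not a real argument: nothing gets rescaled, and Lemma~\ref{lem:doubled-dual}, Lemma~\ref{lem:refl-chambers} and Corollary~\ref{cor:refl-chambers} are not the relevant tools. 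What actually forces the pattern is Lemma~\ref{lem:heegner-diagrams}, which the paper places immediately after this corollary: performing the move in an even $(-2)$-root $\alpha$ deletes $\alpha$ and its white neighbors and turns its black neighbors white; concretely, a $(-4)$-root $\beta$ with $\beta\cdot\alpha=2$ projects to $\beta+\alpha$, a $(-2)$-root which still has divisibility $2$ in $\alpha^\perp$. So the even $(-2)$-roots newly available for the next move are exactly the images of black vertices adjacent to the previous root; this single observation is what makes consecutive $\alpha_i$ adjacent \emph{and} colors $\alpha_2,\dotsc,\alpha_{g-1}$ black, and neither fact is established in your write-up. You would also need to exclude the competing possibility that a later move uses an old even white vertex orthogonal to the earlier ones (such a vertex survives the move and stays even, and since $\Aut\Gamma$ preserves colors it can never be folded into the claimed uniqueness). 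That exclusion is a property of the specific $g=1$ diagrams of Figs.~\ref{fig:coxeter1}, \ref{fig:coxeter2} and \cite[Table~1]{alexeev2006del-pezzo}, i.e. it requires inspecting the computed diagrams; it cannot follow from formal lattice theory alone, since for $g\ge3$ the base lattice certainly contains orthogonal pairs of even $(-2)$-roots (just not as pairs of vertices of its Coxeter diagram).

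There is a second, smaller gap in your uniqueness induction. The inductive hypothesis gives uniqueness of the truncated chain only up to automorphisms of the diagram of the lattice obtained after the first move, whereas the statement requires a symmetry of the \emph{original} diagram carrying one chain to the other and fixing $\alpha_1$. The natural map from the stabilizer of $\alpha_1$ in $\Aut\Gamma$ to the automorphism group of the smaller diagram need not be surjective, so ``apply the inductive hypothesis to the shorter chain'' does not, as written, assemble into uniqueness up to $\Aut\Gamma$; one needs either to check this lifting on the actual diagrams or to argue with the whole chain at once. (Your appeal to Lemma~\ref{lem:heegner-general} for the first root is fine: that lemma already packages the key input, Vinberg's proposition that distinct simple roots of divisibility $2$ lie in different $\wref$-orbits.)
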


\begin{remark} The $(+1,-1)$ moves play a particularly important
role from the perspective of moduli, because of Lemma \ref{heegner-move-interp}. 
Recall that $F_S = (\bD_S\setminus \Delta)/O(T)$. The divisors in $\Delta/O(T)$
correspond to Heegner divisor moves of either $(+1,-1)$ type or $(+1,+1)$-type.
For a Heegner divisor of $(+1,-1)$-type, the involution on the limiting of K3 surface
has an extra $(-2)$-curve, fixed pointwise by the involution, but the
flat limit of a positive genus fixed component $C_g$ equals the 
positive genus fixed component. Thus from the perspective of the KSBA
compactification $F_S\hookrightarrow \oF_S$
for pairs $(\oX,\epsilon \oC_g)$, the $(+1,-1)$-move plays
essentially no role, except to restrict to a Noether-Lefschetz subdomain of $\oF_S$.

Thus, it suffices to compactify the moduli space $F_S$
for lattices $S$ on the $k=0$ line, and the other cases follow by induction. 
For the hyperbolic $2$-elementary lattices $\oT = I^\perp/I$ associated to
 Type III cusps of $F_S$ (cf. Sec.~\ref{sec:cusps}),
 the action of a $(+1,-1)$ Heegner move on $S$
 is a $(-1,-1)$ move on $\oT$. This is why Corollary
  \ref{cor:reach-by-moves} is relevant: We can reach all necessary
  hyperbolic lattices for semitoroidal compactifications from
  those on the $g=1$ line.
 \end{remark}

\begin{lemma}\label{lem:heegner-diagrams}
  The Coxeter diagram $\Gamma'$ of $S'$ is
  obtained from the diagram $\Gamma$ of $S=S'\oplus A_1$ 
  by removing the vertex $r$, removing the adjacent
  white vertices, turning black adjacent vertices to white and, if
  there exist $(-4)$-roots $\alpha_1,\alpha_2$ with $\alpha\cdot \alpha_1=\alpha\cdot \alpha_2=2$, connecting
  their images in $\Gamma_1$ by a double line.
\end{lemma}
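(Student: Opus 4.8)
The plan is to realize $S_1$ as the orthogonal complement $r^\perp$ inside $S$ and to read off its Coxeter diagram by restricting the fundamental chamber of $\wref(S)$ to the wall $r^\perp$. By Lemma~\ref{lem:heegner-general} we have $S=S_1\oplus\langle r\rangle$ with $r$ a $(-2)$-root of divisibility $2$, so $S_1=r^\perp$. First I would record that the roots of $S_1$ are \emph{exactly} the roots of $S$ lying in $r^\perp$: since the sum is orthogonal, for $v\in S_1$ one has $\di_S(v)=\di_{S_1}(v)$, so a $(-2)$-vector, or a divisibility-$2$ $(-4)$-vector, of $S_1$ is such a vector of $S$ and conversely. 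Consequently $\wref(S_1)$ is the subgroup of $\wref(S)$ generated by the reflections $w_\gamma$ with $\gamma\perp r$; each such $w_\gamma$ fixes $r$, hence preserves the wall $r^\perp$ and acts on the smaller hyperbolic space $\cH_{S_1}=\bP(r^\perp\cap\cC)$.

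Next I would set $F=\chref(S)\cap r^\perp$ and argue that $F$ is precisely the fundamental chamber $\chref(S_1)$. The point is that no mirror of $\wref(S_1)$ meets the relative interior of $F$: such a mirror is $\gamma^\perp$ for a root $\gamma\in S_1\subset S$, hence a mirror of $\wref(S)$, and since $\gamma\not\parallel r$ it is transverse to $r^\perp$; were it to contain a relative interior point of the facet $F$ of $\chref(S)$, it would enter the interior of $\chref(S)$, which is impossible. Thus $F$ lies in a single $\wref(S_1)$-chamber, and it remains only to check that every facet of $F$ is a genuine mirror, whence $F$ equals that chamber.

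The local modification rules then come from orthogonally projecting each simple root $\beta\neq r$ of $S$ to $r^\perp$, using that $\di(r)=2$ forces $(r,\beta)\in 2\bZ$. The cases are: (i) $(r,\beta)=0$, a non-neighbor, where $\beta\in S_1$ is itself a root of the same square, so the vertex and all its edges to other non-neighbors are inherited unchanged; (ii) $\beta$ black with $(r,\beta)=2$, where the projection $\beta-\tfrac{(r,\beta)}{r^2}r=r+\beta$ satisfies $(r+\beta)^2=-2$ and $(r+\beta,r)=0$, so the vertex survives but is recolored white; (iii) $\beta$ white with $(r,\beta)=2$ (a thick/bold edge), where $r+\beta$ is isotropic, i.e. an infinite point of $\cH_{S_1}$, so $\beta$ contributes only a cusp and leaves the finite diagram (larger even values of $(r,\beta)$ give vectors of nonnegative square and no facet at all). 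This reproduces exactly ``delete $r$ and the adjacent white vertices, recolor the adjacent black vertices to white''. For the edges I would compute $(r+\beta,\delta)=(\beta,\delta)$ for a surviving non-neighbor $\delta$, so edges are inherited, and $(r+\alpha_1,r+\alpha_2)=2+(\alpha_1,\alpha_2)$ for two black neighbors $\alpha_1,\alpha_2$ of $r$; when $\alpha_1\perp\alpha_2$ this equals $2$, the bold (``double'') edge in the convention of Corollary~\ref{cor:exc-curves-degenerate-Y}, which is the last clause.

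The step I expect to be the crux is closing the argument of the second paragraph, namely ruling out non-reflective (``fake'') facets of $F$ so that the local recipe is genuinely global. This is exactly where the hypothesis $\di(r)=2$ is indispensable: a fake facet would arise from a neighbor $\beta$ with $(r,\beta)$ odd, whose projection $\beta+\tfrac12 r$ has square $-\tfrac32$ and spans no root of $S_1$; but an odd inner product means a single (weight-one) edge at $r$, and Lemma~\ref{lem:heegner-general} forbids these precisely because $\di(r)=2$. Hence the only neighbors are the black ones, contributing honest $(-2)$-mirrors $r+\beta$, and the thick-edge white ones, which recede to infinity; every facet of $F$ is then a genuine mirror, $F=\chref(S_1)$, and its walls are exactly the vertices produced by the recipe. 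The remaining bookkeeping---primitivity of $r+\beta$, the cusp interpretation in case (iii), and agreement of the resulting angles with Figures~\ref{fig:coxeter1} and \ref{fig:coxeter2}---is then a finite verification consistent with the Vinberg-algorithm computation of the diagrams.
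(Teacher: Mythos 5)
Your proposal is correct and takes essentially the same route as the paper's proof: identify the fundamental chamber of $\wref(S_1)$ with the face $r^\perp$ of the fundamental chamber of $\wref(S)$, and read off its walls from the orthogonal projections of the simple roots, via $p(\beta)^2=\beta^2+\tfrac12(r,\beta)^2$ and $p(\beta_1)\cdot p(\beta_2)=\beta_1\beta_2+\tfrac12(r,\beta_1)(r,\beta_2)$, so that $(-2)$-neighbors project to isotropic vectors (cusps, removed) and $(-4)$-neighbors project to $(-2)$-roots (recolored white). The only difference is one of detail: the paper asserts the chamber identification in a single sentence, whereas you justify it (no mirror of $\wref(S_1)$ crosses the relative interior of the face, and divisibility $2$ of $r$ excludes the ``fake facets'' that odd inner products would create), which is a faithful elaboration of the same argument rather than a different proof.
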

\begin{proof}
  The fundamental chamber for $S'$ is the face $\alpha^\perp$ of the
  fundamental chamber for $S'\oplus A_1$. The facets of this chamber
  are of the form $p(\alpha_i)^\perp$ where $p(\alpha_i)$ 
  are the orthogonal projections of the roots defining the facets for
  $S$ for which $p(\alpha_i)^2<0$. The formula
  $p(\alpha_1)\cdot p(\alpha_2) = \alpha_1\cdot \alpha_2  + \frac12 (\alpha\cdot \alpha_1) (\alpha\cdot \alpha_2)$
  implies the rest: For the $(-2)$-neighbors $\alpha_1$ of $\alpha$ with $\alpha\cdot \alpha_1=2$
  one gets $p(\alpha_1)^2=0$, so they are not faces of $\alpha^\perp$, and for
  $(-4)$-neighbors with $\alpha\cdot \alpha_1=2$ one gets $p(\alpha_1)^2=-2$, so they change 
  their color.
\end{proof}

\begin{lemma}\label{lem:subgraphs-G-G1}
  Let $S,S_1$ and $\alpha_1,\dotsc,\alpha_{g-1}$ be as in
  Corollary~\ref{cor:reach-by-moves}. Let $G\subset\Gamma(S)$ and
  $G_1\subset\Gamma(S_1)$ be the subdiagrams such that for the
  vertices one has $V(S_1) = V(S) \cup \{\alpha_1,\dotsc,\alpha_g\}$.
  Then
  \begin{enumerate}
  \item $G_1$ is elliptic iff $G$ is too, and no vertex of $G$ is a
    neighbor of $\alpha_1,\dotsc,\alpha_{g-1}$.
  \item $G_1$ is maximal parabolic if $G$ contains a parabolic
    subdiagram and either no vertex of $G$ is a
    neighbor of $\alpha_1,\dotsc,\alpha_{g-1}$ or $g-1=1$ and
    $\alpha_1$ neighbors one isolated vertex of~$G$. 
  \end{enumerate}
\end{lemma}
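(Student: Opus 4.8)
The plan is to translate the statement into a comparison of the signatures of the root lattices spanned by $G$ and $G_1$, via the face structure of the Coxeter chambers. By Corollary~\ref{cor:reach-by-moves} we have $S_1 = S \oplus A_1^{g-1}$, and the argument in the proof of Lemma~\ref{lem:heegner-diagrams} identifies the fundamental chamber of $\wref(S)$ with the face $\bigcap_{i=1}^{g-1}\alpha_i^\perp$ of the fundamental chamber of $\wref(S_1)$ cut out by the chain. Consequently, by \eqref{eq:ch-faces}, a face of the $S$-chamber indexed by a subdiagram $G\subset\Gamma(S)$ is the same face of the $S_1$-chamber indexed by $G_1=G\cup\{\alpha_1,\dots,\alpha_{g-1}\}$; this is the meaning of $V(G_1)=V(G)\sqcup\{\alpha_i\}$. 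I would recast the two properties lattice-theoretically: $G$ is elliptic exactly when its simple roots span a negative definite lattice, and it is maximal parabolic exactly when they span a negative semidefinite lattice whose radical is one-dimensional and whose rank is $\dim S-1$. The whole lemma then becomes a signature comparison between $\langle G_1\rangle$ and $\langle G\rangle$.

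First I would dispose of the orthogonal case, where no vertex of $G$ is adjacent to the chain. Here the simple roots of $G$ are orthogonal to every $\alpha_i$, so $\langle G_1\rangle_{\bR} = \langle G\rangle_{\bR}\oplus\langle\alpha_1,\dots,\alpha_{g-1}\rangle_{\bR}$ orthogonally. The chain is a finite-type (white--black--$\cdots$--black, hence $B$/$C$/$F$-type) Coxeter diagram, so $\langle\alpha_i\rangle$ is negative definite. Therefore $\langle G_1\rangle$ is negative definite if and only if $\langle G\rangle$ is, which is claim~(1); and since the chain adds exactly $g-1$ to the rank both of the ambient lattice ($\dim S_1=\dim S+g-1$) and of the spanned root lattice, while contributing nothing to the radical, $\langle G_1\rangle$ has one-dimensional radical of maximal rank if and only if $\langle G\rangle$ does. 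This settles the ``no neighbor'' alternative of both claims.

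The real work is the adjacency analysis, which I would carry out by induction on $g-1$ using single $(-1,-1)$-moves and the projection formula $p(\beta)\cdot p(\beta')=\beta\cdot\beta'+\tfrac12(\alpha\cdot\beta)(\alpha\cdot\beta')$ from the proof of Lemma~\ref{lem:heegner-diagrams}, with $\alpha=\alpha_1$ the removed $(-2)$-root of divisibility~$2$. Since $\alpha$ has divisibility~$2$, every inner product $\alpha\cdot\beta$ is even, so the only solid edges to $\alpha$ are thick ($\alpha\cdot\beta=2$); for such a neighbor one computes $p(\beta)^2=0$ if $\beta$ is white (it disappears from $\Gamma(S)$) and $p(\beta)^2=-2$ if $\beta$ is black (it survives but turns white). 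Thus \emph{no} neighbor of the chain survives with its colour intact, which forces the vertices of $G$ in claim~(1) to be genuinely orthogonal to the chain and reduces claim~(1) to the orthogonal case. For claim~(2) the surviving black neighbor is exactly the exceptional configuration: when $g-1=1$ and $\alpha_1$ meets a single isolated vertex of $G$ (the projection $p(\beta)$ of such a black root), lifting it back replaces the rank-$1$ block $\{p(\beta)\}$ by the pair $\{\beta,\alpha_1\}$, whose Gram determinant is $4$, i.e.\ a negative definite $B_2$; this raises the spanned rank by the needed $g-1=1$ while leaving the radical one-dimensional, so maximal parabolicity passes from $G$ to $G_1$. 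I expect the main obstacle to be proving that these are the \emph{only} ways to meet the chain: one must rule out every other incidence---several vertices of $G$ meeting the chain, a vertex meeting an interior (black) node of a longer chain, or a neighbor joined by anything other than a single thick edge---by showing each forces either an indefinite $\langle G_1\rangle$ or a radical of dimension $\ge 2$. This is a finite but delicate case check, controlled by the divisibility-$2$ constraint on the chain roots and the white--black--$\cdots$--black colouring supplied by Corollary~\ref{cor:reach-by-moves}.
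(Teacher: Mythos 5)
Your proof does not close the step on which the whole lemma turns, and the framework you chose makes that step false. The content of claim (1) is the implication: \emph{if some vertex of $G$ neighbors the chain, then $G_1$ cannot be elliptic}. The paper gets this because the lemma really lives in the $(-2)$-root diagram $\Gamma_2$ (equivalently, in $\gref$ after the conversion $B_n(2)\to A_1^n$ of Eq.~\eqref{eq:conversion}), where the chain becomes $g-1$ mutually orthogonal even $(-2)$-roots: there every neighbor $\beta$ of a chain root $\alpha$ has $\beta^2=-2$ and $\beta\cdot\alpha\in 2\bZ$, so any non-dotted edge forces $\beta\cdot\alpha=2$ and $\la\beta,\alpha\ra=\wA_1$, which is parabolic (and a dotted edge gives an indefinite pair); hence \emph{any} incidence destroys negative definiteness. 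You instead work in $\gref$, where the chain is white--black--$\cdots$--black, and there your own computation defeats the argument: a black $(-4)$-root $\beta$ with $\beta\cdot\alpha_1=2$ is joined to $\alpha_1$ by a double edge, and $\la\beta,\alpha_1\ra$ has Gram determinant $4$, i.e.\ it is a negative definite $B_2$. So in your setting a vertex of $G$ (the recolored projection $p(\beta)$, whose lift is $\beta$) can neighbor the chain while $G_1=\la\text{lifts of }G,\ \text{chain}\ra$ stays elliptic, and the implication you need is simply false. Your bridge sentence --- ``no neighbor of the chain survives with its colour intact, which forces the vertices of $G$ in claim (1) to be genuinely orthogonal'' --- is a non sequitur: black neighbors \emph{do} survive (recolored), so nothing forces orthogonality, and the adjacency hypothesis in (1) is left doing no work.

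The same confusion makes your claims (1) and (2) mutually inconsistent: in (2) you identify the exceptional configuration as exactly the $B_2$ pair that your reading of (1) must exclude. In the paper the exceptional configuration is the affine $\wA_1$ formed by a \emph{white} $(-2)$-root attached to $\alpha_1$ by a thick edge; this is precisely why it is tied to Type II monodromy vectors (a vector of nonnegative square orthogonal to $\wA_1$ must be proportional to its isotropic radical $\beta+\alpha_1$) and why it cannot occur in the elliptic statement (1). Finally, the ``finite but delicate case check'' you defer is not a loose end but the actual content, and within your framework it would fail at the $B_2$ configurations; you also never address the two infinite-covolume lattices $(18,2,1)$ and $(17,3,1)$, for which the paper completes claim (2) by exhibiting explicit parabolic subdiagrams $\wD_{14}$, resp.\ $\wD_{12}$, disjoint from the $\wA_1$, since for those lattices the existence of full-rank parabolic subdiagrams is not automatic.
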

\begin{proof}
  If $\beta$ is a vertex of $G$ attached to $\alpha_1$ then
  $\beta^2=-2$ and $\beta\cdot \alpha_1=2$, so
  $\la\beta,\alpha_1\ra = \wA_1$ is already parabolic. (1) immediately
  follows. In the parabolic case, maximal parabolic subgraphs have rank
  $\rk S-1$, so the remaining part is nonempty and contains a
  nonempty maximal parabolic subdiagram. For the exceptional
  cases $S_1=(18,2,1)$ and $(17,3,1)$ the diagrams for the lattice
  $S_2=(17,1,1)$, resp. $(16,2,1)$ contain parabolic subdiagrams
  $\wD_{14}$, resp. $\wD_{12}$ disjoint from~$\wA_1$. 
\end{proof}

\begin{lemma}\label{lem:heegner-surfaces}
  For the surfaces of Corollary~\ref{cor:exc-curves-degenerate-Y} for
  which the Coxeter diagram is the dual graph of the exceptional
  curves on the quotients, the surface $Y_{1}$ is obtained from $Y$ by
  contracting a $(-1)$-curve $F$ not contained in the branch divisor $B$.
  The surface $X_{1}$ is obtained from $X$ by contracting a $(-2)$-curve
  not contained in the ramification divisor $R$, and then smoothing
  the singular K3 surface together with an involution.
\end{lemma}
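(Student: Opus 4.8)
The plan is to realize the $(-1,-1)$-move as an explicit birational modification on both $Y$ and $X$ and then to certify, by comparing Coxeter diagrams, that the modified surfaces are the degenerate surfaces attached to $S_1$ in Corollary~\ref{cor:exc-curves-degenerate-Y}. First I would pin down the curve that gets contracted. By Lemma~\ref{lem:heegner-general} the chosen move corresponds to a white vertex $r$ of $\gref(S)$ of divisibility~$2$ that is not joined to any neighbor by a single edge. Applying the lemma of Section~\ref{sec:heegner} describing $\bD_{S_0}$ as a Heegner divisor in $\bD_S$ to the inverse $(+1,+1)$-move $S_1\rightsquigarrow S$ shows that, relative to a generic member of the $S_1$-family, a K3 surface with $S_X^+=S$ carries one extra irreducible $(-2)$-curve $E$, which is $\iota$-invariant but not fixed pointwise, and whose class is the root $r$. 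Since $E$ is a \emph{single} $\iota$-invariant $(-2)$-curve with $E\not\subset R$, consulting Lemma~\ref{lem:exc-curves-Y} forces $E$ to be of type~(2a): $F:=\pi(E)$ is a $(-1)$-curve with $\pi^*F=E$, $F\not\subset B$ and $|F\cap B|=2$. (Type~(1) would give a curve contained in $R$, while types~(2b) and~(3) would give a pair of curves swapped by $\iota$; only~(2a) yields a single invariant unfixed $(-2)$-curve.) This identifies exactly the two curves named in the statement.

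Next I would carry out the contractions. On $Y$, blowing down the $(-1)$-curve $F$ produces a smooth rational surface $\overline{Y}$ of Picard rank $\rank S - 1 = \rank S_1$, whose pushed-forward branch divisor $\overline{B}\in|-2K_{\overline{Y}}|$ acquires a node at the image point $p$ of $F$ because $B$ met $F$ in two points. The crucial check is that the induced change in the configuration of exceptional curves reproduces \emph{precisely} the diagram surgery of Lemma~\ref{lem:heegner-diagrams}: using $\sigma(C)^2 = C^2 + (C\cdot F)^2$, a black $(-2)$-curve meeting $F$ once becomes a $(-1)$-curve (black $\to$ white), a white $(-1)$-curve meeting $F$ acquires self-intersection $0$ and drops out, double-circled and other curves disjoint from $F$ are unchanged, and two black curves each meeting $F$ once are forced to meet at $p$ after contraction, producing the double line between the images of the two relevant $(-4)$-roots. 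Since these degenerate surfaces are determined by their exceptional configurations, this identifies $\overline{Y}$ with the surface $Y_1$ of Corollary~\ref{cor:exc-curves-degenerate-Y} for $S_1$. On $X$, contracting the $(-2)$-curve $E$ yields a K3 surface $\overline{X}$ with a single $A_1$-point and a descended involution $\bar\iota$, and $\overline{X}/\bar\iota=\overline{Y}=Y_1$ exhibits $\overline{X}$ as the double cover of $Y_1$ branched along the nodal $\overline{B}$, the node of $\overline{B}$ at $p$ being exactly what produces the node of $\overline{X}$.

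Finally I would smooth equivariantly. Locally near $p$ one smooths the node of $\overline{B}$ inside $|-2K_{Y_1}|$ to a divisor $B_1$ smooth at $p$, leaving the rest of the configuration intact; the double cover $X_1$ of $Y_1$ branched along $B_1$ is then the smoothing of $\overline{X}$, its covering involution $\iota_1$ is the involution throughout the family, $X_1/\iota_1=Y_1$, and removing the $A_1$-class $r$ gives $S_{X_1}^+ = S_1$. Equivalently, this is the quotient of the one-parameter family supplied by the Heegner-divisor lemma when one moves transversally to $\bD_S\subset\bD_{S_1}$. The main obstacle I anticipate is not the existence of the smoothing — the double-cover picture makes it automatic and manifestly $\iota$-equivariant — but the bookkeeping that certifies the output lands in the $S_1$-family: verifying that blowing down $F$ implements exactly the surgery of Lemma~\ref{lem:heegner-diagrams} (including the behavior at double-circled vertices and the newly created double edge) and that the invariant Picard lattice of $X_1$ is exactly $S_1$ rather than something larger. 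Lemma~\ref{lem:subgraphs-G-G1} together with the genericity of the node-smoothing $B_1$ away from the other special loci of $\overline{B}$ should settle these last two points.
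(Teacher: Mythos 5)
Your proof is correct and follows essentially the route the paper intends: in fact the paper states this lemma with no proof at all, treating it as an immediate consequence of the (unnamed) Heegner-divisor lemma of Section~\ref{sec:heegner}, the classification in Lemma~\ref{lem:exc-curves-Y}, and the diagram surgery of Lemma~\ref{lem:heegner-diagrams} --- precisely the ingredients you assemble and verify. The only point you leave implicit is that the two points of $F\cap B$ necessarily lie on the genus-$g$ component of $B$ (so $F$ is disjoint from the double-circled curves and no such vertex is adjacent to $r$, which your case analysis tacitly assumes); this follows from the invariant bookkeeping $(g_1,k_1)=(g+1,k)$ of the $(-1,-1)$-move recorded in the Heegner-divisor lemma that you already invoke.
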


\begin{remark}\label{rem:the-other-move}
  The $(+1,-1)$ and $(-1,+1)$ moves in \fign\ do not admit such an
  easy description. In those cases $S$ corresponds to an index~$2$
  overlattice of $S'\oplus A_1$.  The
  $(r,a,\delta_1)\rightarrowtail (r-1,a+1,\delta)$ move can be understood as
  contracting a $(-4)$-curve on $Y'$ and then smoothing. For example,
  $(2,0,0)\rightarrowtail (1,1,1)$ is a smoothing of $\bF_4^0=\bP(1,1,2)$ to
  $\bP^2$. But that is a far trickier operation than contracting a
  $(-1)$-curve. 
  There is also no obvious relation between the Coxeter
  diagrams. For example, in the sequence $(19,1,1) \rightarrowtail (18,2,1) \rightarrowtail
  (17,3,1) \rightarrowtail (16,4,1)$ the diagrams go from being finite to infinite to
  finite again, see Sections~\ref{sec:vinberg-g=1} and
  \ref{sec:vinberg-exceptional}. 
\end{remark}

\section{The cusps of $\bD_S/O(T)$}
\label{sec:cusps}

Let $S=(r,a,\delta)$ be one of the $2$-elementary hyperbolic lattices
of \fign, $T=S^\perp=(22-r,a,\delta)_2$, and $\rho$ the corresponding
involution of $\lk$ for which $S$ and $T$ are the
$(\pm 1)$-eigenspaces. By Section~\ref{sec:moduli-with-involution} we
have a moduli space $F_S = (\bD_S\setminus\Delta)/O(T)$.  As in
Section~\ref{sec:bb-etc-compactifications}, the boundary of the
Baily-Borel compactification consists of

\begin{enumerate}
\item points, called $0$-cusps, in bijection with the $O(T)$-orbits of
  isotropic lines $I=\bZ e\subset T$, $e^2=0$, \vspace{1pt}
\item modular curves, called $1$-cusps, in bijection with the $O(T)$-orbits
of isotropic planes $J\subset T$.
\end{enumerate}

A $0$-cusp appearing in the compactification of a modular curve $1$-cusp
corresponds to an inclusion $I\subset J$. In this section we find all cusps,
together with their incidence relations.

\subsection{Isotropic vectors in $2$-elementary discriminant groups}
\label{sec:iso-AT}

For a nonzero vector $T$ its divisibility $\di(v)\in\bN$ is defined by
$v \cdot T = \di(v)\bZ$. Since $T$ is $2$-elementary, $\di(v)=1$ or
$2$. Define $v^* = v/\di(v) \in A_T$; one has $v^*=0$ iff
$\di(v)=1$. If $e$ is a primitive isotropic vector then one certainly
has $q_T(e^*)=0$. Thus, the first step in classifying the $0$-cusps is
to understand the isotropic vectors in the finite discriminant group
$A_T$. For this, one has the following result of Nikulin.

\begin{definition}
  For an $2$-elementary lattice $H$ the form $q \pmod\bZ\colon A_H\to
  \frac12\bZ / \bZ$ is linear. A nonzero vector $v^*\in A_H$ is called
  \emph{characteristic} if $q(x) = (x,v^*) \pmod \bZ$ for all $x\in
  A_T$. It is called \emph{ordinary} otherwise. Note that if the
  lattice $H$ is co-even then $q\pmod\bZ\equiv 0$ and there are no
  characteristic vectors.
\end{definition}

\begin{lemma}[\cite{nikulin1979integer-symmetric}, Lemma 3.9.1]
  \label{lem:disc-0-cusps}
  Let $(A_T,q_T)$ be the discriminant group of an even $2$-elementary
  lattice. Then there are at most two orbits of nonzero isotropic
  vectors in $A_T$: ordinary and characteristic (thus, at most three
  including $e^*=0$).
\end{lemma}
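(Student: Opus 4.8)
The plan is to argue entirely inside the finite quadratic space $(A_T,q_T)$, where $A_T\cong\bZ_2^a$ is a vector space over $\bZ_2$, the values of $q_T$ lie in $\tfrac12\bZ/2\bZ$, and the associated bilinear form $b_T$ is nondegenerate with values in $\tfrac12\bZ/\bZ\cong\bZ_2$. First I would record the two structural facts that drive everything. Reducing $q_T(x+y)=q_T(x)+q_T(y)+2b_T(x,y)$ modulo $\bZ$ and using $2b_T\equiv0\pmod\bZ$ shows that $\ell(x):=q_T(x)\bmod\bZ$ is a linear functional, as recalled above; and by nondegeneracy of $b_T$ there is a \emph{unique} $w\in A_T$ with $\ell(x)=b_T(x,w)$ for all $x$. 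By definition $w$ is the one and only characteristic vector (and $w=0$ exactly in the co-even case). Since $\ell$, and hence $w$, is canonically attached to $q_T$, every $g\in O(q_T)$ fixes $w$; thus the characteristic vectors already form a single (singleton) orbit, and the lemma reduces to showing that $O(q_T)$ acts transitively on the \emph{ordinary} nonzero isotropic vectors, i.e.\ the isotropic $v\neq w$.

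The key local step I would prove is a splitting lemma: every ordinary isotropic $v$ completes to a hyperbolic plane. I look for $v'$ with $q_T(v')=0$ and $b_T(v,v')=\tfrac12$. For any $x$ in the codimension-one coset $\{b_T(\cdot,v)=\tfrac12\}$ one computes $q_T(x+v)=q_T(x)+q_T(v)+2b_T(x,v)=q_T(x)+1$, so on each pair $\{x,x+v\}$ the values of $q_T$ form a set $\{c,c+1\}$; hence the coset contains a vector with $q_T=0$ as soon as it contains one with $\ell=0$. The system ``$b_T(x,v)=\tfrac12$ and $b_T(x,w)=0$'' is solvable over $\bZ_2$ exactly when $w\neq v$ (the functionals $b_T(\cdot,v),b_T(\cdot,w)$ are then independent, or $w=0$), i.e.\ exactly in the ordinary case; for $v=w$ the two conditions contradict one another, reflecting that the characteristic vector admits no such $v'$. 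Thus $\langle v,v'\rangle\cong u$, and since $b_T$ is nondegenerate on it, $A_T=\langle v,v'\rangle\perp\langle v,v'\rangle^\perp$.

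With the splitting in hand, transitivity follows by a Witt-type argument. Given two ordinary isotropic vectors $v_1,v_2$, complete each to a hyperbolic plane $H_i=\langle v_i,v_i'\rangle\cong u$ and write $A_T=H_i\perp H_i^\perp$. Cancelling $u$ in $u\perp H_1^\perp\cong A_T\cong u\perp H_2^\perp$ yields an isometry $H_1^\perp\xrightarrow{\sim}H_2^\perp$; gluing it to the isometry $H_1\to H_2$, $v_1\mapsto v_2$, $v_1'\mapsto v_2'$, produces $g\in O(q_T)$ with $g(v_1)=v_2$. The one external input is the cancellation $u\perp A\cong u\perp B\Rightarrow A\cong B$ for finite even $2$-elementary forms, which I would deduce from Nikulin's classification recalled in Section~\ref{sec:2-elem-lattices}: such a form is determined up to isomorphism by its rank, its coparity $\delta$, and its signature $\bmod 8$, and all three behave additively under $\perp u$ (adding $u$ raises the rank by $2$ and leaves $\delta$ and the signature unchanged).

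I expect this cancellation to be the only real obstacle, since the monoid of finite $2$-elementary forms is genuinely non-cancellative in general (e.g.\ $u^2\cong v^2$ and $p^4\cong q^4$); the point is that cancellation of the \emph{specific} summand $u$ is nonetheless valid, and it must be justified through the complete numerical invariant $(\,\text{rank},\delta,\text{sign}\bmod 8)$ rather than by any naive uniqueness of the $\{p,q,u,v\}$-decomposition. Once transitivity on ordinary isotropic vectors is established, the orbits of nonzero isotropic vectors number at most two — the ordinary orbit and, when $q_T(w)=0$, the singleton $\{w\}$ — giving at most three orbits including $e^*=0$, as asserted.
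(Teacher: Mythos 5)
Your proof is correct, but note that the paper contains no argument to compare it with: the lemma is imported verbatim from Nikulin, with the bracketed citation serving as the entire proof, so any honest derivation is necessarily ``a different route.'' Your reconstruction is sound at every step. The characteristic vector $w$ is the representative of the linear functional $\ell = q_T \bmod \bZ$ under the isomorphism $A_T \cong \Hom(A_T,\tfrac12\bZ/\bZ)$ induced by the nondegenerate $b_T$, hence canonical and fixed by all of $O(q_T)$, giving the singleton orbit. For an ordinary isotropic $v$, the system $b_T(x,v)=\tfrac12$, $b_T(x,w)=0$ is solvable exactly because over $\bZ_2$ two distinct nonzero functionals are automatically independent, and then one of $x$, $x+v$ is isotropic; here you use implicitly that $b_T(v,v)\equiv q_T(v)\equiv 0\pmod\bZ$, so that $x+v$ stays in the coset --- worth one explicit line. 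This splits off $\langle v,v'\rangle\cong u$ orthogonally, and the Witt-type gluing plus cancellation of $u$ finishes. The cancellation is the real content, and your justification is the right one: rank, coparity, and signature mod $8$ form a complete set of invariants, which does follow from the generators $p,q,u,v$ and the relations recalled in Section~\ref{sec:2-elem-lattices}, and all three are insensitive to adding $u$. The one point you should make explicit is that invoking this classification for the complements $H_i^\perp$ presupposes that \emph{every} nondegenerate quadratic form on a $\bZ_2$-vector space with values in $\tfrac12\bZ/2\bZ$ is a direct sum of copies of $p,q,u,v$, not merely those forms already presented as discriminant forms; this is also part of Nikulin's classification, so it is a legitimate input, but it deserves a citation rather than silence. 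In sum: your route buys a transparent, self-contained argument carried out entirely inside the finite quadratic space, at the cost of invoking the classification twice (for the complement and for cancellation); the paper's route buys brevity by outsourcing precisely the cancellation subtlety you correctly identified as the crux.
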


\begin{definition}\label{def:types-e*}
  Let $H$ be a $2$-elementary lattice and $e\in T$ be a primitive
  isotropic vector. We say that $e$ is \emph{odd} or \emph{simple} if
  $\di(e)=1$; $e$ is \emph{even ordinary} if $\di(e)=2$ and $e^*$ is
  ordinary; and $e$ is \emph{even characteristic} if $\di(e)=2$ and
  $e^*$ is characteristic.
\end{definition}

\subsection{Isotropic vectors in $2$-elementary lattices}
\label{sec:iso-T}

\begin{lemma}\label{lem:perp-p-elem}
  Let $K\subset T$ be a primitive sublattice. If $K$ and $T$ are
  $p$-elementary for some prime $p$ then so is $K^\perp$.
\end{lemma}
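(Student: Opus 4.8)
The plan is to exploit the relationship between discriminant groups of $K$, $K^\perp$, and $T$ inside the ambient lattice. The lattice $T$ contains $K \oplus K^\perp$ as a finite-index sublattice, and there is a standard ``glue group'' description relating the three discriminant forms. First I would recall the fundamental exact sequence: since $K$ is primitive in $T$, the embedding $K \oplus K^\perp \hookrightarrow T$ gives rise to an isotropic subgroup $\Gamma \subset A_K \oplus A_{K^\perp}$ whose projections to each factor $A_K$ and $A_{K^\perp}$ are injective, and such that $T/(K \oplus K^\perp) \cong \Gamma$. The key algebraic fact is that $A_T$ is then identified with $\Gamma^\perp / \Gamma$ inside $A_K \oplus A_{K^\perp}$, where $\Gamma^\perp$ is taken with respect to the bilinear form $b_K \oplus b_{K^\perp}$.

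The main structural step is to control the prime $p$ through these maps. Because $T$ is $p$-elementary, $A_T \cong \bZ_p^{\,c}$ for some $c$, and because $K$ is $p$-elementary, $A_K \cong \bZ_p^{\,d}$. I would then argue that $A_{K^\perp}$ must be $p$-elementary as well, i.e. killed by $p$. The cleanest route is to show that the projection $\Gamma \to A_{K^\perp}$ is an isomorphism onto its image (injectivity is automatic from primitivity of $K^\perp$, which follows from primitivity of $K$), and that this image, together with the quotient $\Gamma^\perp/\Gamma \cong A_T$, sandwiches $A_{K^\perp}$ between two $p$-elementary groups. More concretely, since $\Gamma$ embeds in $A_K$ which is $p$-elementary, $\Gamma$ is itself $p$-elementary; and since $A_T = \Gamma^\perp/\Gamma$ is $p$-elementary, the group $\Gamma^\perp$ is an extension of a $p$-elementary group by a $p$-elementary group \emph{with} the projection to $A_{K^\perp}$ surjective, which forces $p \cdot A_{K^\perp} = 0$.

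A more efficient and less error-prone variant, which I would actually prefer to carry out, uses duals directly. The condition that $H$ is $p$-elementary is equivalent to $p H^* \subset H \subset H^*$. So from $p T^* \subset T$ and $p K^* \subset K$ I want to deduce $p (K^\perp)^* \subset K^\perp$. Here I would use the identification $(K^\perp)^* = \operatorname{pr}_{K^\perp_\bQ}(T^*)$, the orthogonal projection of the dual lattice $T^*$ onto $K^\perp \otimes \bQ$; this holds because $K \oplus K^\perp$ has finite index in $T$ and $K$ is primitive. Then for $x \in (K^\perp)^*$ write $x = \operatorname{pr}(y)$ with $y \in T^*$. Since $p y \in T \subset K^* \oplus (K^\perp)^*$ with both summands, and $p K^* \subset K$, projecting $py$ to $K^\perp \otimes \bQ$ and using that projection commutes with the decomposition, I would conclude $p x \in K^\perp$.

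The main obstacle I anticipate is the identification $(K^\perp)^* = \operatorname{pr}_{K^\perp_\bQ}(T^*)$ and keeping the finite-index glue contributions straight: the projection of an element of $T^*$ need not land in $K^\perp$ on the nose, only in $(K^\perp)^*$, and one must verify that multiplying by $p$ clears exactly the denominators introduced both by the glue group and by $K^*$. I would handle this by working $p$-locally, tensoring with $\bZ_p$ so that all the groups become $\bZ_p$-modules and the $p$-elementary hypotheses become the statements that $T \otimes \bZ_p$ and $K \otimes \bZ_p$ are self-dual up to the scaling factor $p$; over $\bZ_p$ the orthogonal decomposition $T \otimes \bZ_p = (K \otimes \bZ_p) \oplus (K^\perp \otimes \bZ_p)$ may split cleanly (there is no glue at $p$ once one checks the discriminant exponents match), reducing the claim to the elementary fact that an orthogonal summand of a $p$-elementary lattice is $p$-elementary. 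Verifying that the glue group has no $p$-part, or handling it if it does, is the one genuinely delicate point.
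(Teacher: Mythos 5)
Your preferred argument (the dual-lattice computation in your third paragraph) is correct and complete, but it is a genuinely different route from the paper's. The paper disposes of the lemma in one line by citing Nikulin's Proposition 1.15.1, which expresses the discriminant group of the orthogonal complement as a subquotient $\Gamma_\gamma^\perp/\Gamma_\gamma$ of $A_T\oplus A_K$; since a subquotient of an $\bF_p$-vector space is again an $\bF_p$-vector space, $K^\perp$ is $p$-elementary. Note the direction of that glue description: the glue group sits inside $A_T\oplus A_K$ and the \emph{unknown} group $A_{K^\perp}$ is the subquotient, which is exactly what makes the deduction immediate. Your argument instead runs integrally: given $x\in (K^\perp)^*$, choose $y\in T^*$ with $x=\operatorname{pr}_{K^\perp\otimes\bQ}(y)$ (possible because the restriction map $\Hom(T,\bZ)\to\Hom(K^\perp,\bZ)$ is surjective, $T/K^\perp$ being free), write $y=a^*+x$ with $a^*=\operatorname{pr}_{K\otimes\bQ}(y)\in K^*$; then $px=py-pa^*$ lies in $T$ because $pT^*\subset T$ and $pK^*\subset K\subset T$, and it lies in $(K^\perp\otimes\bQ)\cap T=K^\perp$ because orthogonal complements are saturated. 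This trades the citation for two routine verifications (the surjectivity of the projection, which you rightly flag, and the saturation of $K^\perp$, both automatic), and it has the small bonus of never using the primitivity of $K$, which the paper's cited result does require. The $p$-local reduction you propose as insurance in the last paragraph is unnecessary: the integral argument already closes, and there is no glue-group subtlety left to handle.

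One caveat on your first variant, in case you are tempted to fall back on it: the inference that an extension of a $p$-elementary group by a $p$-elementary group, with surjection onto $A_{K^\perp}$, forces $p\cdot A_{K^\perp}=0$ is false as stated ($\bZ/p^2$ is an extension of $\bZ/p$ by $\bZ/p$). Your glue group sits in $A_K\oplus A_{K^\perp}$, the opposite configuration from the paper's, and that is precisely why the subquotient trick does not apply directly. The variant can be repaired, but it needs the extra input $\Gamma\cap(0\oplus A_{K^\perp})=0$ (injectivity of $\Gamma\to A_K$, coming from primitivity): for $y\in A_{K^\perp}$ pick $(x,y)\in\Gamma^\perp$, note $p\cdot(x,y)\in\Gamma$ since $A_T\cong\Gamma^\perp/\Gamma$ is killed by $p$, note $px=0$ since $A_K$ is killed by $p$, and conclude $(0,py)\in\Gamma$, hence $py=0$. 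Since you committed to the second variant, this gap does not affect the correctness of your proposal.
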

\begin{proof}
  By \cite[1.15.1]{nikulin1979integer-symmetric} the discriminant
  group of $K^\perp$ is $G^\perp/G$ for a
  certain subgroup $G \subset A_T \oplus A_K$. If $A_T$,
  $A_K$ are vector spaces over $\bF_p$ then so is $A_{K^\perp}$.
\end{proof}

\begin{proposition}
  \label{prop:isotropic-vecs}
  Let $T$ be an even indefinite $2$-elementary lattice
  of signature $(n_+,n_-)$,
  $e\in T$ a nonzero primitive isotropic vector, and let
  $\oT = e^\perp/e$. Then there exist sublattices $H,K\subset T$ such
  that $T=H\oplus K$, $K\simeq\oT$, $e\in H$, and exactly one of the
  following holds.
  \noindent If $\delta_T=\delta_\oT$: 
  \begin{enumerate}
  \item $H=U$,  $a_\oT=a_T$ and $e$ is odd
    (Def.~\ref{def:types-e*}). 
  \item $H=U(2)$,  $a_\oT = a_T-2$ and $e$ is even ordinary.
  \end{enumerate}
  If $\delta_T=1$ and $\delta_\oT=0$: 
  \begin{enumerate}\setcounter{enumi}{2}
  \item $H=I_{1,1}(2)$, $a_\oT = a_T-2$ and $e$ is even characteristic.
  \end{enumerate}

  An isotropic vector of type (1--3) exists iff there exists a
  $2$-elementary lattice $\oT$ of signature $(n_+-1,n_--1)$ with the
  invariants $(r_\oT=r_T-2, a_\oT, \delta_\oT)$, and then it is unique
  up to $O(T)$-action.
\end{proposition}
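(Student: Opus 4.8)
The plan is to reduce the classification of primitive isotropic vectors to the discriminant-group picture of Lemma~\ref{lem:disc-0-cusps}, and then realize each of the three discriminant-level cases by an explicit orthogonal splitting $T = H \oplus K$. First I would fix a primitive isotropic $e \in T$ and form $\oT = e^\perp/e$; this is automatically an even lattice of signature $(n_+-1,n_--1)$. The divisibility $\di(e)$ is either $1$ or $2$ because $T$ is $2$-elementary, and the image $e^* = e/\di(e) \in A_T$ is isotropic, so by Lemma~\ref{lem:disc-0-cusps} it falls into exactly one of the three classes: $e^*=0$ (odd), $e^*$ ordinary, or $e^*$ characteristic. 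These three classes will correspond precisely to the three hyperbolic summands $H = U$, $U(2)$, $I_{1,1}(2)$. The parity shift in the last case is forced: a characteristic vector only exists when $q_T \pmod \bZ \not\equiv 0$, i.e. $\delta_T = 1$, and splitting off $I_{1,1}(2)$ will absorb the odd part of the discriminant form, leaving $\delta_\oT = 0$.

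The key step is the existence of the splitting. When $e$ is odd, $\di(e)=1$ means $e$ can be completed to a hyperbolic plane $U = \la e, f\ra$ inside $T$ with $f^2=0$, $e\cdot f = 1$; since $U$ is unimodular, $T = U \oplus U^\perp$ and $U^\perp \simeq \oT$ with $a_\oT = a_T$. When $e$ is even, $\di(e)=2$, so the natural complement to $e$ has intersection $2$ with it, yielding a sublattice $H = \la e, f\ra$ with Gram matrix either $\begin{pmatrix} 0 & 2 \\ 2 & 0 \end{pmatrix}$ or $\begin{pmatrix} 2 & 2 \\ 2 & 0 \end{pmatrix}$; these are $U(2)$ and $I_{1,1}(2)$ respectively, and their discriminant forms are $u$ (co-even) and $p\oplus q$ type (carrying the co-odd part). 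In each case $H$ is a primitive $2$-elementary sublattice, so by Lemma~\ref{lem:perp-p-elem} its orthogonal complement $K = H^\perp$ is again $2$-elementary, and one checks $K \simeq \oT$. The $\bZ_2$-rank count $a_\oT = a_T$ or $a_T - 2$ then follows from the additivity of discriminant forms under orthogonal direct sum, reading off the contribution of $q_H$ from the list of $p,q,u,v$ in Section~\ref{sec:2-elem-lattices}.

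For the existence-and-uniqueness clause, the forward direction is immediate from the splitting: given $e$, the lattice $\oT = K$ has the stated invariants $(r_T - 2, a_\oT, \delta_\oT)$. For the converse, given a $2$-elementary lattice $\oT$ of signature $(n_+-1, n_--1)$ with the prescribed invariants, I would set $T' = H \oplus \oT$ with the appropriate $H \in \{U, U(2), I_{1,1}(2)\}$, verify that $T'$ has the same signature and invariants $(r_T, a_T, \delta_T)$ as $T$, and invoke Nikulin's uniqueness theorem for indefinite even $2$-elementary lattices (recalled in Section~\ref{sec:2-elem-lattices}) to conclude $T' \simeq T$; the isotropic vector $e \in H \subset T'$ then transports to $T$. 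Uniqueness up to $O(T)$ follows from the surjectivity of $O(T) \to O(q_T)$ together with Lemma~\ref{lem:disc-0-cusps}: two isotropic vectors of the same type have discriminant images in the same $O(q_T)$-orbit, and an isometry realizing this can be lifted to $O(T)$ and adjusted to match the vectors themselves using Eichler-type transvections on the split hyperbolic summand $H$. The main obstacle I expect is the bookkeeping in the characteristic case, where one must verify that the odd summand $I_{1,1}(2)$ correctly accounts for the full change in coparity $\delta_T = 1 \to \delta_\oT = 0$ while keeping the $\bZ_2$-rank shift equal to $2$; this requires a careful identification of the discriminant form $q_{I_{1,1}(2)}$ against the relations $u^2 = v^2$, $vp = q^3$, etc., listed above.
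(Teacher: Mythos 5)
Your proposal follows the same broad outline as the paper's proof---classify $e^*\in A_T$ via Lemma~\ref{lem:disc-0-cusps}, split off a rank-$2$ lattice $H\ni e$, and get the converse from Nikulin's uniqueness theorem---but the central step, producing the orthogonal splitting when $\di(e)=2$, has a genuine gap, and the implication you rely on is false as stated. You pick some $f$ with $e\cdot f=2$, normalize $f^2\bmod 4$, set $H=\la e,f\ra$, and assert that $T=H\oplus H^\perp$ with $H^\perp\simeq\oT$. This is not automatic, and it fails for bad choices of $f$: in $T=U\oplus U(2)$ with bases $e_U,f_U$ and $g,h$, take $e=g$ and $f=h+e_U$. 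Then $H=\la e,f\ra\simeq U(2)$ is primitive and contains $e$, but $H^\perp=\la e_U,\ g-2f_U\ra$, so every element of $H\oplus H^\perp$ has even $f_U$-coordinate; hence $[T:H\oplus H^\perp]=2$ and $H$ is not an orthogonal summand (indeed here $H^\perp\simeq U(2)$ while $\oT\simeq U$). Lemma~\ref{lem:perp-p-elem} tells you that $H^\perp$ is $2$-elementary, but it does not give the splitting. The paper's proof is arranged the other way around precisely to avoid this: it lifts $\oT$ isometrically to a primitive sublattice $K\subset e^\perp$, sets $H=K^\perp\ni e$---a rank-$2$ even hyperbolic $2$-elementary lattice, hence $U$, $U(2)$ or $I_{1,1}(2)$---and then deduces $T=H\oplus K$ from the discriminant count $a_{H\oplus K}=2+(a_T-2)=a_T$.

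Second, even when your $H$ does split off, its isomorphism class is not determined by $e$, so the dictionary you assert (ordinary gives $U(2)$, characteristic gives $I_{1,1}(2)$) cannot be read off from whichever splitting the construction happens to produce. In $T=U(2)\oplus A_1=\la e,f'\ra\oplus\la\alpha\ra$ the vector $e$ is even ordinary (it must be, since $\delta_\oT=\delta_{A_1}=1$), yet $H'=\la e,\ f'+\alpha+e\ra\simeq I_{1,1}(2)$ is primitive, contains $e$, and genuinely splits: $T=H'\oplus\la e+\alpha\ra$. Applied to this splitting, your reasoning would declare $e$ characteristic, which it is not. The missing ingredient---and the actual content of ``exactly one of (1)--(3) holds''---is the paper's re-splitting argument: if $H=I_{1,1}(2)=\la e_1,e_2\ra$, $e=e_1+e_2$, but $K$ is co-odd, choose $x\in K$ of divisibility $2$ with $x^2\equiv 2\pmod 4$; then $f=e_1+x$ satisfies $ef=2$, $f^2=4n$, and $\la e,f-ne\ra\simeq U(2)$ splits off, so an ordinary $e$ always admits a $U(2)$-splitting and case (3) occurs only when $\delta_\oT=0$. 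Without this step, neither the exclusivity of the trichotomy nor the identification of case (2) with ordinary vectors is established. (A lesser point: your uniqueness argument via ``Eichler-type transvections'' also needs care, since Eichler's criterion requires a $U^2$ summand, which some of the $2$-elementary lattices $T$ here lack; the paper instead obtains uniqueness by transporting the splittings $T=H_i\oplus K_i$, using that the primitive isotropic vectors in each $H_i$ form a single $O(H_i)$-orbit.)
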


Here, $\I_{1,1}=\la 1\ra \oplus\la -1\ra$ is the odd hyperbolic
unimodular hyperbolic rank-$2$ lattice; recall that $\II_{1,1}=U$ is
the even one.

\begin{proof}
  Let $T = (r_T,a_T,\delta_T)_{n_+}$ and $e^* = e/\di(e)\in A_T$. The
  lattice $\oT = e^\perp/e$ has signature $(n_+-1,n_--1)$
  and its discriminant group is $A_\oT = (e^*)^\perp/e^*$. If
  $\di(e)=1$ then there exists an isotropic vector $f$ such that $\la
  e, f\ra = U$, $T=U\oplus U^\perp$ and we are done. This is case (1).

  So suppose that $\di(e)=2$. Then $a_\oT=a_T-2$. Pick any lift
  $\oT \to K \subset e^\perp$ of the projection $e^\perp\to \oT$. It
  exists and it is automatically an isometry. So we got a primitive
  sublattice $K\simeq\oT$ of $T$. Let $H=K^\perp\subset T$. One has
  $e\in H$.  By Lemma~\ref{lem:perp-p-elem}, $H$ is $2$-elementary. It
  is also hyperbolic. So $H=U$, $U(2)$ or $I_{1,1}(2)$.
  
  The case $H=U$ is impossible since $\di(e)=2$.  In the other two
  cases $H\oplus K\subset T$ have the same $a_{H\oplus K}=a_T$, so they are equal.  If
  $H=U(2)$ or if $H=I_{1,1}(2)$, $\delta_T=1$ and $\delta_\oT=0$ then we
  are done. So suppose that $H=I_{1,1}(2)$ and
  $\delta_T=\delta_\oT=1$.

  Write $\I_{1,1}(2) = \la e_1, e_2\ra$, $e_1^2=2$, $e_2^2=-2$ and
  $e=e_1+e_2$. One has $e\cdot e_1=2$. Since $K$ is co-odd, there exists
  $x\in K$ of divisibility~$2$ such that $x^2 \equiv 2\pmod 4$. Then
  $f = e_1 + x$ has divisibility~$2$ and satisfies $e\cdot f=2$, $f^2 = 4n$.
  Then $(f-ne)^2=0$ and $\la e,f-ne\ra$ splits off a copy of $U(2)$ so
  that $T = U(2)\oplus K'$ as well, as in case (2).

  Vice versa, if a lattice $\oT$ with the invariants as in cases
  (1--3) exists then $T$ and $H\oplus \oT$ are $2$-elementary, even,
  indefinite and have the same $(r,a,\delta)$. So they are isomorphic.
  The statement about the types of $e^*\in A_T$ is immediate.

  If there are two vectors $e_1,e_2$ of the same type with isomorphic
  $\oT_i$ then the isomorphisms $H_1\to H_2$, $K_1\to K_2$ define an
  isometry $T\to T$. Noting that the primitive isotropic vectors in
  each $H_i$, $i=1,2$, are in the same $O(H_i)$-orbit, this shows that
  $e_1,e_2$ and the splittings $T=H_i\oplus K_i$ are in the same
  $O(T)$-orbit.
\end{proof}

\subsection{The $0$-cusps}
\label{sec:0-cusps}

\begin{definition}[Mirror moves]\label{def:mirror-moves}
  We define three ``mirror moves'' from a node
  $(r,a,\delta)$ to a node $(\mr,\ma, \mdelta)$ of \fign.
  \begin{displaymath}
    S = (r,a,\delta)_1 \to
    S^\perp = T= (22-r,a,\delta)_2 \leadsto
    \oT = (\bar r, \bar a,\bar\delta)_1,
  \end{displaymath}
  where the move $T\leadsto \oT$ is one of the following:
  \begin{enumerate}
  \item odd or simple: $(22-r,a,\delta)_2 \to (20-r,a,\delta)_1$,
  \item even ordinary: $(22-r,a,\delta)_2 \dto (20-r,a-2,\delta)_1$,
  \item even characteristic: $(22-r,a,1)_2 \dto (20-r,a-2,0)_1$.
  \end{enumerate}
  
  \includegraphics{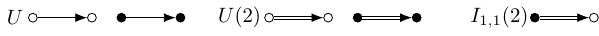}
  
  This makes \fign\ into a directed graph in which every vertex has in-
  and out-degrees equal to $0$, $1$, $2$ or $3$.
\end{definition}

\begin{remark}\label{rem:mirror-targets}
  The only nodes which are \emph{not} targets of any mirror move
  are those on the line $g=0$ and the point $(14,6,0)$. In \fign\ they
  are marked in red.
\end{remark}

\begin{theorem}\label{thm:0-cusps}
  Let $S$ be one of the $2$-elementary lattices of \fign\ and $e\in T$ a
  primitive isotropic vector with $\oT = e^\perp/e$. Then the move
  $S \leadsto \oT$ is one of the mirror moves of
  Def.~\ref{def:mirror-moves}. Moreover, if \fign\ allows for a particular
  move then there exists a unique $0$-cusp of $\bD_S/O(T)$ of that
  type.
\end{theorem}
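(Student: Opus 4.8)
The plan is to directly invoke Proposition~\ref{prop:isotropic-vecs}, which already does the heavy lifting. That proposition says: for a primitive isotropic $e\in T$, there is an orthogonal splitting $T = H\oplus K$ with $K\simeq\oT=e^\perp/e$, $e\in H$, where $H$ is one of $U$, $U(2)$, or $\I_{1,1}(2)$, and the three cases correspond exactly to $e$ being odd, even ordinary, or even characteristic respectively (with the invariant bookkeeping $a_\oT=a_T$ in case (1) and $a_\oT=a_T-2$ in cases (2),(3), and $\delta$ dropping from $1$ to $0$ only in case (3)).

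First I would recall that $T=(22-r,a,\delta)_2$, so $\oT$ has signature $(2-1,\,20-r-1)=(1,19-r)$, i.e.~$\oT$ is hyperbolic of rank $\bar r = 20-r$. This immediately matches the rank entry $20-r$ in all three mirror moves of Definition~\ref{def:mirror-moves}. Then I would read off the discriminant invariants from Proposition~\ref{prop:isotropic-vecs}: in the odd case $a_\oT=a_T=a$ and $\delta_\oT=\delta_T=\delta$, giving $\oT=(20-r,a,\delta)_1$, which is move (1); in the even ordinary case $a_\oT=a-2$ and $\delta$ is preserved, giving $(20-r,a-2,\delta)_1$, which is move (2); and in the even characteristic case $\delta_T=1$, $\delta_\oT=0$ and $a_\oT=a-2$, giving $(20-r,a-2,0)_1$, which is move (3). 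Thus every primitive isotropic $e$ produces one of the three listed mirror moves, establishing the first assertion.

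For the second assertion --- existence and uniqueness of a $0$-cusp of each type that the figure permits --- I would again appeal to the final paragraph of Proposition~\ref{prop:isotropic-vecs}, which states that an isotropic vector of a given type exists \emph{if and only if} there exists a $2$-elementary lattice $\oT$ of signature $(1,19-r)$ with the prescribed invariants $(20-r,a_\oT,\delta_\oT)$, and that when it exists it is unique up to the $O(T)$-action. Since $0$-cusps of $\bD_T/O(T)$ are by Section~\ref{sec:bb-etc-compactifications} in bijection with $O(T)$-orbits of primitive isotropic lines, uniqueness up to $O(T)$ is exactly uniqueness of the cusp. The remaining point is that ``\fign\ allows for the move'' is precisely the condition that the target triple $(20-r,a_\oT,\delta_\oT)$ occurs as a node of \fign, i.e.~is the invariant triple of a hyperbolic $2$-elementary lattice primitively embeddable in $\lk$; by Nikulin's classification this is the same as the existence of the abstract lattice $\oT$ required by the proposition.

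The only genuine obstacle I anticipate is matching the combinatorial admissibility of a move in \fign\ with the lattice-theoretic existence criterion of Proposition~\ref{prop:isotropic-vecs}. One must check that the target triple being a node of \fign\ (hyperbolic, $2$-elementary, embeddable in $\lk$) is equivalent to the existence of the rank-$(20-r)$ hyperbolic $2$-elementary lattice $\oT$ with those invariants --- including that the signature $(1,19-r)$ lattice actually embeds primitively into $\lk=\II_{3,19}$. I expect this to follow from Nikulin's existence and uniqueness results for indefinite $2$-elementary lattices together with the observation that $\oT\oplus H$ reconstructs $T$ (so embeddability of $\oT$ is controlled by that of $T$, which is given). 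I would verify that the arithmetic constraints defining the nodes of \fign\ (e.g.~$0\le a\le \min(r,22-r)$ and the parity/coparity relations) are automatically inherited under each of the three moves, so that no spurious or missing moves arise.
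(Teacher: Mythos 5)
Your proposal is correct and follows essentially the same route as the paper: the paper's proof is precisely to apply Proposition~\ref{prop:isotropic-vecs} to $T=S^\perp$, observing that $\oT$ is again hyperbolic and embeds primitively into $\lk$ (via $\oT\simeq K\subset T\subset\lk$), hence corresponds to a node of \fign. Your additional bookkeeping of signatures and invariants, and the identification of ``\fign\ allows the move'' with Nikulin's existence criterion via the splitting $T=H\oplus\oT$, is exactly the content the paper compresses into its two-sentence proof.
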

\begin{proof}
  This follows immediately from Proposition~\ref{prop:isotropic-vecs}
  applied to $T=S^\perp$.  Indeed, in this case $\oT$ is again
  hyperbolic and it admits a primitive embedding into $\lk$. So it
  corresponds to a node in \fign.
\end{proof}

\subsection{The $1$-cusps}
\label{sec:1-cusps}

\begin{lemma}
  \label{lem:1-cusps}
  Let $T$ be an even indefinite $2$-elementary lattice of signature
  $(n_+,n_-)$, $J\subset T$ a primitive isotropic plane, and let
  $\ooT = J^\perp/J$. Then there exist sublattices $P,N\subset T$ such
  that $T=P\oplus N$, $N\simeq\ooT$, $J\subset P$, and exactly one of
  the following holds.  
  \noindent If $\delta_T=\delta_\oT$: 
  \begin{enumerate}
  \item $P=U^2$ and  $a_\ooT=a_T$.
  \item $P=U\oplus U(2)$ and  $a_\ooT=a_T-2$.
  \item $P=U(2)^2$ and  $a_\ooT=a_T-4$.
  \end{enumerate}
  If $\delta_T=1$ and $\delta_\ooT=0$: 
  \begin{enumerate}\setcounter{enumi}{3}
  \item $H=U \oplus I_{1,1}(2)$ and $a_\oT = a_T-2$.
  \item $H=U(2) \oplus I_{1,1}(2)$ and $a_\oT = a_T-4$.
  \end{enumerate}
  
  An isotropic plane of type (1--5) exists iff there exists a
  $2$-elementary lattice $\ooT$ of signature $(n_+-2,n_--2)$ with the
  invariants $(r_\ooT=r_T-4, a_\ooT, \delta_\ooT)$, and then it is unique
  up to $O(T)$-action.
\end{lemma}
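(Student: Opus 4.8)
The plan is to deduce this from the rank-one case, Proposition~\ref{prop:isotropic-vecs}, by splitting off the isotropic plane $J$ one basis vector at a time. First I would choose a primitive isotropic vector $e\in J$ that is part of a basis $\{e,f\}$ of $J$. Since $J$ is isotropic we have $J\subset e^\perp$, so $J$ descends to a primitive isotropic line in $\oT=e^\perp/e$ spanned by the image $\bar f$ of $f$, and a direct check of the induced pairing gives $\ooT=J^\perp/J=\bar f^\perp/\bar f$. Applying Proposition~\ref{prop:isotropic-vecs} to $(T,e)$ produces a splitting $T=H_1\oplus K_1$ with $e\in H_1$, $K_1\simeq\oT$ and $H_1\in\{U,U(2),\I_{1,1}(2)\}$. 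Applying it again to $\oT\simeq K_1$, carrying along the induced isotropic vector $\tilde f\in K_1$ (the image of $\bar f$ under a chosen lift $\oT\to K_1$), produces $K_1=H_2\oplus K_2$ with $\tilde f\in H_2$, $K_2\simeq\ooT$ and $H_2\in\{U,U(2),\I_{1,1}(2)\}$. Since $\tilde f$ and $f$ have the same image in $e^\perp/e$ they differ by a multiple of $e$, so $\la e,\tilde f\ra=J$; thus, setting $P=H_1\oplus H_2$ and $N=K_2$, we obtain $T=P\oplus N$ with $J\subset P$ a primitive isotropic plane and $N\simeq\ooT$.

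Next I would enumerate the possibilities for $P=H_1\oplus H_2$. The key observation is that the third option $H_i=\I_{1,1}(2)$ is precisely the \emph{characteristic} move of Proposition~\ref{prop:isotropic-vecs}, which requires $\delta=1$ before the move and yields $\delta=0$ afterwards, whereas the $U$- and $U(2)$-moves preserve $\delta$. Consequently at most one of the two steps can be characteristic, and the pair $(\I_{1,1}(2),\I_{1,1}(2))$ never occurs. The remaining five unordered pairs give exactly $P\in\{U^2,\ U\oplus U(2),\ U(2)^2,\ U\oplus\I_{1,1}(2),\ U(2)\oplus\I_{1,1}(2)\}$, which are cases (1)--(5). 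The invariants are then bookkeeping: $a$ is additive over orthogonal sums, a $U$-step leaves $a$ unchanged while each $U(2)$- or $\I_{1,1}(2)$-step lowers it by $2$, giving $a_\ooT=a_T,\ a_T-2,\ a_T-4$ respectively; and the coparity satisfies $\delta_T=\delta_\ooT$ in (1)--(3) (no characteristic step) and $\delta_T=1,\ \delta_\ooT=0$ in (4)--(5) (exactly one characteristic step), independently of the order in which the two steps are taken.

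For existence I would argue as in Proposition~\ref{prop:isotropic-vecs}. Given a $2$-elementary lattice $\ooT$ of signature $(n_+-2,n_--2)$ with invariants $(r_T-4,a_\ooT,\delta_\ooT)$, form $P\oplus\ooT$ with $P$ the rank-$4$ lattice dictated by $(a_\ooT,\delta_\ooT)$ as above. This lattice is even, $2$-elementary, and indefinite (as $P$ is hyperbolic of signature $(2,2)$), and it has the same invariants $(r_T,a_T,\delta_T)$ as $T$; hence $P\oplus\ooT\simeq T$ by Nikulin's uniqueness for indefinite even $2$-elementary lattices \cite{nikulin1979integer-symmetric}, and the image of a standard isotropic plane of $P$ gives the desired $J$.

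Uniqueness up to $O(T)$ would follow the template of the rank-one proof. If $J_1,J_2$ have the same type and $J_1^\perp/J_1\simeq J_2^\perp/J_2$, the splittings $T=P_i\oplus N_i$ satisfy $P_1\simeq P_2$ (the same member of the five) and $N_1\simeq N_2$, so there is an isometry of $T$ carrying $P_1\to P_2$ and $N_1\to N_2$; it sends $J_1$ to some primitive isotropic plane of $P_2$, and composing with an isometry of $P_2$ extended by the identity on $N_2$ finishes the argument, \emph{provided} $O(P)$ acts transitively on the primitive isotropic planes of each of the five rank-$4$ lattices $P$. I expect this transitivity to be the main obstacle: it is a finite computation on four-dimensional lattices, but it must be carried out for each $P$, with extra care for the $U(2)$- and $\I_{1,1}(2)$-factors whose orthogonal groups are smaller than in the unimodular case. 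A related subtlety I would flag explicitly is that in the type IV setting $n_+=2$ the lattice $\ooT$ is negative definite, so its isometry class is \emph{not} determined by $(r,a,\delta)$ alone; distinct classes within a single genus then yield distinct $O(T)$-orbits of planes, which is exactly the source of the multiple $1$-cusps analyzed in the remainder of Section~\ref{sec:1-cusps}.
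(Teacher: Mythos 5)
Your strategy coincides with the paper's: the paper's entire proof of this lemma is the single sentence ``We apply Proposition~\ref{prop:isotropic-vecs} twice,'' and your first three paragraphs are a correct unwinding of exactly that --- splitting off $e\in J$ to get $T=H_1\oplus K_1$, splitting off the induced vector $\tilde f$ inside $K_1\simeq\oT$ to get $P=H_1\oplus H_2$ and $N=K_2$, ruling out the pair $(\I_{1,1}(2),\I_{1,1}(2))$ because a characteristic step requires $\delta=1$ on input and produces $\delta=0$ on output, and deducing existence from Nikulin's uniqueness of indefinite even $2$-elementary lattices. Your closing remark about definite $\ooT$ is also on target: it is precisely why Theorem~\ref{thm:2-elem-negdef} and Table~\ref{tab:neg-def} list several isomorphism classes per triple $(r,a,\delta)$, and why the uniqueness assertion must be read per isomorphism class of $\ooT$ rather than per invariants.

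The only genuine wrinkle is in your uniqueness argument, where you leave the two-step scheme and instead reduce to transitivity of $O(P)$ on primitive isotropic planes of the five rank-$4$ lattices $P$, which you correctly flag as unverified. That verification is unnecessary: uniqueness also follows by applying Proposition~\ref{prop:isotropic-vecs} twice, using that its uniqueness clause matches not just the vectors $e_1,e_2$ but the whole splittings $T=H_i\oplus K_i$. Concretely, the case (1)--(5) of $J$ is determined by the invariant changes from $T$ to $\ooT$, and these changes also pin down which rank-one types occur among primitive vectors of $J$ and what the type of the complementary second step must be (for instance, in case (2) a plane contains both odd and even ordinary vectors, and an odd first step forces an even ordinary second step). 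So given $J_1,J_2$ of the same case with $J_1^\perp/J_1\simeq J_2^\perp/J_2$, choose $e_i\in J_i$ of the same rank-one type; one application of the proposition lets us assume $e_1=e_2=e$, $H_1=H_2=H$, $K_1=K_2=K$. Then $J_i=\bZ e\oplus\bZ k_i$ with $k_i\in K$ primitive isotropic, of the same type in $K\simeq\oT$ and with isomorphic quotients $\ooT$, so a second application gives $\bar\gamma\in O(K)$ with $\bar\gamma(k_1)=k_2$; extending $\bar\gamma$ by the identity on $H$ produces $\gamma\in O(T)$ with $\gamma(J_1)=J_2$. This stays entirely within the rank-one statement already proved in the paper and avoids any orbit computation in rank~$4$.
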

\begin{proof}
  We apply Proposition~\ref{prop:isotropic-vecs} twice.
\end{proof}

\label{sec:neg-def}
\begin{table}[hp!]
  \centering
\renewcommand{\arraystretch}{1.1}
  \begin{tabular}[t]{|c>{$}l<{$}|}
  \hline
  \text{Case} &\text{Lattice} \\
  \hline
  \ccy (0,0,0) &0\\
  \hline
   (1,1,1) &A_1\\
  \hline
   (2,2,1) &A_1^2\\
  \hline
   (3,3,1) &A_1^3\\
  \hline
  \ccy (4,2,0) &D_4\\
   (4,4,1) &A_1^4\\
  \hline
   (5,3,1) &D_4 A_1\\
   (5,5,1) &A_1^5\\  
  \hline
   (6,2,1) &D_6\\
   (6,4,1) &D_4 A_1^2\\
   (6,6,1) &A_1^6\\
  \hline
   (7,1,1) &E_7\\
   (7,3,1) &D_6A_1\\
   (7,5,1) &D_4 A_1^3\\
   (7,7,1) &A_1^7\\
  \hline
  \ccy (8,0,0) &E_8\\
  \ccy (8,2,0) &D_8\\
   (8,2,1) &E_7A_1\\
  \ccy (8,4,0) &D_4^2\\
   (8,4,1) &D_6 A_1^2\\
  \ccy (8,6,0) &A_1^8 *\\
   (8,6,1) &D_4 A_1^4\\
  \ccy (8,8,0) &E_8(2)\\
   (8,8,1) &A_1^8\\
  \hline
   (9,1,1) &E_8A_1\\
   (9,3,1) &E_7 A_1^2\\
  \ditto &D_8A_1\\
   (9,5,1) &D_6 A_1^3\\
  \ditto &D_4^2A_1\\
   (9,7,1) &A_1^9*\\
  \ditto &A_1^5 D_4\\
   (9,9,1) &A_1^9\\
  \ditto &A_1 E_8(2)\\
  \hline
   (10,2,1) &D_{10}\\
  \ditto &E_8 A_1^2\\
   (10,4,1) &E_7 A_1^3\\
  \ditto &D_8 A_1^2\\
  \ditto &D_6 D_4\\
   (10,6,1) &D_4^2 A_1^2*\\
  \ditto &A_1^6 D_4 *\\
  \ditto &D_6 A_1^4\\
   (10,8,1) &D_4 A_1^6 *\\
  \ditto &A_1^{10} *\\
  \ditto &A_3 E_7(2) *\\
  \hline
\end{tabular}
\renewcommand{\arraystretch}{1.157}
\kern -6.5pt
\begin{tabular}[t]{c>{$}l<{$}|}
  \hline
  \text{Case} &\text{Lattice} \\
  \hline
   (11,3,1) &D_{10}A_1\\
  \ditto &E_8 A_1^3\\
  \ditto &E_7 D_4\\
   (11,5,1) &D_6D_4A_1\\
  \ditto &D_8A_1^3\\
  \ditto &E_7A_1^4\\
  \ditto &D_6A_1^5*\\
   (11,7,1) &D_6A_1^5\\
  \ditto &A_1^7 D_4 *\\
  \ditto &D_4^2A_1^3\\
  \ditto &A_5 E_6(2) * \\
  \hline
  \ccy (12,2,0) &E_8 D_4\\
  \ccy\ditto &D_{12}\\
  \ccy (12,4,0) &E_7 A_1^5 *\\
  \ccy\ditto &D_8 D_4\\
   (12,4,1) &D_8 A_1^4 *\\
  \ditto &E_8 A_1^4\\
  \ditto &D_6^2\\
  \ditto &D_{10} A_1^2\\
  \ditto &E_7 D_4 A_1\\
  \ccy (12,6,0) &D_4^3\\
  \ccy\ditto &E_6 E_6(2) *\\
  \ccy\ditto &D_6 A_1^6*\\
   (12,6,1) &D_6A_1^6*\\
  \ditto &E_7A_1^5\\
  \ditto &D_4^2 A_1^4*\\
  \ditto &D_8A_1^4\\
  \ditto &D_6D_4A_1^2\\
  \ditto &A_7 D_5(2)*\\
  \hline
   (13,3,1) &D_{12}A_1\\
  \ditto &E_7D_6\\
  \ditto &E_8D_4A_1\\
  \ditto &D_{10}A_1^3*\\
   (13,5,1) &D_8 D_4 A_1\\
  \ditto &E_7 D_4 A_1^2\\
  \ditto &D_6^2 A_1\\
  \ditto &E_7 A_1^6 *\\
  \ditto &D_6 D_4 A_1^3 *\\
  \ditto &D_{10} A_1^3\\
  \ditto &E_8 A_1^5\\
  \ditto &D_8 A_1^5 *\\
  \ditto &A_9 A_4(2) *\\
  \hline
\end{tabular}
\renewcommand{\arraystretch}{1.156}
\kern -6.5pt
\begin{tabular}[t]{c>{$}l<{$}|}
  \hline
  \text{Case} &\text{Lattice} \\
  \hline
   (14,2,1) &D_{12} A_1^2*\\
  \ditto &D_{14}\\
  \ditto &E_8 D_6\\
  \ditto &E_7^2\\
   (14,4,1) &D_6^2 A_1^2 *\\
  \ditto &E_8 D_4 A_1^2\\
  \ditto &E_7 D_6 A_1\\
  \ditto &D_{10} A_1^4 *\\
  \ditto &D_8 D_4 A_1^2 *\\
  \ditto &E_7 D_4 A_1^3 *\\
  \ditto &D_{12} A_1^2\\
  \ditto &D_{10} D_4\\
  \ditto &D_8 D_6\\
  \ditto &A_{11} (A_2 A_1)(2) *\\
  \hline
   (15,1,1) &E_8 E_7\\
  \ditto &D_{14} A_1*\\
   (15,3,1) &E_7^2 A_1\\
  \ditto &D_8 E_7\\
  \ditto &D_{14} A_1\\
  \ditto &E_8 D_6A_1\\
  \ditto &D_{12} A_1^3*\\
  \ditto &D_{10} D_4A_1*\\
  \ditto &D_8 D_6 A_1*\\
  \ditto &E_7 D_6 A_1^2*\\
  \ditto &A_{13} A_1(2)\!*\!* \\
  \hline
  \ccy (16,0,0) &E_8^2\\
  \ccy\ditto &D_{16}* \\
  \ccy (16,2,0) &D_8^2*\\
  \ccy\ditto &E_7^2 A_1^2*\\
  \ccy\ditto &E_8 D_8\\
  \ccy\ditto &D_{12} D_4*\\
  \ccy\ditto &D_{16}\\
  \ccy\ditto &A_{15} A_1(2)*\\
   (16,2,1) &E_8 E_7 A_1\\
  \ditto &D_8 E_7 A_1 *\\
  \ditto &D_{10} D_6 *\\
  \ditto &D_{14} A_1^2 *\\
  \ditto &A_{15}\!*\!*\\
  \hline
   (17,1,1) &E_8^2 A_1\\
  \ditto &D_{16} A_1*\\
  \ditto &D_{10} E_7*\\
  \ditto &A_{17}*\\
  \hline
\end{tabular}
\medskip
  \caption[Negative definite $2$-elementary lattices]{$2$-elementary negative
    definite lattices appearing at $1$-cusps}
  \label{tab:neg-def}
\end{table}

\begin{theorem}\label{thm:1-cusps}
  On the diagram \fign\ the move $S\to T\leadsto\oT\leadsto\ooT$
  can be seen by
  \begin{enumerate}
  \item making one of the three mirror moves
    $(r,a,\delta)\leadsto (\bar r,\bar a,\bar\delta)$ of
    Def.~\ref{def:mirror-moves}, 
  \item and then doing one of the following three moves:
    \begin{enumerate}
    \item odd or simple: staying at the same vertex; we set
      $\bar{\bar a}=\bar a$, $\bar{\bar\delta}=\bar\delta$.
    \item even ordinary: dropping down by $2$
      and keeping the color; here, we set
      $\bar{\bar a} = \bar a -2$,
      $\bar{\bar\delta} = \bar\delta$. 
    \item even characteristic: dropping down by $2$ and changing the
      color from black $\bar\delta=1$ to white $\bar{\bar\delta}=0$;
      we set $\bar{\bar a}=\bar a-2$.       
    \end{enumerate}
    The final vertex of \fign\ is interpreted as a negative definite
    lattice $\ooT$ with the invariants
    $(\bar{\bar r}, \bar{\bar a},\bar{\bar\delta})$, where $\bar{\bar
      r}= \bar r-2=18-r$, and $\bar{\bar a}$, $\bar{\bar\delta}$ are as
    set above. 
  \end{enumerate}

  \begin{center}
    \includegraphics{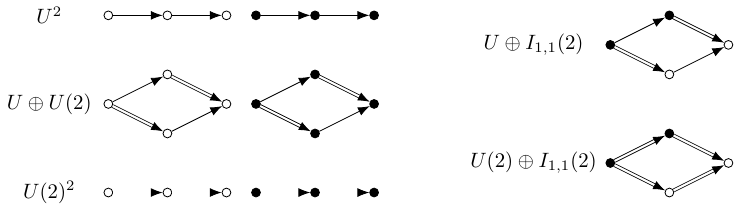}
  \end{center}

  A $1$-cusp of this form exists iff \fign\ allows the two-move
  combination. For each isomorphism class of $\ooT$ there is a unique
  $O(T)$-orbit of the $1$-cusps.

  A $0$-cusp corresponding to a mirror move $S\leadsto\oT$ contains the
  cusps $\ooT$ above iff $\ooT$ can be reached by going
  through $S\leadsto\oT$ as the first step.
\end{theorem}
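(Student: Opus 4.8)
The plan is to deduce everything from Lemma~\ref{lem:1-cusps} by resolving each isotropic plane into a flag, so that the two-step passage $T\to\oT\to\ooT$ becomes literally the two applications of Proposition~\ref{prop:isotropic-vecs} used to prove that lemma, with the first recognized as a mirror move and the second as a down move. Recall from Section~\ref{sec:bb-etc-compactifications} that the $1$-cusps of $\bD_S/O(T)$ are the $O(T)$-orbits of primitive isotropic planes $J\subset T$, and that a $0$-cusp $[\bZ e]$ lies in the closure of the $1$-cusp $[J]$ exactly when some $O(T)$-translate of $\bZ e$ is contained in $J$. The existence claim and the assertion that each isomorphism class of the negative definite lattice $\ooT=J^\perp/J$ gives a single $O(T)$-orbit are then already furnished by Lemma~\ref{lem:1-cusps}; nothing further is needed for these.

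To read off the two-step move description, I would choose a primitive isotropic vector $e$ of $T$ lying in $J$ and set $\oT=e^\perp/e$, a hyperbolic $2$-elementary lattice of signature $(1,19-r)$ and rank $\bar r=20-r$. Since $e\in J\subseteq J^\perp\subseteq e^\perp$, the plane descends to a primitive isotropic line $\bZ\bar e=J/\bZ e\subset\oT$, and the identity $\bar e^{\,\perp}/\bar e=(J^\perp/\bZ e)/(J/\bZ e)=J^\perp/J$ gives a canonical identification $\ooT\simeq\bar e^{\,\perp}/\bar e$ computed inside $\oT$. By Theorem~\ref{thm:0-cusps} applied to $e\in T=S^\perp$, the first step $T\to\oT$ is one of the three mirror moves of Definition~\ref{def:mirror-moves}, its type fixed by $\di(e)$ and by whether $e^*$ is ordinary or characteristic. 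As $\oT$ is again even, indefinite and $2$-elementary, Proposition~\ref{prop:isotropic-vecs} applies to $\bar e\in\oT$ and yields $\ooT$ of signature $(0,18-r)$, i.e.\ negative definite of rank $\bar{\bar r}=\bar r-2=18-r$, its three cases producing exactly the three down moves (a)--(c): $(\bar a,\bar\delta)$ changes to $(\bar{\bar a},\bar{\bar\delta})$ as listed, with the coparity falling from $1$ to $0$ precisely in the even characteristic case, where the split-off summand is $I_{1,1}(2)$. Conversely, any combination of moves permitted by \fign\ reassembles, through the orthogonal splitting $T=P\oplus N$ with $N\simeq\ooT$ of Lemma~\ref{lem:1-cusps}, into a genuine isotropic plane, which gives the ``exists iff allowed'' statement.

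For the incidence relation I would show that the $0$-cusps adjacent to the $1$-cusp $[J]$ are exactly the intermediate lattices $\oT$ of these factorizations: every primitive isotropic line of $T$ contained in $J$ has the form $\bZ e$, and its adjacent $0$-cusp is $\oT=e^\perp/e$, a mirror-move target equipped with the down move $\oT\to\ooT$ coming from the residual line $\bZ\bar e\subset\oT$; conversely, a down move $\oT\to\ooT$ supplies an isotropic line of $\oT$ whose preimage, together with $e$, spans an isotropic plane $J\subset T$ with $\bZ e\subset J$ and $J^\perp/J\simeq\ooT$. This is precisely the assertion that the $0$-cusp $S\to\oT$ is incident to $\ooT$ iff $\ooT$ is reached by taking $S\to\oT$ as the first step. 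I expect the main obstacle to be the bookkeeping of orderings: the summands $U\oplus U(2)$, $U\oplus I_{1,1}(2)$ and $U(2)\oplus I_{1,1}(2)$ of Lemma~\ref{lem:1-cusps} each admit two orderings of their two elementary splits, and these correspond to two genuinely different intermediate $0$-cusps $\oT$ above a single $1$-cusp. One must verify that each ordering allowed by \fign\ really occurs, meaning the intermediate $\oT$ is again a node of the figure, and that the forced coparity drop attaches to the correct step; this matching of ordered pairs of move-types against the five unordered decomposition types is the heart of the incidence statement.
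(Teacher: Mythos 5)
Your route is the paper's own: the paper also derives this theorem from Lemma~\ref{lem:1-cusps}, whose proof is exactly your flag argument (Proposition~\ref{prop:isotropic-vecs} applied twice, first to $e\in J\subset T$, then to $\bZ\bar e=J/\bZ e\subset\oT$), and your handling of existence, of the uniqueness of the $O(T)$-orbit for each isomorphism class of $\ooT$, and of the incidence statement (spanning a primitive isotropic plane by $e$ together with a lift of an isotropic vector of $\oT$) is sound as far as it goes.

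The genuine gap is the step you treat as automatic: passing between the criterion of Lemma~\ref{lem:1-cusps}, which is phrased in terms of the \emph{existence of the negative definite lattice} $\ooT$ with prescribed invariants, and the criterion of the theorem, which is phrased in terms of \fign. A vertex of \fign\ is by definition a \emph{hyperbolic} $2$-elementary lattice of rank $\bar r$ admitting a primitive embedding into $\lk$, while the theorem reads the final vertex as a \emph{negative definite} lattice of rank $\bar r-2$; these two readings of the figure genuinely differ. Your ``conversely'' sentence assumes that an \fign-allowed combination produces an existing $\ooT$ to plug into the splitting $T=P\oplus\ooT$, and your forward direction needs that the existence of $\ooT=J^\perp/J$ forces the final vertex, read as a hyperbolic lattice, to be a node of \fign. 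Neither implication is formal. For example, $(10,10,0)$ is a node of \fign, yet the negative definite lattice $(8,10,0)_0$ does not exist (one always has $a\le r$), so the simple down move ``staying'' at that vertex must be disallowed even though every vertex visited is a node of the figure; in the other direction, when $P=U(2)^2$ your splitting exhibits only $U(2)\oplus\ooT$ as a primitive sublattice of $\lk$, and that lattice is the \emph{intermediate} vertex, not the final one, so node-ness of the final vertex does not follow from your construction. What closes both directions is the one fact the paper's proof adds on top of Lemma~\ref{lem:1-cusps} and that your proposal never invokes: by \cite[Thm.~3.6.2]{nikulin1979integer-symmetric} the admissible invariants $(r,a,\delta)$ of even $2$-elementary hyperbolic lattices and of the negative definite lattices occurring here correspond under the shift $r\mapsto r-2$, which is precisely what allows \fign\ to be reused for $\ooT$. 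Incidentally, the ``ordering'' bookkeeping you single out as the main obstacle is not where the difficulty lies: by Theorem~\ref{thm:0-cusps}, for \emph{any} choice of primitive isotropic $e\in J$ the intermediate lattice $\oT=e^\perp/e$ is automatically a node of \fign; it is only the identification of the \emph{final} vertex that requires Nikulin's classification.
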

\begin{proof}
  This follows by applying Lemma~\ref{lem:1-cusps} and 
  using the fact that by
  \cite[Thm.~3.6.2]{nikulin1979integer-symmetric} the allowed
  invariants of even $2$-elementary hyperbolic lattices and of
  negative definite lattices are in a bijection with a shift
  $\bar{\bar r} = \bar r-2$, so we can reuse \fign\ instead of making
  a new figure for the invariants of $\ooT$ that occur.
\end{proof}

\begin{theorem}\label{thm:2-elem-negdef}
  For the $2$-elementary lattices $S$ of \fign, the isomorphism
  classes of the even negative definite $2$-elementary lattices $\ooT$
  appearing for the $1$-cusps of $\bD_S/O(T)$ are uniquely determined
  by their invariants $(r,a,\delta)$ and the root sublattices $R$, 
  as listed in Table~\ref{tab:neg-def}. 
\end{theorem}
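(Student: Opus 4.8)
The plan is to read this as a refinement, \emph{within each genus}, of Nikulin's triple $(r,a,\delta)$. By Theorem~\ref{thm:1-cusps} the triples that occur as invariants of a $1$-cusp lattice $\ooT$ are completely determined by the combinatorics of the mirror-and-down moves, so I would begin by listing them; each triple, together with the negative definite signature, pins down a single genus of even $2$-elementary lattices by \cite[Thm.~3.6.2]{nikulin1979integer-symmetric}. As remarked after that theorem, a \emph{definite} genus can split into several isomorphism classes, so the whole content to be proved is the intra-genus statement: the root sublattice $R\subset\ooT$ generated by the $(-2)$-vectors is a complete isomorphism invariant, and the classes so obtained are exactly those of Table~\ref{tab:neg-def}.

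For the enumeration I would, for each class, split off the root sublattice, setting $M=R^\perp\subset\ooT$; then $M$ is rootless (a root of $M$ would lie in $R$ yet be orthogonal to itself) and $\ooT$ is an even overlattice of $R\oplus M$, recorded by an isotropic glue subgroup $G\subset A_R\oplus A_M$. Here it is natural to split into two cases. When $R$ is an orthogonal sum of copies of $A_1,D_{2k},E_7,E_8$, the lattice $R$ is itself $2$-elementary, so by Lemma~\ref{lem:perp-p-elem} the complement $M$ is $2$-elementary as well, and both are controlled by their own invariants and the relations among $p,q,u,v$ of Section~\ref{sec:2-elem-lattices}. When instead $R$ contains an $A_n$ $(n\ge2)$, a $D_{\mathrm{odd}}$, or $E_6$, the group $A_R$ has a cyclic factor which is not $2$-torsion (e.g.\ the $\bZ_{n+1}$ of an $A_n$), and the condition $A_\ooT=\bZ_2^a$ forces $G$ to cut that factor down to its $2$-torsion; this is exactly what happens for the starred entries $A_3E_7(2)$, $A_5E_6(2)$, $A_7D_5(2)$, $A_9A_4(2)$, $A_{11}(A_2A_1)(2)$, $A_{13}A_1(2)$, $A_{15}$, $A_{17}$, whose rootless complements are the doubled blocks displayed in Table~\ref{tab:neg-def}.

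The essential step, and the main obstacle, is \textbf{uniqueness of the gluing}: given $R$ and the invariants of $\ooT$, I must show $M$ is determined up to isometry and then that the isotropic $G$ is unique up to $O(R)\times O(M)$. The first part is a finite check that the rootless $2$-elementary (or, in the glued case, rootless) lattice $M$ of the forced rank and discriminant is unique; the relevant complements form the short explicit list $\la -4\ra$, $A_4(2),D_5(2),E_6(2),E_7(2),E_8(2)$, etc., which one matches one at a time. For the second part, two gluings differing by an automorphism of $q_R\oplus q_M$ preserving $G$ give isometric overlattices, so it suffices to realize the needed automorphisms by genuine isometries via the surjectivity $O(R)\to O(q_R)$, $O(M)\to O(q_M)$; this holds for root lattices and their doublings and I would verify it directly in the few delicate cases. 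I expect the coparity bookkeeping across the glue in case~(ii) — tracking whether the overlattice comes out co-even or co-odd as the cyclic summand is reduced — to be the most error-prone point, and the one where Nikulin's surjectivity must be supplemented by hand.

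Finally I would assemble the verification. The candidate lattices built above have pairwise distinct root sublattices within each genus — read off from the tabulated names by deleting the doubled summands such as $E_7(2)$ or $A_4(2)$ — so $R$ separates the classes, while the enumeration shows that every lattice in the genus is one of them. Hence $(r,a,\delta)$ fixes the genus, $R$ then fixes the class, and the pair $\big((r,a,\delta),R\big)$ is a complete invariant, with Table~\ref{tab:neg-def} the full list. As an independent safeguard against an omitted class I would confirm that the number of classes in each genus agrees with a Minkowski--Siegel mass-formula computation, running this outside the logical chain rather than as part of it.
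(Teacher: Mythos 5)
Your route is genuinely different from the paper's, but as written it has gaps that the paper's route is specifically designed to avoid. The paper obtains Table~\ref{tab:neg-def} almost entirely from Vinberg's theory: the isomorphism classes of negative definite lattices $\ooT$ in a fixed genus correspond to orbits of isotropic lines in the unique hyperbolic $2$-elementary lattice two ranks up, and when the reflection group of that hyperbolic lattice has finite covolume these orbits are exactly the maximal parabolic subdiagrams of its Coxeter diagram (computed by Nikulin for $g\ge2$ and in Section~\ref{sec:vinberg-g=1} for $g=1$), which hand you the root sublattice $R$ and the completeness of the list simultaneously. Kneser's gluing method --- essentially your entire plan --- is used in the paper only for the two genera $(16,2,1)$ and $(15,3,1)$, whose parent hyperbolic lattices $(18,2,1)$ and $(17,3,1)$ have infinite Coxeter diagrams (Theorem~\ref{thm:finite-covolume}), so Vinberg's method fails there.

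The first genuine gap is completeness. Your enumeration reduces the classification of $\ooT$ to the classification of the rootless complements $M$ ``of the forced rank and discriminant,'' which you call a finite check. But showing, say, that $E_8(2)$ is the unique class in its genus with no $(-2)$-vectors, and likewise for $E_7(2)$, $D_5(2)$, $A_4(2)$, \dots, is a definite-genus classification problem of exactly the kind being solved; the gluing recursion bottoms out at rootless lattices, and you supply no method for them. The one tool that would certify completeness --- the mass formula --- you explicitly place ``outside the logical chain,'' so nothing in the proposal proves that no class, and no root sublattice, has been omitted. In the paper this certificate comes for free from Vinberg's stopping condition: once the algorithm terminates, the listed parabolic subdiagrams are provably all of them.

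The second gap is in your uniqueness mechanism. The surjectivity of $O(R)\to O(q_R)$ fails for root lattices: for $A_{11}$ the discriminant group is $\bZ_{12}$, every unit $u\in\{1,5,7,11\}$ satisfies $u^2\equiv 1\pmod{24}$ and so preserves $q$, while the image of $O(A_{11})$ is only $\{\pm1\}$ --- and $A_{11}(A_2A_1)(2)$ is an entry of Table~\ref{tab:neg-def}, so this is not a fringe case but precisely where uniqueness of the glue must be argued. When surjectivity fails, two glue subgroups conjugate under $O(q_R\oplus q_M)$ but not under the image of $O(R)\times O(M)$ can produce non-isometric lattices in the same genus, which is exactly how multiple classes arise; so one must run the orbit computation on the actual isotropic subgroups, not appeal to surjectivity. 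Two further points: a glue can create new roots, so every candidate overlattice must be checked to have root sublattice exactly $R$, otherwise filing classes by $R$ (and hence your distinctness claim) is unjustified; and the theorem's $R$ is the root lattice of the \emph{full} reflection group $\wref$, including $(-4)$-vectors of divisibility~$2$ computed in $\ooT$ --- this is why $E_7(2)$, $A_4(2)$, etc.\ appear inside $R$ in Table~\ref{tab:neg-def} rather than as rootless complements --- so your bookkeeping, which treats these summands as $M$, classifies by a different invariant than the one in the statement and would need a final divisibility computation in the glued lattice to match it.
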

Notations of Table~\ref{tab:neg-def} are as follows: $R$ means
that $\ooT$ is the root lattice $R$; $R*$ that the root sublattice
$R\subset \ooT$ is a finite index sublattice and $\ooT$ is
obtained from $R*$ by adding a glue.  $R\!*\!*$ means that $R$ has infinite
index in $\ooT$. The root lattices are given for the entire reflection
group $\wref$ as explained in Section~\ref{sec:reflection-groups}.

\begin{proof}
  Most of these lattices are found by listing the maximal
  parabolic subdiagrams of Coxeter diagrams of hyperbolic lattices, as
  described in Section~\ref{sec:reflection-groups}. The two
  exceptional cases are $(16,2,1)$ and $(15,3,1)$ where Vinberg's
  method does not apply. For these cases we applied Kneser's gluing
  method \cite[15.10.2]{conway1999sphere-packings}.  
\end{proof}

\begin{corollary}
  $\bD_S/O(T)$ can have a maximum three of $0$-cusps and a
  maximum $14$ of $1$-cusps, the latter happening only for $S=(4,4,1)$.
\end{corollary}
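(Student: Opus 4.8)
The plan is to read the Corollary off the cusp classifications already in hand: Theorem~\ref{thm:0-cusps} handles the $0$-cusps outright, while Theorems~\ref{thm:1-cusps} and~\ref{thm:2-elem-negdef}, together with Table~\ref{tab:neg-def}, reduce the $1$-cusp statement to a finite count over the $75$ nodes of \fign.

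For the $0$-cusps the bound is immediate. By Theorem~\ref{thm:0-cusps} every $0$-cusp of $\bD_S/O(T)$ comes from exactly one of the three mirror moves of Definition~\ref{def:mirror-moves}, and each move that \fign\ permits yields a single cusp. Hence there are at most three $0$-cusps; the bound is attained exactly at those co-odd nodes ($\delta=1$) for which both even moves land on existing nodes, so ``maximum three'' is sharp.

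For the $1$-cusps I would proceed by direct enumeration. Theorem~\ref{thm:1-cusps} identifies the $1$-cusps of $\bD_S/O(T)$ with the isomorphism classes of negative definite $2$-elementary $\ooT$ reachable from $S=(r,a,\delta)$ by a mirror move followed by a down move, and Theorem~\ref{thm:2-elem-negdef} says these classes are precisely the rows of Table~\ref{tab:neg-def}. Composing the two moves on the discriminant invariants shows that every reachable $\ooT$ has rank $18-r$, while $(\bar{\bar a},\bar{\bar\delta})$ runs through the explicit set
\[
  \{(a,1),\,(a-2,1),\,(a-2,0),\,(a-4,1),\,(a-4,0)\}
\]
when $\delta=1$ (and through $\{(a,0),(a-2,0),(a-4,0)\}$ when $\delta=0$), intersected with those triples for which a lattice of signature $(0,18-r)$ exists. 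The signature congruence modulo $8$ — the signature of $\ooT$ must agree with that of its discriminant form — eliminates the spurious triples (for instance $(14,2,0)$ and $(14,0,\ast)$ at rank $14$). That every surviving class is actually realized follows from a splitting $T\cong P\oplus\ooT$ and Nikulin's uniqueness for indefinite $2$-elementary lattices, so the number of $1$-cusps of $\bD_S/O(T)$ is the sum, over the reachable triples, of the corresponding class counts in Table~\ref{tab:neg-def}.

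It then remains to tabulate this sum for all $75$ lattices and extract the maximum, verifying that it equals $14$ and is attained only at $S=(4,4,1)$ (where $18-r=14$ and the reachable triples collapse to $(14,4,1)$ and $(14,2,1)$, contributing $10+4$ classes). I expect this comparison to be the one genuine obstacle. It is not a single estimate but a bookkeeping problem whose pitfalls are real: a fixed invariant triple $(r,a,\delta)$ may host several isomorphism classes of negative definite lattices — exactly the rows flagged $\ast$ and $\ast\ast$ in Table~\ref{tab:neg-def} — so the quantity to be maximized is a sum of \emph{class} counts, not of invariant triples, and one must confirm both that no class is double-counted when it is reachable along two different move-compositions and that no node outside the claimed extremum reaches a larger total. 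Organizing Table~\ref{tab:neg-def} by rank $18-r$ and by reachable $(\bar{\bar a},\bar{\bar\delta})$, and comparing the resulting column sums, is what pins down the extremal lattice.
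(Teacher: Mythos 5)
Your proposal is correct and takes essentially the same route as the paper: the corollary is stated there without a separate proof precisely because it is read off from Theorem~\ref{thm:0-cusps} (at most three mirror moves give at most three $0$-cusps) and from Theorem~\ref{thm:1-cusps} together with Theorem~\ref{thm:2-elem-negdef} and Table~\ref{tab:neg-def} (the $1$-cusps are the isomorphism classes of reachable negative definite $2$-elementary lattices). Your verification at $S=(4,4,1)$ — that the reachable invariants are $(14,4,1)$ and $(14,2,1)$, contributing $10+4=14$ classes — is exactly the computation underlying the paper's claim, and the remaining tabulation over the other nodes that you outline is the same finite bookkeeping the paper leaves implicit.
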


\begin{proposition}\label{prop:1-cusps-modular}
  The modular curve in $\overline{\bD/O(T)}\ubb$ corresponding to a cusp $\ooT$ is
  isomorphic to $\bH/\SL(2,\bZ)$, resp. $\bH/\Gamma_0(2)$
  if it is incident to a single $0$-cusp, resp. to two $0$-cusps.
\end{proposition}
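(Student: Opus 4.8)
The plan is to identify the uniformizing group of the boundary modular curve with an explicit subgroup of $\SL(2,\bZ)$ and then read off its isomorphism type. By the general theory of the Baily--Borel compactification of a type IV quotient \cite{scattone1987on-the-compactification-of-moduli, ash1975smooth-compactifications}, the $1$-cusp attached to a primitive isotropic plane $J\subset T$ is a modular curve $\bH/\Gamma_J$, where $\Gamma_J$ is the image of the stabilizer $\{\gamma\in O(T)\mid \gamma(J)=J\}$ under the restriction $\mathrm{Stab}(J)\to \SL(J)\cong\SL(2,\bZ)$ (the orientation on $\bD_S$ forces the holomorphic action into the special-linear part, and $-\mathrm{id}$ acts trivially on $\bH$). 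Moreover the cusps of $\bH/\Gamma_J$ are exactly the $\Gamma_J$-orbits of lines in $J_\bQ$, i.e. the primitive isotropic lines of $T$ contained in $J$ up to $\mathrm{Stab}(J)$; these are precisely the $0$-cusps of $\bD_S/O(T)$ lying in the closure of the given $1$-cusp. By Theorems~\ref{thm:0-cusps} and~\ref{thm:1-cusps} this number is $1$ or $2$, so $\bH/\Gamma_J$ has one or two cusps accordingly.

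First I would reduce the computation of $\Gamma_J$ to the rank-$4$ summand. By Lemma~\ref{lem:1-cusps} we may split $T=P\oplus N$ with $J\subset P$, $N\simeq\ooT$, and $P$ one of the five lattices $U^2$, $U\oplus U(2)$, $U(2)^2$, $U\oplus\I_{1,1}(2)$, $U(2)\oplus\I_{1,1}(2)$. Since $J\subset P$ and $P\perp N$, the divisibility in $T$ of any $v\in J$ equals its divisibility in $P$, so the $\SL(J)$-image preserves the same partition of the primitive vectors of $J$ by type (odd, even ordinary, even characteristic, in the sense of Definition~\ref{def:types-e*}) whether computed in $T$ or in $P$. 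This gives an \emph{upper bound} for $\Gamma_J$. On the other hand, every isometry of $P$ fixing $J$ extends by the identity on $N$ to an element of $\mathrm{Stab}(J)$, so the image of $\mathrm{Stab}_{O(P)}(J)\to\SL(J)$ is a \emph{lower bound}. The proposition follows once the two bounds coincide, case by case.

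For the lower bound I would use Eichler transvections \cite[3.7]{scattone1987on-the-compactification-of-moduli}. Writing $J=\langle e_1,e_2\rangle$ with $e_1,e_2$ the primitive isotropic generators adapted to the summands of $P$, the transvection $E_{e_2,-f}$, with $f$ dual to $e_1$ in the first summand, fixes $e_2$ and sends $e_1\mapsto e_1+e_2$, while $E_{e_1,-f'}$, with $f'$ dual to $e_2$ in the second summand, fixes $e_1$ and sends $e_2\mapsto e_2+(f',e_2)\,e_1$. When the relevant summand is $U$ the second generator realizes $e_2\mapsto e_2+e_1$, and these two unipotents generate all of $\SL(2,\bZ)$; when it is $U(2)$ or $\I_{1,1}(2)$ the dual pairing equals $2$, so one only realizes $e_2\mapsto e_2+2e_1$, and the resulting unipotents generate the group $\Gamma^0(2)$ of matrices with even upper-right entry, which is conjugate in $\SL(2,\bZ)$ to $\Gamma_0(2)$.

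Matching the bounds then finishes the proof. For $P=U^2$ and $P=U(2)^2$ all primitive vectors of $J$ share one divisibility ($1$, resp. $2$) and one type, so the upper bound is all of $\SL(2,\bZ)$, the single $\mathrm{Stab}(J)$-orbit gives one $0$-cusp, and $\Gamma_J=\SL(2,\bZ)$. For $P=U\oplus U(2)$, $U\oplus\I_{1,1}(2)$ and $U(2)\oplus\I_{1,1}(2)$ a short computation with $v^*=v/\di(v)\in A_P$ shows that $J$ contains exactly two types of primitive isotropic lines (odd and even ordinary; odd and even characteristic; even ordinary and even characteristic, respectively), so the type constraint pins the upper bound down to $\Gamma^0(2)$, there are two $\mathrm{Stab}(J)$-orbits and hence two incident $0$-cusps, and $\Gamma_J=\Gamma^0(2)\cong\Gamma_0(2)$. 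I expect the main obstacle to be precisely this last bookkeeping: verifying in the three mixed cases that the non-odd lines are \emph{uniformly} ordinary or \emph{uniformly} characteristic, so that no third orbit appears, and that the transvections above really exhaust $\Gamma^0(2)$ rather than a proper subgroup. Both reduce to the explicit structure of $A_P$ (the characteristic class is the image of the isotropic generator of the $\I_{1,1}(2)$ summand) and to the generation of $\Gamma_0(2)$ by two parabolic elements.
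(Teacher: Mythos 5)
Your overall route is the same as the paper's: identify the boundary curve with $\bH/\Gamma_J$, where $\Gamma_J$ is the image of the stabilizer of $J$ in $\SL(J)\cong\SL(2,\bZ)$, split $T=P\oplus\ooT$ using Lemma~\ref{lem:1-cusps}, and reduce to the five rank-$4$ lattices $P$ (the paper then declares the five checks ``immediate''). Your upper-bound-by-types / lower-bound-by-Eichler-transvections scheme is a legitimate way to carry out those checks, and it does close the cases $P=U^2$, $U\oplus U(2)$, and $U\oplus\I_{1,1}(2)$, where one summand is $U$ and hence one transvection realizes a translation by $1$.

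However, there is a genuine gap in the two cases with no $U$ summand, $P=U(2)^2$ and $P=U(2)\oplus\I_{1,1}(2)$. By your own description, both transvections there only realize translations by $2$, i.e. $e_1\mapsto e_1+2e_2$ and $e_2\mapsto e_2+2e_1$, and these generate only the principal congruence subgroup $\Gamma(2)$. So your bounds do \emph{not} match: you get $\Gamma(2)\subseteq\Gamma_J\subseteq\SL(2,\bZ)$ for $U(2)^2$ and $\Gamma(2)\subseteq\Gamma_J\subseteq\Gamma^0(2)$ for $U(2)\oplus\I_{1,1}(2)$, yet you assert equality with the upper bound in both. This also invalidates your orbit count in these cases: if $\Gamma_J$ were $\Gamma(2)$, the primitive vectors of $J$ would fall into three orbits, not one or two. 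The gap is easy to close. Since rescaling a form does not change its isometry group, $O(P)=O\bigl(P(\tfrac12)\bigr)$, and $P(\tfrac12)$ is $U^2$, resp.\ $U\oplus\I_{1,1}$; the Eichler transvection $E_{e_2,-f_1}$ of the rescaled lattice gives the missing $e_1\mapsto e_1+e_2$. Concretely, because $f_1$ and $e_2$ both have divisibility $2$ in $P$ and $f_1^2=0$, the ``half-Eichler'' map
\begin{displaymath}
  v\ \longmapsto\ v-\tfrac{(v,e_2)}{2}\,f_1+\tfrac{(v,f_1)}{2}\,e_2
\end{displaymath}
is integral, is an isometry of $P$ stabilizing $J$, and induces the translation by $1$ on $J$. (Note that the analogous map in the other direction for $U(2)\oplus\I_{1,1}(2)$ fails to be integral, exactly as your upper bound $\Gamma^0(2)$ predicts.) With this supplement both remaining cases match: $\Gamma_J=\SL(2,\bZ)$ for $U(2)^2$, with a single orbit and hence one incident $0$-cusp, and $\Gamma_J=\Gamma^0(2)\cong\Gamma_0(2)$ for $U(2)\oplus\I_{1,1}(2)$, with two orbits and two incident $0$-cusps.
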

Here, $\bH$ is the upper half plane and $\Gamma_0(2)\subset
\SL(2,\bZ)$ is the standard level-$2$ modular subgroup of index~$3$.

\begin{proof}
  Let $J\subset T$ be an isotropic plane corresponding to the
  $1$-cusp. Then the corresponding modular curve is $\bH/\Gamma$,
  where $\Gamma$ is the image of the stabilizer of $J$ in $\SL(T)$ in
  $\SL(J)\simeq\SL(2,\bZ)$. By Theorem~\ref{thm:1-cusps},
  $T=P\oplus\ooT$, with $P$ one of the five signature $(2,2)$ lattices
  listed there. Then $J$ is an isotropic plane in $P$. So the
  statement is reduced for the five cases $T=P$ for which the check is
  immediate. 
\end{proof}

\subsection{$0$-cusps and involutions of $\lias = {\rm II}_{2,18}$}
\label{sec:reverse-moves}

As before, let $\rhok$ be an involution on $\lk$, the lattices
$S,T=\lk^{\pm}$, and an isotropic vector $e\in T$. We have
$e^\perp_T\subset e^\perp_{\lk}$, where the perps are taken in
$T$, resp. in $\lk$, and
$$\oT = e^\perp_T/e \subset e^\perp_{\lk}/e \simeq {\rm II}_{2,18} =
U^2\oplus E_8^2.$$

Let us denote the last lattice by $\lias$. It comes with an induced
involution $\rhoias$ and a pair of orthogonal lattices $\lias^{\pm}$,
both hyperbolic. The $(-1)$-eigenspace is $\oT$. Since
$S\subset e^\perp_{\lk}$ and $e\notin S$, the projection
$e^\perp_{\lk} \to \lias$ embeds $S$ into $\lias^+$. But it need not
be a primitive sublattice, only its saturation is $S\usat= \lias^+$.

In this section we determine exactly which sublattices of $\lias^+$
appear as images of the lattices $S = \lk^+$.

\begin{lemma} \label{lem:invo-lias-forward}
  The inclusion $S\to \liasplus$ identifies $S$ with $\liasplus$ for
  an odd $0$-cusp, or an index~$2$ sublattice of $\liasplus$ that
  contains $(1 + \rhoias)\lias = 2(\liasplus)^*$ for an even $0$-cusp.

\end{lemma}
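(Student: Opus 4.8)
The plan is to split the statement into two independent assertions and prove each directly: the computation of the index $[\liasplus:S]$, and the containment $2(\liasplus)^*\subseteq S$. The embedding $S\hookrightarrow\liasplus$ coming before the lemma is already an isometric embedding of full rank $r=\rank S$, with $\liasplus=S\usat$, so the index is controlled entirely by discriminants. First I would establish the index from the $2$-elementary invariants, which immediately settles the odd case (index $1$, hence $S=\liasplus$) and reduces the even case to the containment. For the containment I would invoke the standard ``$(1+\rho)$-trick'' together with the compatibility of the two involutions $\rhok$ and $\rhoias$ under the projection $e^\perp_{\lk}\to\lias$.

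For the index, note that since $\lias$ is unimodular and $\liasplus,\oT$ are the orthogonal $(\pm1)$-eigenlattices of $\rhoias$, there is a canonical isomorphism of finite discriminant groups $A_{\liasplus}\cong A_{\oT}$; in particular $a_{\liasplus}=a_{\oT}=\bar a$. The mirror-move classification of Proposition~\ref{prop:isotropic-vecs} (as summarized in Definition~\ref{def:mirror-moves}) gives $\bar a=a$ for an odd cusp and $\bar a=a-2$ for an even cusp, where $a=a_S$. Since $S$ and $\liasplus$ are both $2$-elementary one has $|A_S|=2^{a}$ and $|A_{\liasplus}|=2^{\bar a}$, whence
\[
  [\liasplus:S]^2=\frac{|A_S|}{|A_{\liasplus}|}=2^{\,a-\bar a}.
\]
Thus $[\liasplus:S]=1$ in the odd case, forcing $S=\liasplus$, and $[\liasplus:S]=2$ in the even case.

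For the containment I would first record the general fact that for a unimodular lattice $\Lambda$ with involution $\rho$ and saturated $(+1)$-eigenlattice $\Lambda^+$ one has $(1+\rho)\Lambda=2(\Lambda^+)^*$: the operator $\tfrac12(1+\rho)$ is the orthogonal projection onto $\Lambda^+\otimes\bQ$, it carries $\Lambda$ into $(\Lambda^+)^*$ (since for $x\in\Lambda$, $y\in\Lambda^+$ one has $\tfrac12(1+\rho)x\cdot y=x\cdot y\in\bZ$), and the resulting map $\Lambda\to(\Lambda^+)^*$ is surjective by unimodularity of $\Lambda$ and primitivity of $\Lambda^+$. Applying this to $\lias$ gives $(1+\rhoias)\lias=2(\liasplus)^*$. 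Now $\rhok$ sends $e\mapsto-e$ and preserves $e^\perp_{\lk}$, so it descends to $\rhoias$ on $\lias=e^\perp_{\lk}/e$; writing $\pi\colon e^\perp_{\lk}\to\lias$ for the quotient map this reads $\rhoias\circ\pi=\pi\circ\rhok$. Since $S\subseteq e^\perp_{\lk}$ is precisely the integral $(+1)$-eigenlattice of $\rhok$, the operator $1+\rhok$ maps $e^\perp_{\lk}$ into $S$, and therefore
\[
  2(\liasplus)^*=(1+\rhoias)\lias=\pi\bigl((1+\rhok)e^\perp_{\lk}\bigr)\subseteq\pi(S)=S,
\]
which is exactly the asserted containment and, combined with the index count, completes the even case.

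I do not expect a serious obstacle here: the whole argument is discriminant bookkeeping plus one application of the projection identity $(1+\rho)\Lambda=2(\Lambda^+)^*$. The only points needing care are the identification $a_{\liasplus}=\bar a$, where the unimodularity of $\lias$ and the mirror-move classification both enter, and the observation that $1+\rhok$ lands in the integral lattice $S$ rather than merely in $S\otimes\bQ$ — which is immediate because $S$ is by definition the $(+1)$-eigenlattice and is contained in $e^\perp_{\lk}$.
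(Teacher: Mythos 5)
Your proof is correct and takes essentially the same route as the paper's: the index is computed from $[\liasplus:S]^2=|A_S|/|A_{\liasplus}|$ together with $A_{\liasplus}\cong A_{\oT}$ (unimodularity of $\lias$) and the mirror-move relation $a_S=a_{\oT}$ resp. $a_S=a_{\oT}+2$, while the identity $(1+\rhoias)\lias=2(\liasplus)^*$ again follows from unimodularity. Your explicit verification of the containment $2(\liasplus)^*\subseteq S$ via the compatibility $\rhoias\circ\pi=\pi\circ\rhok$ and the fact that $(1+\rhok)$ lands in the saturated $(+1)$-eigenlattice is a sound filling-in of a step the paper's proof leaves implicit.
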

\begin{proof}
  The
  identity $(1+\rhoias)\lias = 2(\liasplus)^*$ follows because $\lias$
  is unimodular.
  Let $|\liasplus/S|=n$. Then $|A_S| : |A_{\liasplus}| = n^2$. We have
  $|A_{\liasplus}| = |A_{\liasminus}| = |A_\oT|$. Thus,
  $n^2 = |A_S| : |A_\oT| = 2^{a_s} : 2^{a_\oT}$.  Since $a_S = a_\oT$
  for an odd $0$-cusp, resp. $a_S = a_\oT+2$ for an even cusp,
  it follows that $n=1$, resp. $n=2$.
\end{proof}

\begin{lemma}
  Let $K$ be an even $2$-elementary lattice. Then any index $2$
  sublattice $S\subset K$ that contains $2K^*$ is $2$-elementary. Such
  sublattices are in a bijection with nonzero elements $x\in
  A_{K^\dag}$. Moreover,
  \begin{enumerate}
  \item If $\delta_K=1$ then there exists a unique
    $O(q_{K^\dag})$-orbit of $x$'s.
  \item If $\delta_K=0$ then there exist at most two
    $O(q_{K^\dag})$-orbit of $x$'s: for $x$ with
    $q_{K^\dag}(x)\equiv 0\pmod{2\bZ}$ and
    $q_{K^\dag}(x)\equiv 1\pmod{2\bZ}$. In the first case one has
    $\delta_S=0$ and in the second case $\delta_S=1$.
  \end{enumerate}
  If $K$ is indefinite, then the $O(q_{K^\dag})$-orbits are
  $O(K)$-orbits. 
\end{lemma}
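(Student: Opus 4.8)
The plan is to parametrize the sublattices explicitly and then push everything onto the finite form on $A_{K^\dag}$. Since $K$ is $2$-elementary one has $2K^*\subset K$, hence $K^*\subset\tfrac12 K$, and $A_{K^\dag}=\tfrac12 K/K^*$ by the formula of Lemma~\ref{lem:doubled-dual}. First I would note that an index-$2$ sublattice $S\subset K$ is the same as a nonzero homomorphism $K\to\tfrac12\bZ/\bZ$, i.e. $S=S_w:=\{k\in K\mid \langle k,w\rangle\in\bZ\}$ for some $w\in\tfrac12 K^*$, well defined modulo $K^*$. The condition $2K^*\subset S_w$ translates into $\langle 2K^*,w\rangle\subset\bZ$, i.e. $2w\in (K^*)^*=K$, so $w\in\tfrac12 K$. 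Thus $S\mapsto[w]$ is the asserted bijection between such sublattices and nonzero classes $x=[w]\in A_{K^\dag}=\tfrac12 K/K^*$.

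For $2$-elementarity I would compute the dual. Because $\langle w,S_w\rangle\subset\bZ$ we have $w\in S_w^*$, and since $[S_w^*:K^*]=[K:S_w]=2$ while $w\notin K^*$, this gives $S_w^*=K^*+\bZ w$. Then $2S_w^*=2K^*+\bZ(2w)$, where $2K^*\subset S_w$ by hypothesis and $2w\in S_w$ because $\langle 2w,w\rangle=2\langle w,w\rangle\in\bZ$ (as $w\in\tfrac12 K$ and $K$ is even). Hence $2S_w^*\subset S_w$, so $S_w$ is $2$-elementary.

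The crucial input for the coparity and orbit statements is the computation $q_{K^\dag}(x)=\langle w,w\rangle_{K^\dag}=2\langle w,w\rangle\pmod{2\bZ}$, combined with the criterion that $\delta_{S_w}=0$ iff $\langle y,y\rangle\in\bZ$ for all $y\in S_w^*=K^*+\bZ w$. Writing $y=v+nw$, the $v$-terms are integral exactly when $\delta_K=0$ and the cross terms $2n\langle v,w\rangle$ are always integral, so when $\delta_K=0$ one has $\delta_{S_w}=0$ iff $\langle w,w\rangle\in\bZ$, i.e. iff $q_{K^\dag}(x)\equiv0\pmod{2\bZ}$; this yields the stated match between $\delta_{S_w}$ and the value of $q_{K^\dag}(x)\in\{0,1\}$, and, since $K^*\subset S_w^*$, it also shows $\delta_{S_w}=1$ whenever $\delta_K=1$. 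For the orbit count I would use that $b_{K^\dag}$ is \emph{always} alternating, as $b_{K^\dag}(x,x)=2\langle w,w\rangle\equiv0\pmod{\bZ}$. By Lemma~\ref{lem:doubled-dual}, $\delta_K=0$ means $K^\dag$ is even, so $q_{K^\dag}$ is a genuine nondegenerate $\bF_2$-quadratic refinement of $b_{K^\dag}$ and Witt's theorem leaves at most the two orbits $\{q=0\}$ and $\{q=1\}$, giving (2). When $\delta_K=1$ the lattice $K^\dag$ is odd, so $q_{K^\dag}$ is only defined modulo $\bZ$ and that reduction vanishes; only $b_{K^\dag}$ survives, its isometry group is the full symplectic group, which is transitive on nonzero vectors, giving the single orbit of (1). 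Finally, for indefinite $K$ one has $O(K)=O(K^\dag)$, and Nikulin's surjectivity $O(K^\dag)\twoheadrightarrow O(q_{K^\dag})$ (the even case being the surjectivity recalled in Section~\ref{sec:2-elem-lattices}, the odd case its analogue) shows that $O(q_{K^\dag})$-orbits coincide with $O(K)$-orbits.

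I expect the delicate point to be precisely the clean split between (1) and (2). One must argue that the oddness of $K^\dag$ genuinely destroys the quadratic refinement, so that a vector with $q_{K^\dag}=0$ and one with $q_{K^\dag}=1$ actually fuse under an explicit isometry of $K^\dag$ (built from a square-$1$ vector of the odd summand, whose action on $A_{K^\dag}$ one checks changes the representative's square by an odd amount), and conversely that in the even case Witt transitivity on each value set holds with no spurious extra orbits, including the small-rank and characteristic-vector configurations. Identifying the exact automorphism group realized on $A_{K^\dag}$ — i.e. invoking the correct even/odd version of Nikulin's surjectivity for indefinite $2$-elementary lattices — is the other step that requires care.
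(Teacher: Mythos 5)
Your parametrization of the index-$2$ sublattices, the proof that they are $2$-elementary, and the coparity computation are correct and essentially identical to the paper's own proof (the paper writes $K_x$, $K_x^*=K^*+x$ where you write $S_w$, $S_w^*=K^*+\bZ w$). For the orbit statements (1)--(2) you genuinely diverge: the paper disposes of them by citing \cite[3.9.1]{nikulin1979integer-symmetric}, whereas you prove the needed special case directly, observing that $b_{K^\dag}$ is alternating and then invoking transitivity of the symplectic group on nonzero vectors (co-odd case) and Witt's theorem for nondegenerate $\bF_2$-quadratic forms (co-even case). This is a valid, self-contained substitute; since $q_{K^\dag}\equiv 0\pmod\bZ$ and $b_{K^\dag}$ is nondegenerate there are no nonzero characteristic elements, so no orbit splits further and your count agrees with Nikulin's.

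The genuine gap is the last claim in the case $\delta_K=1$, exactly the point you flagged as delicate. Your reduction is $O(K)=O(K^\dag)$ plus surjectivity of $O(K^\dag)\to O(q_{K^\dag})$. When $\delta_K=0$ this is clean: $K^\dag$ is even, indefinite, $2$-elementary, and the surjectivity recalled in Section~\ref{sec:2-elem-lattices} applies. When $\delta_K=1$, however, $K^\dag$ is \emph{odd}, and there is no citable ``analogue'': Nikulin's surjectivity theorem is a statement about even lattices. Worse, the assertion is interpretation-sensitive: the image of $O(K)$ in $\mathrm{GL}(A_{K^\dag})$ always preserves $b_{K^\dag}$, so if $O(q_{K^\dag})$ for an odd lattice is read as ``all automorphisms preserving the (identically vanishing) $\bQ/\bZ$-valued form'', i.e.\ all of $\mathrm{GL}(A_{K^\dag})$, then surjectivity genuinely fails whenever $\mathrm{Sp}(A_{K^\dag},b_{K^\dag})\ne\mathrm{GL}(A_{K^\dag})$ (e.g.\ $K=U\oplus E_7\oplus A_1$, where $A_{K^\dag}\cong\bF_2^8$). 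What the lemma needs, and what is true, is only that $O(K)$ acts \emph{transitively} on $A_{K^\dag}\setminus\{0\}$. For what it is worth, the paper's own one-line justification, the identity $O(q_{K^\dag})=O(q_K)$, is equally problematic: for $K=U\oplus A_1$ one has $O(q_K)=1$ while $O(q_{K^\dag})\cong S_3$; so this point does require a real argument.

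Here is a patch using only the even-lattice results the paper provides. Suppose $\delta_K=1$ and $x_1,x_2\in A_{K^\dag}$ are nonzero. The sublattices $K_{x_1},K_{x_2}$ are even, indefinite, $2$-elementary with the same signature and invariants $(r,a_K+2,1)$, hence isometric by Nikulin's uniqueness theorem; fix $\phi\colon K_{x_1}\to K_{x_2}$. Each inclusion $K_{x_i}\subset K$ corresponds to an order-$2$ isotropic subgroup $H_i\subset A_{K_{x_i}}$ with $H_i^\perp/H_i\cong q_K$, and its generator must be \emph{ordinary}: for a characteristic element $h$ one has $q(y)\equiv b(y,h)\equiv 0\pmod\bZ$ on $h^\perp$, so the quotient would be co-even, contradicting $\delta_K=1$. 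By Lemma~\ref{lem:disc-0-cusps} the subgroups $\bar\phi(H_1)$ and $H_2$ lie in a single $O(q_{K_{x_2}})$-orbit, and by surjectivity of $O(K_{x_2})\to O(q_{K_{x_2}})$ --- applied now to an \emph{even} indefinite $2$-elementary lattice --- one can correct $\phi$ so that it carries $H_1$ to $H_2$. The corrected isometry then extends to the overlattices, giving $\gamma\in O(K)$ with $\gamma(K_{x_1})=K_{x_2}$, hence $\gamma(x_1)=x_2$ by the $O(K)$-equivariance of your bijection.
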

\begin{proof}
  Any index $2$ subgroup of $K$ is of the form
  \begin{displaymath}
    K_x = \{ v \in K \mid v \cdot x\in \bZ \}
    \quad \text{for some } x\in \tfrac12 K^*
  \end{displaymath}
  and the condition that $2K^*\subset K_x$ means that moreover
  $x\in (2K^*)^* = \frac12 K \subset\frac12 K^*$.
  For the dual lattices we have $K_x^* = K^* + x$.

  The lattice $K_x$ is $2$-elementary $\iff$
  $2K_x^* \subset K_x$ $\iff$ $2K^* + 2x \subset K_x$. Since $K$ is
  $2$-elementary, we have $2K^* \subset K$. And $2K^* \cdot x = K^*
  \cdot (2x) \in\bZ$, so $2K^* \subset K_x$ as well.
  One has $2x \in K$, and $2x \in K_x$ iff $2x\cdot x = (2x)^2/2
  \in\bZ$. This is true because $K$ is an even lattice. This proves
  that the lattice $K_x$ is indeed $2$-elementary.

  Two elements $x_1,x_2\in\frac12 K$ define the same sublattice iff
  $x_1-x_2\in K^*$. So the set of the distinct sublattices $K_x$ is in
  a bijection with 
  \begin{displaymath}
    \tfrac12 K / K^* = K(\tfrac14) / K^* \simeq K(\tfrac12) / K^*(2)
    = A_{K^\dag},
  \end{displaymath}
  see Lemma~\ref{lem:doubled-dual}. Since $K$ is even, $K^\dag$ is
  co-even, so $q_{K^\dag}(x)\in\bZ$ for any $x\in A_{K^\dag}$.

  If $\delta_K=1$ then $K^\dag$ is odd and $q_{K^\dag}$ is well
  defined only mod $\bZ$, so $q_{K^\dag}(x)\equiv 0 \pmod\bZ$ for any $x\in
  A_{K^\dag}$. 
  If $\delta_K=0$ then $K^\dag$ is even and $q_{K^\dag}$ is well
  defined mod ${2\bZ}$, and $q_{K^\dag}(x)\equiv 0$ or $1 \pmod\bZ$. Then
  $\delta_{K_x}=1$ iff $K_x^\dag$ is odd $\iff$ $q_{K^\dag}(x)\equiv 1$.

  The statement about the $O(q_{K^\dag})$-orbits follows by 
  \cite[3.9.1]{nikulin1979integer-symmetric}.
  We have $O(q_{K^\dag}) = O(q_K)$, and if $K$ is indefinite then
  $O(K)\to O(q_K)$ is surjective. 
\end{proof}

\begin{theorem}\label{thm:reverse-direction}
  Let $\rhoias\colon\lias\to\lias$ be an involution.  Then any
  sublattice $S\subset K$ of index~$1$ or $2$ containing $2K^*$ is
  $2$-elementary, and the $O(q_{K^\dag})$-orbits of such sublattices
  are in a bijection with the sources of mirror moves
  $S\leadsto\oT$ with 
  $\oT\simeq\liasminus$. The lattice $\liasplus$ itself corresponds to
  the simple mirror of $\oT$ and the sublattices of index~$2$
  correspond to the even, non-simple mirror moves.
\end{theorem}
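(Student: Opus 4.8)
The plan is to exhibit a bijection by reading off both sides from lattice invariants, using the preceding lemma on the sublattice side and Proposition~\ref{prop:isotropic-vecs} on the mirror-move side. Throughout set $K=\liasplus$ and $\oT=\liasminus$. First I would record the basic structure: since $\lias$ is even and unimodular and $\rhoias$ is an involution, both $K$ and $\oT$ are even $2$-elementary hyperbolic lattices with $(A_K,q_K)=(A_\oT,-q_\oT)$, so that $a_K=a_\oT=\bar a$, $\delta_K=\delta_\oT=\bar\delta$, and $\rank\oT=20-\rank K$ (cf.\ Section~\ref{sec:2-elem-lattices}). Because $K$ is hyperbolic, the $O(q_{K^\dag})$-orbits coincide with the $O(K)$-orbits. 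The $2$-elementarity of $S$ is then immediate: for index~$1$ we have $S=K$, and for index~$2$ it is exactly the content of the preceding lemma, whose hypothesis $2K^*\subset S$ is the constraint in the statement; note $2K^*=(1+\rhoias)\lias$ by unimodularity, matching Lemma~\ref{lem:invo-lias-forward}.

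Next I would set up the forward map $\Phi$. Given a mirror move $S\rightsquigarrow T\to\oT\simeq\liasminus$, the projection $e^\perp_{\lk}\to\lias$ of Section~\ref{sec:reverse-moves} embeds $S$ into $\liasplus=K$, and by Lemma~\ref{lem:invo-lias-forward} the image is $K$ itself for an odd (simple) cusp, or an index~$2$ sublattice containing $2K^*$ for an even cusp. This defines $\Phi$ on the sources of mirror moves into $\oT$, and by construction it sends the simple mirror to the index~$1$ sublattice and the even mirrors to index~$2$ sublattices. The map $\Phi$ descends to orbits because the datum of a mirror-move source is an $O(T)$-orbit (Theorem~\ref{thm:0-cusps}) while the target is taken up to $O(q_{K^\dag})$.

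The heart of the proof is to check that $\Phi$ is a bijection on orbits, which I would do by matching the two classifications case by case in $\delta_K=\bar\delta$. On the sublattice side, the preceding lemma gives, besides the single index~$1$ sublattice $K$, exactly one index~$2$ orbit when $\delta_K=1$ (with $\delta_S=1$) and up to two when $\delta_K=0$ (one with $\delta_S=0$, one with $\delta_S=1$, according to whether $q_{K^\dag}(x)\equiv0$ or $1\pmod{2\bZ}$). On the move side, fixing $\oT=(\bar r,\bar a,\bar\delta)$ and reading Definition~\ref{def:mirror-moves} backwards, the possible sources are the odd one $(20-\bar r,\bar a,\bar\delta)=K$, the even ordinary one $(20-\bar r,\bar a+2,\bar\delta)$, and the even characteristic one $(20-\bar r,\bar a+2,1)$ which requires $\bar\delta=0$. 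Thus for $\bar\delta=1$ there are two sources and for $\bar\delta=0$ up to three; these counts agree with the sublattice side, the correspondence being by coparity: even ordinary $\leftrightarrow$ index~$2$ with $\delta_S=\delta_K$, and even characteristic $\leftrightarrow$ index~$2$ with $\delta_S\ne\delta_K$. To produce the inverse explicitly, given $S$ I would set $T=S^\perp\subset\lk$ and invoke Proposition~\ref{prop:isotropic-vecs}, which yields a unique $O(T)$-orbit of primitive isotropic $e\in T$ with $e^\perp_T/e\simeq\oT$ of the type dictated by the pair $(\textrm{index},\delta_S)$; this recovers the mirror move and inverts $\Phi$.

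The main obstacle is aligning the two orbit descriptions: the preceding lemma classifies index~$2$ sublattices by $q_{K^\dag}$ on $A_{K^\dag}$, whereas Proposition~\ref{prop:isotropic-vecs} classifies isotropic vectors by $O(T)$. The real work is to verify that the coparity $\delta_S$ computed via $q_{K^\dag}$ (using $K^\dag=K^*(2)$ and Lemma~\ref{lem:doubled-dual}) equals the $\delta_S$ labelling the source of the corresponding mirror move. Once this discriminant-form bookkeeping is carried through the identification $A_K=A_\oT$, the matching of counts together with the uniqueness in Proposition~\ref{prop:isotropic-vecs} forces $\Phi$ to be the asserted bijection, with $\liasplus$ corresponding to the simple mirror and the index~$2$ sublattices to the even, non-simple mirror moves.
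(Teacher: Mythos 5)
Your framework coincides with the paper's (forward map from Lemma~\ref{lem:invo-lias-forward}, classification of index-$2$ sublattices by the preceding lemma, matching by coparity), but the step you defer as ``discriminant-form bookkeeping'' is in fact the entire content of the paper's proof, and your preliminary counts are wrong precisely in the cases where that bookkeeping is nontrivial. Both sides of your proposed bijection are only \emph{upper} bounds: the preceding lemma gives at most one index-$2$ orbit when $\delta_K=1$ (none at all if $A_{K^\dag}=0$) and at most two when $\delta_K=0$, while on the move side the even sources $(20-\bar r,\bar a+2,\cdot)$ need not exist as nodes of Fig.~\ref{fig:discforms}. For example, if $K=\liasplus$ lies on the $r=a$ line (say $K=(10,10,1)$, so $\oT\simeq(10,10,1)$ as well), then $K=L(2)$ for a unimodular $L$, hence $K^\dag=L$ and $A_{K^\dag}=0$: there are \emph{no} index-$2$ sublattices, and correspondingly no even moves, since the would-be sources have $a=r+2>r$. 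So your assertion ``for $\bar\delta=1$ there are two sources'' is false there. A subtler failure is $K=(6,4,0)=U(2)\oplus D_4$: here $K^\dag=U\oplus D_4$, the form $q_{K^\dag}$ takes only the value $1\pmod{2\bZ}$ on nonzero elements of $A_{K^\dag}$, so every index-$2$ sublattice is characteristic ($\delta_S=1$); correspondingly $(6,6,0)$ is not a node of the figure while $(6,6,1)$ is. The paper's proof consists exactly of this verification: computing $A_{K^\dag}$ and $q_{K^\dag}$ from the direct-sum decompositions for every node, with these two degenerate families singled out as the special cases, and checking in the remaining cases that both values of $q_{K^\dag}$ occur whenever $\delta_{K^\dag}=0$, so that the sublattice count agrees with the count of moves allowed by the figure.

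Relatedly, your proposed inverse ``set $T=S^\perp\subset\lk$ and invoke Proposition~\ref{prop:isotropic-vecs}'' presupposes that the abstract index-$2$ sublattice $S\subset K$ admits a \emph{primitive} embedding into $\lk$. That is not automatic: the inclusion $S\subset\lias$ is certainly not primitive (its saturation is $K$), and by Nikulin's classification the existence of a primitive embedding is equivalent to the invariants $(r_K,a_K+2,\delta_S)$ appearing in Fig.~\ref{fig:discforms} --- which is again exactly the existence statement that must be checked, not one you may assume. Once that case-by-case matching is carried out, your orbit/coparity correspondence (index $1$ $\leftrightarrow$ odd move, index $2$ with $\delta_S=\delta_K$ $\leftrightarrow$ even ordinary, index $2$ with $\delta_S\ne\delta_K$ $\leftrightarrow$ even characteristic) is correct and does close the argument.
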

\begin{proof}
  We apply the previous lemma to $K=\lias^+$ and 
  check that the index~$2$ sublattices corresponding to
  $0\ne x\in A_{K^\dag}$ are indeed in a bijection with those that are
  allowed by \fign.
  There are two special cases to consider:
  \begin{enumerate}
  \item The lattices $S$ on the $r=a$ line, where according to \fignb
    there should be no index~$2$ sublattices. Indeed, in this case
    $S=L(2)$ with a unimodular $L$, so $S^\dag = L$ and
    $A_{S^\dag} =0$ has no nonzero elements.
  \item $(6,4,0)$ where all the index~$2$ sublattices must be
    characteristic. In this case $S=U(2)\oplus D_4$. So $\delta_S=0$ and
    $S^\dag = U\oplus D_4$. Then the discriminant form is
    $q(A_{S^\dag})=v$ and every nonzero element $x\in A_{S^\dag}$
    satisfies $q(x)\equiv 1\pmod {2\bZ}$. So this is consistent with \fign.
  \end{enumerate}

  As we noted at the end of Section~\ref{sec:2-elem-lattices}, all
  lattices in \fignb can be written as direct sums of the standard
  ones. Computing the doubled duals for them one checks that in the
  remaining, non special cases one has $A_{S^\dag}\ne 0$, and if
  $\delta_{S^\dag}=0$ then there are elements both with
  $q(x)\equiv 0\pmod {2\bZ}$ and $q(x)\equiv 1\pmod{2\bZ}$
  corresponding to both even ordinary and even characteristic
  moves. The answer given by \fignb is the same in all of these cases.
\end{proof}

\section{Degenerations and integral affine spheres} 
\label{sec:kulikov+ias}

The material of this section is well explained in
\cite{engel2018looijenga, engel2021smoothings, alexeev2019stable-pair,
  alexeev2021compact}. So we give a brief summary and fix notations.

\subsection{Kulikov models}
\label{sec:kulikov}

We discuss one parameter degenerations of K3 surfaces.
\cite{friedman1986type-III} is a useful reference.

\begin{definition}
  Let $X^*\to C^*$ be a family of smooth complex K3 surfaces over a
  punctured curve or disk $C^* = C\setminus 0$. A {\it Kulikov}, or
  {\it Kulikov-Persson-Pinkham, model} is a proper extension $X\to C$ such that
  $X\to C$ is semistable, i.e. $X$ is nonsingular and the central
  fiber $X_0 = \cup V_i$ is a reduced normal crossing union of
  divisors, and additionally, one has $K_X \sim_C 0$. 
  The central fiber $X_0$ is called a {\it Kulikov surface}.
\end{definition}
A Kulikov model exists after a finite base change $C'\to C$ by
\cite{kulikov1977degenerations-of-k3-surfaces,
  persson1981degeneration-of-surfaces}.

There are three types of Kulikov models, depending on the image of
$0\in C$ under the extended period morphism
$$f\colon C\to \overline{\bD/\Gamma}\ubb$$ to the Baily-Borel
compactification:
\begin{enumerate}
\item[(I)] $f(0)$ lies in the interior $\bD/\Gamma$: $X_0$ is smooth.
\item[(II)]
  $f(0)$ lies in a $1$-cusp.
  The dual complex of $X_0$ is an interval of length $n$,
  $D_{0,1} = \dotsb = D_{n-1,n} = E$ is the same elliptic
  curve, $V_0$ and $V_n$ are rational, and for $0<i<n$ the surfaces
  $V_i\to E$ are generically ruled. The double locus
  is an anticanonical divisor on each component $V_i$.
\item [(III)]
  $f(0)$ lies in a $0$-cusp.
  The dual complex $\Gamma(X_0)$ is a triangulation of the
  sphere~$S^2$. Each $(V_i, D_i)$ is an anticanonical pair,
  i.e. $K_{V_i} + D_i\sim 0$, where the part of the double locus $D_i$ contained
  in $V_i$ is a wheel of rational curves. 
\end{enumerate}

In Type III, all components $V_i$ are necessarily rational.
The central fibers $X_0$ of Kulikov models are called Type I, II, III Kulikov
surfaces, respectively. Denote $D_{ij} = V_i\cap V_j$
so that $D_i = \sum_j D_{ij}$. Then the dual complex $\Gamma(X_0)$
of a Type III Kulikov surface consists of vertices $v_i$ corresponding to components $V_i$,
edges $e_{ij}$ corresponding to double curves $D_{ij}$, and triangles $t_{ijk}$ corresponding
to triple points $T_{ijk}=V_i\cap V_j\cap V_k$.

The Picard-Lefschetz transform
$T\colon H^2(X_t,\bZ)\to H^2(X_t,\bZ)$ takes
the form $T={\rm exp}(N)$ where $$N(x) = (x\cdot \lambda)e-(x\cdot e)\lambda$$
for $e\in H^2(X,\bZ)$ primitive isotropic and $\lambda\in e^\perp/e$ a vector
satisfying $\lambda^2=\textrm{the}$ number of triple points of $X_0$. We call $\lambda$
the {\it monodromy invariant}. When $\lambda^2>0$ (so $X_0$ is of Type III),
$I=\bZ e$ defines the $0$-cusp $f(0)$, and when $\lambda^2=0$, $\lambda\neq 0$
(so $X_0$ is of Type II),
$J=(\bZ e\oplus \bZ \lambda)^{\rm sat}$ defines the $1$-cusp containing $f(0)$.

\medskip

Two Kulikov models $X$ of the same degeneration $X^*\to C^*$
differ by a sequence of flops in curves $E\subset X_0$.
The central fiber $X_0$ is then changed by a
sequence of elementary modifications of the following types:
\begin{enumerate}
\item[(M0)] $E\subset V_i$, $E\cap D_i=\emptyset$ is an interior
  $(-2)$-curve. The flop is a nontrivial birational transformation
  $X\ratmap X'$ but $X'_0=X_0$ are canonically identified.
\item[(M1)] $E\subset V_i$, $E^2=-1$, $E\cap D_i=p\in D_{ij}$ is a
  smooth point of $D_i$. The flop contracts $E$ on $V_i$ to $p$ and
  blows up $p\in V_j$ to create a $(-1)$-curve $E'\subset V_j$.
\item[(M2)] The flop contracts $E=D_{ij}$ which is a $(-1)$-curve on
  both $V_i$ and $V_j$ and inserts a curve $D_{kl}$ between their
  neighbors $V_k,V_l$.
\end{enumerate}

\medskip

\begin{definition}
  Let $(V, D=\sum D_j)$ be an anticanonical pair.  A
  \emph{corner blowup} is a blowup $f\colon V'\to V$ at a node of $D$.
  The anticanonical divisor of $V'$ is $D'=f\inv(D)$. 
  
  An \emph{interior, or almost toric blowup} $V''\to V$ is a blowup at an
  interior point of a curve $D_{j}$, i.e. at a point
  $p\in D_j \setminus \cup_{k\ne j} D_k$. The anticanonical
  divisor $D''\subset V''$ is the strict preimage of $D$.
\end{definition}

\begin{lemma}[\cite{gross2015moduli-of-surfaces}]
  \label{def:toric-model}
  For any anticanonical pair $(V,D)$ there exists a
  diagram $V \gets V' \to \oV$, called a {\rm toric model}, such that $V'\to V$ is a
  sequence of corner blowups and $V'\to \oV$ is a sequence of
  interior blowups. 
\end{lemma}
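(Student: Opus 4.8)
The plan is to run a minimal model program on $V$ while keeping track of the cycle $D$, sorting the contractions into interior blowdowns and corner blowups. The single computation driving everything is the following: if $E\subset V$ is any $(-1)$-curve, then by adjunction $-K_V\cdot E=1$, and since $D\sim -K_V$ this gives $E\cdot D=1$. Because $D$ is a cycle of rational curves, total intersection $1$ leaves only two possibilities: either $E$ meets $D$ transversally at a single smooth point of a unique component $D_j$ with $E\not\subset D$ (an \emph{interior} $(-1)$-curve, in the sense of the interior blowups defined above), or $E$ is itself a boundary component $D_j$ with $D_j^2=-1$ (consistent, since then $E\cdot D=D_j^2+2=1$). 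Passing through a node of $D$, or being tangent to $D$, would force $E\cdot D\ge 2$ and is excluded.

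I would then induct on the Picard number $\rho(V)=10-K_V^2$. If $(V,D)$ carries an interior $(-1)$-curve $E$, I contract it: the map $V\to V_1$ is an interior blowdown, the pair $(V_1,D_1)$ is again a rational anticanonical pair (the relation $K_V+D\sim 0$ is preserved because $E$ meets $D$ in one smooth point), and $\rho$ drops by one; by induction $(V_1,D_1)$ has a toric model, which I lift back across the interior blowup. If instead every $(-1)$-curve of $V$ is a boundary component, I contract one such $D_j$, which is precisely a corner blowdown and again yields a rational anticanonical pair of smaller $\rho$. The induction terminates at a minimal rational surface, i.e.\ $\bP^2$ or $\bF_m$; for these one checks directly, by a short case analysis over the possible anticanonical cycles (the triangle, the conic-plus-secant, and the nodal cubic on $\bP^2$, and the fiber/section cycles on $\bF_m$), that the pair is either toric or acquires a toric model after a single corner blowup followed by one interior blowdown.

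The main obstacle is not \emph{finding} the contractions but \emph{reordering} them into the required shape $V\gets V'\to\oV$, with all corner blowups preceding all interior blowdowns. The modifications produced by the induction interleave the two types, and lifting a toric model of $(V_1,D_1)$ through an interior blowup of $V$, or through a corner blowdown, can insert corner blowups downstream of interior ones. The technical heart is therefore a commutation lemma: a corner blowup and an interior blowdown can be performed in either order, the only delicate configuration being when the interior point lies on a boundary component incident to the corner being modified, where one chases strict transforms explicitly. A clean bookkeeping device is the charge $Q(V,D)=12-\sum_j\big(D_j^2+3\big)$, which is nonnegative, unchanged by corner blowups, and drops by exactly $1$ under each interior blowdown; it guarantees termination and identifies the number of interior blowdowns in any toric model with the invariant $Q$. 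I would treat the short cycles $n=1,2$, where the notion of corner degenerates, as separate base cases.
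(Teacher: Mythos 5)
First, note that the paper itself does not prove this lemma: it is quoted from Gross--Hacking--Keel \cite{gross2015moduli-of-surfaces}, so your proposal must be judged against the standard argument there, which, like yours, inducts on the Picard rank using $(-1)$-curves. Your skeleton is sound: the adjunction computation $E\cdot D=1$ and the resulting dichotomy (interior $(-1)$-curve versus boundary component of square $-1$) is correct, and the induction--with--lifting works. In fact the ``commutation lemma'' you call the technical heart is more benign than you suggest: corner blowups happen at nodes of the boundary and interior blowups at interior points of boundary components, and since an interior $(-1)$-curve $E$ satisfies $E\cdot D=1$ it can never pass through a corner; so the two kinds of modification always occur at distinct (even infinitely near) points and commute on the nose. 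What you do need, and do not state, is that any two sequences of corner blowups of a fixed pair are dominated by a common one (as for fan refinements); this is what lets you lift a toric model of $(V_1,D_1)$ through a corner blow\emph{down} $V\to V_1$. The genuinely non-commuting reorderings (pushing a corner blowdown past an interior blowdown in a fixed downward MMP sequence) are indeed impossible in general, but your inductive lifting never requires them, so that worry is moot.

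The genuine gap is the base case, and it is exactly where the content of the lemma lives. Your claim that a minimal pair ``is either toric or acquires a toric model after a single corner blowup followed by one interior blowdown'' is false, and it contradicts your own charge bookkeeping: $(\bP^2,D)$ with $D$ an irreducible nodal cubic has charge $Q=11-D^2=2$, and $(\bF_m,D)$ with $D$ irreducible nodal has $Q=11-8=3$, so these require two, respectively three, interior blowdowns in \emph{any} toric model. Even in the charge-one case $(\bP^2, \mathrm{conic}+\mathrm{line})$ a single corner blowup does not suffice: blowing up one intersection point gives $\bF_1$ with a $3$-cycle of self-intersections $(3,-1,0)$, and the only $(-1)$-curve on $\bF_1$ is the exceptional section, which is a boundary component, so no interior blowdown exists; one needs a second corner blowup (at the corner where the strict transform of the conic meets the exceptional curve), after which the strict transform of the tangent line to the conic becomes an interior $(-1)$-curve and contracts to give the toric pair $(\bF_2,\text{toric boundary})$. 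Since your inductive step, by design, defers all difficulty to the minimal pairs, the proof is incomplete until you carry out this analysis: for each anticanonical cycle on $\bP^2$ and on $\bF_m$ (including the irreducible nodal and two-component cases, and uniformly in $m$) you must exhibit enough corner blowups to create the required number $Q$ of interior $(-1)$-curves --- typically tangent lines at nodes and intersection points, and fibers through them on $\bF_m$ --- and then contract them. This is classical and finite work, but it is not the one-line check your sketch asserts.
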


We order the blowups and call $V \gets V' \to \oV$ \emph{the ordered
  toric model} of $V$. We also fix the origin $1\in
(\bC^*)^2\subset\oV$. This defines a choice of origin on every
boundary curve $D_j$ of $V$.

\begin{definition}\label{def:charge}
  The \emph{charge} $Q(V,D)$ of an anticanonical pair is 
  the number of the interior blowups in a toric model.
  Equivalently, one has
  \begin{displaymath}
    Q(V,D) =
    \begin{cases}
      12 + \sum(-D_j^2-3) &\text{ if } D \text{ is nodal with } \ge2
      \text{ components,}  \\
      11-D^2 & \text{ if } D \text{ is irreducible nodal.}\\
    \end{cases}
  \end{displaymath}
\end{definition}

Type III surfaces are in a sense $24$ steps away from being toric:

\begin{theorem}[Friedman-Miranda \cite{friedman1983smoothing-cusp}]
  \label{thm:friedman-miranda}
   For a Type III Kulikov surface,
  one has $\sum Q(V_i,D_i)=24$. 
\end{theorem}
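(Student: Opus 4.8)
The plan is to reduce this global identity to a pointwise statement about a single anticanonical pair, together with one topological computation for the whole degeneration. First I would establish the local identity
\begin{equation}\label{eq:charge-is-euler}
  Q(V_i,D_i) = \chi_{\mathrm{top}}(V_i\setminus D_i),
\end{equation}
and then show that the open surfaces $V_i\setminus D_i$ carry the entire Euler characteristic of the smooth fiber, so that $\sum_i \chi_{\mathrm{top}}(V_i\setminus D_i) = \chi_{\mathrm{top}}(X_t) = 24$, the Euler number of a K3 surface.

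To prove \eqref{eq:charge-is-euler} I would run through the ordered toric model $V_i \gets V_i' \to \bar V_i$ of Lemma~\ref{def:toric-model}. On the toric surface $\bar V_i$ the complement of the toric boundary is the open torus $(\bC^*)^2$, with $\chi_{\mathrm{top}}=0$. A corner blowup replaces a torus-fixed point of the boundary by an exceptional curve that is again part of the total-transform boundary, so it leaves the complement, hence $\chi_{\mathrm{top}}(V\setminus D)$, unchanged. An interior blowup at a smooth point of the boundary produces an exceptional $\bP^1$ meeting the strict transform in one point, thus adjoining a copy of $\bC$ to the complement and raising $\chi_{\mathrm{top}}(V\setminus D)$ by exactly $1$. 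As $Q(V_i,D_i)$ is by definition the number of interior blowups, \eqref{eq:charge-is-euler} follows, and one checks directly that it matches the closed formula of Definition~\ref{def:charge} in both the reducible and the irreducible nodal cases.

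For the global step I would stratify
\[
  X_0 = \bigsqcup_i (V_i\setminus D_i) \ \sqcup\ \bigsqcup_{i<j} (D_{ij}\setminus\{\text{triple points}\}) \ \sqcup\ \{\text{triple points}\}
\]
and compute $\chi_{\mathrm{top}}(X_t)$ by additivity of the Euler characteristic of the nearby cycles over this stratification. Over a stratum along which exactly $k$ local branches of $X_0$ meet, the family has local equation $x_1\cdots x_k=t$, whose Milnor fiber is $(\bC^*)^{k-1}$; this has $\chi_{\mathrm{top}}=1$ for $k=1$ and $\chi_{\mathrm{top}}=0$ for $k\ge 2$. Hence only the top strata $V_i\setminus D_i$ contribute, giving $\chi_{\mathrm{top}}(X_t)=\sum_i\chi_{\mathrm{top}}(V_i\setminus D_i)$; since $X_t$ is a smooth K3 surface this equals $24$, and with \eqref{eq:charge-is-euler} we conclude $\sum_i Q(V_i,D_i)=24$.

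I expect the global step to be the main obstacle, precisely because $\chi_{\mathrm{top}}$ is \emph{not} preserved under a semistable degeneration --- already a cycle of $\bP^1$'s smoothing to an elliptic curve has special fiber of positive Euler number but general fiber of Euler number $0$ --- so the real content is the vanishing-cycle bookkeeping that isolates the two-dimensional strata. As a more self-contained alternative I would bypass nearby cycles and argue combinatorially: write $V$, $E$, $F$ for the numbers of vertices, edges and triangles of the dual triangulation of $S^2$ and $v_i$ for the valence of vertex $i$ (equivalently, the number of boundary components of $D_i$). Combining Noether's formula $\chi_{\mathrm{top}}(V_i)=12-K_{V_i}^2$, valid since $\chi(\mathcal{O}_{V_i})=1$, with $K_{V_i}=-D_i$, the triple point formula $(D_{ij}^2)_{V_i}+(D_{ij}^2)_{V_j}=-t_{ij}$ --- which follows from $\mathcal{O}_X(X_0)|_{D_{ij}}\cong\mathcal{O}_{D_{ij}}$ on the smooth total space --- the fact that each edge lies on exactly two triangles so $t_{ij}=2$, the handshake identity $\sum_i v_i=2E$, and Euler's relations $V-E+F=2$ and $3F=2E$, one obtains $\sum_i Q(V_i,D_i)=12V-4E=24$.
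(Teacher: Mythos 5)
The paper does not actually prove this statement: it is quoted as a theorem of Friedman--Miranda with a citation to \cite{friedman1983smoothing-cusp}, so there is no internal argument to compare yours against. Judged on its own, your proposal is correct, and in fact you give two valid proofs. The local identity $Q(V_i,D_i)=\chi_{\mathrm{top}}(V_i\setminus D_i)$ is right: it follows from the toric model exactly as you say (corner blowups change neither side, each interior blowup adds a copy of $\bC$ to the complement), and it is consistent with both branches of Definition~\ref{def:charge} via Noether's formula $\chi_{\mathrm{top}}(V_i)=12-K_{V_i}^2$ and $K_{V_i}=-D_i$. Your global step via nearby cycles is also correct, and your own worry about it is already answered by your computation: the Clemens retraction has fiber a $(k-1)$-torus over a $k$-fold stratum, so precisely the open two-dimensional strata contribute, which is why the cycle-of-$\bP^1$'s example is consistent rather than a counterexample. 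The alternative combinatorial argument ($\sum Q = 12V-4E$, then $V-E+F=2$, $3F=2E$) is the classical one and is closest in spirit to Friedman--Miranda's original bookkeeping; it buys you independence from vanishing-cycle machinery, while the Euler-characteristic argument buys uniformity.

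One small caveat on the combinatorial version: as written it assumes the dual complex is simplicial, since you index double curves by pairs $i<j$, use the valence-equals-number-of-boundary-components identity, and invoke only the ``$\ge 2$ components'' branch of the charge formula. Kulikov surfaces can have dual complexes that are triangulations only in the generalized ($\Delta$-complex) sense, with multiple edges or loops (the loop case is exactly where $D_i$ is irreducible nodal and $Q=11-D_i^2$ applies). Multiple edges cause no harm, and loops can be checked separately or removed by M1/M2 modifications after noting that these preserve $\sum_i Q(V_i,D_i)$; alternatively, your first proof handles all cases with no modification, which is a good reason to prefer it as the primary argument and keep the combinatorial one as a remark.
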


Conversely, suppose we have a collection of anticanonical pairs $(V_i,D_i)$
and identifications $D_{ij}\to D_{ji}$ whose dual complex forms a $2$-sphere,
such that $D_{ij}^2+D_{ji}^2=-2$.
Then a $d$-semistable (in the sense of Friedman
\cite{friedman1983global-smoothings}) gluing gives a
Type III surface which admits a smoothing to a  
K3 surface. We refer to \cite{alexeev2021compact} for details.

If we fix the numerical type and the ordered toric models for each
$(V_i,D_i)$, then we can construct \emph{the standard}
Type III surface as follows: the interior blowups
$V'\to\oV$ are all done at the point $-1$ on each boundary component
$D_{ij}\simeq \bP^1$,
with respect to the chosen origins. Each
identification $D_{ij}\to D_{ji}$ is chosen to be the unique
isomorphism matching the origins and the triple points. The
standard surface is always $d$-semistable.

All the other gluings are defined by varying the points of
nontoric blowups and the differences between the origins in $D_{ij}$
and $D_{ji}$, modulo the changes of the origins in each $\oV_i$.  This
defines a gluing complex \cite[Def.~5.10]{alexeev2021compact}. The
final result is that the $d$-semistable Type III surfaces of a fixed
numerical/combinatorial type are parameterized by the $19$-dimensional torus
$\Hom(\Lambda, \bC^*)$, where $\Lambda\simeq\bZ^{19}$ can be defined
from the gluing complex or, equivalently, from the Picard complex:
\begin{equation}\label{eq:picard-complex}
  \wt\Lambda = \ker \left(\oplus_i\Pic V_i \to \oplus_{i<j} \Pic D_{ij}\right),
  \quad \Lambda = \wt\Lambda/\Xi, 
\end{equation}
where $\Xi = \la \xi_i \ra / (\sum \xi_i)$ 
and $\xi_i = \sum_j (D_{ij}-D_{ji}) \in \bigoplus_i \Pic V_i$.  For a
given smoothing with a $0$-cusp $e$ and monodromy vector
$\lambda\in e^\perp/e$ one has $\Lambda = \lambda^\perp$ in
$e^\perp/e \simeq {\rm II}_{2,18}$. See \cite{alexeev2021compact}, Sec.~5B
and Prop.~3.29.

The lattice $\wt\Lambda$ is of \emph{numerical Cartier
  divisors}, which are the numerical possibilities for the
restrictions of a line bundle on $X_0$ to $V_i$ and $D_{ij}$. The
elements~$\xi_i$ represent the line bundles $\cO_{X_0}(-V_i)$ which
are defined on any Kulikov model $X$ containing $X_0$ as the central
fiber.  The homomorphism
\begin{equation}\label{eq:period}
  \psi\colon\Lambda\to\bC^*, \ \text{resp. } E
  \quad\text{for } X_0 \text{ of type III, resp. II}
\end{equation}
is the \emph{period} of~$X_0$.  For a Type II surface $X_0$,
$E=\Pic^0 D_i$ is the elliptic curve for any of the isomorphic
double curves $D_i$.

The Picard group of $X_0$ is $\ker\wt\psi$,
where $\wt\psi$ is the composition
$\wt\Lambda \to \Lambda \xrightarrow{\psi}\bC^*\textrm{ or }E$.

\begin{definition}\label{def:reduced-picard}
  The \emph{reduced Picard group} is
  \begin{math}
    \oPic (X_0) = \ker\psi = \Pic(X_0) / \Xi.
  \end{math}
  If $X_0$ is the central fiber of a smoothing, it is the quotient of
  $\Pic(X_0)$ by $\xi_i=\cO_{X_0}(-V_i)$.
\end{definition}
For a standard surface one has $\psi\equiv 1$, so $\oPic(X_0) =
\Lambda$.

\subsection{Nef, divisor, and stable models}
\label{sec:nef-divisor-models}

\begin{definition}
  Let $L^*$ be a line bundle on $X^*$, relatively nef and big over $C^*$.
  A relatively nef extension $L$ to a Kulikov model $X\to C$
  is called a {\it nef model}.
\end{definition}

\begin{definition}
  Let $R^*\subset X^*$ be the vanishing locus of a
  section of $L^*$ as above, containing no vertical
  components. A {\it divisor model} is an extension $R\subset X$ to a relatively
  nef divisor $R\in |L|$ for which $R_0$ contains no strata of $X_0$.
\end{definition}

\begin{definition}\label{pass-to-stable} The {\it (KSBA-)stable model} $(\oX,\epsilon \oR)$
is ${\rm Proj}_C\,\bigoplus_{n\geq 0}\pi_*(nR)$ for some divisor model $\pi\colon (X,R)\to (C,0)$.
It is unique, depending only on the family $(\oX^*,\oR^*)\to C^*$, and stable under base change.
We call $(\oX_0,\epsilon \oR_0)$ the {\it stable limit}.
 \end{definition}

\begin{definition}
  For an arbitrary, not necessarily nef effective divisor $R^*$ on
  $X^*$, we say that a Kulikov model $X\to C$ is \emph{compatible with
    a divisor} if its closure $R$ does not contain any strata of the
  central fiber $X_0$. 
\end{definition}

If $X\to C$ is a family with an involution $\iota$ and $R=X^\iota$ is
the fixed locus then $(X,R)$ is usually not a nef model, since $R_t$
contain $(-2)$ curves $E_{i,t}$. Only the part of $R$ which is the
family $C_g$ of curves of genus $g\ge2$ may give a divisor or stable model. 

\subsection{$\ias$ from Kulikov surfaces}
\label{sec:ias}

More details of the constructions in the following three sections,
with pictures, are given in \cite{engel2018looijenga,
  engel2021smoothings, alexeev2019stable-pair}.

\smallskip

For a Type III Kulikov surface $X_0=\cup V_i$ the dual graph is a
triangulation of a sphere. This is a very rough, partial
description. To describe the combinatorial type of $X_0$ precisely, one
has to specify the deformation types of each pair $(V_i,D_i)$.
There is an economical way to do so, using the language of
integral affine structures $B$ on the complement of finitely many
points in a sphere $S^2$.

\begin{definition} 
An \emph{integral affine structure} on a real oriented surface $S$ is
a collection of charts to open subsets of $\bR^2$, with transition
functions in $\SL(2,\bZ)\ltimes\bR^2$.
\end{definition}

For a Type III Kulikov model, we endow $\Gamma(X_0)\setminus \{v_i\,\big{|}\,Q(V_i,D_i)>0\}$
with an integral-affine structure as follows. Each triangle is declared equivalent
to a lattice triangle of the smallest possible lattice volume $1$. Any two such are equivalent
up to $\SL_2(\bZ)\ltimes \bZ^2$. Cyclically order the directed edges $\vec{e}_{ij}$ 
emanating from a vertex $v_i$ so that $j$ is increasing by $1$ on successively
counterclockwise edges. Then, to extend the integral-affine structure to the interiors of edges,
we glue two lattice triangles together according to the formula
\begin{displaymath}
  \vec e_{i, j-1} + \vec e_{i, j+1} = d_{ij}\vec e_{ij}, \quad
  \text{where }
  d_{ij}=\begin{cases}
    -D_{ij}^2 & \text{ if } D_{ij} \text{ is smooth,}\\
    -D_{ij}^2+2 & \text{ if } D_{ij} \text{ is rational nodal.}
    \end{cases}
\end{displaymath}

Let ${\rm star}(v_i)$ denote the union of the triangles containing 
$v_i$. The integral-affine structure on $\Gamma(X_0)\setminus \{v_i\}$ extends to the vertices
$v_i$ for which $Q(V_i,D_i)=0$, i.e.~when $(V_i,D_i)$ is toric. By a well-known
formula in toric geometry, ${\rm star}(v_i)$
admits a chart to a polygon in $\bR^2$ whose vertices are the endpoints of
the primitive integral vectors in the fan of $(V_i,D_i)$.
Thus, in analogy with the toric case, we define (dropping
the index $i$ for notational convenience):

\begin{definition} The {\it pseudofan} of $(V,D)$ is
the integral-affine surface ${\rm star}(v)$ constructed from gluing
lattice triangles as above, one for each node of $D$. \end{definition}

It is an integral-affine surface with boundary,
PL isomorphic to the cone over the dual complex of $D$ with (up to)
one singularity at the cone point.

An alternative description is in terms of a toric model
$V\gets V'\to \oV$ of in Definition~\ref{def:toric-model}. In the fan
of $\oV$, let $\vec e_{j}$ denote the primitive integral generators of the rays, in
the counterclockwise order. The morphism $V'\to\oV$ is a sequence of
interior blowups, say $n_{j}$ times on the side $\oD_{j}$. Then the
pseudofan $\fF(V',D')$ is obtained from $\fF(\oV,\oD)$ by regluing
along each edge $\vec e_{j}$ with a shearing transformation
$I_{n_j}(\vec e_j) = I(\vec e_j)^{n_j}$, see \cite[Prop.~3.13]{engel2018looijenga}.
Here, $$I(\vec e_j)\sim \begin{pmatrix} 1&1\\0&1 \end{pmatrix}$$
is the unique linear transformation in $\SL_2(\bZ)$ conjugate to a unit shear
which has $\vec e_j$ as an eigenvector.
The pairs $(V,D)$ and $(V',D')$ differ by corner blowups and the
neighborhoods of the origins in $\fF(V,D)$ and $\fF(V',D')$ are
isomorphic. Only the polyhedral subdivision changes.
So they define the same integral-affine singularity.

\begin{definition}\label{def:cbec}
  Two pseudofans belong to the same \emph{corner blowup equivalence
    class (cbec)} if they correspond to different toric models of
  the same anticanonical pair, possibly after some corner blowups and topologically
  trivial deformations.
\end{definition}

\begin{definition}\label{def:ias}
  An $\ias$ is an integral-affine structure on
  $S^2\setminus \{p_1,\dotsc, p_k\}$, together with
  the data of a cbec for which $\fF(V,D)$ models a neighborhood of $p_i$.
    \end{definition}

\begin{construction}\label{con:ias-from-kulikov}
  Each $d$-semistable Type III surface $X_0=\cup V_i$ defines a
  triangulated (by unit lattice triangles) $\ias$ $(B,\cT)$ as follows: $B= \cup_i \fF(V_i,D_i)$, and
  $\cT$ is the dual complex $\Gamma(X_0)$. By
  \cite[Prop.~2.2]{engel2018looijenga}, the consistency of the
  integral-affine structure on the union of adjacent pseudofans
  is equivalent to the triple point formula $D_{ij}^2 + D_{ji}^2=-2$ which
  holds on any Kulikov surface.
  
  Vice versa, from any triangulated $\ias$, one can
  construct a Type III surface by interpreting the star of each vertex as
  an anticanonical pair $(V_i,D_i)$ and gluing them along
  identifications $D_{ij}\to D_{ji}$. As explained in
  Section~\ref{sec:kulikov}, this way one obtains a family of type III
  surfaces parameterized by the torus
  $\Hom(\Lambda,\bC^*) \simeq (\bC^*)^{19}$.
\end{construction}

  By Theorem~\ref{thm:friedman-miranda}, the sum of charges of
  singularities in $24$. An $\ias$ is called \emph{generic} if there
  are $24$ distinct $I_1$ singularities.

\subsection{$\ias$ from symplectic geometry}
\label{sec:ias-symplectic}

A second source of integral-affine structures is symplectic geometry.
Let $(\hX,\omega)\to B$ be a smooth symplectic $4$-manifold with a
Lagrangian torus fibration such that the singular fibers are necklaces
of spheres. Then it defines a natural integral-affine structure on the
base minus finitely many points and with the $I_n$ singularity
$b\in B$ at a point where the fiber $\hX_b$ is a necklace of $n$
spheres. Vice versa, any integral affine structure $B$ on a sphere of
total charge $24$ with only $I_n$ singularities defines a unique
symplectic manifold $(\hX,\omega)\to B$ with $\hX$ diffeomorphic to a
K3 surface.  See \cite{engel2021smoothings},
\cite[Sec.~8E]{alexeev2019stable-pair} for more details.

The easiest examples of integral-affine structures
coming from symplectic geometry are those from the following construction
due to Symington \cite{symington2003four-dimensions}:

\begin{construction}\label{con:ias-antican}
  Let $(\hV,\hD)$ be an anticanonical pair with a toric model
  $$\hV \xleftarrow{f} \hV'\xrightarrow{g} \ohV.$$ Choose a
  big and nef line bundle $L$ on $\hV$ and let $L'=f^*L$ and
  $\oL=g_*L'$; they are big and nef as well.  The line bundle $\oL$
  defines the moment map $$\bar\mu\colon\ohV\to \oP$$ to the moment
  polytope. It is a Lagrangian torus fibration, with circle and point fibers
  over the edges and vertices of $\oP$, respectively.
  
  Suppose that $g\colon \hV'\to \hV$ contains $n$ internal blowups on
  the side $\oD_j$ and that $L' = g^*(L) - \sum a_{jk}E_{jk}$, where
  $E_{jk}$ are the exceptional divisors. Then, as described in \cite{symington2003four-dimensions},
 one can define a \emph{Symington polytope} $P$, obtained from $\oP$ by
  cutting $n_j$ triangles $t_{jk}$ of sizes $a_{jk} = L'\cdot E_{jk}$ resting
  on the edge of $\oP$ over which $\oD_j$ fibers, and
  then gluing the remaining two edges by a unit shear. The resulting
  integral affine structure has $n_j$  $I_1$ singularities of
  integral-affine structure at interior points, with monodromy-invariant
  direction parallel to the edge on which the triangles rested.
  
Assuming all the introduced singularities are distinct,
there is a fibration $$\mu'\colon \hV'\to V$$ whose fiber
over an $I_1$ singularity is an irreducible nodal sphere.
 It then descends to
  a Lagrangian torus fibration $$\mu\colon (\hV,\omega)\to P$$
  because $L'\cdot \hD_j'=0$ for the components of $\hD'$
  introduced by the corner blow-ups $f$.

  This construction is only possible if there exists a toric model for
  which the triangles can be fit into $\oP$ without intersections. By
  \cite[Thm.~5.3]{engel2021smoothings}
  such a toric model always exists because $L$ is big and nef.
  
  An important generalization of this construction is to the case of
  an anticanonical pair $(\hV,\hD)$ with a smooth boundary and a big and nef
  line bundle $L$ on it.  If $(\hV,\hD)$ is a deformation of a Looijenga pair
  $(\hV,\hD')$ with a singular boundary $\hD'$, one
  can define a Symington polytope $P$ by a \emph{node smoothing}
  surgery from a Symington polytope $P'$ of $(\hV,\hD')$ and a
  Lagrangian torus fibration $\mu\colon \hV\to P$.
\end{construction}

For K3 surfaces, the basic example of
$\ias$ from symplectic geometry comes from the following construction:

\begin{proposition}\label{def:ias-k3-symplectic}
  Let $(\hX,L)\to (C,0)$ be a nef model with a
  central fiber $\hX_0=\cup V_i$ and $L\big{|}_{\hV_i}$ big.
  Then there is a Lagrangian torus
  fibration $$\mu\colon (\hX_t,\omega_t)\to B=\cup_i P_i$$ from a smooth fiber $\hX_t$ 
  for which $[\omega_t]=L$, defined as a
  composition of two maps:
  \begin{enumerate}
  \item the Clemens collapse $c_t\colon \hX_t\to \hX_0$ and 
  \item the union of moment maps $\mu_i\colon \hV_i\to P_i$ from
    Construction~\ref{con:ias-antican}, associated to the big and nef
    line bundle $L_i$.
  \end{enumerate}
\end{proposition}

\begin{proof} The Lagrangian torus fibrations $\mu_i$ 
undergo symplectic boundary reduction, collapsing to circles
over the edges of $P_i$ and to points over vertices of $P_i$.
The Clemens collapse $c_t\colon \hX_t\to \hX_0$
is the restriction of a retraction of $\hX\to \hX_0$. The
fibers of $c_t$ over the double locus are circles, and the fibers
over the triple points are $2$-tori. Thus, the composition 
$(\mu_i)\circ c_t$ has $2$-torus fibers, even over the edges
and vertices of $P_i$.

The general fiber $\hX_t$ can be constructed as a fiber connect-sum
of Lagrangian torus fibrations over $P_i$ which undergo no boundary reduction.
Since we additionally have $L\cdot \hD_{ij} = L\cdot \hD_{ji}$,
this fiber connect-sum can be performed as
a symplectic fiber connect-sum of Lagrangian torus fibrations, by slightly enlarging
the bases of the Lagrangian torus fibrations, and then gluing over the neighborhood of a
glued edge $P_i\cap P_j$.

Thus $\hX_t$ is endowed with a symplectic form $\omega_t$ for which
the composition $(\mu_i)\circ c_t$ is a Lagrangian torus fibration
over $S^2$ with $I_1$ singular fibers over the singular points of the Symington
polytopes $P_i$. The arguments in \cite[Prop.~3.14]{engel2021smoothings},
\cite[Thm.~2.43]{alexeev2019stable-pair} show that $[\omega_t]=c_1(L)\in \hat{e}^\perp/\hat{e}$,
where $\hat{e}\in H^2(\hX,\bZ)$ denotes the Lagrangian fiber class of $\mu$.
\end{proof}

\subsection{Nodal slides and scaling $\ias$}
\label{sec:surgeries-ias}

A \emph{nodal slide} on an $\ias$ $B$ is an operation
$B\dashrightarrow B'$ which moves some $I_n$
singularity by a specified lattice length,
in the direction of its monodromy ray. The integral-affine
structure the same on the complement of the segment
along which the singularity moves, and is only modified
along the segment. It has an interpretation
both on the algebraic side for the dual complex $B=\Gamma(X_0)$,
and on the symplectic side for the base $\hX\to B$.

The symplectic $4$-manifolds $(\hX,\omega)\to B$ and $(\hX',\omega')$
are symplectomorphic with only the Lagrangian fibration deforming.
On the algebraic side a unit length nodal slide $B\dashrightarrow B'$ 
corresponds to applying an M1 modification $X_0\dashrightarrow X_0'$.
It moves the location of the singularity by a length $1$ nodal slide,
in the direction of the monodromy ray corresponding to the exceptional
curve $E$ which was flopped.

\smallskip

The \emph{scaling} operation on $B$ corresponds to post-composing
the charts with multiplication by $n>0$. The area of $B$ is multiplied by $n^2$. 

On the symplectic side, it corresponds
to replacing $\omega\mapsto n\omega$ or $L\mapsto nL$, while leaving the Lagrangian
torus fibration the same. On the algebraic
side, it corresponds to a ramified base change $(C',0)\to (C,0)$ of
degree $n$, and resolving $X\times_C C'$ in a standard way to a
Kulikov model. Each triangle in the triangulation $\cT$ is replaced by
the standard subdivision into $n^2$ triangles. 

\smallskip

\smallskip

A \emph{node smoothing}, called a nodal trade in 
 \cite[Sec.~6]{symington2003four-dimensions}, trades a corner of the
Symington polytope $P$ for a singularity inside $P$, smoothing the
corner. It occurs when a nodal slide hits a wall of $P$.


\begin{remark}\label{rem:assume-generic}
By nodal slides, any integral affine structure
with a given decomposition of singularities into
$\prod I(\vec e_j)^{n_j}$ can be replaced by an affine structure with
only $I_n$ singularities or, by further nodal slides with $24$
distinct $I_1$ singularities. On the algebraic side, in our
Definition~\ref{def:ias}, each singularity comes with a decomposition
$\prod I(\vec e_j)^{n_j}$. So after a base change any Kulikov model
can be replaced, after a sequence of M2 modifications (retriangulation)
and M1 modifications (nodal slides), 
with a generic Kulikov model with exactly $24$ non-toric components
$V_i$ defining $24$ distinct $I_1$ singularities of an $\ias$. 
\end{remark}

\subsection{The Mirror Theorem}
\label{sec:mirror-symmetry}

A key to our ability to understand Kulikov models is the mirror
symmetry between algebraic geometry of degenerations (the A-model) and
symplectic geometry (B-model) which is well-studied in the
literature. For example, it appears in
\cite{gross2003affine-manifolds, kontsevich2006affine-structures}. 
We use it in the following form:

\begin{theorem}
  [\cite{engel2021smoothings}, Prop.~3.14]
  \label{thm:EF-mirror}
  Let $B$ be a generic $\ias$ and let 
  \begin{enumerate}
  \item $X\to (C,0)$ be a type III Kulikov model for which $\Gamma(X_0)=B$.
  \item $\mu\colon (\hX,\omega)\to B$ be a Lagrangian torus fibration defining
    the same $B$.
  \end{enumerate}
  Then there exists a diffeomorphism $\phi\colon \hX\to X_t$ to a
  nearby fiber $t\ne0$ such that
  \begin{enumerate}
  \item [(a)] $\phi_* \hat e = e$, where $\hat e$ is a fiber of $\mu$ and $e\in
    T_{X_t}$ the isotropic vanishing cycle.
  \item [(b)] $\phi_*[\omega] = \lambda$, where $\lambda\in e^\perp/e$
    is the monodromy invariant.
  \end{enumerate}
\end{theorem}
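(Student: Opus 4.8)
The plan is to realize both the nearby algebraic fiber $X_t$ and the symplectic manifold $\hX$ as total spaces of topological $2$-torus fibrations over the common base $B$ and to build $\phi$ by matching them fiberwise; everything then follows because a single affine invariant of $B$ controls both sides. On the algebraic side I would compose the Clemens collapse $c\colon X_t\to X_0$ (as in Construction~\ref{def:ias-k3-symplectic}) with the retraction $X_0\to\Gamma(X_0)=B$ of the central fiber onto its dual complex to get a map $\nu\colon X_t\to B$. By the local analysis of a Type~III degeneration, over the complement of the charge points of $B$ the fiber of $\nu$ is the vanishing $2$-torus of a triple point $xyz=t$, and over a charge-$n$ point it degenerates to an $I_n$ necklace; the $\SL(2,\bZ)$-monodromy of this $T^2$-bundle is exactly that of the integral affine structure on $B$, both being governed by the triple-point relation $D_{ij}^2+D_{ji}^2=-2$ used in Construction~\ref{con:ias-from-kulikov}. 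By construction the class of a generic fiber of $\nu$ is the isotropic vanishing cycle $e$.

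By hypothesis $\mu\colon\hX\to B$ is a Lagrangian torus fibration inducing the \emph{same} affine structure and the same $I_n$ fibers on the same $B$; by genericity (Remark~\ref{rem:assume-generic}) the singular data on both sides reduce to $24$ standard $I_1$ models, which are locally identified. Since a $T^2$-fibration over a fixed integral affine surface with prescribed singular fibers is rigid up to fiber-preserving diffeomorphism, patching the local matchings of $\nu$ and $\mu$ over a good cover of $B$ should yield a diffeomorphism $\phi\colon\hX\to X_t$ with $\nu\circ\phi=\mu$. As $\phi$ sends $\mu$-fibers to $\nu$-fibers it sends $\hat e$ to $e$, giving (a); moreover $\phi_*$ is an isometry of $\lk$ with $\hat e\mapsto e$, so it descends to an isometry $\hat e^\perp/\hat e\to e^\perp/e$.

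Since the $\mu$-fibers are Lagrangian, $[\omega]\cdot\hat e=0$, so $\phi_*[\omega]$ lies in $e^\perp/e\simeq\lias$, the same group as $\lambda$; it then remains to match the two classes, and both are read off from $B$ alone. On the symplectic side, action--angle coordinates identify $[\omega]$ with the affine developing (radiance) class of $B$: pairing $[\omega]$ against the cycle $\nu^{-1}(\gamma)$ over a loop $\gamma\subset B$ recovers the affine displacement along $\gamma$, and $[\omega]^2$ is twice the symplectic volume, i.e.\ the affine area of $B$. On the algebraic side the nilpotent-orbit description of the degeneration gives the logarithm of monodromy $N$, whose relevant part is recorded by $\lambda\equiv N(f)\pmod{e}$ for an isotropic $f$ with $(e,f)=1$, and this $\lambda$ (with $\Lambda=\lambda^\perp$) is likewise determined by the affine geometry of $\Gamma(X_0)=B$. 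Identifying both with this common affine invariant and using that $\phi_*$ preserves it gives $\phi_*[\omega]=\lambda$, which is (b).

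The routine steps are the first two; the real content is the third, where one must funnel the symplectic (action--angle) computation of $[\omega]$ and the Hodge-theoretic monodromy invariant $\lambda$ through one and the same affine invariant of $B$. The hard part will be the compatibility of normalizations: one has to verify that the identification $\hat e^\perp/\hat e\simeq e^\perp/e$ induced by $\phi_*$ agrees with those coming separately from the Lagrangian fibration and from the nilpotent orbit, so that the two a priori distinct $\lias$-valued classes coincide exactly, and not merely up to an automorphism of $B$.
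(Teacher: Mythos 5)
First, a point about the comparison you were asked to make: the paper does not prove this statement at all. It is imported verbatim from \cite{engel2021smoothings}, Prop.~3.14 (Engel--Friedman), so the only ``proof'' in the paper is a citation, and your proposal has to be judged as a free-standing reconstruction of that result. As such it has the right overall shape --- a topological torus fibration $\nu\colon X_t\to B$ via the Clemens collapse, comparison with the Lagrangian fibration $\mu$ over the same affine base, identification of the fiber class with the vanishing cycle $e$, and identification of $[\omega]$ with $\lambda$ through an affine invariant of $B$ --- but two of the steps you label as routine or defer are in fact the substance of the theorem, and as written they are genuine gaps.

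The first gap is the rigidity claim: ``a $T^2$-fibration over a fixed integral affine surface with prescribed singular fibers is rigid up to fiber-preserving diffeomorphism.'' This is not a known triviality and you give no argument for it. The classification of torus fibrations over a fixed base with fixed local models involves, beyond the monodromy representation, a global twisting invariant (for Lagrangian fibrations this is Duistermaat's obstruction class; topologically an analogous class in $H^2$ of $B$ with coefficients in the local system of fiber first homology), and patching local identifications of $\nu$ and $\mu$ over a good cover globalizes only if this obstruction vanishes or can be matched --- nothing in your proposal addresses this. Relatedly, you assert that the monodromy of the Clemens torus bundle ``is exactly'' the affine monodromy; in fact the former acts on $H_1$ of the fiber, which is canonically the \emph{dual} of the integral tangent lattice $T_{B,\bZ}$ on which the affine monodromy acts. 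For $I_1$ matrices the two are conjugate in $\SL(2,\bZ)$, but not canonically equal, and this dualization is exactly where your deferred ``normalization compatibility'' lives. The second gap is part (b) itself: you correctly isolate that the content is the chain of equalities (symplectic class under action--angle coordinates) $=$ (radiance/developing class of $B$) $=$ (monodromy invariant $\lambda$ from the nilpotent orbit), but you prove neither identification --- you only assert that both sides are ``read off from $B$'' and explicitly flag the compatibility of normalizations as the hard part left undone. Since that chain of equalities is precisely what \cite{engel2021smoothings}, Prop.~3.14 establishes, what you have is a credible plan for a proof, not a proof.
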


This theorem reduced study of Kulikov models with a monodromy
invariant $\lambda$ to that of a mirror, symplectic K3 surface $\hX$
and a symplectic form on it. If $\hX$ is algebraic, then we can use a
nef line bundle $L$ instead of the form $\omega$. One deals with the
non-generic $\ias$ by Remark~\ref{rem:assume-generic}. 

\subsection{Visible curves on $\ias$}
\label{sec:visible-curves}

In this paper we use visible curves mostly for motivation, so we only
give a brief sketch. See more details in
\cite{alexeev2019stable-pair}.

Let $B$ be a generic $\ias$. The integral tangent sheaf $T_{B,\bZ}$ is
a constructible sheaf whose fiber is $\bZ^2$ at a smooth point and
$\bZ$ at an $I_1$ singularity. The Leray spectral sequence for
$\mu\colon\hX\to B$ and the sheaf $\bZ_{\hX}$ shows that
$H^1(B,T^*_{B,\bZ})$ has a natural bilinear product and is isomorphic to
$\me^\perp/\me \simeq \lias = {\rm II}_{2,18}$. By a Poincar\'e duality,
$H^1(B,T^*_{B,\bZ})$ can be identified with $H_1(B,T_{B,\bZ})$.
The elements of the latter group are cycles $\sum (\gamma_i,v_i)$
valued in the integral tangent bundle $T_{B,\bZ}$ which satisfy
balancing conditions at the boundaries of the $1$-chains $\gamma_i$ from the cycle.
These are called \emph{visible curves}, see \cite[Con.~2.39]{alexeev2019stable-pair}.

Let $p_1,p_2$ be two $I_1$ singularities connected by a path. Let
$v_1,v_2$ be the monodromy directions at these points and suppose that
there is a path $\gamma$ from $p_1$ to $p_2$ with a constant vector
field $v$ along it which at the ends equals $v_1$ and $v_2$. Then
$(\gamma,v)$ is a visible curve. Its square is $(-2)$, and if there
are three such $I_1$ singularities with the same monodromy rays then
$(\gamma(p_1,p_2), v) \cdot (\gamma(p_2,p_3), v)=1$.
Frequently, for a collection of several $I_1$ singularities with some
monodromy rays in common one can form a collection of visible curves
whose intersection matrix is an $ADE$ matrix, giving the dual $ADE$
graph. We give an example of this in Section~\ref{sec:type3-nongeneric}.

\section{Mirror symmetry for K3 surfaces with a nonsymplectic
  involution}
\label{sec:mirror-with-involution}

The Mirror Theorem~\ref{thm:EF-mirror} establishes a dictionary
between algebraic geometry of degenerations of surfaces in the
$S$-family and symplectic geometry of surfaces in the $\hS$-family,
where $\hS = \oT = e^\perp/e$. One special feature of the present case
is that the lattice $\hS$ is again $2$-elementary, so we can exploit
algebraic geometry of degenerations of surfaces in the
$\hS$-family as well. We do it in this section.

\smallskip

We begin with a hyperbolic lattice $S$ from \fign\ and the period
domain $\bD_S$ as in Section~\ref{sec:moduli-with-involution} parameterizing
K3 surfaces with involution whose generic Picard lattice is $S$ and
generic transcendental lattice is $T$. As in
Section~\ref{sec:0-cusps}, let $e\in T$, $e^2=0$ be a $0$-cusp
of~$\bD_S$ and $\oT=e^\perp/e$ the hyperbolic lattice corresponding to
it.

On the mirror side, we now consider the family of K3 surfaces with
involution whose generic Picard lattice is $\hS = \oT$.  Note that by
Remark~\ref{rem:mirror-targets} not every node of \fign\ can appear as
$\oT$.  If $(\hX,\hiota)$ is one of these surfaces then we get the
quotient surface $\hY = \hX / \hiota$, which is a rational surface for
$\hS\ne (10,10,0)$ and an Enriques surface for $\hS=(10,10,0)$.

\subsection{A special degeneration}
\label{sec:special-degeneration}

In this subsection we restrict ourselves to the elliptic surfaces
of Section~\ref{sec:k3-elliptic-pencil}, which we denote by $\hX$ since
this is the mirror side.

\begin{lemma}\label{lem:special-degeneration}
  In each of the cases (1,2,3) of Theorem~\ref{thm:nikulin-elliptic},
  there exists a one-parameter degeneration $\hX\to (C,0)$ with the
  central fiber $\hX_0 = \hY\cup_D \hY$. The $3$-fold $\hX$ can be chosen to be
  a smooth Kulikov degeneration in the cases (2,3); it is singular in
  the case (1).
  
  In the Enriques case (4) there exists such a degeneration with
  smooth $\hX$ and the central fiber $\hX_0= \hY\cup_D \hY$ such that $\hY$ is
  the surface from the Halphen case (3) but the involution on $\hX_0$ is
  base-point-free.
\end{lemma}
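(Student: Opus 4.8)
The plan is to realize each $\hX$ as a double cover of its quotient and then degenerate the branch locus to a doubled anticanonical curve, so that the cover breaks into two copies of the quotient glued along that curve. For the ordinary cases (1,2,3) of Theorem~\ref{thm:nikulin-elliptic}, the quotient $\hY=\hX/\hiota$ is a rational elliptic surface and $\pi\colon\hX\to\hY$ is the double cover branched along $\hat B\in|-2K_{\hY}|$, the image of the ramification $R$. The key observation is that if a section $f_t\in H^0(\hY,-2K_{\hY})$ degenerates as $t\to0$ to $f_0=\ell^2$, where $\ell\in H^0(\hY,-K_{\hY})$ cuts out an anticanonical curve $D$, then the affine equation $z^2=f_t$ specializes to $z^2=\ell^2$, i.e.\ to the two sheets $z=\pm\ell$ meeting along $D$. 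Thus the total space $\hX=\{z^2=f_t\}\to(C,0)$ has central fiber $\hX_0=\hY\cup_D\hY$, and the nonsymplectic involution $\hiota$, which on each fiber is $z\mapsto-z$, acts on $\hX_0$ by swapping the two copies of $\hY$ and fixing $D$ pointwise. This matches the fixed loci: the elliptic curve(s) in $R_t$ (the fibre $F_0$, and in case~(2) also the second elliptic fibre, which collide) specialize to the double curve $D$, while the degeneration is of Type~II with dual complex a single edge.

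Next I would verify smoothness and the Kulikov condition, which turn on the singularities of $D$ and of the family $\{f_t\}$. Writing $f_t=\ell^2+t\,h$ with $h\in H^0(-2K_{\hY})$, the threefold $\{z^2=\ell^2+th\}$ is singular only where $z=0$, $d(\ell^2)=0$, $h=0$, so away from the nodes of $D$ it is controlled by the intersection $D\cap\{h=0\}$. In cases~(2,3) one may take $D$ to be a smooth elliptic fibre and arrange this intersection to be empty, so $\hX_0=\hY\cup_D\hY$ is a reduced normal-crossing surface glued along a smooth elliptic curve; the double-cover formula $K_{\hX}=\pi^*(K_W+L)$ with $2L=\hat{\mathcal B}$ and $W=\hY\times C$ then gives $K_{\hX}\sim_C0$, and $\hX\to C$ is a genuine Type~II Kulikov model. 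In case~(1), by contrast, the reducible $\I_{2k}$-fibre appearing in the ramification forces the limiting anticanonical member $D$ to be a cycle of rational curves; the gluing $\hY\cup_D\hY$ along this nodal $D$ fails to be normal crossing at the nodes, so $\hX$ is only singular there, exactly as asserted.

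Finally, the Enriques case~(4) needs a genuinely different construction and is the step I expect to be the main obstacle, precisely because $R=\emptyset$ forces $\pi\colon\hX\to\hY$ to be étale, leaving no branch divisor to degenerate. Here I would instead build the Type~II degeneration from the preserved elliptic pencil directly, producing $\hX_0=\hY'\cup_D\hY'$ with $\hiota$ swapping the two components and acting on the elliptic double curve $D$ by translation by a nonzero $2$-torsion point, so that the swap is fixed-point-free, reflecting that the Enriques involution has empty fixed locus. This ``$2$-torsion twist'' in the gluing is the essential new feature. One then identifies the component $\hY'$ with the rational elliptic surface of the index~$2$ Halphen pencil of case~(3): the quotient $\hX_0/\hiota$ recovers a single $\hY'$ whose boundary $D$ is folded by the $2$-torsion translation, matching the multiple-fibre structure produced by the logarithmic transformation relating the Enriques and Halphen geometries recalled in Section~\ref{sec:k3-g=0}. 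I expect the delicate points to be checking $d$-semistability together with $K_{\hX}\sim_C0$ for the twisted gluing, and pinning down the identification of $\hY'$ as the Halphen surface; both should follow once the elliptic data of the two half-fibres is tracked through the construction.
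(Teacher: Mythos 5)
Your construction for cases (1,2,3) is essentially the paper's own proof: there too the degeneration is obtained by letting the branch curve in $|-2K_{\hY}|$ collide into a doubled anticanonical curve $2D$, with local models $z^2=x^2-t$ in cases (2,3) and $z^2=x(xy^2-t)$ in case (1), and your explanation of why case (1) forces $D$ to be the nodal anticanonical cycle is correct in substance. One inaccuracy: the total space in case (1) is not singular ``only at the nodes'' of $D$. Every member of $|-2K_{\hY}|$ contains the $(-4)$-curves as fixed components, so your $h$ vanishes along them, and $\{z^2=\ell^2+th\}$ is then singular along those entire double-curve components, not just at isolated points; this is visible in the paper's local model, which is singular along the whole line $\{x=z=0\}$. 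The lemma's assertion (that $\hX$ is singular) is unaffected.

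The genuine gap is in the Enriques case (4). You correctly propose the central fiber---two copies of the Halphen surface $V$ glued along the half-fiber $D$ with a translation by a nonzero $2$-torsion $a\in E[2]$, $E=\Pic^0(D)$---but the lemma asserts the existence of a degeneration whose generic fiber is an Enriques K3 surface with its involution, and this existence is exactly what you defer to ``tracking the elliptic data''. Two things are missing. First, the choice of $a$ matters: among the three nonzero $2$-torsion points, only $a=\cF:=\cO_V(D)|_D$ works. Second, proving that $\hX_0$ with its base-point-free involution smooths into the $(10,10,0)$ family requires the lattice-theoretic period computation that constitutes the real work in the paper's proof: one computes $\Lambda^+=\diag E_8\oplus\la v,d\ra \simeq E_8(2)\oplus U$ inside the reduced lattice of numerical Cartier divisors, evaluates the period $\psi$ on it ($\psi|_{E_8(2)}=1$, $\psi(v)=a$, $\psi(d)=\cF$), and concludes that $(\oPic\hX_0)^+=\ker\psi^+\simeq E_8(2)\oplus U(2)=(10,10,0)$ precisely when $a=\cF$; a generic smoothing preserving this eigenlattice is then the required family of Enriques K3 surfaces. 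For $a\ne\cF$ the kernel is $E_8(2)\oplus 2U$, which is not $2$-elementary, so the involution cannot extend to any smoothing: your construction, stated with an arbitrary nonzero $2$-torsion twist, fails for two of the three choices. Finally, $d$-semistability, which you single out as a delicate point, is in fact automatic for every $2$-torsion twist (it reduces to $\cF\otimes T_a^*\cF\simeq\cO_D$, i.e.\ to $\cF^{\otimes2}\simeq\cO_D$); the delicate point is the eigenlattice computation, not $d$-semistability.
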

\begin{proof}
  ($I_{2k}I_0$). This Type III degeneration is achieved by a family
  in which the branch divisor $G_0$ on $\hY_0$ collides with the special
  fiber $G$. One has $G_0 + G\sim 2D$. In local coordinates $(x,y,s)$
  on $\hY$ and $s$ on $\bP^1$ the fibration $f'\colon \hY\to\bP^1$ can be
  written as $xy^2=s$, a $(-4)$ branch curve $E_i$ is $x=0$ and the
  fiber $F_0$ is $xy=t$. Then the double cover is locally given by the
  equation $z^2= x(xy-t)$.

  \smallskip
  
  ($I_0^2$). Collide $F$ and $F_0$.  In local coordinates
  $\hY\to\bP^1$ is $(x,y)\to s=x$, the branch divisor is $s^2=t$ and $\hX$
  is $z^2=x^2-t$. Since $F\sim F_0$, the branch divisor $F+F_0$ is a
  pullback of $2$ points on $\bP^1$, so this is just
  $\hY\times_{\bP^1} S$, where $S\to \bP^1\times C$ is the family of
  double covers of $\bP^1$.

  \smallskip
  
  (Halphen). Degenerate the branch locus into the multiple
  fiber $G=2D$. Locally $\hY\to\bP^1$ is
  $(x,y)\to s=x^2$ and the branch divisor is $s=t$. Thus, $\hX$ locally
  is $z^2=x^2-t$. The fiber $G=2D\sim -2K_\hY$ is divisible by $2$ in
  $\Pic \hY$, so the construction works globally.

  \smallskip
  
  (Enriques). We construct this degeneration ``by hand," by smoothing
  a central fiber to an Enriques K3 surface. Let $\hY\to\bP^1$ be an 
  index $2$ Halphen pencil. Consider the surface $$\hX'_0=\hY\cup_D \hY$$
  from the Halphen case with multiple fiber $G=2D$, $D\sim -K_{\hY}$. It is two copies of
  $\hY$ glued {\it by the identity map} along the elliptic curve $D$,
  and the involution $\iota$ exchanges the two copies of $\hY$ and fixes $D$ pointwise.

  Let $E=\Pic^0(D)$. This is an elliptic curve and $D$ is a torsor
  over $E$. Then $\cF:=\cO_{\hY}(D)|_D=a\in E[2]$ is nontrivial
  $2$-torsion because the pencil is Halphen.
  Now build a new surface $\hX_0= \hY\cup_D \hY$ in which the
  two copies of $\hY$ are glued with a twist, a translation of $D$ by 
  the element $a\in E[2]$. There is still an involution which exchanges
  the $\hY$s, but it now acts as translation by $a$ on $D$
  and thus is fixed point free on $\hX_0$.
  
  Since
  $T^*_a\cF\simeq\cF$ and $\cF^{\otimes 2}\simeq\cO_D$, $\hX_0$ is
  $d$-semistable. We will pick a generic smoothing of $\hX_0$
  preserving the $(+1)$-eigenspace of the reduced Picard group of
  $\hX_0$ modulo of the components of $\hX_0$ defined in
  \ref{def:reduced-picard}. 
  The lattice of numerical Cartier divisors is
  \begin{displaymath}
    \wt\Lambda = \ker\big( H^2(\hY) \oplus H^2(\hY) \to H^2(D) \big),
  \end{displaymath}
  $\Xi =\bZ\xi$, where $\xi = (D,-D)$, and
  $\Lambda = \wt\Lambda/\Xi$ is the reduced lattice of numerical
  Cartier divisors.  The surface $\hY$ is a blowup of $9$ points lying
  on a cubic, so $\Pic \hY\to\Pic D$ can be identified with the
  homomorphism
  \begin{displaymath}
    \I_{1,9}\to\bZ, \ L\mapsto L\cdot D, \quad
    \text{where } D = (-3,1,\dotsc, 1) = -3e_0 + \sum_{i=1}^9 e_i.
  \end{displaymath}
  Choose $\hY$ to be the $(10,10,1)$ surface from
  Table~\ref{tab:extremal-surfaces}. If $e_9$ is the last blowup of
  $\hY\to\bP^2$ then in $\la e_9, D\ra^\perp$ in $H^2(\hY)$ is the $E_8$
  lattice with a basis given by $8$ of the~$9$ components of
  the $\II^*$ fiber.

  Denote by $d$ and $v$ the vectors $(D,0)$ and $(e_9,e_9)$.  One has
  $v^2=-2$, $d^2=0$ and $v\cdot d = 1$.  Then $v$ and $d$ span a copy
  of $U$, and
  \begin{displaymath}
    \Lambda^+ = \diag E_8 \oplus \la v, d\ra = E_8(2) \oplus U.
  \end{displaymath}

  Let $\psi\colon\Lambda\to E$ be the period of $\hX_0$ as in
  Eq.~\eqref{eq:period}, and $\psi^+$ be its restriction to
  $\Lambda^+$.  We have $\oPic \,\hX_0 = \ker\psi$ and
  $(\oPic\, \hX_0)^+ = \ker \psi^+$. For our surface, $\psi(E_8(2))=1$
  (since the $\II^*$ fiber is disjoint from $D$), $\psi(v) = a$ and
  $\psi(d) = \cF$. Thus, when $a=\cF\in E[2]$, we have
  \begin{displaymath}
    \ker( U\to E) = \la v+d, 2d\ra \simeq U(2), 
    \quad (\oPic\, \hX_0)^+ \simeq E_8(2) \oplus U(2) = (10,10,0).
  \end{displaymath}
  So a generic smoothing of $\hX_0$ preserving $(\oPic \,\hX_0)^+$ is a
  family of K3 surfaces with generic Picard lattice $(10,10,0)$, i.e. a
  family of Enriques K3 surfaces. It comes with Enriques involution
  reducing to the involution on $\hX_0$. \end{proof}

\begin{remark}
  The last construction builds a degeneration of
  Enriques surfaces to a nonnormal surface obtained by gluing an index~$2$
  Halphen pencil to itself along the double fiber $2D$ by translating
  $D$ by $a = \cO_D(D)\in(\Pic^0D)[2]$. This is an interesting degeneration
  which we have not seen in the literature. 
\end{remark}

\begin{remark}
  There are two more ways to produce a $d$-semistable Type II surface
  $\hX_0=V\cup_D V$ with a base-point-free involution: We can glue two
  Halphen pencils by a nontrivial $2$-torsion $a\ne\cO_D(D)$. We can also glue
  by $2$-torsion two copies of a rational elliptic surface with a section. In the first
  case $\ker(U\to E) = 2U\simeq U(4)$. In the second case
  $\ker(U\to E) = \la 2v, d\ra$. Both of these are not $2$-elementary
  lattices, so the base-point-free involution on $\hX_0$ can not be
  extended to a smoothing.
\end{remark}

\subsection{Lagrangian torus fibration for the mirror K3 surfaces}
\label{sec:lagr-fibration}

\begin{theorem}\label{thm:special-fibration}
  Let $\hS=(r,a,\delta)=(10+k-(g-1),10-k-(g-1),\delta)$ be a lattice appearing
  as a target of one of the mirror moves $S\leadsto\oT=\hS$ of
  Definition~\ref{def:mirror-moves}, $\hX_t$ a surface with
  $\hS = (\Pic \hX_t)^+$, and $L\in\hS\otimes\bQ$ an ample $\bQ$-line
  bundle on $\hX_t$. Then there exists an involution-equivariant
  Lagrangian torus fibration $\mu\colon (\hX_t,\omega)\to B$ with
  $\omega=[L]$, where $B=P\cup P\uopp$ is a
  union of two Symington polytopes for the pairs $(\hY,D)$ of Lemma 
  \ref{lem:special-degeneration}, glued along their common
  boundary $\partial P = \partial P\uopp$, to form an equator
  of the sphere.

  If $\hS\ne (10,10,0)$ then $\partial P$ and $\partial P\uopp$ are glued by the identity map,
  and if $\hS= (10,10,0)$, then $B=P\cup_{\rm twist} P\uopp$ results from gluing
  the Symington polytope by a half-twist along its equator, so that $B/\iota\simeq \bR\bP^2$.

  If $\hS\ne (10,8,0)$ then the fibration can be chosen so that $B$ has
  $2k$ $I_1$ singularities on the equator at the vertices of $P$, with
  the monodromy rays transverse to the equator, an
  $I_{2(g-1)}$ singularity with the monodromy ray parallel to the equator,
  and $12-k-(g-1)$ total
  $I_1$ singularities in each of the hemispheres.
  For $\hS=(10,8,0)$, there are $12$ $I_1$ singularities in each of the
  hemispheres.
  
  In particular, there no singularities on the equator when $\hS = (10,10,0)$, $(10,10,1)$, 
  $(10,8,0)$, and the equator is an embedded integral-affine circle.
\end{theorem}
\begin{proof}
  \emph{The base case.}
  We begin with the base case when $\hS$ is a lattice in
  Section~\ref{sec:k3-elliptic-pencil}. 
  The map $\mu$ is given by Construction~\ref{def:ias-k3-symplectic}
  for the special degeneration of
  Section~\ref{sec:special-degeneration}. 
  In the $\I_0^2$, Halphen, and Enriques cases the degeneration of
  Lemma~\ref{lem:special-degeneration} is already Kulikov and
  immediately gives the required Lagrangian torus fibration~$\mu$.
  It is, notably, a Type II degeneration.

  We now consider the $\I_{2k}\I_0$ case. In local coordinates the
  family $\hX\to (C,0)$ is
\begin{displaymath}
  \hX = \{ z^2 = x(xy^2 - t) \} \subset \bA^4_{x,y,z,t}
  \quad \text{or} \quad (z+xy)(z-xy) = tx.
\end{displaymath}
This $3$-fold is singular along the line $\ell = \{x=z=0\}$. The
central fiber $\hX_0$ is a union of two $\bA^2$s glued along $xy=0$. 
Let $\wX$ be the blowup of $\hX$ along $\ell$. It is covered by two charts:
\begin{enumerate}
\item[Chart $1$] $z=xz_1$, $(z_1+y)(z_1-y)x=t$. This is a smooth
  $3$-fold and the central fiber is a normal crossing divisor with a
  single triple point.
\item[Chart $2$] $x=zx_1$, $z(1+x_1y)(1-x_1y) = tx_1$. If $g$ is the
  difference of the two sides, then
  \begin{math}
    \frac{\partial g}{\partial t} = -x_1, \,\,
    \frac{\partial g}{\partial z} = (1+x_1y)(1-x_1y)
  \end{math}
  which have no common zeros. Thus, this $3$-fold is smooth as
  well. The central fiber consists of three irreducible components,
  two of which do not intersect: $1+x_1y=0$ and $1-x_1y=0$.
\end{enumerate}

We conclude that $\wX$ is a Kulikov model of $\hX\to C$. Globally, the
blowup $\wX\to \hX$ has $n$ exceptional divisors $Z_i$, one for each
$(-4)$ curve $E_i$ in the special fiber $F$ of~$\hY$. Each $Z_i$ is an
anticanonical pair with two boundary divisors $D_1$, $D_2$, $D_1\cdot D_2=2$
and $D_i^2=2$. The surface $Z_i$ is geometrically ruled and $D_1,D_2$
are sections. It follows that $Z_i\simeq \bP^1\times\bP^1$ and
$D_1,D_2\in |\cO(1,1)|$, or $\bF_2$ and $D_1\sim D_2\sim
s_\infty$. The latter is in the same deformation type, so for the
construction of the Lagrangian fibration $\mu$ we can assume
$Z_i=\bP^1\times\bP^1$. The central fiber $\wX_0$ of this Kulikov
model is pictured in Fig.~\ref{fig-kulikov1}.

\begin{figure}[htp]
  \includegraphics[width=3.8in]{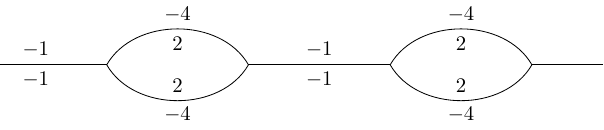}
  \caption{The local behavior along the equator
  of the Kulikov model for a degeneration of $(10+k,10-k,\delta)$.}
  \label{fig-kulikov1}
\end{figure}

The central fiber $\wX_0$ has $2+k$ irreducible components $V_i$. The two
hemispherical components are isomorphic to $\hY$ with the Symington polytope $P$ and $k$
are isomorphic to $(\bP^1\times\bP^1,D_1+D_2)$.  The Clemens collapse $\hX_t\to \wX_0$
composed with the moment maps $V_i\to P_i$ for appropriately chosen
polarizations on $V_i$ gives a Lagrangian torus
fibration $\wt\mu\colon \hX\to \wB = \cup P_i$. We construct the
claimed Lagrangian torus fibration $\mu\colon \hX\to B$ from it by nodal
slides defined in Section~\ref{sec:surgeries-ias}.

\begin{figure}[htp]
  \includegraphics{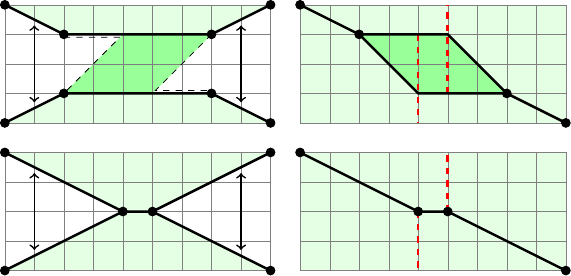}
  \caption[Surgery to make $\ias$ into $P\cup P\uopp$]
  {Surgery to make $\ias$ into $P\cup P\uopp$:
  $(\ell'_{i-1},\ell'_i,\ell'_{i+1})=(1,5,1)\to
    (\ell_{i-1},\ell_i,\ell_{i+1})=(2,1,2)$}
  \label{fig-surgery}
\end{figure}

As in Prop.~\ref{prop:small-nef-cone}, let $L=\pi^*L_\hY$ for some
ample $\bQ$-line bundle on $\hY$. We modify $L_\hY$ by considering an
ample $\bQ$-line bundle $L'_\hY= L_\hY-\sum_{i=1}^k c_iE_i$ for some small
positive rational numbers $0<c_i\ll1$. Here, $E_i$ are the $k$
$(-4)$-curves.  Below, we consider the case of a single $(-4)$-curve
$E_i$, with obvious modifications for the general case. Let
$\ell_i=L_\hY \cdot E_i$ and $\ell'_i = L'_\hY \cdot E_i$. Then $\ell_i = \ell'_i-4c_i$
and $\ell_{i\pm1} = \ell'_{i\pm1}+c_i$.

We consider the polarization $\cO(\ell_i,c_i)$ on
$Z_i=\bP^1\times\bP^1$. The Symington polytope is obtained from a
parallelogram with sides of lattice length $\ell_i'$ and $c_i$ by
cutting two triangles of side $c_i$ erasing two of the four sides, as in
the first picture of Fig.~\ref{fig-surgery}.
This produces an $\ias$
as in the second picture
(going clockwise)
with two $I_1$ singularities off the
equator. Node sliding each of them by lattice distance $c_i$ gives the
$\ias$ $B=P\cup P\uopp$ with sides $\ell_i$ and $2k$ $I_1$
singularities on the equator. The effect of this nodal
slide on the lengths is $(\ell'_{i-1},\ell'_i,\ell'_{i+1}) \to
(\ell'_{i-1}+c_i,\ell'_i-4c_i,\ell'_{i+1}+c_i)$, and gives the symplectic form
corresponding to $L_\hY$.

\emph{General case.} We use the reduction given in
Corollary~\ref{cor:reach-by-moves} and
Lemma~\ref{lem:heegner-surfaces}.  Let $\hX$, $\hX_1$ be K3 surfaces
as in Lemma~\ref{lem:heegner-surfaces}.  Vary the $\bQ$-line bundle
$L$ on $\hX$ continuously until it becomes zero on the $(-2)$-curve
$E$ and gives a contraction $\hX\to\hX'$ to a K3 surface with a node.
This produces a continuous family of integral affine structures. In
the limiting integral-affine structure on $B$,
two $I_1$ singularities with monodromy rays parallel to the
equatorial edge collide to give an $I_2$ singularity with the
monodromy ray in the same direction. Smoothing $\hX'$ to a surface
$\hX_1$ in the $\hS_1$ family gives a Lagrangian torus fibration
$\mu\colon \hX_1\to B$ with same $B$. We repeat this procedure $(g-1)$
times to complete the proof.

One can produce either $2(g-1)$ $I_1$
singularities in this way, or a single $I_{2(g-1)}$ singularity, they
are equivalent up to nodal slides along the equator. 
\end{proof}

\begin{example}[An $\ias$ for $\hS=(12,6,1)$]
  Let $(\overline{\oV}, \overline{\oD}) =  (\bP^2,L_1+L_2+L_3)$ be a
toric anticanonical pair with the $L_i$ lines forming a triangle.
Let $(\oV,\oD)\to (\overline{\oV}, \overline{\oD})$ be the blow up at the three corners
of the triangle, introducing three exceptional classes $e_1$, $e_2$, $e_3$ to get
a Cremona surface. Then,
let $(V',D')\to (\oV,\oD)$ be the blow up of $9$ total points, $3$ on each strict transform
of $L_i$, with exceptional classes
$e_4, \,\dots,\, e_{12}$.

The pair $(V',D')$ arises as an $I_6I_0$ type quotient of an
elliptic K3 surface, as in Theorem \ref{thm:nikulin-elliptic},
with the length $6$ cycle $D'$ of alternating $(-1)$- and $(-4)$-curves representing
the quotient of the $I_6$ cycle. Hence $(V',D')$ is the quotient of an elliptic
K3 surface with involution, with generic Picard group $(13,7,1)$.  

\begin{figure}
\includegraphics[width=5in]{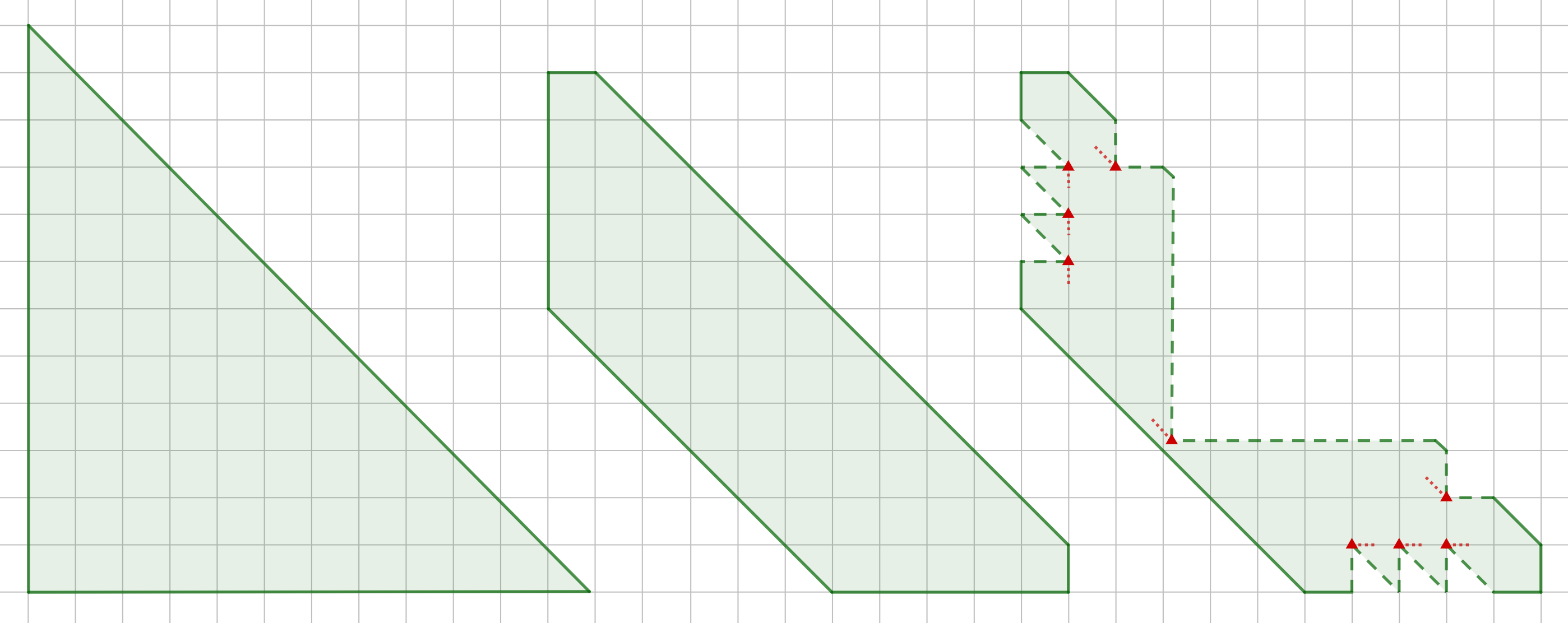}
\caption[Moment polytopes for $\hS=(12,6,1)$]%
{Left: the moment polytope for $(\mathbb{P}^2, 12h)$. Center:
the moment polytope for $(Bl_{p_1,p_2,p_3}\bP^2, 12h - 6e_1-e_2-e_3)$. Right:
the Symington polytope for $(Bl_{p_1,\cdots,p_{12}}\bP^2, L_{\epsilon})$.}
\label{example}
\vspace{5pt}
\includegraphics[width=3in]{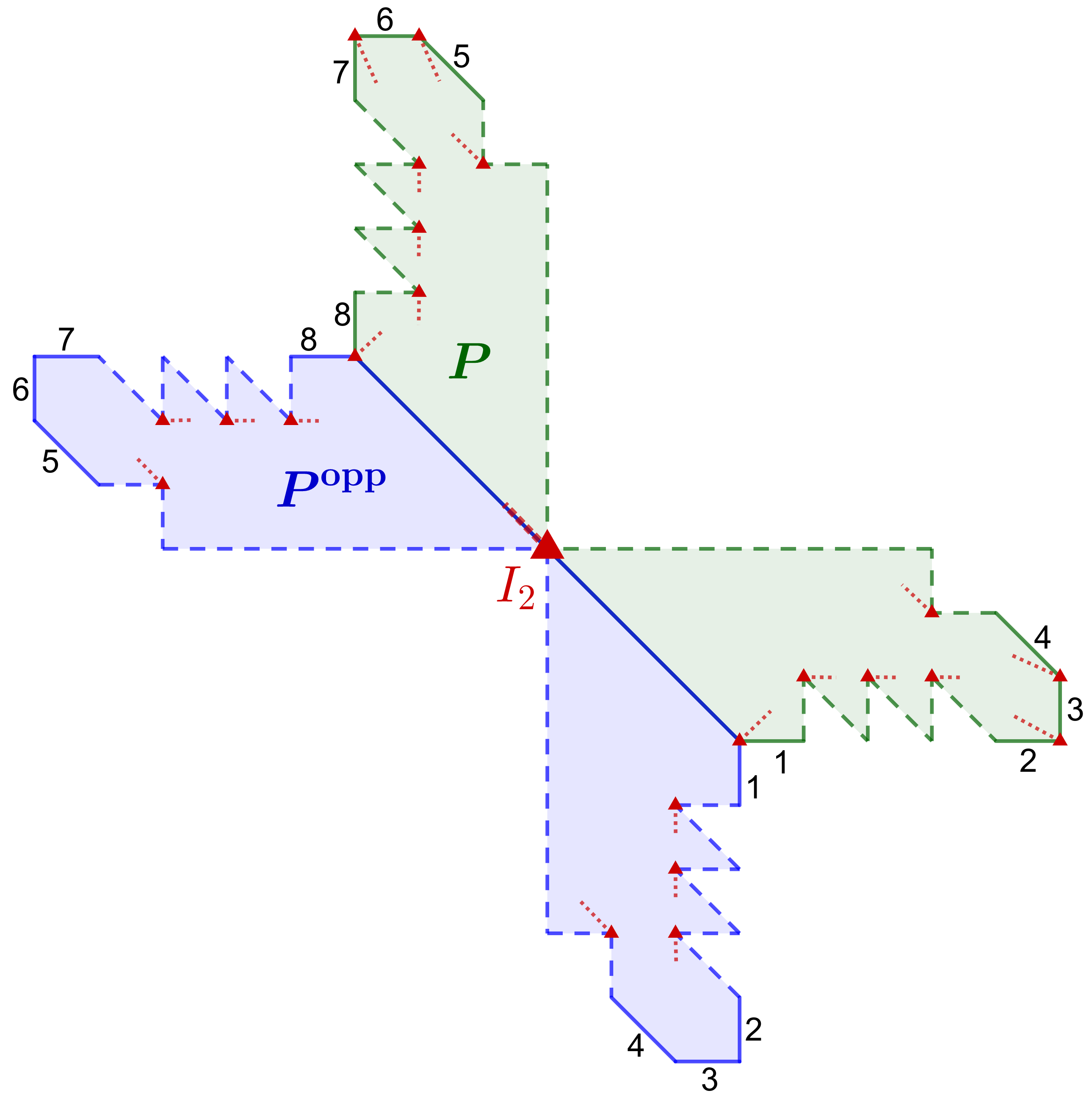}
\caption[An integral-affine sphere associated to an ample class
  in $\hS = (12, 6,1)$]%
{An integral-affine sphere associated to an ample class
  in $\hS = (12, 6,1).$
  There are $22$ $I_1$ singularities and $1$ $I_2$ singularity.}
\label{example2}
\end{figure}

Consider the ample class $12h$ on $\overline{\oV}=\bP^2$.
After the three corner blow-ups, we have an ample class $12h-6e_1-e_2-e_3$ on
$\oV$. Their moment polytopes are depicted in the left and center of Figure \ref{example}.
After the next nine interior blow-ups, we have an ample class $$L_\epsilon := 
12h-6e_1-(6-\epsilon)e_8-\sum_{i\geq 2,\, i\neq 6} e_i$$ on $(V',D')$ whose corresponding
Symington polytope $P$ is shown on the right of Figure \ref{example}.
As $\epsilon\to 0$, this class becomes only big and nef, because $L_\epsilon \cdot (h-e_1-e_8)=\epsilon$
and $h-e_1-e_8$ represents the class of an exceptional curve.
Thus, $L_0$ represents an ample class on the pair $(V,D)\leftarrow (V',D')$
resulting from the blow-down of $h-e_1-e_8$.
Set $(\hY,D) = (V,D)$. Then $\hY$ is the quotient of a K3 surface
$\hX$ with generic Picard group $(12,6,1)$, or alternatively $k=3$, $g=2$.

We glue two copies $B= P\cup P\uopp$ together along their common
boundary to form a sphere, and let $\epsilon \to 0$. At $\epsilon=0$, 
the two $I_1$ singularities in $P$ and $P\uopp$ collide to form an $I_2$ singularity
on the equator. See Figure \ref{example2}.

As in Lemma \ref{lem:special-degeneration},
we have a degeneration $(\hX',L)\to (C,0)$ of K3 surface with involution,
where $\hX_0' = V'\cup_{D'}V'$ and the generic Picard lattice is $\hS' = (13,7,1)$,
and with $L\big{|}_{\hX_0'}=(L_\epsilon,L_\epsilon)$. At $\epsilon=0$, the class $L$
is only big and nef on the general fiber, contracting the $(-2)$-curve which
results from smoothing the union of $h-e_1-e_8$ and its opposite. But
instead smoothing $\hX_0'$ into the larger moduli space $(12,6,1)$, we get a 
degeneration $\hX$ for which the class $L$ is ample on the general fiber.
By Theorem \ref{thm:special-fibration}, we have a Lagrangian torus fibration
$\hX_t\to B$, and as predicted, there $2k=6$ $I_1$ singularities on the equator
with monodromies transverse to the equator, and there is one $I_{2(g-1)}=I_2$
singularity on the equator, with monodromy parallel to it.
As in Corollary \ref{cor:reach-by-moves}, the $(12,6,1)$ lattice was reached
by a single $(-1,-1)$ Heegner move $(13,7,1)\rightarrowtail (12,6,1)$, corresponding a white vertex
of the $(13,7,1)$ Coxeter diagram attached to the outer $6$-cycle by a bolded edge
(see Fig.~\ref{fig:coxeter1}).
 \end{example}

\subsection{Mirror symmetry and involutions}
\label{sec:mirror-involutions}

Let $\mu\colon \hX\to B=P\cup P\uopp$ be the Lagrangian torus fibration
of Theorem~\ref{thm:special-fibration}, associated to some $2$-elementary
lattice $\hS$. Suppose that the same 
$B=\Gamma(X_0)$ is the $\ias$ obtained as in
Construction~\ref{con:ias-from-kulikov}, from the dual complex
of a Kulikov surface $X_0$. We may smooth $X_0$ to a Kulikov model $X\to (C,0)$.
By the results of the following section, whenever $\hS = \oT = e^\perp/e$
is the target of a mirror move from $S$, we may choosing the gluings
of $X_0$ appropriately so that $X\to (C,0)$ admits an involution
acting on $\Gamma(X_0)=B$ by the specified one,
exchanging $P$ and $P\uopp$, with generic Picard group $S$.

By the Mirror Theorem~\ref{thm:EF-mirror} there is a diffeomorphism
$\phi\colon\hX \to X_t$ inducing an identification between a nef 
class $L\in \me^\perp/\me$ on $\hX$ and the monodromy vector $\lambda\in\oT = e^\perp/e$.  Now on
$\hX$ there are two involutions: $\miota$ and
$\iota_\phi:=\phi\inv \circ \iota \circ \phi$.

\begin{theorem}\label{thm:mirror-and-involution}
  The two involutions $\miota$ and $\iota_\phi$
  differ by negation (with respect to some section)
  $z\to -z$ in the
  generic fiber $\me\simeq (S^1)^2$ of $\mu$, composed with a
  translation in the fibers. The diffeomorphism $\phi$ identifies
  $(e^\perp/e)^+$ with $(\me^\perp/\me)^-$ and $(e^\perp/e)^-$ with
  $(\me^\perp/\me)^+$.
\end{theorem}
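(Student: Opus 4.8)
The plan is to reduce the eigenspace statement to a single cohomological identity on $\me^\perp/\me$, and then to establish that identity by showing that the composition $\miota\circ\iota_\phi$ acts on the Lagrangian fibers of $\mu$ by fiberwise negation. First I would record the reduction. Because $\iota_\phi=\phi\inv\circ\iota\circ\phi$, the isometry $\phi_*$ intertwines the two actions, $\phi_*\circ\iota_\phi^*=\iota^*\circ\phi_*$, and by Theorem~\ref{thm:EF-mirror} it carries $\me^\perp/\me$ isometrically onto $e^\perp/e$. Both $\miota^*$ and $\iota_\phi^*$ send $\me\mapsto-\me$: for $\iota_\phi$ this follows from $\iota^*e=-e$ (as $e\in T=(H^2)^-$), giving $\iota_\phi^*\me=\phi^*(\iota^*(\phi_*\me))=\phi^*(-e)=-\me$; for $\miota$ it follows because $\miota$ is holomorphic (orientation preserving on $\hX$) while covering the orientation-reversing equator reflection $\sigma$ of $B$, so it is orientation reversing on the fibers, whence $\miota^*\me=-\me$. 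Thus both descend to involutions of $\me^\perp/\me$, and the asserted flip $\phi_*\big((\me^\perp/\me)^{\pm}_{\miota}\big)=(e^\perp/e)^{\mp}_{\iota}$ is equivalent to the identity
\begin{equation*}
  \iota_\phi^*=-\,\miota^*\quad\text{on } \me^\perp/\me .
\end{equation*}

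Next I would prove the fiberwise statement, from which this identity will follow. Both $\miota$ and $\iota_\phi$ cover $\sigma$ on $B=P\cup P\uopp$: the former by the equivariance in Theorem~\ref{thm:special-fibration}, the latter because the involution on the Kulikov model inducing the given $\sigma$ on $B=\Gamma(X_0)$ is transported by $\phi$, which intertwines the Clemens collapse with $\mu$. Hence $\tau:=\miota\circ\iota_\phi$ covers $\mathrm{id}_B$, i.e.\ it preserves every fiber of $\mu$ and acts on it by an integral-affine automorphism whose linear part $A$ is globally defined and therefore commutes with the monodromy of $\mu$. I then show $\tau$ is anti-symplectic. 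After averaging the Kähler class to an $\miota$-invariant representative, $\miota^*\omega=\omega$. For $\iota_\phi$ I use that $\iota$ commutes with the geometric monodromy $N$ of the degeneration (it acts on the whole family) and that $N$ sends $x\mapsto (x,e)\lambda$ modulo $\bR e$, where $\lambda=\phi_*[\omega]$ is the monodromy invariant; together with $\iota^*e=-e$ this forces $\iota^*\lambda=-\lambda$, so $\iota_\phi^*[\omega]=\phi^*(\iota^*\lambda)=-[\omega]$. Therefore $\tau^*\omega=\iota_\phi^*\omega=-\omega$. A fiber-preserving, anti-symplectic, integral-affine automorphism of a Lagrangian torus fibration must have linear part $-I$ and constant translation part: in the local model $T^*B/\Lambda$ with $\omega=\sum dq_i\wedge dp_i$, the condition $\omega\mapsto-\omega$ forces $p\mapsto-p+c$ with $dc=0$. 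Thus $\tau$ is fiberwise negation composed with a translation, which is exactly the first assertion of the theorem.

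Finally I would read off the cohomological consequence. Under the identification $\me^\perp/\me\cong H^1(B,T^*_{B,\bZ})$ of Section~\ref{sec:visible-curves}, fiberwise negation acts by $-1$ on the local system $T_{B,\bZ}$ and a fiber translation acts trivially, so $\tau^*=-\mathrm{id}$ on $\me^\perp/\me$. Since $\tau^*=\iota_\phi^*\circ\miota^*$ and $\miota^*$ is an involution, this yields $\iota_\phi^*=-\miota^*$ on $\me^\perp/\me$, and hence the eigenspace flip, completing both parts.

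I expect the main obstacle to be justifying that $\iota_\phi$ covers $\sigma$, i.e.\ that $\tau$ is genuinely fiber preserving. Theorem~\ref{thm:EF-mirror} as stated only matches the two classes $\phi_*\me=e$ and $\phi_*[\omega]=\lambda$, so upgrading it to an intertwining of the two fibration structures (Clemens collapse versus $\mu$) compatibly with the involutions requires the finer details of how $\phi$ is constructed; this is the step that will need the most care. A secondary point is fixing the sign $A=-I$ rather than $A=+I$. The anti-symplectic computation above settles this uniformly, but it is worth cross-checking the symmetric boundary cases with $r=10$ (Enriques, Halphen, $\I_0^2$), where a rank count cannot distinguish the two signs; there one can instead verify the conclusion directly from the explicit degenerations of Lemma~\ref{lem:special-degeneration}.
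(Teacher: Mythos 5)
Your proposal has the same skeleton as the paper's proof: both arguments use that $\miota$ and $\iota_\phi$ cover the same involution of $B$ (in the paper this is simply part of the hypotheses of the theorem, so your worry about it, while a fair comment on where the content of Theorem~\ref{thm:EF-mirror} is used, is not where the proof lives), deduce that $\tau=\iota_\phi\circ\miota$ is fiber-preserving, pin down that $\tau$ is fiberwise negation composed with a translation, and then conclude the eigenspace flip because negation acts by $-1$ and translations act trivially on $\me^\perp/\me$. Your reduction at the start and your final cohomological step are correct and agree with the paper.

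The genuine gap is in your sign-determination step. You write ``therefore $\tau^*\omega=\iota_\phi^*\omega=-\omega$'' and then run an action-angle computation that requires this as an identity of \emph{forms}. But what you actually derived (and all that is available) is the cohomological identity $\iota_\phi^*[\omega]=-[\omega]$: the map $\phi$ is only a diffeomorphism matching the classes $\me\mapsto e$ and $[\omega]\mapsto\lambda$, so $\iota_\phi^*\omega$ is merely some closed $2$-form in the class $-[\omega]$, with no pointwise relation to $\omega$. Indeed, even the correct final answer --- fiberwise negation composed with translation by a section $c$ --- satisfies $\tau^*\omega=-\omega$ exactly only when the $1$-form determined by $c$ is closed, so the pointwise statement is not even a consequence of the theorem; your local model computation therefore cannot be fed the hypothesis it needs. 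The repair is exactly the ingredient you set up but never used: the linear part $A\in\GL(2,\bZ)$ of $\tau$ on the fibers is globally defined over $S^2\setminus\{p_i\}$, hence commutes with the monodromy representation of $\mu$; since not all monodromy rays of the singularities of $B$ are parallel, the centralizer of the monodromy is $\{\pm\id\}$, so $A=\pm\id$; and $A=+\id$ is excluded because then $\tau$ would be a fiberwise translation, acting trivially on $\me^\perp/\me$, contradicting $\tau^*[\omega]=-[\omega]$ (note $[\omega]\in\me^\perp/\me$ since the fibers are Lagrangian, and $-[\omega]\neq[\omega]$ there because $[\omega]^2>0$ while $\me^2=0$). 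This centralizer argument plus the cohomological exchange of a $(+1)$-class for a $(-1)$-class is precisely how the paper fixes the sign.
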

\begin{proof}
The Mirror Theorem \cite[Prop.~3.14]{engel2021smoothings} proceeds
by identifying $\hX$ and $X_t$ as isomorphic torus bundles over the same base $B$
(this is essentially topological SYZ mirror symmetry). The torus bundle on $\hX$
is the Lagrangian torus fibration $\mu$, and Proposition \ref{def:ias-k3-symplectic}
serves to construct the Lagrangian torus fibration on $X_t$ as the composition
of a Clemens collapse, and almost toric fibrations of components $V_i\subset X_0$
(note that our fibration $\mu$ was also constructed this way, but {\it from a different
Kulikov model $\hX_0 = \hY\cup_D \hY$}). Since the smoothing
$X\to (C,0)$ extends an involution on $X_0$, we can make the Clemens collapse
and almost toric fibrations involution equivariant, and so can ensure both $\iota$ and
the diffeomorphism $\phi$ respect the torus bundle structure over $B$. 

  Thus, $\miota$ and $\iota_\phi$ respect the torus bundle structure,
  and define the same
  involution on the base $B$, exchanging $P$ and $P\uopp$. So the composition
  $\iota_\phi \circ\miota\inv$ is a fiber-preserving diffeomorphism of the
  torus bundle, and defines an
  element of the mapping class group $\GL(2,\bZ)$
  of the general fiber.
  Since not all of the monodromy rays of the singularities of $B$ are
  parallel, the only element centralizing the monodromy
  on $S^2\setminus\{p_i\}$ is $\pm\id$.
  
  So $\iota_\phi\circ\miota$ must give $\pm\id$ in the mapping class group
  of the general fiber.
  By the Mirror Theorem, $\phi$ exchanges the class $L\in S$ of an ample line
  bundle in the $(+1)$-eigenspace of $\iota$ with
  the monodromy invariant $\lambda\in\hS=\oT$, which is in the $(-1)$-eigenspace of
  $\miota$. Thus, $\iota_\phi\circ\miota=-\id$ composed with a
  translation. The negation $z\to -z$ in the torus fibers acts by
  multiplication by $(-1)$ on $\me^\perp/\me$, as can be
  seen e.g.~from the Leray spectral sequence 
  $H^p(B, R^q\mu_*\underline{\bZ}) \implies H^{p+q}(\hX,
  \underline{\bZ})$. The eigenspaces of $\miota$ and $-\miota$ are
  opposite of each other. A translation in the fibers acts on
  $\me^\perp/\me$ as identity. The theorem follows. 
\end{proof}

\section{Kulikov models of K3 surfaces with a nonsymplectic involution}
\label{sec:kulikov-models-all}

In this section for each of the $75$ lattices of \fign\ we construct
a family of models $(X,R)$ adapted to the
ramification divisor of the involution. Moreover, for the $50$
cases when $R$ contains a curve $C_g$ with $g\ge2$, we construct the
divisor models $(X,C_g)$. We do it first for a generic $\lambda$ in the
interior of the fundamental domain~$\ch_2$, and then in
Sections~\ref{sec:type3-nongeneric}, \ref{sec:type2-models} for 
$\lambda$ on a face of $\ch_2$. 

\subsection{The main construction}
\label{sec:main-construction}
Let $S$ be a lattice from \fign, $e\in T$ a $0$-cusp, and
$\oT=e^\perp/e$. Let $\lambda\in\oT\cap\cC$ be a monodromy
invariant. Recall that $\cC$ is the positive cone.
Our goal now is to explicitly construct a Type III
$d$-semistable Kulikov surface $X_0$ with the monodromy
invariant~$\lambda$, admitting an involution $\iota_0$
which extends to an involution on a Kulikov model
$X\to (C,0)$ for which $X_t\in F_S$.

\begin{proposition}\label{stratum-function} For $g\ge 2$, $S\neq (10,8,0)$, and for all $0$-cusps of $F_S$,
there is a well-defined {\rm stratum function}
$$\bS\colon \{\lambda\colon \oT\cap \cC\}\to \{\textrm{combinatorial types of KSBA-stable surfaces}\}$$
which assigns to each monodromy invariant $\lambda$ of a Type III degeneration,
the combinatorial type of the stable limit of a degeneration $(\oX,\epsilon \oC_g)$, 
whose monodromy is $\lambda$.
Furthermore, the loci on which $\bS$ is locally constant form the cones of a semifan $\mathfrak{F}$
for which $(\oF_S)^\nu = \oF_S^{\mathfrak{F}}$.

Dropping the condition on $g$, we have that any degeneration
with monodromy invariant $\lambda$ admits a Kulikov model of fixed
combinatorial type, adapted to $R$. 
\end{proposition}

Recall, the Kulikov model is adapted to $R$ if the flat limit contains no strata, and any
components of positive genus have a nef limit.

\begin{proof}
It is already a consequence of the general theory, see
\cite[Thm.~1]{alexeev2021compact}, \cite[Thm.~3.24]{alexeev2021nonsymplectic},
that when $g\ge 2$, the KSBA compactification $\oF_S$ 
is normalized by a semitoroidal compactification 
associated to some semifan $\mathfrak{F}$. This is because the
fixed locus of an involution is a so-called ``recognizable divisor." 

The main tool
to find the associated semifan is \cite[Thm.~9.3]{alexeev2021compact}.
It states that the cones $\sigma$ of the semifan $\mathfrak{F}$ 
are determined as the (closures of) collections of monodromy
invariants $\lambda\in \sigma$ for which the KSBA-stable model
of a degeneration with monodromy invariant $\lambda$
has a fixed combinatorial type, see \cite[Def.~8.12]{alexeev2021compact}.
Furthermore, recognizability ensures
that {\it any degeneration} with a given monodromy invariant in the projective
class of $\lambda$ has the same combinatorial type \cite[Cor.~8.13]{alexeev2021compact}.
The first statement follows.

Even when $R$ contains no components of genus $g\ge 2$, many of the same
properties of recognizability hold, in particular, the existence of a combinatorially
constant Kulikov model adapted to $R$ for all divisors with a fixed monodromy
invariant $\lambda$. Specifically, the second statement
follows from \cite[Prop.~8.16]{alexeev2021compact}
and the arguments in \cite[Thm.~3.24]{alexeev2021nonsymplectic}.
\end{proof}

With these results in mind, we need only construct some generic divisor model
with monodromy invariant $m\lambda$, for all $\lambda\in \oT\cap \cC$,
and identify when the combinatorial type of the stable model
changes (as a function of $\lambda$). 

It is sufficient to consider
$\lambda$ up to the isometry group $O(\oT)$. Thus, we can take
$\lambda$ in a fundamental chamber $\ch_2$ for the reflection group
$W_2$ or, more economically, in a fundamental chamber $\chref$
for the full reflection group $\wref$.
We only care about a generic $X_0$ within its combinatorial type.
This is quite useful, as even a smooth, non-generic 
K3 surface $X_0\in \Delta\subset \bD_S$ 
does not smooth {\it with its involution} into $F_S$.

\medskip

Now we set $\hS=\oT$ and consider the family of K3 surfaces with the
generic Picard lattice $\hS$. Let $\hX$ be one
such surface with an involution $\hiota$. We have $(\Pic \hX)^+=\hS$. 
By Theorem~\ref{thm:special-fibration}, for an ample line bundle $L$ on
$\hX$ there exists a Lagrangian torus fibration
$\mu\colon(\hX,\omega)\to B$ with $B$ an $\ias$ of a special type:
$B=P\cup P\uopp$, a union of two Symington polytopes interchanged by
the involution $\hat{\iota}$, and $[\omega]=L$. 

By Lemma~\ref{lem:nef-Y-trivial} and
Proposition~\ref{prop:small-nef-cone}, assuming $S\ne(10,10,0)$,
$\chref$ can be identified with $(\Nef \hX)\cap \hS_\bR = \Nef\hY$ for
a particular quite degenerate rational surface $\hY$, so that
$L=\pi^*L_\hY$.  Replacing $L$ by $2L$ we can assume $L_\hY$ to be
integral.
Alternatively, we can work with a generic K3 surface in the $\hS$-family and
identify $\ch_2\simeq (\Nef \hX)\cap \hS_\bR = \Nef\hY$. Working with
$\ch_2$ or $\cref$ is purely
a matter a convenience, except for $S=(17,3,1)$ where we only computed
$\ch_2$ and not $\chref$ in Section~\ref{sec:vinberg-exceptional}.  

\medskip

Next, pick a triangulation $\cT$ of $B$ into unit lattice volume triangles, in
such a way that: the vertices are lattice points, $\cT$ is
involution-invariant, and the edges contain the equator. For instance, such
a triangulation arises from triangulating $P$ entirely.
Now interpret $(B,\cT)$ as an $\ias$ coming from a Type III surface
$X_0$ via Construction~\ref{con:ias-from-kulikov}. That is, $B=\Gamma(X_0)$
and for each vertex of $\cT$, we have an anticanonical pair $(V_i,D_i)$
whose pseudofan $\fF(V_i,D_i)$ models the star of the vertex. Then,
we glue $X_0=\cup V_i$ along $D_i$ according to the triangulation. 

For a fixed such $B$ with triangulation, the family of $d$-semistable Type III
surfaces $X_0$ satisfying $\Gamma(X_0)=B$
is $(\bC^*)^{19} = \Hom(\Lambda,\bC^*)$. Here, $\Lambda$ can
be computed directly from $X_0$ by Eq.~\eqref{eq:picard-complex}.
We have $\Lambda=\lambda^\perp$ in $e^\perp/e \simeq \lias =
{\rm II}_{2,18}$, where $e$ corresponds to the $0$-cusp
into which $0\in C$ maps via the extension
of the period map, and $\lambda$ is the monodromy invariant.

\begin{example} Taking $B= P\cup P\uopp$ from Figure \ref{example2},
we may triangulate it in an involution-invariant manner,
and realize it as the dual complex of a Type III Kulikov surface. See Figure \ref{example3}.
Then $22$ components $(V_i,D_i)$ have charge $Q(V_i,D_i)=1$, one component
has $Q=2$, and the remaining components, with $Q=0$, are toric.

\begin{figure}
\includegraphics[width=5 in]{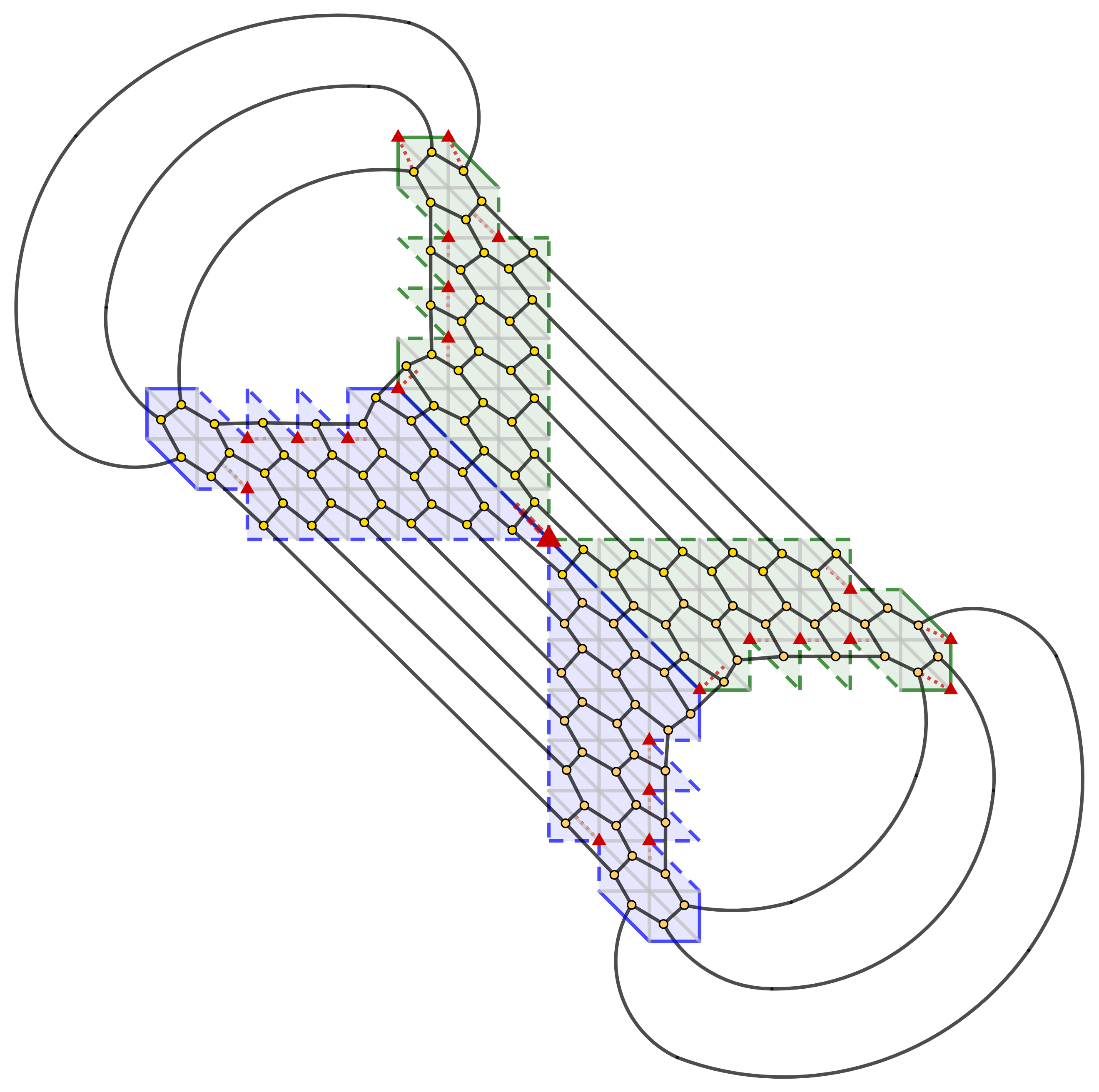}
\caption[A Kulikov surface $X_0$ for $\hS=(12,6,1)$]%
{A Kulikov surface $X_0$ for $\hS=(12,6,1)$ with $\Gamma(X_0)=B$, with triple
points shown in yellow and double curves in black.}
\label{example3}
\end{figure}
\end{example}

\begin{theorem}\label{smoothable} Let $S\leadsto \oT \simeq \hS$ be a mirror move
let $L\in \hS$ be an ample line bundle on $\hX\in\hS$. Let $\lambda = L$
under the identification $\oT\simeq \hS$. Let $P$ be the Symington polytope
of $\hY = \hX/\hat{\iota}$ associated to the image $L_\hY$ 
and let $B= P\cup P\uopp$ be the integral-affine sphere built from
gluing two copies of $P$ (see Section \ref{sec:gluingPs} for further
details on these gluings). Let $\cT$ be an involution-invariant
triangulation of $B$ containing the equator.

Then $B=\Gamma(X_0)$
for a Type III Kulikov surface $X_0$ admitting an involution $\iota_0$
inducing the involution on $B$.
Furthermore, by choosing the period of $X_0$
appropriately, there is a smoothing $X\to (C,0)$ with involution $\iota$
whose general fiber
$X_t$ lies in $F_S$ and whose monodromy invariant is $\lambda\in \oT$.
  \end{theorem}
  
  It is worth remarking that the combinatorial type of $X_0$
  is completely determined by the vector $\lambda\in \oT$
  and has nothing to do with the source $S$ of the mirror move.
  This source $S$ is determined by choosing the period $\psi_{X_0}$ appropriately.
  
  \begin{proof} We observe that the gluings of the
  standard surface $X_0$ (satisfying $\psi_{X_0}=1$)
  are unique, hence invariant under the involution on $B$.
  So $X_0$ admits an involution $\iota_0$ acting on $B$
  by the involution $\iota_{\rm IAS}$ switching $P$ and $P\uopp$.
  
  We analyze
  the deformations $X_0\rightarrowtail X_0'$ for which $X_0'$
  retains an involution $\iota_0'$. For there to be an involution on $X_0'$ 
  the period point $\psi_{X_0'}\in  \Hom(\Lambda,\bC^*)$
  must be {\it anti-invariant} with respect to $\iota_0^*$, under identification
  $\Lambda(X_0')\simeq \Lambda(X_0)$.
This is because the target $\bC^*$ depends on a choice of orientation
on $B$, which is reversed by $\iota_{\rm IAS}$, see \cite[Sec.~6G]{alexeev2019stable-pair}.
We conclude that a generic involutive $X_0'$ deforming $X_0$
is one for which ${\rm ker}(\psi_{X_0'})\supset \Lambda^+$
where $\Lambda^+$ is the $(+1)$-eigenspace of $\iota_0^*$ acting 
on $\Lambda(X_0)$. The period torus for these involutive Kulikov surfaces
is ${\rm Hom}(\Lambda/\Lambda^+,\bC^*)$. These deformations of the standard
surface $X_0$ will become the Kulikov surfaces corresponding to a simple mirror move $S\to \hS$.

The involution $\iota_{\rm IAS}$ on $B$ has an induced action $\iota_{\rm IAS}^*$
on $H^1(B,T_\bZ^*)\simeq {\rm II}_{2,18}=\lias$ which is easily
seen to be an isometry with respect to the intersection form
\cite[Ex.~7.27]{alexeev2020compactifications-moduli}
on visible curves, see Section \ref{sec:visible-curves}.
The induced involution on $\Lambda = \lambda^\perp$ agrees
with the action of $\iota_0^*$ on $\Lambda(X_0)$ for the standard surface
$X_0$. Here $\lambda$, too, can be interpreted purely in integral-affine terms
as the so-called {\it radiance obstruction}
in $H^1(B,T_\bZ^*)$ of the integral-affine structure. So it
is automatically preserved by $\iota_{\rm IAS}$.
We define $\rhoias:=-\iota_{\rm IAS}^*\in O(\lias)$. The motivation
for this definition is Theorem \ref{thm:mirror-and-involution}
(which notably relies
on this theorem).

Let $\lias^{\pm}$ denote the eigenspaces of $\rhoias$.
We claim that $\lias^-=\hS$ or equivalently $(\iota_{\rm IAS}^*)^+=\hS$.
This can be verified purely on the symplectic 
side, by observing that we have an isomorphism $\me^\perp/\me\cong \lias$
where $\me$ is the Lagrangian fiber class of $\mu\colon \hX\to B$. 
The involution $\hat{\iota}$ quotients this fibration of $\hX$ to 
the Lagrangian torus fibration $\hY\to P$. Thus, $\lias^-\otimes \bQ\simeq H^2(Y,\bQ)$
and so $\hS \subset \me^\perp/\me$ must equal $\lias^-$.

Denote $\lias^+:=S^{\rm sat}$ (this choice of notation will be made reasonable soon).
It is a $2$-elementary lattice in $\lias \simeq {\rm II}_{2,18}$ perpendicular to $\hS$
and so $S^{\rm sat}$ is the source of a simple mirror move with
target $\hS$. Smoothing $X_0'$ (a deformation
of the standard surface $X_0$ as above), keeping the classes in $S^{\rm sat}$ Cartier,
we produce the claimed degeneration of the theorem, in the case of a simple mirror move.
The smoothings that keep $S^{\rm sat}$ Cartier (at least for $X_0'$ period-generic) are identified with
the deformations extending the involution $\iota_0$ by \cite[Thm.~6.35]{alexeev2019stable-pair}.

For the non-simple mirrors, we observe that an involutive $X_0$ need not
be a deformation of the standard surface: There may be other connected
components in ${\rm Hom}(\Lambda,\bC^*)$ of the space of anti-invariant periods.
These connected components also admit an involution, because
\begin{enumerate} 
\item $\rho_{\rm IAS}$
acts on the (re)gluing complex of \cite[Sec.~7A]{alexeev2021compact} and an involution
anti-invariant regluing of an involutive $X_0$ will be involutive, and 
\item the period
and gluing complexes are quasi-isomorphic \cite[Thm.~7.9]{alexeev2021compact}.
\end{enumerate}

The connected components of the anti-invariant periods are determined by a lattice
$S=({\ker \psi})^+$ which has index at most $2$ in $S^{\rm sat}=\lias^+$ and furthermore contains
$(1+\rhoias)\lias^+$. Up to isometry, these are in bijection
with the sources $S$ of mirror moves with target $\hS$, by
Theorem \ref{thm:reverse-direction}. 

So these components of anti-invariant periods also parameterize involutive
Kulikov surfaces, and the same argument as in the previous paragraph applies
to prove that such an $X_0'$ smooths, with its involution, into moduli of $S$-polarized K3 surfaces.

Finally, we must verify that the monodromy invariant $\lambda$ is as stated.
This is follows from the Monodromy Theorem \ref{thm:EF-mirror}, which identifies $[\omega]=L\in \me^\perp/\me$
on $\hX$ with $\lambda\in e^\perp/e$ on the smoothing $X_t'$ of $X_0'$.
  \end{proof}


\bigskip

By Proposition \ref{stratum-function}, we must understand 
the stable models associated to each $\lambda\in \ch_2$ (including
for $\lambda$ only big and nef, on faces of $\ch_2$). For this, it will
suffice to show that the Kulikov models of Theorem \ref{smoothable}
are, after some appropriate M1 modifications (Section \ref{sec:models-remaining}),
adapted to $R$. Then, by passing to the stable model (\ref{pass-to-stable}),
we can explicitly determine the combinatorial
types of stable models on each face of $\ch_2$
in terms of the ADE surfaces of \cite{alexeev17ade-surfaces}
to find the semifan $\fF$.

This requires understanding in some detail how the singularities on the $\ias$ collide.
When $\lambda=L$ is ample, in the interior of $\ch_2$, the parameters
$\ell_i := \lambda\cdot\alpha_i$ are all positive, where $\alpha_i$ are
the simple roots. Then, no singularities $B(\lambda)$ have collided,
except the $I_{2g-2}$ singularity which forms on an edge of the equator.

For a non-general $\lambda$ with $\lambda^2>0$
lying on a face of $\ch_2$, some $\ell_i=0$. The corresponding $\ias$ $B(\lambda)$
is obtained from a generic $\ias$ $B(\lambda+t\lambda_0)$ for some $\lambda_0\in (\ch_2)^{\rm int}$ 
in the interior, by letting $t\to 0$. Then,
 the $I_1$ and $I_{2g-2}$ singularities on $B(\lambda+t\lambda_0)$ collide
 as $t\to 0$ to give more complicated singularities associated to higher charge
anticanonical pairs $(V_i,D_i)$. We discuss this in
Section~\ref{sec:type3-nongeneric}.

Ultimately, only the facets of $\chref$ which
are also facets of $\ch_2$ matter; for the other facets, 
the singularity collision can be avoided, and the stable models will not change.
For the (highly non-generic) $\lambda\in\chref$ satisfying
$\lambda^2=0$, the $\ias$ collapses to an interval, the dual graph of a Type II
degeneration. These are discussed in Section~\ref{sec:type2-models}.

\subsection{Edge behavior in the gluing of $P$ and $P\uopp$}
\label{sec:gluingPs}

Consider an $\ias$ $B=P\cup P\uopp$ with a set of integral points and
an involution $\iota$ exchanging $P$ and $P\uopp$. Let $C$ be a side
of $P$. The gluing along $C$ could be of two types. We say that it is
\emph{even} if for any lattice point $u\in P$ the lattice distance
between $u$ and $\iota(u)$ is always even, and \emph{odd} if it is odd for
some~$u$.  This is illustrated in Fig.~\ref{fig-equator}.

\begin{figure}[h]
  \includegraphics{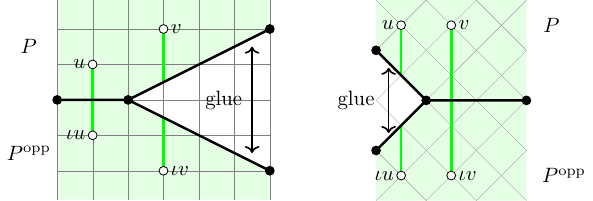}
  \caption{Equatorial behavior: even vs odd}
  \label{fig-equator}
\end{figure}

\begin{lemma}
  In the $I_{2\mk}I_0$ case, when $B$ has $2\mk>0$ $I_1$ singularities on
  the equator at the vertices of $P$, 
  gluing along the side corresponding to a curve $E_i$ on $\hY$ with
  $E_i^2=-4$ (resp. $E_i^2=-1$) is even (resp. odd).
\end{lemma}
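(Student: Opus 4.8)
The plan is to localize the analysis to a single equatorial edge $C$ of $P$ corresponding to $E_i$, and to reduce the even/odd dichotomy to the parity of one shear integer. First I would choose an affine chart of $B$ covering the interior of $C$ and straddling the equator, with $C$ along the $x$-axis, $P$ in the upper half plane, primitive edge direction $\vec u=(1,0)$ and primitive inner normal $\vec n=(0,1)$. Since $\iota$ fixes the equator pointwise and exchanges the two hemispheres, its restriction to this chart is an affine involution fixing $\{y=0\}$ pointwise, hence necessarily of the form $\iota(x,y)=(x+a\,y,\,-y)$ for a unique $a\in\bZ$. A lattice point $u=(m,n)$ with $n>0$ then satisfies $u-\iota(u)=(-an,2n)=n\,(-a,2)$, whose lattice length is $n\gcd(a,2)$. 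Thus the gluing along $C$ is even exactly when $a$ is even and odd exactly when $a$ is odd, and the whole lemma becomes the assertion $a\equiv E_i^2\pmod 2$.

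To compute $a\bmod 2$ I would use the explicit Kulikov model $\wX$ of Theorem~\ref{thm:special-fibration} together with the ramification behaviour recorded in Lemma~\ref{lem:exc-curves-Y}. When $E_i^2=-4$ the curve $E_i\subset\hX$ is fixed pointwise and $\pi^*E_i=2E$, so over $C$ the double cover $\hX\to\hY$ is ramified along a full loop in the torus fibre; the fibrewise involution therefore fixes that loop and acts as a genuine reflection of the fibre, forcing the identification of $P$ with $P\uopp$ across $C$ to be the straight reflection, i.e.\ $a$ even. When $E_i^2=-1$ we are in case (2a), $\pi^*E_i=E$ with $E\to E_i$ a double cover branched at the two points $E\cap R$; here the cover is nontrivial over the interior of $C$, so matching the two sheets introduces a half-integral twist of the fibre, and the reflection is composed with an odd shear, i.e.\ $a$ odd. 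Equivalently, one can read $a$ off combinatorially: the inner normals of $P$ obey the pseudofan relation $\vec e_{i-1}+\vec e_{i+1}=(-E_i^2)\,\vec e_i$ of Section~\ref{sec:ias}, and propagating the mirror image $P\uopp=\iota(P)$ across the two equatorial $I_1$ singularities at the ends of $C$ expresses $a$ in terms of this coefficient, whence $a\equiv -E_i^2\equiv E_i^2\pmod2$.

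The main obstacle is pinning down $a\bmod 2$ rigorously rather than heuristically. In the combinatorial approach this means carefully carrying the monodromy around the two equatorial $I_1$ singularities at the endpoints of $C$ through a consistent choice of branch cuts across the equator, so that the contribution of the transverse monodromy rays is cleanly separated from the reflection part; in the geometric approach it means making precise how the ramification type of $\hX\to\hY$ over $C$ — a pointwise-fixed loop versus a branched double cover of a loop — determines the fibrewise gluing and hence $a$. A useful consistency check, and a guide for the sign conventions in either computation, is that even gluings must assemble into the $U(2)$-summands and odd gluings into the $U$-summands of Proposition~\ref{prop:isotropic-vecs}: this is precisely the $2$-elementary structure of the final identification.
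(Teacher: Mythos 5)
Your opening reduction is correct and worth keeping: along the interior of a side the involution of $B$ must, in an integral affine chart, have the form $\iota(x,y)=(x+ay,-y)$, and the side is even precisely when $a$ is even. But the lemma is a statement about the \emph{particular} $\ias$ $B=P\cup P\uopp$ assembled in the proof of Theorem~\ref{thm:special-fibration}, and the paper's proof consists of reading the parity off that assembly (Fig.~\ref{fig-surgery}): the side of $P$ corresponding to a $(-4)$-curve $E_i$ is exactly where the component $Z_i\simeq\bP^1\times\bP^1$, with moment polytope a parallelogram of height $c_i$ with two corner triangles removed, was inserted between the two hemispheres; after the nodal slides that push the two resulting $I_1$ singularities onto the equator, $P$ and $P\uopp$ are glued along that side by a straight reflection (even), while the singularities deposited at its endpoints force an odd shear across the adjoining $(-1)$-sides, where the two copies of $\hY$ meet directly. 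Your proposal never carries out the corresponding computation of $a\bmod 2$, and both substitutes you offer fail as stated --- a gap you yourself flag.

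Concretely, the geometric route breaks at ``the fibrewise involution therefore fixes that loop and acts as a genuine reflection of the fibre, forcing \dots\ $a$ even.'' Fixing a loop does not distinguish the parities: for every $a$ the fibrewise involution covering $(x,y)\mapsto(x+ay,-y)$ can fix a loop, and what differs is the \emph{number} of fixed loops --- two disjoint circles (quotient an annulus) when $a$ is even, a single circle (quotient a M\"obius band) when $a$ is odd. Your factual picture is also off: the fixed fibre $F_0$ collapses onto the equator in this degeneration, so over a $(-4)$-side there are \emph{two} fixed circles per torus fibre (one from the curve $E$ with $\pi^*E_i=2E$, one from the limit of $F_0$), and over a $(-1)$-side there is one (from $F_0$ alone); it is this count, not the mere existence of a fixed loop, that determines $a\bmod 2$, and establishing where $E$ and $F_0$ land is tantamount to inspecting the explicit Kulikov model you bypass. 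The combinatorial route misapplies Section~\ref{sec:ias}: the relation $\vec e_{j-1}+\vec e_{j+1}=d_j\vec e_j$ is the pseudofan relation at a lattice point of $B$ (equivalently, the triple point formula for a component of a Kulikov surface), not a relation among the edge directions of the Symington polytope $P$ at its corners, which carry $I_1$ singularities; the monodromy bookkeeping you defer to a ``careful choice of branch cuts'' is precisely the missing proof. Finally, the proposed consistency check is itself incorrect: in the $I_{2\mk}I_0$ case the relevant $0$-cusp is odd (the $U$-case of Proposition~\ref{prop:isotropic-vecs}), yet by the lemma half of the sides glue evenly, so side-by-side parity cannot match the $U$ versus $U(2)$ dichotomy.
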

\begin{proof}
  The statement is obvious from Fig.~\ref{fig-surgery}. 
\end{proof}

Note: when there are no singularities on the equator, there are no such
restrictions and below we will give examples of both even and odd
behaviors.

\smallskip

Next, we find anticanonical pairs $(V,D)$ corresponding to the
nonsingular lattice points of $B$ on the equator. Nonsingularity implies
means that these are toric pairs. We say that an involution on a toric
pair is {\it nonsymplectic} if the action on the cocharacter lattice $\bZ^2$
has determinant $-1$. Equivalently, it acts by multiplying a generator
of $H^0(V, \omega_V(D))$ by $-1$. 

\begin{lemma}\label{lem:equatorial-toric}
  Let $\iota$ be a nonsymplectic involution on a toric pair $(V,D)$
  such that the fixed locus $V^\iota$ does not contain any torus-fixed
  points of $V$. Then $V$ admits a generic ruling
  $\pi\colon V\to\bP^1$ with two sections $s_1$, $s_2$. The map
  commutes with a nontrivial involution on the base $\bP^1$ and there
  are three cases:
  \begin{enumerate}
  \item[(E0)] $V^\iota$ consists of four points, two on each $s_i$. 
  \item[(E1)] $V^\iota$ is a fiber of $\pi$ plus two points, one on
    each $s_i$.
  \item[(E2)] $V^\iota$ is two fibers of $\pi$. 
  \end{enumerate}
  The cases E0 and E2 give the even equatorial behavior, and
  E1 the odd equatorial behavior.
  The fan of $V$ is an involution-invariant subdivision 
  of the fan for $\bF_n$ with $n$ even for E0, E2, resp. with $n$ odd for E1.
\end{lemma}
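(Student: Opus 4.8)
The plan is to read the involution through its action on the cocharacter lattice and to reduce everything to a Hirzebruch surface. Write $N\cong\bZ^2$ for the cocharacter lattice of the torus $T\subset V$ and let $g\in\GL(2,\bZ)$ be the induced involution, so $g^2=\id$ and $\det g=-1$ by the nonsymplectic hypothesis; then $g$ has eigenvalues $+1,-1$ and is the reflection of $N_\bR$ across its $(+1)$-eigenline $N^+_\bR$. The decisive use of the hypothesis that $V^\iota$ contains no torus-fixed point is that $\iota$ fixes the torus-fixed point $x_\sigma$ exactly when $g(\sigma)=\sigma$; hence $g$ fixes no maximal cone, and this forces the fixed line $N^+_\bR$ to contain two rays $\pm v^+$ of the fan $\Sigma$ (otherwise the axis would pass through the interior of a cone, which would then be $g$-invariant). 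First I would record the fibration: projecting along $v^+$, i.e.\ $N\to N/\bZ v^+\cong\bZ$, gives a toric morphism $\pi\colon V\to\bP^1$ whose generic fibre is $\bP^1$, with the two boundary curves $D_{\pm v^+}$ as sections $s_1,s_2$; since $g(v^+)=v^+$ and $\det g=-1$, the induced involution on the base lattice is $-1$, a nontrivial involution of $\bP^1$.

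Next I would reduce to $\bF_n$. Choosing a basis with $v^+=(0,1)$ makes $g=\left(\begin{smallmatrix}-1&0\\ n&1\end{smallmatrix}\right)$, and $\Sigma$ is a $g$-invariant refinement of the fan of $\bF_n$ with rays $(0,\pm1),(1,0),(-1,n)$. The extra blow-ups are corner or interior blow-ups at torus-fixed or boundary points, which by the no-fixed-cone property occur in $\iota$-swapped pairs; hence no new fixed points are created and $V^\iota\cong(\bF_n)^\iota$, so the trichotomy may be computed on $\bF_n$ itself. I note that the parity of $n$ is intrinsic: it is the $\GL(2,\bZ)$-conjugacy type of $g$, namely whether $N^+\oplus N^-$ has index $1$ (split type) or $2$ in $N$.

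I would then compute the fixed locus on $\bF_n$. Lifting the base involution $u\mapsto c/u$ (whose fixed points $\pm\sqrt c$ lie in the open base, off the torus-fixed points) to an automorphism preserving both sections gives $\iota(u,w)=(c/u,\ \lambda\,u^{n}w)$ in the affine chart $U_0$, where the exponent $n$ is forced by the dual action $g^{T}f_2=n f_1+f_2$ on the character lattice. On each section $\iota$ acts as a nontrivial involution of $\bP^1$, contributing two fixed points per section. Over the two base-fixed points the multiplier on the fibre is $\epsilon_+:=\lambda(\sqrt c)^{n}$ and $\epsilon_-=(-1)^{n}\epsilon_+$, each equal to $\pm1$ by the involution relation, and a fibre is pointwise fixed iff its multiplier is $+1$. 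Thus for $n$ even the two fibres behave identically, yielding either no pointwise-fixed fibre (E0) or two (E2), while for $n$ odd exactly one fibre is pointwise fixed (E1); together with the fixed points on the sections this reproduces exactly the three loci E0, E1, E2.

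Finally I would match this with the equatorial behaviour. On the integral-affine base the involution acts through the dual $g^{T}$, the equator being its $(+1)$-eigenline; taking a lattice point $u_0$ on the equator one finds $u-\iota(u)=\bigl(2(u_1-u_{0,1})-n(u_2-u_{0,2})\bigr)e_1$, so the lattice distance from $u$ to $\iota(u)$ has parity $n(u_2-u_{0,2})$. Hence it is even for all $u$ precisely when $n$ is even, and odd for some $u$ precisely when $n$ is odd; combined with the previous step this gives ``E0, E2 $\Rightarrow$ even'' and ``E1 $\Rightarrow$ odd.'' The main obstacle, and where care is needed, is bookkeeping the single parity of $n$ across its three incarnations---the conjugacy type of $g$ (the index of $N^+\oplus N^-$), the sign $(-1)^{n}$ relating the two fibre multipliers, and the integral-affine displacement $u-\iota(u)$---and checking that the reduction $V\rightsquigarrow\bF_n$ leaves $V^\iota$ unchanged because the blow-ups come in $\iota$-swapped pairs.
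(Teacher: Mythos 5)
Your proposal is correct and follows essentially the same route as the paper's proof: use the no-fixed-torus-point hypothesis to show the $(+1)$-eigenline of $g$ contains two rays of the fan, deduce the fibration with sections $D_{\pm v^+}$ and the reduction to a Hirzebruch fan, then classify the torus-lifts of $g$ to obtain the three fixed loci — your multiplier computation $\epsilon_-=(-1)^n\epsilon_+$ is a repackaging of the paper's three normal forms $(x,y)\mapsto(-x,y^{-1})$, $(x,xy^{-1})$, $(x,y^{-1})$, and your displacement formula makes explicit the equatorial parity claim that the paper leaves implicit. One point needs a small fix: after setting $v^+=(0,1)$ you must choose the second basis vector to be a ray of $\Sigma$ (e.g.\ the ray adjacent to $(0,1)$, which has the form $(1,b)$ by smoothness); for an arbitrary complement the fan of $V$ need \emph{not} refine the fan with rays $(0,\pm1),(1,0),(-1,n)$ — for instance $\Sigma$ could itself be the fan of $\bF_{n+2}$ with rays $(0,\pm 1),(1,1),(-1,n+1)$, whose cone $\langle(1,1),(0,-1)\rangle$ contains $(1,0)$ in its interior — and this freedom is exactly the role of the parameter $m$ in the paper's proof, consistent with your own remark that only the parity of $n$ is intrinsic.
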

\begin{proof}
  Consider the fan $\fF$ of $V$. The involution acting on the
  cocharacter lattice $N=\bZ^2$ has a fixed line and it does not
  intersect the interior of a two-dimensional cone in $\fF$ by the
  condition on $V^\iota$. So in $\fF$ there are two rays $\pm e_1$,
  fixed by the involution. Let $\la e_1,e_2\ra$ be a basis.
  Then $e_2\to -e_2 + ke_1$ with $k$ either even or odd, and we can
  assume that $k=0$ or $1$.
  Any nonsingular involution-invariant fan is a
  symmetric subdivision of the fan with the rays $\pm e_1$,
  $e_2 + me_1$, $-e_2+(k+m)e_1$ which is the fan of $\bF_n$ for
  $n=|k+2m|$. 
  This defines the action on monomials up to constants: $x^m\to c(m)
  x^{\iota m}$. It is easy to see that up to rescaling there are the
  following three possibilities, giving the three cases above:
  \noindent
  (E0) $(x,y)\to (-x,y\inv)$, or 
  (E1) $(x,y)\to (x, xy\inv)$, or
  (E2) $(x,y)\to (x,y\inv)$.
\end{proof}

\bigskip

We begin by describing Kulikov models for the lattices $\oT$ on the
$\mg=1$ line. Note that these only appear as the targets of simple mirror moves.

\subsection{Models for $\oT=(10+\mk,10-\mk,\delta)$, $1\le \mk\le 9$}
\label{sec:models-1}

By Theorem~\ref{thm:special-fibration}, for an ample line bundle $L$,
i.e. a generic monodromy vector $\lambda$, the $\ias$ $B$ has $24$
distinct $I_1$ singularities. Consider the Type III surface $X_0$
of Theorem \ref{smoothable} with its involution $\iota_0$ and involution-equivariant
smoothing $(X,\iota)$. The flat limit of $R_t={\rm Fix}(\iota_t)$ is the divisorial
locus $R_0$ in ${\rm Fix}(\iota_0)$. Note that since $X$ is smooth,
${\rm Fix}(\iota)$ as a set consists of relative curves $R\subset X$ and isolated
points in $X_0$.

Let us first deal with the pairs $(V_j,D_j)$ corresponding to the
lattice points in the interior of either hemisphere. They fall into
disjoint pairs $V_j$, $V_j\uopp$ exchanged by $\iota_0$.
Most of the pairs are
toric, entirely determined by the triangulation $\cT$.  There are also
$12-\mk$ pairs of $I_1$ singularities of charge $Q=1$. These are almost
toric pairs, with a single internal blowup at the side determined by
the monodromy ray.  The divisor $R_0$
is empty on these components.

\begin{figure}[htpb]
  \centering
  \includegraphics{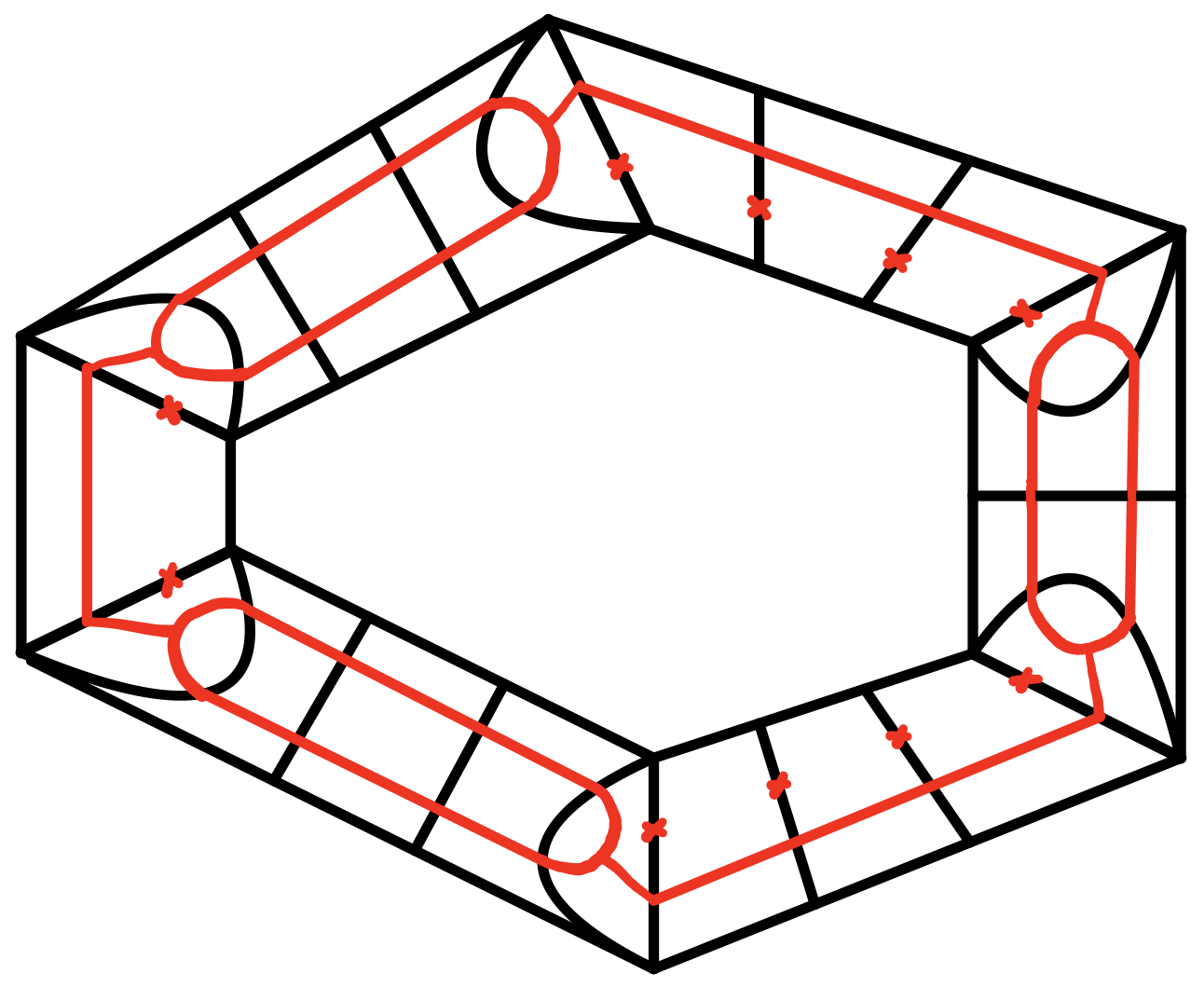}  
  \caption{Kulikov models for the $\mg=1$ case}
  \label{fig:type3-surfaces-generic}
\end{figure}

Next are the pairs corresponding to the nonsingular lattice points on
the equator. These are of types E1 or E2 of
Lemma~\ref{lem:equatorial-toric} respectively for the sides of $\hY$
with $E_i^2=-1$ and $E_i^2=-4$.

Finally, we need to determine the cbec, together with the
involution, of an $I_1$ singularity at a vertex of the polytope
$P$. From either Fig.~\ref{fig-surgery} or \ref{fig-equator} we see
that the minimal pseudofan for this singularity consists of two rays
$v_1=(-2,1)$ on the side with $E_1^2=-1$, $v_2=(1,0)$ on the side with
$E_2^2=-4$, and the monodromy ray $v\dmon=(0,-1)$. The balancing
condition for these vectors is $v_1+2v_2 = -v\dmon$.  This is the
pseudofan of $(\bP^2,D_1+D_2)$ with $D_1$ a line (so $D_1^2=1$),
$D_2$ a conic (so $D_2^2=4$), and an involution whose fixed
locus is a line $R$ and an isolated point.  The balancing condition
\cite[Sec.~2E]{alexeev2019stable-pair}
reads $\sum_{i=1}^2 (R\cdot D_i)v_i \in\bZ v\dmon$.

Therefore, $B$ is the $\ias$ given by
Construction~\ref{con:ias-from-kulikov} for the surfaces $X_0$ glued
from $2k$ pairs in the cbec of $(\bP^2,D_1+D_2)$, $\mk$ chains of
lengths $\ell_i=L\cdot E_i$ of type E1 for odd
$i$, $\mk$ chains of lengths $\ell_i=L\cdot E_i$ of type E2 for even $i$,
$12-\mk$ pairs of almost toric surfaces with a single blowup, and the
rest toric pairs. The triangulation $\cT$ specifies the combinatorial type of
each pair precisely.

\medskip

This surface can be read of directly from the Coxeter diagram for the
lattice $\oT$ given in Figs.~\ref{fig:coxeter1},
\ref{fig:coxeter2}. The most relevant part is the cycle of $2\mk$ white
vertices, alternating between double circled and single circled. Each
\emph{edge} on this cycle corresponds to a $\bP^2$. A single circled vertex,
with odd $i$ corresponds to a line, and a double circled vertex, with
even $i$ corresponds to a conic on $\bP^2$.

Each integral vector $\lambda\in\chref$ defines the nonnegative
integers $\ell_i=\lambda\cdot\alpha_i$. The numbers $\ell_i$ for these
$2\mk$ simple roots $\alpha_i$ on the $2\mk$-cycle specify the lengths of
chains as above. In addition, the other $\ell_i$ define the locations
of the nontoric surfaces in the hemispheres. On the mirror side, if
$\hY\to\oY$ is a toric model with exceptional curves $E_i$, $i>2\mk$, then
$E_i$ correspond to some of the vertices in the Coxeter diagram and
the lattice distance of a singularity from the equator is determined
in terms of $\ell_i=L\cdot C_i$. 
We give a cartoon picture for the equator of $X_0$ in Fig.~\ref{fig:type3-surfaces-generic},
with the fixed locus shown in red. The divisorial part $R_0$
of ${\rm Fix}(\iota_0)$ is a degeneration of genus $3$ curves
${\rm Fix}(\iota_t)$. 
\smallskip

A detailed example of this construction was given in
\cite{alexeev2019stable-pair} for $\oT=(19,1,1)$. In that case, there
are $18$ $I_1$ singularities on the equator, and $3$ pairs of
$I_1$ singularities in the hemispheres. The lengths
$\ell_0, \dotsc, \ell_{17}$ give the lengths of the chains of toric
surfaces on the equator. On the mirror side, the morphism to the toric
model $\hY\to\oY$ consists of three disjoint blowups, and the
distances of the nontoric surfaces $V_j$ from the equator are $L\cdot E_i$
for $i=18, 19, 20$.

\medskip

Each of the lattices $\oT$ on the $\mg=1$ line is the target of a
unique mirror move $S\to T\leadsto \oT$ as in
Definition~\ref{def:mirror-moves}, which is simple. So $S\subset\lias$
is saturated and the set of symmetric periods is the torus
$\bT_{\Lambda/S}$ and the family of Type III surfaces contains
the standard surface corresponding to the period $\psi=1$.

%

\subsection{Models for $\oT$ with $\mg\ge2$, excluding $(10,8,0)$}
\label{sec:models-remaining}

We use the reduction from the $\mg=1$ case given in
Section~\ref{sec:heegner}.  The lattice
$$\oT = \hS=(10+\mk-(\mg-1), 10-\mk-(\mg-1), \mdelta)$$ is obtained from
the lattice $\oT' = \hS'=(10+\mk,10-\mk,1)$ by a chain of $\mg-1$
Heegner $(-1,-1)$ moves. The set of possible monodromy invariants
$\lambda\in\oT$ lie in the fundamental chamber for $\oT$ which is a face
of the fundamental chamber for~$\oT'$, obtained by setting some of the
lengths $\ell_i$ with $i>2\mk$ to zero.

By Theorem~\ref{thm:special-fibration} the $\ias$ for $\oT$ is
obtained from the $\ias$ for $\oT'$ by descending $(\mg-1)$ pairs of
$I_1$ singularities in the interiors of the hemispheres symmetrically
to the equator, to a side of type E1 of
Lemma~\ref{lem:equatorial-toric} with odd equatorial behavior, to get
an $I_{2(\mg-1)}$ singularity with monodromy rays parallel to the
equator.

What remains to see is an involution on an actual surface $X_0$ and
specifically the surfaces $V_i$ with involution on the equator. To
describe them we introduce two moves, illustrated in
Fig.~\ref{fig:moves}.

\begin{figure}[htpb]
  \centering
  \includegraphics[width=3.5 in]{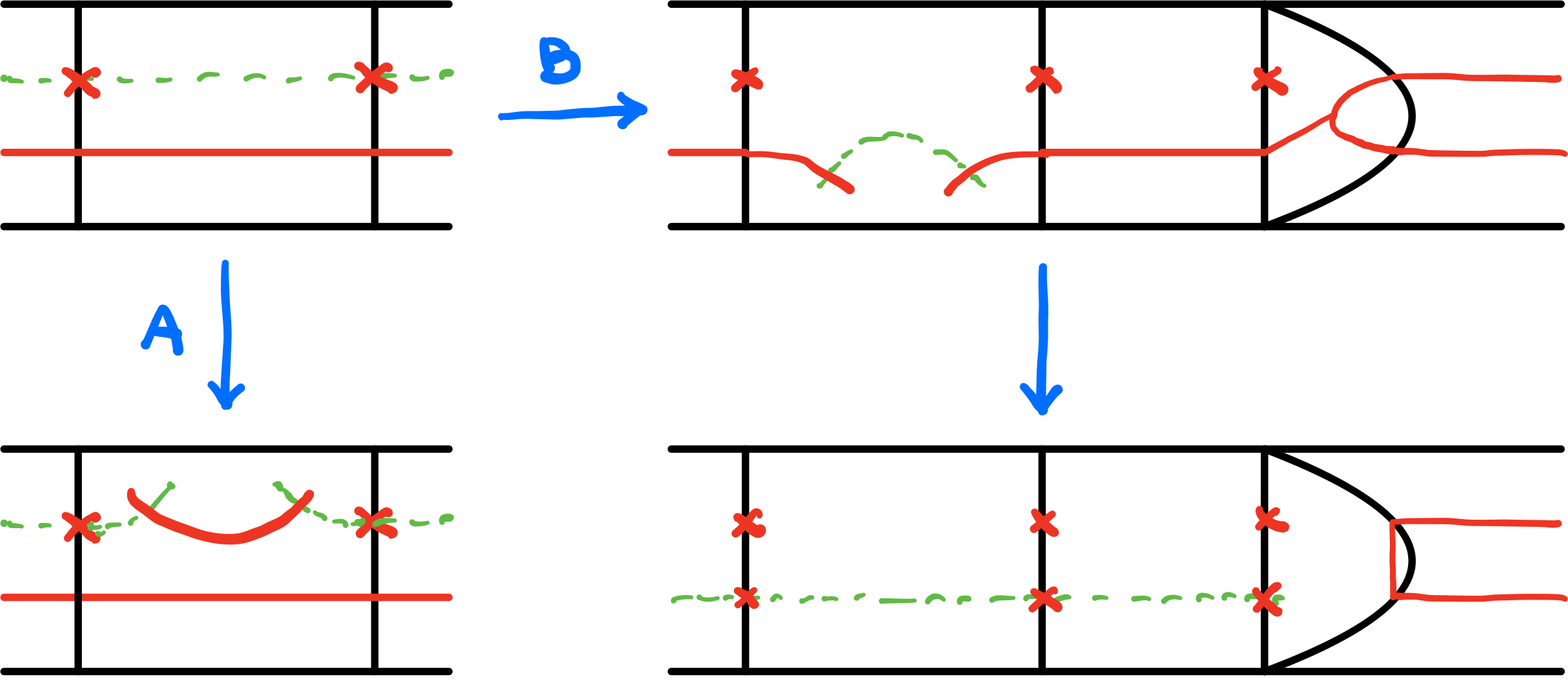} 
  \caption{A- and B-moves}
  \label{fig:moves}
\end{figure}

{\bf A-move:} Let $V'\to\bP^1$ be a toric surface of type E1 of
Lemma~\ref{lem:equatorial-toric}. The A-move is blowing up one of
the fixed points on a section, say $s_1$ twice. In the corresponding
fiber of $V\to\bP^1$ this creates the fiber $E_1+2E_2+E_3$ with
$E_1^2=E_3^2=-1$ and $E_2^2=-2$, the the interior $(-2)$-curve $E_2$
is fixed by the induced involution of~$V$. 

Let $(g,k)$ be the invariants of the K3 surfaces in the $S'$ family,
so that $\oT'$ was reached by the simple mirror move $S'\to
\oT$. Then the Type III surface we obtained belongs to the $S$-family
with the invariants $(g,k+1)$.

The point we have blown up twice in the toric model $\oV'$ of $V'$
corresponds to $-1$ with respect to the origin of the blown up side.
Therefore, we can for
example take $X_0$ of this type to be the standard surface, with
$\psi=1$. In particular, the A-move keeps a simple mirror move
$S\to \hS$ simple, in the sense that $S$ changes by a $(+1,-1)$ Heegner
move, while $\hS$ changes by a $(-1,-1)$ Heegner move. 

\smallskip

{\bf B-move:} For the B-move we instead blow up twice the other fixed point, the
one that lies on a fiber fixed by the involution pointwise. On the
surface $\Bl_2 V$ the fixed locus of the involution consists of two
$(-1)$-curves $E_1$, $E_3$ in the fiber $E_1+2E_2+E_3$. The divisor
$V^\iota$ fixed by the induced involution is not nef since it contains
$E_1$ and $E_3$ and not $E_2$. 

We flop $E_1$ to a neighboring surface $V_1$ by the
M1 modification. In $V_1$ this creates a fiber $E_1'+E_2'$
with $E'_1{}^2=E'_2{}^2=-1$, where $E'_1$ is not in $V_1^\iota$ and
$E'_2$ is. The ramification divisor is not nef again. We proceed by
flopping in $E'_2$ etc, until we reach a component in the cbec of
$(\bP^2,D_1+D_2)$ corresponding to the $I_1$ singularity of the
$\ias$. This is where the sequence of flops stops, making
$(\bP^2,D_1+D_2)$ with $D_1^2=1$, $D_2^2=4$ into $(\bF_1, D_1+D_2)$
with $D_1^2=0$, $D_2^2=4$ and the ramification into a fiber of
$\bF_1\to\bP^1$.

We do the same with the other $(-1)$-curve $E_3$ on $V$, flopping it
all the way to the surface in the cbec of $(\bP^2,D_1+D_2)$ on the
equator in the other direction. The end result is that the entire
chain of surfaces between the two $(\bP^2,D_1+D_2)$-components, which
in $X'_0$ was a chain of toric surfaces of equatorial type E1 is now a
chain of toric surfaces of equatorial type E0, with no divisorial part
in the fixed locus.

If $(g,k)$ are the invariants of the
K3 surfaces in the $S'$ family then the Type III surface $X_0$
obtained by the B-move has invariants $(g-1,k)$ and thus must be the
result of a non-simple mirror move.

\medskip

Now repeat this procedure $(\mg-1)$ times doing either the A-move
every time, or $(\mg-2)$-times the A-move and once the B-move.
Let $X_0$ be either of the surfaces obtained this way. There is an
entire $19$-dimensional family $\Hom(\Lambda,\bC^*)$ of $d$-semistable
surfaces $X_0(\psi)$ of the same deformation type.

As in Theorem \ref{smoothable}, 
the subset of anti-symmetric periods is a union of torus translates
of $\bT_{\Lambda/\Lambda^+}:={\rm Hom}(\Lambda/\Lambda^+,\bC^*)$. As before, $\Lambda=\lambda^\perp$ in
$\lias$, $\oT = \lias^-$, and $\Lambda^+ = \lias^+ = (\oT)^\perp=S^{\rm sat}$ in
$\lias$. The coset of $\bT_{\Lambda/\Lambda^+}$ containing the standard
surface is of the A...A type, and all other involution anti-invariant cosets
of $\bT_{\Lambda/\Lambda^+}$ are of the A...AB type.

\subsection{Monodromy invariants for $\oT=(10,10,0)$}
\label{sec:models-enriques}
In this case, the source of the mirror move must be $S= (10,10,0)$, so
$\oT$ corresponds to the self-mirror cusp of Enriques K3 moduli.

By Theorem~\ref{thm:special-fibration} and
Lemma~\ref{lem:special-degeneration}, the $\ias$ $B=P\cup P\uopp$ in
this case is obtained from the $\ias$ $B'=P\cup P\uopp$ of the
$(10,10,1)$ case by gluing $P\uopp$ to $P$ by twisting 
halfway along the circular boundary. This is possible, so long as the affine length of
the equator is even, which can always be achieved by replacing
$\lambda$ by $2\lambda$. So the Type III surface is obtained from a
Type III surface of the $(10,10,1)$ case by this rotation. The
involution $\iota_0$ on $X_0$ is base point free. 

$(10,10,0)$ is the only case when the
fundamental chambers $\ch_2$ and $\chref$ are
\emph{not} the nef cones of quotient surfaces $\hY$. Indeed, for a
general Enriques surface $\hY$ the nef cone is the round cone
$\ocC$. On the other hand, in the $\ias$ $B$, we used the
Symington polytope $P$ from the Halphen $(10,10,1)$ case,
which certainly has polyhedral walls, and so is not the entire round cone.

On the other hand, $\chref(10,10,0)$ is a subset of $\chref(10,10,1)$.
Indeed, the Coxeter
diagrams for both lattices are given in Fig.~\ref{fig:coxeter1} and
they are nearly identical. 
Denote the simple root vectors in $E_8(2)$ by
$\alpha_1,\dotsc,\alpha_8$, so that $\alpha_i^2=-4$ and
$\alpha_i\cdot \alpha_j\in\{0,2\}$ for $i\ne j$. Let $U=\la e,f\ra$ with
$e^2=f^2=0$, $e\cdot f=1$. One has $(10,10,0)=E_8(2)\oplus U(2)$ and
$(10,10,1) = E_8(2)\oplus I_{1,1}(2)$, where $U(2)=\la 2e,f\ra$ and
$I_{1,1} = \la 2e, e+f\ra$ are two different index~$2$ sublattices of
$U$.

Then the first $9$ vectors of the two Coxeter diagrams are exactly the
same: $\alpha_1, \dotsc, \alpha_8, \alpha_9=2e-\alpha_0$, where
$-\alpha_0$ is the longest root of $E_8$.
It is only the last, $10$th roots that are different. For $(10,10,0)$
it is $\alpha_{10}=-2e+f$, and for $(10,10,1)$ it is
$\alpha'_{10} = -e+f$. Thus, $\alpha'_{10} = \alpha_{10} + e$. Since
$e$ is a positive linear combination of $\alpha_1,\dotsc,\alpha_9$, if
$\lambda$ is a vector in $\chref(10,10,0)$, i.e. a vector satisfying
$\lambda\cdot \alpha_i\ge0$ for $i=1,\dotsc, 10$ then one moreover has
$\lambda\cdot \alpha'_{10}\ge 0$.

\smallskip

So for any $\lambda\in\chref(10,10,0) \subset \chref(10,10,1)$ we interpret
$\lambda$ as a vector defining the Symington polytope for $(10,10,1)$
by the previous construction, and take $B$ to be $P\cup P\uopp$ glued
with a half-circle rotation in the equator.

\subsection{Models for $\oT=(10,8,0)$, $(10,10,0)$, $(10,10,1)$}
\label{sec:models-res}

There are five mirror moves $S\leadsto \oT$ for which the integral
affine structure $B=P\cup P^{\rm opp}$ has a circular
equator with no singularities on it. They are:

\begin{align}\label{circles}\begin{aligned} &(10,10,0)\to (10,10,0), && (10,10,1)\to (10,10,1), \\
&(10,8,0)\to (10,8,0), && (10,10,0)\dto (10,8,0), && (10,10,1)\dto (10,8,0).
\end{aligned} \end{align} 

In particular, the lattice $\oT=(10,8,0)$ is the target of three mirror moves.
Respectively, there are three families of Type III
surfaces with anti-symmetric periods.
All of them share the same $\ias$ $B=P\cup P\uopp$, glued from two
copies of a Symington polytope $P$ for the anticanonical pair
$(\hY,\hD)$ with $\hY=Bl_{p_1,\dots,p_9}\bP^2$ and $\hD$ the strict
transform of a cubic through the nine points $p_i$. The gluing as in
Section~\ref{sec:gluingPs} is with the even equatorial behavior.
The only difference is in the periods.

For the $(10,10,1)\to (10,10,1)$
mirror move, the target $\ias$ determined by $\oT$ is $P\cup P\uopp$
with $P$ as in the previous paragraph, but with the odd equatorial behavior.

The $(10,10,0)\to (10,10,0)$ mirror move is the only one for which
the target $\ias$ satisfy $B/\iota \simeq \bR\bP^2$. There is no well-defined
equatorial edge behavior since the edge is not fixed, and the sphere $B$ is built
from Symington polytopes for $(\hY,\hD)$ with a half-twist, as in Section \ref{sec:models-enriques}.

The shape of the equator, and the fixed locus
of $\iota_0$ inside it, is depicted in Figure \ref{circular-equators},
in the respective positions of the mirror moves in Equation \ref{circles}.
In all cases, the equatorial behavior of $R_0$ depicted is uniquely determined by the
fact that the divisorial part of $R_0$ must be the flat limit of $R_t$.

\begin{figure}
\includegraphics[width=4in]{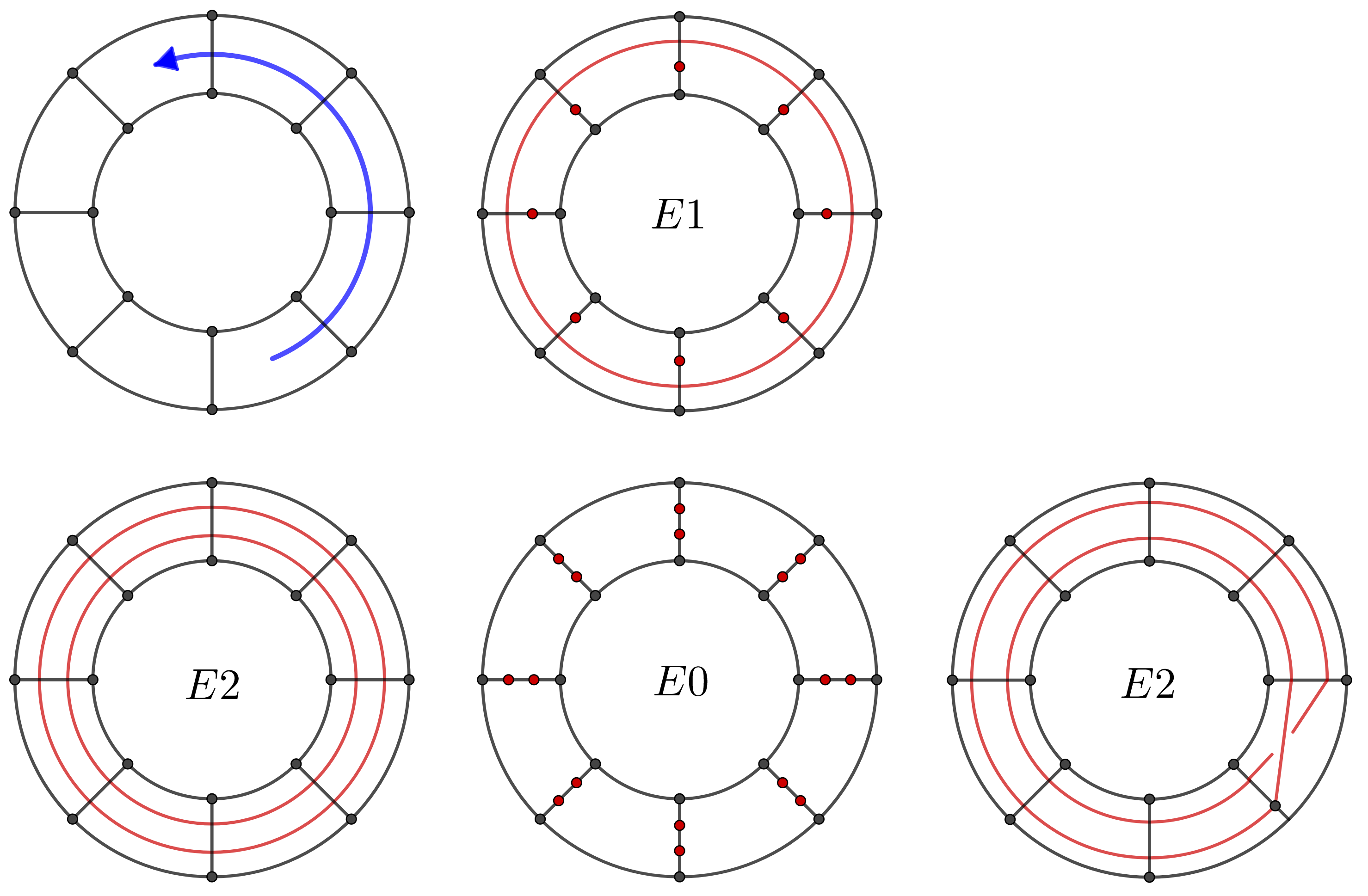}
\caption{The fixed loci in circular equators, shown in red,
and the corresponding edge behavior.}
\label{circular-equators}
\end{figure}

\subsection{Colliding singularities and non-generic type III models}
\label{sec:type3-nongeneric}

In the previous sections, we defined Kulikov models for the generic
$\lambda$. With the exception of Section~\ref{sec:models-remaining},
the $\ias$ had $24$ distinct $I_1$ singularities: $2\mk$ of them lie
on the equator, and $12-\mk$ in the interior of each of the hemispheres
$P$, $P\uopp$ in pairs, so that the whole collection is preserved by
the involution $P\leftrightarrow P\uopp$.

For a non-generic $\lambda$ and for the lattices with $\mg\ge2$, some
of these singularities collide to form integral-affine singularities
of higher charge.

Each of the $I_1$ singularities on the equator corresponds to
a surface in the cbec of $(\bP^2,D_1+D_2)$ and the ramification
divisor is a line $L$ of volume $L^2=1$. Also, after a B-move, the M1 modifications
convert each end $(\bP^2,D_1+D_2)$ into $(\bF_1,D_1+D_2)$ with the
ramification divisor a fiber, of volume $L^2=0$. 

On the other hand, the off-equator $I_1$ singularities correspond to 
surfaces with an empty ramification divisor, of volume $0$. When 
singularities collide, the volumes add. The anticanonical pairs
of volume $0$ get contracted on the stable models, which are
described in Section~\ref{sec:stable-models}.
This leads to three types of collisions:
  
\smallskip

(1) Collisions in the interiors of the hemispheres $P$,
$P\uopp$. These are completely irrelevant and can be avoided by nodal
slides, replacing them by isolated~$I_1$.
The corresponding anticanonical pairs
$(V_i,D_i)$ are disjoint from the $C_g$ component of the ramification
divisor $R$, so on the stable model $V_i$ will be contracted to points.

\smallskip
  
(2) Collisions obtained by descending pairs of $I_1$ singularities
to the equator. These are only
partly relevant: without further collisions, the curve $C_g$ has numerical
dimension $1$ or $0$ on the corresponding
pair $(V_i,D_i)$. So on the stable model, $V_i$ is contracted to $\bP^1$ or a point
and the collision is not detected by the stable model.

\smallskip

(3) The singularities obtained by colliding some of the $2\mk$ $I_1$
singularities on the equator, between themselves, or with some of the
pairs of $I_1$ singularities from case~(2). These are truly
relevant. Each isolated $I_1$ singularity on the equator corresponds to
a pair $(V_i,D_i)$ in the cbec of $(\bP^2,D_1+D_2)$ with a line and a
conic, and the curve $C_g$ restricts to this $\bP^2$ as a line. On the
stable model, this $V_i$ will not be contracted, mapping to a surface
whose normalization is $\bP^2$. Further collisions lead
to more complicated nontoric anticanonical pairs $(V_i,D_i)$ with big
and nef $C_g$, and to more complicated irreducible components of the
stable model. 

\medskip

Below, we work with the chamber $\ch_2$ and its Coxeter diagram
$\Gamma_2$. The translation between $\ch_2$ and $\chref$ and the
elliptic subgraphs of $\Gamma_2$ and $\gref$
is given by Eq.~\eqref{eq:conversion} and Fig.~\ref{fig:conversion}.

\begin{definition}\label{def:lambda-subgraph}
  For any $\lambda\in\ch_2$  define the
  subdiagram $\Gamma_2(\lambda)\subset\Gamma_2$ with the vertices
  \begin{displaymath}
    V_2(\lambda) = \{ i \mid \ell_i = 0 \},
    \quad\text{where } \ell_i = \lambda\cdot\alpha_i
    \text{ for simple roots } \alpha_i.
  \end{displaymath}
  Recall that $\ell_i\ge0$ for any $\lambda\in\ch_2$.  $\Gamma(\lambda)$
  specifies the face of $\ch_2$ to which $\lambda$ belongs.  

  Further, by Section~\ref{sec:heegner}, for a lattice $\oT$ of genus
  $\mg$ the chamber $\ch_2(\oT)$ 
  is a face of the chamber $\ch_2(\oT_1)$ for the ``parent'' lattice
  $\oT_1 = \oT\oplus A_1^{\mg-1}$. Let
  $\Gamma_2\upar(\lambda)\subset\Gamma_2(\oT_1)$ be the
  diagram of $\lambda$ considered as a vector in~$\oT_1$.
  With notations of Corollary~\ref{cor:reach-by-moves} one has
  $\Gamma_2\upar(\lambda) = \Gamma(\lambda) + A_1^{\mg-1}$, see
  Lemma~\ref{lem:subgraphs-G-G1}. 
\end{definition}

By Eq.~\eqref{eq:ch-faces} in Section~\ref{sec:reflection-groups}, if
$\lambda^2>0$ then $\Gamma_2\upar(\lambda)$ is elliptic, i.e. a
disjoint union of $ADE$ diagrams. As in
Section~\ref{sec:visible-curves}, these $ADE$ diagrams correspond to
collections of visible curves connecting the $I_1$
singularities. We illustrate this in Fig.~\ref{fig:collide} showing a
collision of type (2) that gives an $I_2$ of charge $Q=2$ and a collision
of type (3) that gives $D_5$ of charge $Q=8$. 

\begin{figure}[h]
  \includegraphics[width=3.5 in]{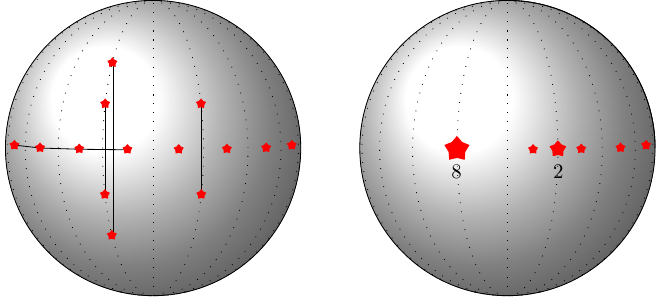}
  \caption[Colliding $I_1$ singularities]{Colliding $I_1$
    singularities:
    $D_5$ (charge $=8$, volume $=4$) and
    $A_1$ (charge $=2$, volume $=0$)}  
  \label{fig:collide}
\end{figure}

The visible curves $(\gamma_i,\alpha_i)$ connecting the pairs of $I_1$
singularities with the same monodromy directions are in a bijection
with some of the simple roots $\alpha_i$ of $\Gamma_2\upar$. The
corresponding lengths $\ell_i=\lambda\cdot\alpha_i$ measure the
lattice distances between these $I_1$ singularities. A collision
occurs when all of these distances become zero.

For each singularity of type (3) the divisor $C_g$ on the
corresponding anticanonical pair $(V,D)$ is big and nef, so
$K_V+D+\epsilon C_g$ is big and nef. Its canonical model
$(\oV,\oD + \epsilon \oC_g)$ is one of the $ADE$ stable involution
pairs which were completely classified in
\cite[Table~2]{alexeev17ade-surfaces}. The cbec of $(V,D)$ is the cbec
of the minimal resolution of singularities of $(\oV,\oD)$.

\cite{alexeev17ade-surfaces} devised an elaborate
system for denoting the $ADE$ surfaces using decorated Dynkin symbols.
For example the $D_5$ singularity of Fig.~\ref{fig:collide}
corresponds to $\pD_5^-$ of \cite[Table~$1$]{alexeev17ade-surfaces}.
Perhaps the easiest way to
understand them is to look at the $ADE$ subgraphs of the
Coxeter diagrams in Figs.~\ref{fig:coxeter1}, \ref{fig:coxeter2},
while remembering the conversion rules of Eq.~\eqref{eq:conversion} and
Fig.~\ref{fig:conversion}. Such a subgraph $G$ contains more information
that just its $ADE$ type. For example, an $A_n$ subgraph can sit in
$\Gamma_2$ in many different ways: it may lie entirely in the equatorial
$2\mk$-cycle, or one or two of its vertices may venture off-equator. It
may begin at a single-circled or a double-circled white vertex.
All of these possibilities correspond to different cbecs of anticanonical pairs
with an involution.
But the decorated $ADE$ symbols specify them uniquely.

The most useful way to think of an $ADE$ subgraph $G\subset\Gamma_2$ is
as a chain in the  
equatorial $2\mk$ cycle, plus some off-equator vertices each
attached to an even-numbered equatorial,
double-circled white vertex $\alpha_i$. For a given
$\alpha_i$, the off-equator vertices~$\beta_j$ connected to it are
disjoint from each other, cf. Fig.~\ref{fig:conversion}.

With the singularities of $B=\Gamma(X_0)$ 
modeled by the cbecs of the corresponding $ADE$ surfaces 
of \cite{alexeev17ade-surfaces}, the proof of Theorem \ref{smoothable}
goes through, essentially unchanged.

\subsection{Type II models}
\label{sec:type2-models}

Type II degenerations correspond to monodromy vectors
$\lambda\in\ch_2$ with $\lambda^2=0$. On the mirror side, the line bundle
$L\in\hS$ is nef but not big, $L^2=0$. Since the lattice area of
$\ias$ is $\lambda^2=L^2$, the sphere degenerates to an interval, the dual
complex of a Type II Kulikov model. The limit of an $\ias$
$B=P\cup P\uopp$ with an involution itself has an involution, so the
interval is either vertical, going from one pole of the sphere to
another, or horizontal.

The subgraph $\Gamma_2(\lambda)$ in this case is parabolic, a disjoint
union of finitely many $\wA_n$, $\wD_n$, $\wE_n$ extended Dynkin
diagrams. Just as in the previous section, the collisions are of types
(1) irrelevant, (2) partly relevant but not changing the stable model,
and (3) those that include some vertices on the $2\mk$-cycle, which do
affect the stable models. The stable models for
$(V_i,D_i,\epsilon C_g)$ in this case are the $\wA\wD\wE$ surfaces
classified in \cite{alexeev17ade-surfaces} and $(V_i,D_i)$ are their
resolutions to nonsingular pairs.

\section{Compact moduli}
\label{sec:compact-moduli}

We now restrict ourselves to the moduli spaces $F_S$ for the $50$
lattices $S$ of \fign\ for which $g\ge2$, excluding $(10,8,0)$. In
these cases the ramification divisor $R=X^\iota$ has a unique
component $C_g$ of genus $g\ge2$. If $X\to\oX$ is the contraction
given by a linear system $|mC_g|$ then the image $\oC_g$ of $C_g$ is
an ample Cartier divisor on $\oX$ and the pair $(\oX, \epsilon\oC_g)$
is a stable KSBA pair for $0<\epsilon\ll1$, so one has the KSBA
compactification as in
Section~\ref{sec:ksba-compactifications}. Recall that we denoted the
main component $\oF_S$ and its normalization was the semitoric compactification
$\oF^\fF=(\oF_S)^\nu$ by Proposition \ref{stratum-function}. 

The goal of this section is to prove the Main
Theorem~\ref{thm:compact-moduli}. The proof is given in
Section~\ref{sec:proof-main-thm}. We begin by defining the semifans
$\fF_I = \fram(\oT)$ forming $\fF$ as one ranges over the $0$-cusps
$I = e^\perp/e$. Then, we discuss stable models. 

\subsection{The ramification semifan}
\label{sec:semifan}

We use the same notations as in Section~\ref{sec:vinberg-theory}.  Let
$H=\oT=e^\perp_T/e$ be the hyperbolic lattice corresponding to a
$0$-cusp of $F_S$. Let $\Gamma$ be its Coxeter diagram.

Recall that we defined the Coxeter semifan in
Section~\ref{sec:coxeter-semifan}.  We now define a \emph{generalized Coxeter
semifan} by the ``Wythoff's construction'' of
\cite{coxeter1935wythoffs-construction}. We refer to
\cite[Sec.~10C]{alexeev2019stable-pair} for details. 

Divide the vertex set $V(\Gamma)$ of the Coxeter diagram into two
sets $\vrel\sqcup\virr$ of {\it relevant} and {\it irrelevant} roots.
(It is important to understand that the ``irrelevant" vertices are
irrelevant only when in isolation. They become relevant if they are
connected to the relevant ones.)

\begin{definition}\label{def:generalized-cox-semifan}
Modulo $W$, the fan $\fgen$ has a unique maximal dimensional
cone
\begin{displaymath}
  \ch\dgen = \cup_{w\in W\dirr} w.\ch, \quad \text{where }
  W\dirr = \la w_i \mid i\in \virr \ra.
\end{displaymath}
All the other cones in $\fgen$ are the faces of $\ch$ and their
$W$-translates.
\end{definition}

\begin{example}
  Consider the Coxeter fan for $W(A_2)=S_3$ in $\bR^2$, consisting of
  $6$ two-dimensional cones and their faces. If we take for $\virr$
  one of the vertices of the~$A_2$ diagram then $\fgen$ will be a fan
  with $3$ two-dimensional cones and it is \emph{not} a reflection
  fan. 
\end{example}

$\fgen$ has fewer cones than $\fF$ in the following sense:

\begin{definition}
  Let $V'\subset V(\Gamma)$ be a subset of vertices and
  $\Gamma'\subset\Gamma$ the induced subdiagram which is either
  elliptic or parabolic.  We divide $V'$ into the relevant and
  irrelevant parts $\relpart(V') \sqcup \irrpart(V')$ as follows: the
  vertices of the connected components of $\Gamma'$ that consist
  \emph{entirely} of irrelevant vertices form $\irrpart(V')$, and the
  rest is $\relpart(V')$. We say that $\relpart(V')$ is \emph{the
    relevant content of} $V'$.
\end{definition}

Then the cone $F'$ of $\fgen$ defined by $V'$ is the same as the
one defined by its relevant content, and $F\cap \ch$ is the cone
$F(\relpart(V'))$ of $\fF$. In particular, if
$\relpart(V')=\emptyset$ then $V'$ does not define any cone in
$\fgen$ at all.

\begin{remark}
  Let $W=\wref$. 
  If $\virr=\bB$ is a subset of $V_4$ as in
  Definition~\ref{def:refl-chambers} then by
  Lemma~\ref{lem:refl-chambers} $\fgen$ is simply the Coxeter semifan
  for the Weyl group $W\dnor(\bB^c)$. But in our case the set $\virr$
  is \emph{bigger} than $V_4$, and $\fgen$ is not a Coxeter
  semifan for any Weyl group.
\end{remark}

\begin{definition}\label{def:the-semifan}
  Let $\oT_1 = (10+\mk, 10-\mk, \mdelta)$, $1\le \mk\le 9$ be a
  lattice on the $\mg=1$ line and $\Gamma(\oT_1)$ be its Coxeter
  diagram as in Sections~\ref{sec:vinberg-g=1},
  \ref{sec:vinberg-exceptional}.  We declare the only relevant
  vertices $\vrel$ to be the $2\mk$ vertices on the $I_{2\mk}$ cycle,
  and all the other vertices to be irrelevant. In particular, all the
  black vertices are irrelevant.

  Any other lattice $\oT$ appearing as the target of a mirror move
  $S\leadsto\oT$ for $S$ with $g\ge2$ is of the form
  $\oT_{\mg} = (10+\mk-(\mg-1), 10-\mk-(\mg-1),\mdelta)$.  By
  Corollary~\ref{cor:reach-by-moves}, it can be reached from
  $\oT_1= (10+\mk, 10-\mk, 1)$ by a chain of $\mg-1$ standard
  operations described in Lemma~\ref{lem:heegner-diagrams}, and the
  set of vertices of $\Gamma(\oT_{\mg})$ is a subset of the vertices
  of $\Gamma(\oT_1)$ We declare the vertices of $\Gamma(\oT_{\mg})$ to
  be relevant if they were also relevant in $\Gamma(\oT_1)$.  For
  $\mg\ge2$ there are $2\mk-1$ of them.

  We define the semifan $\fram(\oT)$ to be the generalized Coxeter fan
  for this set $\virr$ of relevant vertices. It is a coarsening of the
  reflection fan $\fF_2$ for the $(-2)$-roots: Walls of $\fF_2$
  get erased exactly when they are the perpendicular to an entirely
  irrelevant diagram.
  
\end{definition}

\subsection{Stable models}
\label{sec:stable-models}

Let $X_0$ be one of the Type III surfaces with an involution~$\iota$
defined in Section~\ref{sec:main-construction}, $R$ the divisorial
part of the fixed locus $X^\iota$, and $C_g$ the connected component
of $R$ that has arithmetic genus $g\ge2$. In each case, we constructed
a family $X_0$ of such surfaces with an involution,
parameterized by a translate of a torus. After any necessary M1 modifications
resulting from a B-move, see Section \ref{sec:models-remaining},
the limit of $C_g$ is a big and nef divisor, and a large multiple
$|mC_g|$ defines a contraction $f\colon X_0\to \oX_0$. Denote by
$\oC_g$ the image of $C_g$, it is a Cartier divisor. The pair
$(\oX_0,\epsilon \oC_g)$ is a KSBA stable pair, see
Section~\ref{sec:ksba-compactifications}.

\begin{lemma}\label{lem:contractions-to-stable-model}
  Let $X_0=\cup V_i$ with $V_i$ corresponding to the lattice points of
  $B=P\cup P\uopp$. Assume $\mg=1$. Then under the morphism $f\colon
  X_0\to \oX_0$
  \begin{enumerate}
  \item if $i$ is in the interior of $P$ or $P\uopp$ then $V_i$ is
    contracted to a point,
  \item if $i$ is on the equator but not at a vertex of $P$ then $V_i$
    is contracted to $\bP^1$,
  \item if $i$ is a vertex of $P$ on the equator then $V_i\to\oV_i$ is
    birational. 
  \end{enumerate}
\end{lemma}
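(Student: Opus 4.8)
The plan is to reduce the statement to a component-by-component computation of the restriction $C_g|_{V_i}$, using that $f$ is the morphism attached to the complete linear system $|mC_g|$ for $m\gg0$ with $C_g$ big and nef, hence semiample, on $X_0$. The governing principle I would record first is that a curve $C\subset V_i$ is contracted by $f$ exactly when $C\cdot C_g=0$; since $C_g|_{V_i}$ is again nef, this means $\dim f(V_i)$ equals the numerical dimension of $C_g|_{V_i}$, namely $0$ when $C_g|_{V_i}\equiv0$, $1$ when $(C_g|_{V_i})^2=0$ but $C_g|_{V_i}\not\equiv0$, and $2$ (with $f|_{V_i}$ birational) when $(C_g|_{V_i})^2>0$. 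I would also note at the outset that the hypothesis $\mg=1$ forces $k=0$ through the simple mirror move of Definition~\ref{def:mirror-moves}, so that the divisorial part of $X_0^{\iota_0}$ is the single curve $C_g$; this lets me identify $C_g|_{V_i}$ with the whole divisorial fixed locus on $V_i$.

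First I would dispose of case (1). Since $\iota_0$ interchanges $P$ and $P\uopp$, a component $V_i$ indexed by an interior lattice point is carried to the distinct component $V_i\uopp$, so $\iota_0$ fixes no point of $V_i$ and the divisorial fixed curve $C_g$, supported along the equator, is disjoint from $V_i$. Hence $C_g|_{V_i}\equiv0$ and $f$ contracts $V_i$ to a point.

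For cases (2) and (3) I would invoke the explicit $\mg=1$ description of the equatorial components from Section~\ref{sec:models-1} together with Lemma~\ref{lem:equatorial-toric}. A non-vertex equatorial lattice point gives a toric pair of type E1 or E2, that is, a surface carrying a ruling $\pi\colon V_i\to\bP^1$ whose fixed locus is one fiber (E1) or two fibers (E2); thus $C_g|_{V_i}$ is a positive multiple of the fiber class $F$, with $F^2=0$ and $F\not\equiv0$. The contracted curves are precisely the fibers of $\pi$, so $f|_{V_i}$ factors through $\pi$ with image $\bP^1$, giving (2). A vertex of $P$ corresponds to an $I_1$ singularity whose component lies in the cbec of $(\bP^2,D_1+D_2)$ with $D_1$ a line and $D_2$ a conic, and there the involution fixes a line together with an isolated point; this fixed line has self-intersection $1$, so $C_g|_{V_i}$ is big, $f|_{V_i}$ contracts no curve through a general point, and $V_i\to\oV_i$ is birational, giving (3).

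The step I expect to demand the most care is the numerical identification of $C_g|_{V_i}$ on the equatorial components: I must verify that restricting the \emph{global} divisor $C_g$ to $V_i$ yields numerically the fixed fiber(s), with no extra positivity contributed by the gluing along the double cycle $D_i$. I would settle this by computing $C_g|_{V_i}$ directly as a numerical Cartier divisor in the sense of Eq.~\eqref{eq:picard-complex}, using that the fixed fibers meet the double curves transversally, so that the only curve-components of $C_g$ lying on $V_i$ are the fixed fibers while the remaining components of $C_g$ meet $V_i$ in finitely many points of $D_i$ and contribute nothing to the divisor class. A minor companion point, to be checked in the same vein, is that on a vertex component the corner blowups implicit in the cbec are centered away from the fixed line, so that $(C_g|_{V_i})^2$ remains positive and the birationality in (3) is preserved.
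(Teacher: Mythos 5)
Your proof is correct and takes essentially the same route as the paper's: the paper's (one-line) proof likewise identifies $\dim f(V_i)$ with the numerical dimension of $C_g|_{V_i}$, which by construction is $0$, $1$, or $2$ in the three cases — $C_g$ disjoint from interior components, equal to fixed fiber(s) of the ruling on non-vertex equatorial components (types E1/E2 of Lemma~\ref{lem:equatorial-toric}), and equal to a fixed line of self-intersection $1$ on the vertex components in the cbec of $(\bP^2,D_1+D_2)$. The extra verifications you flag (that $\mg=1$ forces $k=0$ via the odd mirror move, and that the gluing along double curves contributes nothing further to $C_g|_{V_i}$) are details the paper leaves implicit in its appeal to the construction of Sections~\ref{sec:models-1} and~\ref{sec:type3-nongeneric}.
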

\begin{proof}
  Indeed, by construction, these are the irreducible components of
  $X_0$ on which $C_g$ has numerical dimension $0$, $1$, $2$,
  compare the description of the pairs $(V_i,D_i)$ of types (1), (2), (3) in 
  Section~\ref{sec:type3-nongeneric}.
\end{proof}

Recall that a lattice
$\oT_{\mg} = (10+\mk-(\mg-1), 10-\mk-(\mg-1),\mdelta)$ of genus
$\mg>1$ is reached from $\oT_1= (10+\mk, 10-\mk, 1)$ by a chain of
$\mg-1$ standard operations described in
Lemma~\ref{lem:heegner-diagrams}. By the construction of
Section~\ref{sec:models-remaining}, if the mirror move $S\to\oT$
is odd then the Kulikov surface for $\oT_{\mg}$ is obtained from that
of $\oT_1$ by a sequence of moves A{\dots}AA, and if $S\dto \oT$ is even
then it is by a sequence of moves A{\dots}AB. 
In either case, if $\mg\ge2$ then the $I_{2\mk}$-cycle on the equator
is broken and becomes a chain of length $2\mk-1$. 

\begin{definition}
  Let $(V,D=\sum D_j)$ be an anticanonical pair and $L$ be a big and
  nef divisor on $V$. We say that an irreducible component $D_j$ of
  $D$ is a \emph{short side} if $LD_j=1$, a \emph{long side} if
  $L\cdot D_j=2$, and a \emph{zero side} if $L\cdot D_j=0$. If $f\colon V\to\oV$
  is the contraction defined by $L$ and $L=f^*\oL$, then we say that
  the images $\oD_j$ of $D_j$ are short or long sides of $\oV$ if
  $\oL\cdot\oD_j=1$ or $2$ respectively.
\end{definition}

\begin{definition}\label{def:stable-models}
  We define two types of stable models $\oX_0=\cup_i (\oV_i,\oD_i,\epsilon \oC_{g,i})$,
  illustrated 
  in Fig.~\ref{fig:pumpkin}.

  (1) \emph{Pumpkin.}  Each surface $\oV_i$ has two sides
  $\oD_i = \oD_{i,}\dleft + \oD_{i,}\dright$, they are glued in a
  circle, all of $D_i$ meeting at the north and south poles.
  \begin{figure}[htbp]
    \centering

    \ifshort 
    \includegraphics[width=2.5 in]{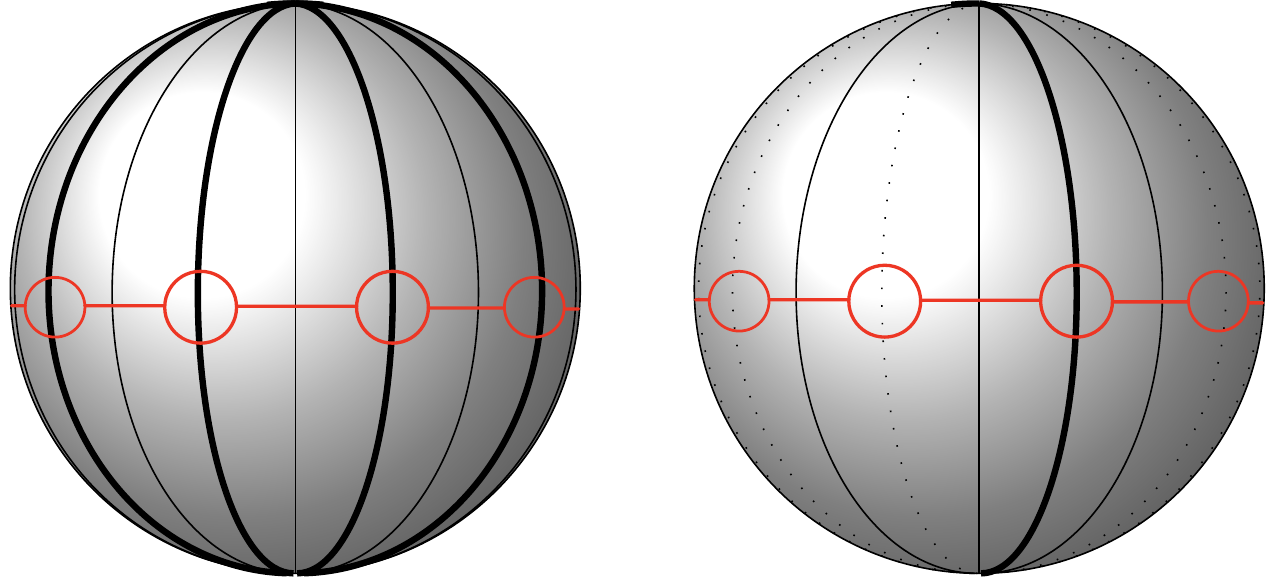}
    \hskip .8in
    \includegraphics[width=2.5 in]{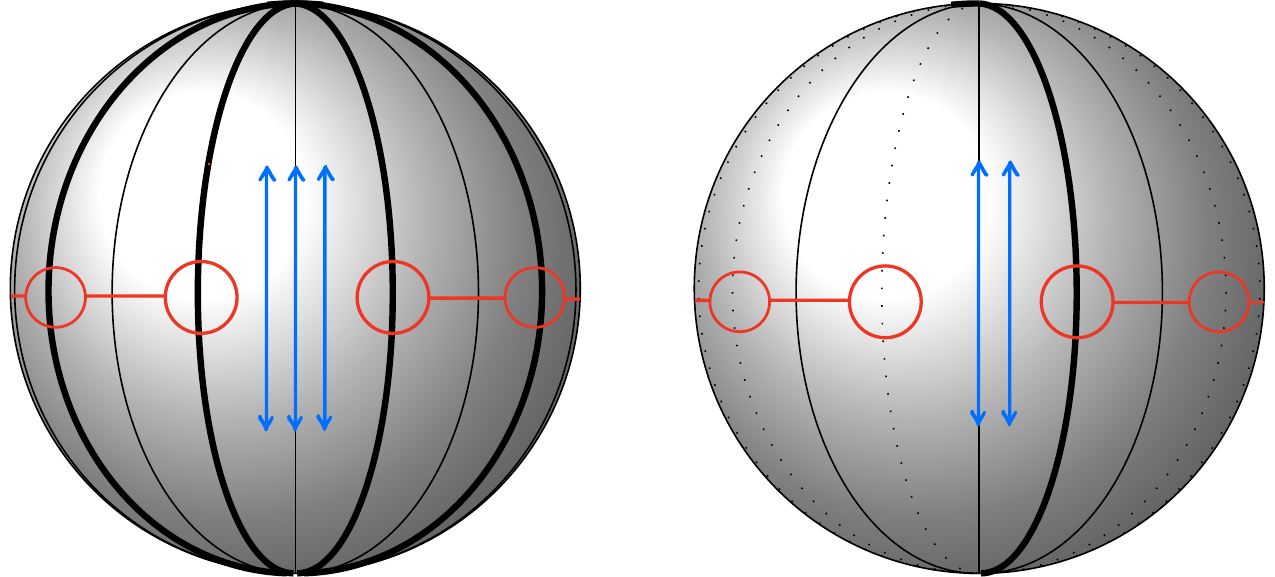}

    \else
    \includegraphics[width=.7\linewidth]{fig-pumpkin}
    \vskip 1cm
    \includegraphics[width=.7\linewidth]{fig-pumpkin2}
    \fi

    \caption{Pumpkin and smashed pumpkin type stable models}
    \label{fig:pumpkin}
  \end{figure}

  (2) \emph{Smashed pumpkin.} Starting with a surface of the pumpkin
  type, one short side is contracted to a point, so that the north and
  south poles are identified.

  If the surface $V_i$, say to the left, is
  $(\bF_1,D_1+D_2)$, where $D_1\sim f$ is the short side being
  contracted, $D_2\sim 2s_1+2f$ is the other side, and $C_{g,i}\sim f$ on $V$
  contract $V_i$ by the $\bP^1$-fibration $V_i\to\bP^1$. Then
  on the next surface $V_{i-1}$ to the left the long side will fold $2:1$ to
  itself, creating a nonnormal singularity along that side.

  If on $V_i$ the divisor $C_{g,i}$ has degree $C_{g,i}^2\ge2$ then only the
  short side is contracted and the resulting surface $\oV_i$ is normal
  in codimension~$1$, with only two points in the
  normalization glued together (the poles).
\end{definition}

\begin{theorem}\label{thm:stable-models}
  Let $(\oX_0 = \cup_i\oV_i, \epsilon \oC_g)$ be the stable model of a
  pair $(X_0=\cup_i V_i, \epsilon C_g)$, where $X_0$ is a Type III
  Kulikov surface $X_0 = \cup V_i$ and $C_g$ is the component of genus
  $g\ge2$ in the ramification divisor $R$. Then the normalization of
  each $\oV_i$ is an $ADE$ surface with an involution from
  \cite[Table~2]{alexeev17ade-surfaces}. Moreover, 
  \begin{enumerate}
  \item If $\oT$ is an odd $0$-cusp of $F_S$ then $\oX_0$ is of pumpkin
    type.  
  \item If $\oT$ is an even $0$-cusp of $F_S$ then $\oX_0$ is of 
    smashed pumpkin type. The surfaces $V_i$ of the last type in
    Definition~\ref{def:stable-models}, on which $V_i\to\oV_i$
    contracts one side are the surfaces of
    \cite[Table~2]{alexeev17ade-surfaces} for which one of the sides
    has length $0$, i.e. those with a double prime or a ``$+$''.
  \end{enumerate}
\end{theorem}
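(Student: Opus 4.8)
The plan is to read the stable model $(\oX_0=\cup\oV_i,\epsilon\oC_g)$ directly off the $\ias$ $B=P\cup P\uopp$ and the explicit involution Kulikov surfaces of Section~\ref{sec:kulikov-models-all}, by contracting with $|mC_g|$, and then to match each surviving component with an entry of \cite[Table~2]{alexeev17ade-surfaces}. First I would determine which $V_i$ survive under $f\colon X_0\to\oX_0$. The divisor $C_g$ is big and nef exactly on those $V_i$ whose lattice point of $B$ carries a relevant (type~(3)) equatorial singularity in the sense of Section~\ref{sec:type3-nongeneric}; on the hemisphere-interior points $C_g$ has numerical dimension $0$, and on the non-relevant equatorial points it has numerical dimension $1$. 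For $\mg=1$ this is Lemma~\ref{lem:contractions-to-stable-model}, and for $\mg\ge2$ it follows by the same computation after the reduction of Section~\ref{sec:models-remaining}: the A- and B-moves only rearrange the fibers over the two involution-fixed points of an equatorial ruled surface and do not change the numerical class of $C_g$ on the generic equatorial component. Thus the two hemisphere interiors collapse to the north and south poles, the intermediate equatorial surfaces collapse to the $\bP^1$ ribs, and the relevant equatorial surfaces map birationally onto the $\oV_i$.

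Next I would identify the normalization of each surviving $\oV_i$. Each relevant $(V_i,D_i)$ is an anticanonical pair on which $C_g$ is big and nef, so $K_{V_i}+D_i+\epsilon C_g$ is big and nef and, for $0<\epsilon\ll1$, its log canonical model contracts precisely the zero sides $D_j$ (those with $C_gD_j=0$) together with the interior curves orthogonal to $C_g$. By the collision analysis of Section~\ref{sec:type3-nongeneric}, the cbec of $(V_i,D_i)$ is prescribed by an $ADE$ subdiagram of $\Gamma_2(\lambda)$ under the conversion rules of \eqref{eq:conversion}, and \cite[Table~2]{alexeev17ade-surfaces} then identifies $\oV_i$ with the corresponding $ADE$ surface carrying an involution. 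In particular the zero sides of $(V_i,D_i)$ are exactly the sides recorded with a double prime or a ``$+$'' in that table, which is the statement I will need in the even case.

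Finally I would assemble the global gluing and split into the two cases. The relevant equatorial surfaces are glued along their left and right sides, with their remaining boundary curves running to the two poles, giving the pumpkin shape of Definition~\ref{def:stable-models}. By Lemma~\ref{lem:invo-lias-forward} an odd $0$-cusp is one with $S=\liasplus$ saturated, and by Section~\ref{sec:models-remaining} the corresponding Kulikov surface is built by an A$\dots$A sequence; then every relevant equatorial component keeps both of its nonzero sides, no short side gets contracted, and the two poles stay distinct, so $\oX_0$ is a pumpkin. An even $0$-cusp is one where $S\subset\liasplus$ has index $2$, and the final move is a B-move; the B-move flops the divisorial fixed locus until an end component becomes $(\bF_1,D_1+D_2)$ with $D_1\sim f$, $D_2\sim 2s_1+2f$ and $C_g\sim f$. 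Contracting this component either collapses it onto $\bP^1$ (when $C_g^2=0$ there), folding the neighboring long side $2{:}1$, or contracts only its short side (when $C_g^2\ge2$), and in both cases identifies the north and south poles; this is exactly the smashed pumpkin, and the components contracting a side are those whose Table~2 type has a zero side.

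The hard part will be this last step: proving that the even-cusp contraction genuinely identifies the two poles and produces the nonnormal $2{:}1$ folding of Definition~\ref{def:stable-models}, rather than some other boundary degeneration. This requires propagating the $d$-semistable gluing data, i.e. the anti-symmetric period $\psi\in\bT_{\Lambda/\Lambda^+}(S)$ of Section~\ref{sec:symmetric-periods}, through the flops of the B-move, and checking that the A$\dots$AB coset forces precisely this identification while the A$\dots$A coset does not. The parity of the cusp enters exactly through whether $S$ is saturated in $\liasplus$ or of index~$2$, so the bookkeeping of these periods is where the two cases are genuinely distinguished.
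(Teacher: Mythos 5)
Your proposal is correct and follows essentially the same route as the paper: the paper's proof is the one-line observation that the stable models are obtained by contracting the explicitly constructed Kulikov models via the big and nef divisor $C_g$, and your argument is precisely that observation spelled out — Lemma~\ref{lem:contractions-to-stable-model} for which components survive, the collision analysis of Section~\ref{sec:type3-nongeneric} identifying the surviving components with the $ADE$ involution pairs of \cite[Table~2]{alexeev17ade-surfaces}, and the A$\dots$A versus A$\dots$AB dichotomy of Sections~\ref{sec:symmetric-periods} and \ref{sec:models-remaining} distinguishing odd (pumpkin) from even (smashed pumpkin) cusps. The only point where you go beyond the paper is in flagging the period bookkeeping through the B-move flops as the "hard part"; the paper treats this as already settled by the construction in Section~\ref{sec:models-remaining}, where the B-move and its effect on the fixed locus and the anti-symmetric period cosets are worked out.
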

\begin{proof}
  By observation, these are the surfaces obtained by contracting the
  Kulikov models we constructed, defined by the big and nef divisor
  $C_{g,i}$ on $V_i$.  
\end{proof}

Figure~~\ref{fig:pumpkin}  
shows the 
maximally degenerate stable model $\oX_0$ and a less degenerate
one. The maximally degenerate surface corresponds to the empty
subdiagram $V_2(\lambda)=\emptyset$.

In the pumpkin type it is a union of $2\mk$
$(\bP^2,D_1+D_2)$ with a short side (a line) and a long side (a
conic). Long, (resp. short) sides correspond to the even-numbered,
doubly circled (resp. odd-numbered, singly circled) white vertices on
the $2\mk$ cycle in $\Gamma_2$.

In the smashed pumpkin type, $\oX_0$ is a union of $2\mk-2$
components, $2\mk-4$ of them are isomorphic to $(\bP^2,D_1+D_2)$ and
the remaining two to $(\bP^2,D_1+D_2)$ with a conic doubly folded on
itself. 

For nongeneric $\lambda$, for each vertex on the equator that is
included in $V_2(\lambda)$, the corresponding side is smoothed out and
removed from the picture.
For each off-equator vertex in $V_2(\lambda)$ which is
connected to some equatorial vertex in $V_2(\lambda)$, the combinatorial
type of the component $\oV_i$ changes, since the charge of its minimal
resolution increases.

\subsection{The Main Theorem}
\label{sec:proof-main-thm}

We recall that in $50$ of the $75$ lattices of \fign\ for a generic
surface $(X,\iota)$ in the moduli space $F_S$, the fixed locus
$R=X^\iota$ has a component $C_g$ of genus $g\ge2$.  These are the
lattices with $g\ge2$ excluding $(10,8,0)$.

Then $C_g$ is big, nef, and semiample, and defines a contraction
$X\to\oX$ to a K3 surface with $ADE$ singularities and an ample
Cartier divisor $\oC_g$. The KSBA theory
(Section~\ref{sec:ksba-compactifications}) then provides a geometric
compactification of $F_S$ by adding stable pairs $(\oX,\epsilon\oC_g)$
on the boundary. We denoted by $\oF_S$ this closure.

\begin{theorem}\label{thm:compact-moduli}
  In the $50$ cases of interest, the normalization of
  $\oF_S$ is semitoroidal, given by the collection of
  the semifans $\fF=\{\fram(\oT)\}$ defined in
  Section~\ref{sec:semifan}, one for each $0$-cusp $\oT$ of $F_S$.
  It is dominated by the semitoroidal compactifications
  $\ofsfref$ and $\oF_S^{\fF_2}$
  for the Coxeter semifans $\fref=\{\fref(\oT)\}$ and $\fF_2=\{\fF_2(\oT)\}$.
  
  For $S\ne(2,2,1)$ or $(3,3,1)$, $\ofsfref$ is toroidal. For $S$
  with $k\ge1$, both $\oF_S^{\fF_2}$ and $\ofsfref$ are toroidal.
\end{theorem}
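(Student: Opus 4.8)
The plan is to reduce the statement to the recognition theorem of \cite{alexeev2021compact, alexeev2021nonsymplectic}, which identifies the normalization of a stable-pair compactification with the semitoroidal compactification $\oF_S^{\fF}$ whose cones record, cusp by cusp, the loci of constancy of the stable models as the monodromy invariant varies. It therefore suffices to prove, for each $0$-cusp $\oT$ of $F_S$, that the isomorphism class of $(\oX_0,\epsilon\oC_g)$ depends on $\lambda\in\oT\cap\cC$ exactly through the cone of $\fram(\oT)$ containing it, and that the stratifications of the two spaces agree.

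First I would invoke the Mirror Theorem~\ref{thm:EF-mirror} to translate $\lambda$ into a nef class on the mirror K3 surface $\hX$ with $(\Pic\hX)^+=\oT$, as set up in Section~\ref{sec:kulikov-models-all}, so that the face of $\ch_2$ containing $\lambda$ is recorded by the subdiagram $\Gamma_2(\lambda)$. By Theorem~\ref{thm:stable-models} the stable model $\oX_0=\cup\oV_i$ is a pumpkin or smashed pumpkin whose components are the $ADE$ surfaces of \cite[Table~2]{alexeev17ade-surfaces}, and its combinatorial type is governed by how the colliding $I_1$ singularities interact with the equatorial $2\mk$-cycle. The essential point, from Section~\ref{sec:type3-nongeneric}, is that collisions of types (1) and (2)---those in the interiors of the hemispheres, or producing components on which $C_g$ has numerical dimension $\le1$---are contracted by $\lvert mC_g\rvert$ and leave $\oX_0$ unchanged, while only the type~(3) collisions involving the equatorial vertices are detected. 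This is precisely the Wythoff/generalized-Coxeter construction of Definition~\ref{def:the-semifan}: declaring the non-equatorial vertices irrelevant coarsens $\fF_2$ to $\fram(\oT)$, so two invariants in the same cone of $\fram(\oT)$ yield the same stable pair. Matching the relevant content of $\Gamma_2(\lambda)$ with the cone of $\fram(\oT)$, together with the Type~II boundary analysis of Section~\ref{sec:type2-models}, then identifies the normalization of $\oF_S\uslc$ with $\oF_S^{\fram}$.

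The domination and toroidal claims are then largely formal. By Section~\ref{sec:coxeter-semifan} the semifan $\fref$ refines $\fF_2$, and by Definition~\ref{def:the-semifan} $\fram$ is a coarsening of $\fF_2$, giving refinements $\fref\succeq\fF_2\succeq\fram$ and hence birational morphisms $\ofsfref\to\oF_S^{\fF_2}\to\oF_S^{\fram}$, so both $\ofsfref$ and $\oF_S^{\fF_2}$ dominate the normalization of $\oF_S\uslc$. For toroidality I would use the criterion of Definition~\ref{def:coxeter-semifan} that $\oF_S^{\fF}$ is toroidal iff the defining reflection semifan is a genuine fan at every $0$-cusp, i.e.\ the relevant group has finite covolume. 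The key arithmetic input is the genus shift under a mirror move: for $S=(r,a,\delta)$ with $k=\tfrac12(r-a)$, a $0$-cusp $\oT$ has $g_{\oT}=1+k$ if the cusp is odd and $g_{\oT}=2+k$ if it is even, by a direct computation with the formulas of Definition~\ref{def:mirror-moves} and \eqref{eq:g-k}. Thus $k\ge1$ forces every cusp to satisfy $g_{\oT}\ge2$, and moreover $\oT\ne(10,8,0)$ since that lattice occurs as a cusp only of $S=(10,8,0)$; hence $W_2(\oT)$---and a fortiori $\wref(\oT)$---has finite covolume by Section~\ref{sec:vinberg-g>1}, making both $\oF_S^{\fF_2}$ and $\ofsfref$ toroidal. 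For $\fref$ alone, Theorem~\ref{thm:finite-covolume} shows that $\wref(\oT)$ fails to have finite covolume only for $\oT=(18,2,1)$ or $(17,3,1)$, and tracing these through the genus formula the $(18,2,1)$ cusp arises only from the odd move out of $S=(2,2,1)$ and the $(17,3,1)$ cusp only from that out of $S=(3,3,1)$, pinning down exactly the two exceptions.

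The hard part will be the second step: verifying uniformly across all $50$ lattices, all their $0$-cusps, and all faces of $\ch_2$ that the stable model is genuinely constant on the open cones of $\fram(\oT)$, and that the colliding-singularity bookkeeping matches the decorated $ADE$ symbols of \cite{alexeev17ade-surfaces}. In particular the two exceptional cusps $(18,2,1)$ and $(17,3,1)$, whose Coxeter diagrams are infinite (Section~\ref{sec:vinberg-exceptional}), must be handled through their explicit $\fF_2$-description rather than via $\fref$.
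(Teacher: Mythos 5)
Your proposal is correct and follows essentially the same route as the paper's proof: recognizability from \cite{alexeev2021compact, alexeev2021nonsymplectic}, identification of the cones of the semifan with loci of constant stable models via the explicitly constructed Kulikov models (with only the type~(3), equatorial collisions detected, matching the Wythoff construction of $\fram$), domination via the refinements $\fref\succeq\fF_2\succeq\fram$, and toroidality via the mirror genus shift $\mg=k+1$ (odd) or $k+2$ (even) together with Theorem~\ref{thm:finite-covolume}. The only quibble is your remark that $(10,8,0)$ occurs as a cusp only of $S=(10,8,0)$ --- it is also an even cusp of $(10,10,0)$ and $(10,10,1)$ --- but since none of these three lattices lie among the $50$ cases, your conclusion is unaffected.
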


\begin{proof}
The first part follows from Proposition \ref{stratum-function}
and the description of the combinatorial type of the KSBA-stable 
limit, for each $\lambda\in \ch_2$ described in Theorem~\ref{thm:stable-models}:
The combinatorial type changes whenever $\lambda$ degenerates
into a cone with larger relevant content, since the number of double
curves of the stable limit $\oX_0$ decreases. The semifan thus
obtained is the semifan defined in
Section~\ref{sec:semifan}.

By Theorem~\ref{thm:0-cusps}, the $0$-cusps of $F_S$ correspond to the
mirror moves $S\leadsto\oT$. For each $\oT$, the semifan $\fram(\oT)$ is
a coarsening of the Coxeter fan for the Weyl group $W_2$ generated by
the $(-2)$-vectors, which is in turn a coarsening of the Coxeter fan for the
full reflection group $\wref$.

For the lattices $S$ with $g\ge 2$, their mirrors $\oT$ are the
$2$-elementary lattices with $\mg\ge1$ and $k\ge1$, excluding
$(14,6,0)$. In all of these cases except for $\oT=(18,2,1)$ and
$(17,3,1)$, $\fref$ is a fan, and these $\oT$ appear as mirror
only for $S=(2,2,1)$ and $(3,3,1)$. So in these cases $\ofsfref$ is
toroidal.

Similarly, for the lattices $S$ with $g\ge1$, the mirrors $\oT$
satisfy $\mg\ge2$. Excluding the case $S=\oT = (10,8,0)$, the Weyl
group $W_2$ has finite covolume (see Section~\ref{sec:vinberg-g>1}),
so the Coxeter semifan for $W_2$ is a fan.
\end{proof}

\begin{remark}
From Definition~\ref{def:generalized-cox-semifan}, $\fram(\oT)$ is a
fan iff $\wref$ has finite covolume
and the Weyl group generated by the irrelevant roots is finite,
i.e. the complement of the relevant vertices forms an elliptic
diagram. (Indeed, this is equivalent to the polyhedron
$P\dram\subset\cH$ for the cone $\ch\dram$ having finite volume.)
One can go through the Coxeter diagrams in
\cite[Fig.~1]{alexeev2006del-pezzo} and Figs.~\ref{fig:coxeter1},
\ref{fig:coxeter2} to verify that this happens in rather few cases.
Heuristically, it is because there are many irrelevant nodes, as these are
the complement of the $2k$-cycle in Figs.~\ref{fig:coxeter1}, \ref{fig:coxeter2}.
\end{remark}

\section{Example. $S=(2,2,0)$: hyperelliptic K3 surfaces of degree~$4$}
\label{sec:2-0-1}

Consider the moduli space $F_S$ of K3 surfaces with the generic Picard
lattice $S=(2,2,0)_1= U(2)$. The K3 surfaces
in this family are double covers of $\bP^1\times\bP^1$ branched in a
divisor $B\in |\cO(4,4)|$. The pullback
$L=\pi^*\cO_{\bP^1\times\bP^1}(1,1)$ has degree $L^2=4$. These
surfaces are known as hyperelliptic K3 surfaces of degree~$4$. 

\begin{figure}[htbp]
  \includegraphics[width=5 in]{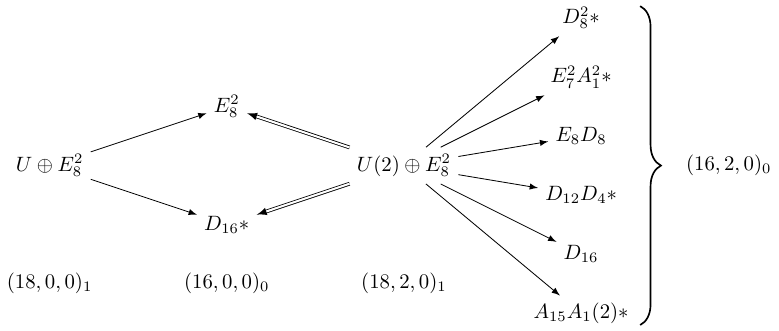}
  \smallskip
  \caption{Cusps for $T=(20,2,0)_2=U\oplus U(2)\oplus E_8^2  = U^2\oplus D_{16}$}
  \label{fig-18-allcusps}
\end{figure}

The generic transcendental lattice is $T=S^\perp=(20,2,0)_2$. Explicitly,
one has $T\simeq U\oplus U(2)\oplus E_8^2\simeq U^2\oplus  D_{16}$. The $0$-cusps of
$F_S$ are found by Theorem~\ref{thm:0-cusps}. In \fignb there
are two mirror moves: the odd
$S=(2,2,0)\to \oT=(18,2,0)\simeq U(2)\oplus E_8^2\simeq U\oplus D_{16}$ and the even
ordinary $S=(2,2,0)\dto\oT=(18,0,0) \simeq U\oplus E_8^2$. The $1$-cusps are given
by Theorem~\ref{thm:1-cusps}. They correspond to the negative definite
lattices $\ooT$ with the invariants $(16,0,0)_0$ (i.e. unimodular) and
$(16,2,0)_0$. These are listed in Table~\ref{tab:neg-def}. We give the
complete cusp diagram in Fig.~\ref{fig-18-allcusps}. (It can also be
found in \cite{laza2021git-versus}.) By 
Proposition~\ref{prop:1-cusps-modular}, the $1$-cusps for the
$\ooT(16,2,0)$ lattices are isomorphic to $\bH/\SL(2,\bZ)$, and those
for the $\ooT(16,0,0)$ lattices to $\bH/\Gamma_0(2)$. 

\begin{figure}[htbp]
  \includegraphics[width=5 in]{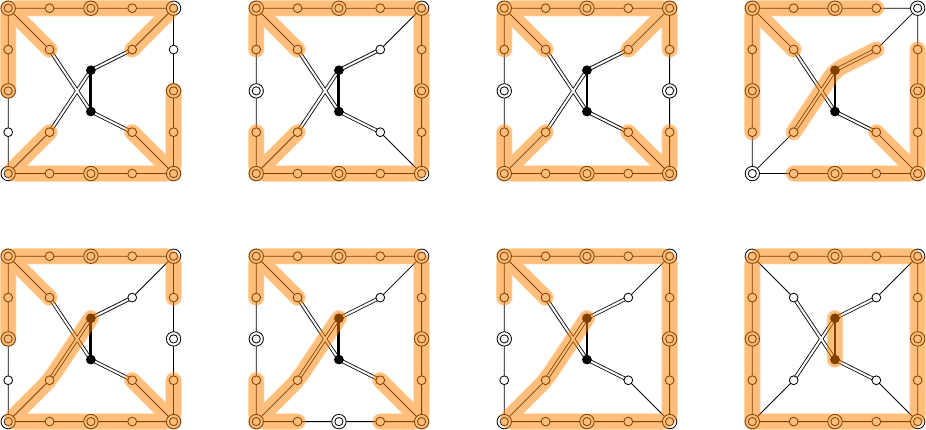}
  \caption{Maximal parabolic subdiagrams in $\Gamma_r$ for $\oT=(18,2,0)$}
  \label{fig-18-1cusps}
\end{figure}  

\begin{figure}[htbp]
  \includegraphics[width=3.5 in]{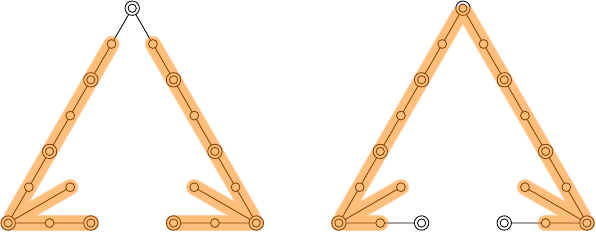}
  \smallskip
  \caption{Maximal parabolic subdiagrams in $\Gamma_r$ for $\oT=(18,0,0)$}
  \label{fig-19-1cusps}
\end{figure} 

Vinberg diagrams for $\oT=(18,2,0)$ and $\oT=(19,1,1)$ are given in 
Fig.~\ref{fig:coxeter2}. The diagram for $\oT=(18,0,0)$ is obtained from
that of $(19,1,1)$ by the procedure given in
Lemma~\ref{lem:heegner-diagrams}; the result is the diagram of
Fig.~\ref{fig-19-1cusps} (without the highlighting).
The $1$-cusps containing a given
$0$-cusp correspond to the maximal parabolic subdiagrams in the
Vinberg diagram. They are highlighted in Figs.~\ref{fig-18-1cusps} and
\ref{fig-19-1cusps}; the order of appearance is the same as in
Fig.~\ref{fig-18-allcusps}. 

\smallskip

The Kulikov models are as described in
Section~\ref{sec:kulikov-models-all}: for $\oT=(18,2,0)$ in
Section~\ref{sec:models-1} and for $\oT=(18,0,0)$ in
Section~\ref{sec:models-remaining}. The stable models are given by
Theorem~\ref{thm:stable-models}. 

At the $(18,2,0)$ odd $0$-cusp the Type III stable models are of
pumpkin type of Fig.~\ref{fig:pumpkin}. They correspond to the
elliptic subdiagrams, i.e. disjoint unions of $ADE$ graphs in
Fig.~\ref{fig-18-1cusps}. The maximal degeneration corresponds to the
empty subdiagram, or $(A^-_0 \mA_0)^8$.
It is a union of $16$
pairs $(\bP^2,D_1+D_2)$ with a line $D_1$ and a conic $D_2$, as in
Fig.~\ref{fig:pumpkin}. The list of irreducible components that
appear, in notations of \cite[Table~2]{alexeev17ade-surfaces}, is as
follows. We list them according to the number of the off-equator
vertices. For brevity, $m$ denotes an even index and $n$ an odd index.

\begin{enumerate} 
\item[($0$)] $A_n$ ($1\le n\le 15$) \quad $A_m^-$ ($0\le m\le 14$)
  \quad $\mA_n^-$ ($1\le n\le 15$)
\item[($1$)] $\pA_m^-$ ($2\le m\le 16$) \quad
  $\pA_n$ ($3\le n\le 15$)\quad $D_m$ ($4\le m\le 16$) \\
  $D^-_n$ ($5\le n\le 15$)\quad
  $\mE_6^-$\quad $\mE_7$\quad $\mE_8^-$
\item[($1+1$)] $\pA'_n$ ($n=7,11,15$)\quad $D'_n$ ($n=9,13,17$)
\item[($2$)] $\pA'_3$\quad $\pD_m$ ($4\le m\le 16$)\quad
  $\pD_n^-$ ($5\le n\le 15$)
\item[($2+1$)] $\pD'_m$ ($m=8,12,16$)
\item[($3$)] $\pD'_4$
\end{enumerate}

At the $(18,0,0)$ even ordinary $0$-cusp the Type III stable models
are of smashed pumpkin type of Fig~\ref{fig:pumpkin}. 
They
correspond to the elliptic subdiagrams in Fig.~\ref{fig-19-1cusps}. In
this case some of the irreducible components $\oV_i$ are non-normal,
with a long side folded $2:1$. As in \cite[Def.~6.9]{alexeev17ade-surfaces},
we denote these by adding
the letter $f$ to the Dynkin symbol.  The maximal
degeneration corresponds to the empty subdiagram, or
$\foA_0^- (\mA_0 A_0^-)^7 \mA_0^f$.  It is a union of $16$ pairs
$(\bP^2,D_1+D_2)$ but in the end two of these pairs the $\bP^2$ is
glued to itself along the conic $D_2$ which is folded $2:1$.  The list
of irreducible components is as follows:

\begin{enumerate} 
\item[($0$)] $\plA_n^-$ ($1\le n\le 15$) \quad $\plA_{17}^+$ \quad
  $\plA_m$ ($2\le m\le 14)$) \quad $\foA_{16}^+$ \\
  $\foA_n$ $(1\le n\le 13)$ \quad $\foA^f_{15}$ \quad
  $\foA_m^-\ (0\le m\le 14)$ \\
  $\mA_n^-\ (1\le n\le 13)$ \quad $\mA_m\ (0\le m\le 12)$
  \quad $A_n\ (1\le n\le 11)$.   
\item[($1$)] $\pA^+_{4}$ \quad $\pA^+_{16}$ \quad $\foA'_3$ \quad
  $\foA'_{15}$ \quad $\pA_m\ (2\le m\le 14)$ \quad
  $\pA_n\ (3\le n\le 13)$\\
  $D_5^+$\quad $\foD^+_{17}$ \quad $\foD_m\ (4\le m\le 14)$ \quad
  $\foD_n \ (5\le n\le 15)$ \quad $\plE_6^-$ \quad
  $\plE_7$ \quad $\plE_8^-$
\item[($1+1$)] $\pA'_{15}$
\end{enumerate}

\smallskip

The Type II stable models are described by the maximal parabolic
subdiagrams of Figs.~\ref{fig-18-1cusps}, \ref{fig-19-1cusps}. The
irreducible components correspond to the relevant connected
components.

\medskip

The normalization of $\oF_S$ is semitoroidal.
At the $(18,2,0)$ $0$-cusp, the semifan $\fF\dram(18,2,0)$ is a
coarsening of the Coxeter semifan $\fref(18,2,0)$ and neither of them
is a fan. At the $(18,0,0)$ $0$-cusp, a maximal-dimensional cone of the
semifan $\fF\dram(18,0,0)$ is a union of $4$ maximal-dimensional cones
of $\fref(18,0,0)=\fF_2(18,0,0)$, and both of them are fans,
so near this cusp the normalization of $\oF_S$ is toroidal.

\bibliographystyle{amsalpha}

\def\cprime{$'$}
\providecommand{\bysame}{\leavevmode\hbox to3em{\hrulefill}\thinspace}
\providecommand{\MR}{\relax\ifhmode\unskip\space\fi MR }
\providecommand{\MRhref}[2]{%
  \href{http://www.ams.org/mathscinet-getitem?mr=#1}{#2}
}
\providecommand{\href}[2]{#2}


\ifshort 
\else
\listoffigures 
\listoftables 
\fi

\end{document}

\begin{table}[htbp]
  \centering
  \begin{tabular}[htbp]{lllll}
    off eq. & normal shapes & $G\dref$ & charge & smashed shapes\\
    \hline
    $0$ & $A_{n}$ & $A_{n}$ & $n+1$ & $A^+_n$, $^+\!A^+_n$ \\
    $1$ & $'\!A_{n}$ & $A_{n}$ & $n+2$ & $''\!A_n$, $'\!A^+_n$ \\ 
    $1$ & $D_n$ & $D_n$ & $n+2$ & $D^+_n$ \\ 
    $1$ & $E_n$ & $E_n$ & $n+2$ & $E_n^+$, $^+\!E_n^+$ \\
    $1+1$ & $'\!A'_n$, $n\ge5$ odd &$A_n$ &$n+3$ &$''\!A'_n$, $''\!A''_n$ \\
    $1+1$ & $D'_n$, $n\ge6$ even & $D_n$ & $n+3$ & $D''_n$ \\
    $1+1$ & $E'_7$ & $E_7$ & $10$ & $^+\!E_7'$, $E''_7$, $^+\!E_7''$ \\
    $2$ & $'\!A'_3$ & $C_3$ & $6$ & $'\!A''_3$ \\
    $2$ & $'\!D_n$, $n\ge4$ & $C_n$ & $n+3$ & $''\!D_n$, $'\!D_n^+$, $''\!D_n^+$\\
    $2+1$ & $'\!D'_n$, $n\ge6$ even &$C_n$ &$n+4$ &$''\!D'_n$, $''\!D''_n$\\
    $3$ & $'\!D'_4$ & $F_4$ & $8$ & $''\!D'_4$
  \end{tabular}
  \medskip
  \caption{Types of $ADE$ surfaces}
  \label{tab:shapes}
\end{table}
